\def\frak{\mathfrak}
\def\Bbb{\mathbb}
\def\Cal{\mathcal}
\newcommand{\llbr}{[\mid}
\newcommand{\rrbr}{\mid]}
\let\phi\varphi
\newcommand{\x}{\times}
\renewcommand{\o}{\circ}
\newcommand{\al}{\alpha}
\newcommand{\be}{\beta}
\newcommand{\ga}{\gamma}
\newcommand{\ep}{\epsilon}
\newcommand{\ka}{\kappa}
\newcommand{\om}{\omega}
\newcommand{\ph}{\phi}
\newcommand{\ps}{\psi}
\renewcommand{\th}{\theta}
\newcommand{\si}{\sigma}
\newcommand{\ze}{\zeta}
\newcommand{\Ga}{\Gamma}
\newcommand{\La}{\Lambda}
\newcommand{\Ps}{\Psi}
\newcommand{\Om}{\Omega}
\newcommand{\tsum}{\textstyle\sum}
\newcommand{\im}{\operatorname{im}}
\newcommand{\End}{\operatorname{End}}
\newcommand{\gr}{\operatorname{gr}}
\newcommand{\id}{\operatorname{id}}
\newcommand{\ad}{\operatorname{ad}}
\newcommand{\Alt}{\operatorname{Alt}}
\newcommand{\tbv}{\widetilde{\mathbb V}}
\newcommand{\tcf}{\widetilde{\mathcal F}}
\newcommand{\tcv}{\widetilde{\mathcal V}}
\newcounter{theorem}
\numberwithin{theorem}{section}
\numberwithin{equation}{section}
\newtheorem{thm}[theorem]{Theorem}
\newtheorem*{thm*}{Theorem \thesubsection}
\newtheorem{lemma}[theorem]{Lemma}
\newtheorem{prop}[theorem]{Proposition}
\newtheorem{cor}[theorem]{Corollary}
\newtheorem*{lemma*}{Lemma \thesubsection}
\newtheorem*{prop*}{Proposition \thesubsection}
\newtheorem*{cor*}{Corollary \thesubsection}
\theoremstyle{definition}
\newtheorem{definition}[theorem]{Definition}
\newtheorem*{definition*}{Definition \thesubsection}
\newtheorem*{example*}{Example \thesubsection}
\theoremstyle{remark}
\newtheorem{remark}[theorem]{Remark}
\newtheorem*{remark*}{Remark \thesubsection}
\def\sideremark#1{\ifvmode\leavevmode\fi\vadjust{\vbox to0pt{\vss% the remark
 \hbox to 0pt{\hskip\hsize\hskip1em%                          will appear only
 \vbox{\hsize3cm\tiny\raggedright\pretolerance10000%          on the side
  \noindent #1\hfill}\hss}\vbox to8pt{\vfil}\vss}}}%
\newcommand\ddd[3]{%
  \begin{picture}(46,12)\put(5,3){\line(1,0){16}}\put(25,3){\line(1,0){16}}%
    \put(3,3){\makebox(0,0){$\o$}}\put(23,3){\makebox(0,0){$\o$}}%
    \put(43,3){\makebox(0,0){$\o$}}%
    \put(3,10){\makebox(0,0){\scriptsize $#1$}}%
    \put(23,10){\makebox(0,0){\scriptsize $#2$}}%
    \put(43,10){\makebox(0,0){\scriptsize $#3$}}\end{picture}}
\newcommand\xxd[3]{%
  \begin{picture}(46,12)\put(3,3){\line(1,0){38}}%
    \put(3,3){\makebox(0,0){$\times$}}\put(23,3){\makebox(0,0){$\times$}}%
    \put(43,3){\makebox(0,0){$\o$}}%
    \put(3,10){\makebox(0,0){\scriptsize $#1$}}%
    \put(23,10){\makebox(0,0){\scriptsize $#2$}}%
    \put(43,10){\makebox(0,0){\scriptsize $#3$}}\end{picture}}
\newcommand\xdd[3]{%
  \begin{picture}(46,12)\put(3,3){\line(1,0){18}}\put(25,3){\line(1,0){16}}%
    \put(3,3){\makebox(0,0){$\times$}}\put(23,3){\makebox(0,0){$\o$}}%
    \put(43,3){\makebox(0,0){$\o$}}%
    \put(3,10){\makebox(0,0){\scriptsize $#1$}}%
    \put(23,10){\makebox(0,0){\scriptsize $#2$}}%
    \put(43,10){\makebox(0,0){\scriptsize $#3$}}\end{picture}}
\begin{document}
\renewcommand{\today}{} 

\title{Relative BGG sequences;\\II.\ BGG machinery and invariant
  operators} 

\author{Andreas \v Cap and Vladimir Sou\v cek}

\address{A.\v C.: Faculty of Mathematics\\
  University of Vienna\\
  Oskar--Morgenstern--Platz 1\\
  1090 Wien\\
  Austria\\
  V.S.:Mathematical Institute\\ Charles University\\ Sokolovsk\'a
  83\\Praha\\Czech Republic} 

\email{Andreas.Cap@univie.ac.at} 
\email{soucek@karlin.mff.cuni.cz}

%\subjclass{primary: 58J10; secondary: 53C07, 53C15, 58J60, 58J70}

\begin{abstract}
  For a real or complex semisimple Lie group $G$ and two nested
  parabolic subgroups $Q\subset P\subset G$, we study parabolic
  geometries of type $(G,Q)$.  Associated to the group $P$, we
  introduce a class of relative natural bundles and relative
  tractor bundles and construct some basic invariant differential
  operators on such bundles. We define a (rather weak) notion of
  ``compressability'' for operators acting on relative differential
  forms with values in a relative tractor bundle. The we develop a
  general machinery which converts a compressable operator to an
  operator on bundles associated to completely reducible
  representations on relative Lie algebra homology groups.

  Applying this machinery to a specific compressable invariant
  differential operator of order one, we obtain a relative version of
  BGG (Bernstein--Gelfand--Gelfand) sequences. All our constructions
  apply in the case $P=G$, producing new and simpler proofs in the
  case of standard BGG sequences. We characterize cases in which the
  relative BGG sequences are complexes or even fine resolutions of
  certain sheaves and describe these sheaves. We show that this gives
  constructions of new invariant differential operators as well as of
  new subcomplexes in certain curved BGG sequences. The results are
  made explicit in the case of generalized path geometries.
\end{abstract}

\thanks{First author supported by projects P23244--N13 and P27072--N25
  of the Austrian Science Fund (FWF), second author supported by the
  grant P201/12/G028 of the Grant Agency of the Czech Republic
  (GACR). Discussions with David M.J.~Calderbank have been very
  helpful.}

\maketitle

\pagestyle{myheadings}\markboth{\v Cap and Sou\v cek}{Relative
  BGG--Sequences II}

\section{Introduction}\label{1}
The main motivation for the construction of
Bernstein--Gelfand--Gelfand sequences (or BGG sequences) came from
questions on differential operators which are naturally associated to
certain geometric structures. In particular, conformally invariant
differential operators were studied in Riemannian geometry for a long
time with rather limited success. Starting from the 1970s, it became
clear that questions on invariant differential operators are closely
related to questions in representation theory. More precisely, for
locally flat conformal structures (with the round sphere as the model
example), conformally invariant differential operators are equivalent
to homomorphisms of generalized Verma modules. A basic source for such
homomorphisms is Lepowsky's generalization \cite{Lepowsky} of the BGG
resolution of a finite dimensional representation by homomorphisms of
Verma modules, see \cite{BGG}.

The generalized BGG resolutions and the Jantzen--Zuckermann
translation principle for homomorphisms were used in the pioneering
article \cite{Eastwood--Rice} to construct conformally invariant
differential operators. Later on, these ideas were combined with
tractor calculus (see \cite{BEG} and \cite{Cap-Gover}) and extended to
the family of \textit{parabolic geometries} under the name
\textit{curved translation principle}.

The basic results on BGG sequences were obtained in \cite{CSS-BGG} and
with an improved construction in \cite{Calderbank--Diemer} following a
slightly different approach. Rather than using results on generalized
Verma modules, these articles gave a direct construction of invariant
differential operators, based on tractor bundles and the algebraic
setup introduced by Kostant for the proof of his theorem on Lie
algebra cohomology in \cite{Kostant}, which is commonly known as
Kostant's version of the Bott--Borel--Weil theorem. Kostant's theorem
itself is then used to identify the natural vector bundles which show
up in the sequence. This provided a general construction for a large
class of differential operators naturally associated to a broad class
of geometric structures, which apart from conformal structures also
contains other well known examples like projective and quaternionic
structures, CR structures and path geometries.

In the applications of BGG sequence that were found during the
subsequent years, a certain change of perspective evolved. On the one
hand, it turned out to be very important that one not only obtains a
construction of higher order operators on (relatively) simple bundles
but also a relation to first order operators on more complicated
bundles. This gives the possibility to work with the operators in a
BGG sequence without knowing explicit formulae for them (which tend to
become very complicated if the order gets high). On the other hand,
already in the original construction, two possible operators on forms
with values in a tractor bundle were used as a starting point for the
construction. In \cite{deformations} and the subsequent generalization
\cite{HSSS}, the construction was applied to certain modifications of
the canonical tractor connection. Corresponding to these developments, 
the terminology \textit{BGG machinery} started turning up.

\medskip

The aim of this article is to develop a relative version of BGG
sequences and at the same time to convert the vague idea of a ``BGG
machinery'' into precise statements. In the notation we are going to
use, the starting point for usual BGG sequences is a pair $(G,Q)$,
where $G$ is a real or complex semi--simple Lie group and $Q\subset G$
is a parabolic subgroup. The construction then associates each
representation of $G$ a sequence of invariant differential operators
on the category of parabolic geometries of type $(G,Q)$. For the
relative version we develop, one in addition chooses an intermediate
parabolic subgroup $P$ lying between $G$ and $Q$. The construction
then starts from a completely reducible representation of $P$, again
producing operators on parabolic geometries of type $(G,Q)$. We
emphasize at this point that one may always choose $P=G$ to obtain a
construction for the usual BGG sequences, which contains several new
features and strong improvements compared to the constructions in
\cite{CSS-BGG} and \cite{Calderbank--Diemer}. This is also crucial for
some of the applications discussed in the end of the article.

The relative construction needs an algebraic background, a relative
version of Kostant's harmonic theory and a description of relative Lie
algebra homology groups parallel to Kostant's theorem. These results
belong to the realm of finite dimensional representation theory and
should be of independent interest, so they have been put into the
separate article \cite{part1}.

Building on this algebraic background, we describe the setup needed
for the relative BGG construction in Section \ref{2}. It turns out
that the intermediate subgroup $P$ can be used to single out a
subclass of natural bundles that we call \textit{relative natural
  bundles}. These contain all irreducible bundles (i.e.~those
associated to irreducible representations of $Q$) and the class of
\textit{relative tractor bundles}, which are associated to completely
reducible representations of $P$. One obtains natural relative
versions of the tangent and cotangent bundle and a relative adjoint
tractor bundle. The main results of Section 2 are a construction of a
relative version of the fundamental derivative in Proposition
\ref{prop2.2}, and a relative version of the curved Casimir operator
from \cite{Casimir}, see Section \ref{2.3}.

The actual relative BGG construction is carried out in two steps. In
Section \ref{3}, we establish a machinery to construct operators on
bundles induced by relative Lie algebra homology groups from operators
defined on relative differential forms with values in a relative
tractor bundle. This construction can be applied to a single operator
mapping $k$--forms to $(k+1)$--forms, and apart from being linear, the
only requirement on this operator is compressability as defined in
Definition \ref{def3.1}. This means that the operator preserves a
natural filtration on the space of forms and has a specific induced
action on the associated graded. Beyond that, it is not even required
to be a differential operator. The main feature of the construction is
that it entirely works with (universal) polynomials in the composition
of a natural bundle map with the given operator. Hence it always
produces operators which are ``as nice'' as the operator one starts
from. 

The key step for this is the construction of a \textit{splitting
  operator}, for which we give two equivalent descriptions. One is
parallel to the general constructions of splitting operators from
curved Casimirs in \cite{Casimir} and \cite{examples}, the other
construction is closer to the one used in
\cite{Calderbank--Diemer}. We also prove that the kernel of the
initial operator naturally corresponds to a subspace in the kernel of
the induced operator on Lie algebra homology groups (Proposition
\ref{prop3.5}), which is a general version of the concept of ``normal
solutions'' of first BGG equations.

In Section \ref{3.6}, we discuss the case that one starts with a
sequence of operators on forms of all degrees rather than just a
single operator. In particular, we show that if two operators in the
sequence have trivial composition then the same is true for the
induced operators on Lie algebra homology bundles, and we precisely
analyze the relation between the cohomologies, see Theorem
\ref{thm3.6}.

\smallskip

The second part of the construction is carried out in Section
\ref{4}. Using the relative fundamental derivative, we construct a
first order invariant differential operator called the
\textit{relative twisted exterior derivative} on relative differential
forms with values in any relative tractor bundle. This operator is
shown to be compressable, so the machinery of Section \ref{3} leads to
a sequence of invariant differential operators on relative homology
bundles, see Theorem \ref{thm4.1}. In the course of the further
developments, it is shown in Proposition \ref{prop4.5} that for $P=G$,
this operator coincides with the one constructed in \cite{CSS-BGG} via
semi--holonomic jet modules.

Next, we treat the question when a relative BGG sequence is a complex
or even a fine resolution of some sheaf. Apart from a computation of
the square of the relative twisted exterior derivative (which seems to
be a new result, even for $P=G$), this requires an interpretation in
terms of a relative analog of tractor connections, see Theorem
\ref{thm4.3} and Proposition \ref{prop4.5}. This is done even in the
case where the relative tangent bundle (which always is a smooth
subbundle in the tangent bundle) is non--involutive, so the naive way
to extend a partial connection to higher order forms fails. 

Involutivity of the relative tangent bundle is necessary but not
sufficient for BGG sequences being resolutions, additional conditions
on the (relative) curvature have to be satisfied. To interpret the
sheaves resolved by a BGG resolution, one has to use the theory of
correspondence spaces and local twistor spaces for parabolic
geometries as developed in \cite{twistor}. The main general results on
BGG resolutions we prove is Theorem \ref{thm4.6}, for the
interpretation of the sheaves being resolved also Theorem \ref{thm4.6a}
is important. 

The last topic in Section \ref{4} are algebraic properties of
splitting operators which generalize the results for usual BGG
sequences in \cite{twistor}, also giving simpler proofs for those
results. The main topic here is to systematically obtain restrictions
on the image of splitting operators, given information on the sections
that they are applied to and/or on the curvature of the geometry. In
particular, these results can be used to obtain information on the
curvature of a geometry from information on its harmonic part. A
crucial result in this context is the description of the Laplacian
determined by the twisted exterior derivative in terms of the relative
curved Casimir in Proposition \ref{prop4.7}, which completes and
extends partial results in this direction from \cite{Casimir}.

\smallskip

In the last Section \ref{5}, we discuss some applications of our
results and make them explicit for one structure. First, we discuss the
case in which the bundles showing up in a relative BGG sequence also
arise in a standard BGG sequence. In representation theory terms, this
means that the weight determining the relative tractor bundle which
gives rise to the sequence is in the affine Weyl orbit of a $\frak
g$--dominant integral weight. In this case, we are able to prove in
general that the operators in the relative BGG sequence are the same
as the operators between the bundles in question that are obtained in
the absolute BGG sequence, see Theorem \ref{thm5.3}. Under the
appropriate curvature conditions, which are much weaker than local
flatness of the geometry, one thus obtains subcomplexes in curved
BGG sequences, which are different from those constructed in
\cite{subcomplexes}. These results also show how strong the
characterization results relating BGG operators to the (relative)
twisted exterior derivative are. Initially, the statement that the
bundles occur in both sequences only comes from the fact that they are
induced by representations with the same highest weight and already
finding an explicit bundle map relating absolute and relative homology
bundles is a quite non--trivial problem.

Second, there is the case in which we obtain operators that cannot
occur in a standard BGG sequence. In representation theory terms this
means that either the representation inducing the relative tractor
bundle has singular infinitesimal character or its highest weight is
non--integral. The latter is not a rare case at all because there are
density weights involved, which can be non--integral without
problems. In all these cases, we obtain a systematic and general
construction for invariant differential operators, for which up to now
there were only construction principles (which usually need
case--by--case verifications, even to decide whether they apply)
available in the literature.

We conclude the article by making our results explicit in the case of
generalized path geometries. This example of parabolic geometries is
of particular interest, since the geometric theory of systems of
second order ODEs as developed in \cite{Fels} is a special case of such
structures. This is just one example, however, and we see potential
for many further applications of relative BGG sequences. In
particular, we hope that relative BGG resolutions provide a starting
point for a curved version of the Penrose transform as described in
\cite{BEastwood}.

\section{Relative natural bundles}\label{2}
We start by briefly recalling the setup of two nested parabolic
subalgebras $\frak q\subset\frak p$ in a semisimple Lie algebra $\frak
g$ with a compatible choice $Q\subset P\subset G$ of groups as
discussed in \cite{part1}. The intermediate parabolic $\frak p$ gives
rise to a class of natural bundles on parabolic geometries of type
$(G,Q)$, which we call relative natural bundles. We show that there
are natural analogs of two of the basic differential operators
available for parabolic geometries, the fundamental derivative and the
curved Casimir operator, which are adapted to the relative
setting. Then we describe the geometric counterpart of the algebraic
setup developed in \cite{part1}, which sets the stage for the relative
BGG--machinery we develop in the next section.

\subsection{Relative natural bundles}\label{2.1}
Throughout this article, we consider a real or complex semisimple Lie
algebra $\frak g$ endowed with two nested parabolic subalgebras $\frak
q\subset\frak p\subset\frak g$. Moreover, we assume that we have
chosen a Lie group $G$ with Lie algebra $\frak g$ and a parabolic
subgroup $P\subset G$ corresponding to $\frak p$. As discussed in
Section 2.1 of \cite{part1} the normalizer $Q$ of $\frak q$ in $P$ has
Lie algebra $\frak q$, so we obtain closed subgroups $Q\subset
P\subset G$ corresponding to $\frak q\subset\frak p\subset\frak
g$. For each of the parabolic subalgebras, we have the nilradical, and
we denote these by $\frak p_+\subset\frak p$ and $\frak
q_+\subset\frak q$. It turns out that $\frak p_+\subset\frak q_+$ and
that the exponential map restricts to diffeomorphisms from these
subalgebras onto closed subgroups $P_+\subset Q_+\subset Q\subset P$
such that $Q_+$ is normal in $Q$ and $P_+$ is normal in $P$.

In this setting, we will study parabolic geometries of type $(G,Q)$,
and use $P$ (or $\frak p$) as an additional input. By definition,
these are Cartan geometries of type $(G,Q)$ and hence can exist on
smooth manifolds of dimension $\dim(G/Q)$. Explicitly, such a geometry
on a smooth manifold $M$ is given by a principal fiber bundle $p:\Cal
G\to M$ with structure group $Q$ together with a Cartan connection
$\om\in\Om^1(\Cal G,\frak g)$. This means that $\om$ defines a
trivialization $T\Cal G\cong M\x\frak g$ which is $P$--equivariant and
reproduces the generators of fundamental vector fields, see section
1.5 of \cite{book} for details on Cartan geometries. There is a
general theory exhibiting such parabolic geometries as equivalent
encodings of underlying structures. For the purposes of this article,
we may however simply take the Cartan geometry as a given input.

From this description it is clear, that a representation $\Bbb W$ of
the Lie group $Q$ gives rise to a natural vector bundle on parabolic
geometries of type $(G,Q)$. If $(p:\Cal G\to M,\om)$ is such a
geometry, then we simply form the associated bundle $\Cal G\x_Q\Bbb
W$. Via the Cartan connection $\om$, one can identify some of these
natural bundles with more traditional geometric objects like tensor
bundles.

\begin{definition}
  Suppose that $Q\subset P\subset G$ are nested parabolic subgroups
  and let $\Bbb W$ be a representation of $Q$, and consider the
  corresponding natural vector bundle $\Cal W$ on parabolic geometries
  of type $(G,Q)$.

  (1) $\Cal W$ is called a \textit{relative natural bundle} if the
  subgroup $P_+\subset Q$ acts trivially on $\Bbb W$.

  (2) $\Cal W$ is called a \textit{relative tractor bundle} if $\Bbb
  W$ is the restriction to $Q$ of a representation of $P$ on which
  $P_+$ acts trivially.
\end{definition}

Observe that trivial action of $P_+$ is equivalent to trivial action
of $\frak p_+$ under the infinitesimal representation. Moreover, on
irreducible (and hence on completely reducible) representations of any
parabolic subgroup, the nilradical always acts trivially. Hence any
completely reducible representation of $Q$ gives rise to a relative
natural bundle (these are the usual completely reducible bundles) and
any completely reducible representation of $P$ gives rise to a
relative tractor bundle.

Beyond the class of completely reducible natural bundles, we can
immediately construct some fundamental examples of relative natural
bundles. Recall that for any parabolic geometry $(p:\Cal G\to M,\om)$,
the tangent bundle $TM$ is the associated bundle $\Cal G\x_Q(\frak
g/\frak q)$. This is not a relative natural bundle in
general. However, the additional parabolic subalgebra $\frak
p\subset\frak g$ is a $Q$--invariant subspace, which gives rise to a
smooth subbundle $\Cal G\x_Q(\frak p/\frak q)=:T_\rho M\subset
TM$. Since $\frak p_+$ is an ideal in $\frak p$ we get $[\frak
p_+,\frak p]\subset\frak p_+\subset\frak q$. Thus $\frak p_+$ acts
trivially on $\frak p/\frak q$, so $T_\rho M$ is a relative natural
bundle, which we will call the \textit{relative tangent bundle}.

From the definition it is clear, that the class of relative natural
vector bundles is closed under forming natural subbundles and
quotients and under the usual functorial constructions like sums,
tensor products, duals and so on. In particular, the dual $T^*_\rho M$
of $T_\rho M$ is also a relative natural bundle, which we call the
\textit{relative cotangent bundle}. As discussed in Section 2.3 of
\cite{part1}, the Killing form of $\frak g$ induces dualities between
$\frak g/\frak q$ and $\frak q_+$ and between $\frak g/\frak p$ and
$\frak p_+$, which implies that it also gives rise to a duality
between $\frak p/\frak q$ and $\frak q_+/\frak p_+$. Thus $T^*_\rho
M\cong\Cal G\x_Q (\frak q_+/\frak p_+)$, so in particular, this is naturally a
bundle of nilpotent Lie algebras. Having the relative tangent bundle
and the relative cotangent bundle at hand, we can of course form
relative tensor bundles, and in particular, there is the bundle
$\La^kT^*_\rho M$ of relative $k$--forms, which is the associated
bundle corresponding to the $Q$--module $\La^k(\frak q_+/\frak p_+)$.

\subsection{Relative adjoint tractor bundle and relative fundamental
  derivative}\label{2.2}
Recall that for a parabolic geometry $(p:\Cal G\to M,\om)$ of type
$(G,P)$, the \textit{adjoint tractor bundle} is the natural bundle
$\Cal AM:=\Cal G\x_Q\frak g$. This is a fundamental example of a
tractor bundle (since it is induced by the restriction to $Q$ of a
representation of $G$), but of course not a relative natural
bundle. There is a relative analog of this bundle, however. The group
$P$ acts on its Lie algebra $\frak p$ by the adjoint representation
and the nilradical $\frak p_+\subset\frak p$ is invariant under this
action. Hence there is an induced action on the quotient $\frak
p/\frak p_+$. Since $\frak p_+$ is an ideal in $\frak p$, it acts
trivially on this quotient, so $\Cal A_\rho M:=\Cal G\x_Q(\frak
p/\frak p_+)$ is a relative tractor bundle called the \textit{relative
  adjoint tractor bundle}.

The bundle $\Cal A_\rho M$ has properties similar to $\Cal AM$ in many
respects. First, $\frak p/\frak p_+$ naturally is a Lie algebra and
the bracket is $P$--invariant and hence $Q$--invariant. Thus we get an
induced bilinear bundle map $\{\ ,\ \}:\Cal A_\rho M\x\Cal A_\rho
M\to\Cal A_\rho M$. Second, as discussed in Section 2.5 of
\cite{part1}, the Lie algebra $\frak g$ carries a natural
$Q$--invariant filtration, which restricts to $Q$--invariant
filtrations on $\frak p$ and $\frak p_+$. The resulting $Q$--invariant
filtration of $\frak p/\frak p_+$ induces a filtration of $\Cal A_\rho
M$ by smooth subbundles $\Cal A^i_\rho M$. Since the initial
filtration is compatible with the Lie bracket, we conclude that
$\{\Cal A^i_\rho M,\Cal A^j_\rho M\}\subset\Cal A^{i+j}_\rho M$, so
$\Cal A_\rho M$ is a bundle of filtered Lie algebras. By definition,
we further have $\frak p^0=\frak q$ and $\frak p^1=\frak q_+$. Passing
to associated graded bundles, this implies that $\Cal A_\rho M/\Cal
A^0_\rho M=T_\rho M$ and that $\Cal A^1_\rho M=T^*_\rho M$. We will
denote by $\Pi_\rho$ the projection $\Ga(\Cal A_\rho M)\to\Ga(T_\rho
M)\subset\frak X(M)$ induced by the first isomorphism.

Having the relative adjoint tractor bundle at hand, we can construct a
relative version of the most basic differential operator available on
any Cartan geometry, the so--called fundamental derivative. Recall
that via the Cartan connection $\om$, sections of the adjoint tractor
bundle $\Cal AM$ can be identified with $Q$--invariant vector fields
on the total space $\Cal G$ of the Cartan bundle. Given any associated
bundle $E$ to $\Cal G$, one can identify its sections with
$Q$--equivariant functions on $\Cal G$, and differentiating such a
function with a $Q$--invariant vector field, the result is
$Q$--equivariant again. Thus one obtains a natural bilinear
differential operator $D:\Ga(\Cal AM)\x\Ga(E)\to\Ga(E)$, which, to
emphasize the analogy to a covariant derivative, is written as
$(s,\si)\mapsto D_s\si$. By construction, this operator is linear over
smooth functions in the $\Cal AM$--slot, so it can also be interpreted
as a natural linear operator $\Ga(E)\to\Ga(\Cal A^*M\otimes E)$, and
in this form it can evidently be iterated.

To construct a relative version of this operator, we need another
property of the fundamental derivative. The $Q$--invariant filtration
of $\frak g$ induces a filtration of $\Cal AM$ by smooth subbundles
$\Cal A^iM$ and in particular $\Cal A^0M=\Cal G\x_Q\frak q$. Now if
$E=\Cal G\x_Q\Bbb W$ for a representation $\Bbb W$ of $Q$, then the
infinitesimal representation defines a $Q$--equivariant, bilinear map
$\frak q\x\Bbb W\to\Bbb W$. Passing to associated bundles, we get a
bilinear bundle map $\Cal A^0M\x E\to E$, which we write as
$(s,\si)\mapsto s\bullet\si$. Now in the above picture of vector
fields on $\Cal G$, sections of $\Cal A^0M$ correspond to vertical
vector fields, and equivariancy implies that $D_s\si=-s\bullet\si$ for
$s\in\Ga(\Cal A^0M)$ and $\si\in\Ga(E)$.

Now the $Q$--invariant subspaces $\frak p\subset\frak g$ and $\frak
p_+\subset\frak q$ give rise to smooth subbundles $\Cal G\x_Q\frak
p_+\subset\Cal A^0M\subset\Cal G\x_Q\frak p\subset\Cal AM$ and the
quotient of the third of these bundles by the first one can be
identified with $\Cal A_\rho M$. Moreover, if $\Bbb W$ is a
representation of $Q$ inducing a relative natural bundle, then the
infinitesimal representation $\frak q\otimes\Bbb W\to\Bbb W$ descends
to $\frak q/\frak p_+$ in the first factor, and the latter
representation induces the subbundle $\Cal A^0_\rho M\subset\Cal
A_\rho M$. If $E$ is the relative natural bundle determined by $\Bbb
W$, then we get an induced bilinear bundle map $\bullet:\Cal A^0_\rho
M\x E\to E$. Having all that at hand, we can construct the
\textit{relative fundamental derivative} and prove that it has the
same strong naturality properties as the fundamental derivative.

\begin{prop}\label{prop2.2}
  For a relative natural bundle $E$, the fundamental derivative
  induces a well defined operator $D^\rho:\Ga(\Cal A_\rho
  M)\x\Ga(E)\to\Ga(E)$ which has the following properties.

  (1) For $s\in\Ga(\Cal A^0_\rho M)$ and $\si\in\Ga(E)$, we get
  $D^\rho_s\si=-s\bullet\si$.

  (2) For $E=M\x\Bbb R$, we get $D_s f=\Pi_\rho(s)\cdot f$ for all
  $s\in\Ga(\Cal A_\rho M)$ and $f\in C^\infty(M,\Bbb R)$.

  (3) The operators $D^\rho$ are compatible with any bundle map which
  comes from a $Q$--equivariant linear map on the inducing
  representation. In particular, one obtains Leibniz rules both for
  the multiplication by functions and for tensor products and
  compatibility on dual bundles in the usual sense (c.f.~Proposition
  1.5.8 in \cite{book}).
\end{prop}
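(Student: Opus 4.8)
The plan is to construct $D^\rho$ by restricting the ordinary fundamental derivative and showing the result descends to the relative quotients. First I would set up the identification carefully: given a relative natural bundle $E=\Cal G\x_Q\Bbb W$, we already have the chain of subbundles $\Cal G\x_Q\frak p_+\subset\Cal A^0M\subset\Cal G\x_Q\frak p\subset\Cal AM$, with $(\Cal G\x_Q\frak p)/(\Cal G\x_Q\frak p_+)\cong\Cal A_\rho M$. So to define $D^\rho_s\si$ for $s\in\Ga(\Cal A_\rho M)$ I would choose a lift $\tilde s\in\Ga(\Cal G\x_Q\frak p)\subset\Ga(\Cal AM)$ and set $D^\rho_s\si:=D_{\tilde s}\si$. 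The first thing to check is that this is independent of the choice of lift: two lifts differ by a section of $\Cal G\x_Q\frak p_+\subset\Cal A^0M$, so by the known property $D_t\si=-t\bullet\si$ for $t\in\Ga(\Cal A^0M)$, the ambiguity is $-t\bullet\si$ where $t$ is a section of the subbundle $\Cal G\x_Q\frak p_+$. Since $\Bbb W$ induces a relative natural bundle, $\frak p_+$ acts trivially on $\Bbb W$, so $t\bullet\si=0$ and the definition is unambiguous. I would note that $C^\infty(M)$-linearity in the $s$-slot is inherited from that of $D$, and that the choice of lift can be made linearly, so $D^\rho$ is indeed a well-defined bilinear operator (equivalently, a natural linear operator $\Ga(E)\to\Ga(\Cal A_\rho^*M\otimes E)$).

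For part (1), given $s\in\Ga(\Cal A^0_\rho M)$, I would lift it to a section $\tilde s$ of $\Cal A^0M$ (possible since $\Cal A^0_\rho M$ is the image of $\Cal A^0M=\Cal G\x_Q\frak q$ under the quotient by $\Cal G\x_Q\frak p_+$). Then $D^\rho_s\si=D_{\tilde s}\si=-\tilde s\bullet\si$, and I would observe that the bundle map $\bullet:\Cal A^0M\x E\to E$ descends to the announced map $\bullet:\Cal A^0_\rho M\x E\to E$ precisely because the infinitesimal representation $\frak q\otimes\Bbb W\to\Bbb W$ descends to $\frak q/\frak p_+$ (as recalled just before the statement), so $\tilde s\bullet\si=s\bullet\si$ and the formula follows. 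For part (2), with $E=M\x\Bbb R$ the action $\bullet$ of $\Cal A^0_\rho M$ is zero (the trivial representation), and on the trivial bundle the ordinary fundamental derivative $D_{\tilde s}f$ equals $\Pi(\tilde s)\cdot f$ where $\Pi:\Ga(\Cal AM)\to\frak X(M)$ is the anchor; since the relative anchor $\Pi_\rho$ is by construction the map induced on $\Cal A_\rho M=\Cal AM$ restricted to $\Cal G\x_Q\frak p$ and then projected, compatibility of $\frak p/\frak q\hookrightarrow\frak g/\frak q$ gives $\Pi(\tilde s)=\Pi_\rho(s)$, hence $D^\rho_sf=\Pi_\rho(s)\cdot f$.

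For part (3), the key point is that any $Q$-equivariant linear map $\Phi:\Bbb W\to\Bbb W'$ between inducing representations of relative natural bundles induces a bundle map $\Phi:E\to E'$, and the corresponding naturality $\Phi(D_{\tilde s}\si)=D_{\tilde s}(\Phi(\si))$ for the ordinary fundamental derivative is exactly Proposition 1.5.8 of \cite{book}; since $D^\rho$ is defined by the same formula with a lift $\tilde s$, the identity $\Phi(D^\rho_s\si)=D^\rho_s(\Phi(\si))$ is immediate, and the Leibniz rules for function multiplication, tensor products and duals follow by applying this to the appropriate equivariant evaluation, multiplication and contraction maps exactly as in the absolute case. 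The only genuine subtlety — and the step I expect to require the most care — is the well-definedness in the first paragraph, i.e.\ tracking precisely which subbundle the lift ambiguity lives in and invoking triviality of the $\frak p_+$-action at exactly the right place; everything else is a transcription of the standard properties of the fundamental derivative through the quotient maps. I would also remark that, since the construction is purely by restriction and passage to quotients, $D^\rho$ can be iterated just like $D$, which is what is needed for the later BGG machinery.
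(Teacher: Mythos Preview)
Your proposal is correct and follows essentially the same approach as the paper: restrict the fundamental derivative to $\Ga(\Cal G\x_Q\frak p)\x\Ga(E)$, observe that on $\Ga(\Cal G\x_Q\frak p_+)$ it reduces to the negative of the algebraic action (since $\frak p_+\subset\frak q$) and hence vanishes by the relative naturality assumption, so the operator descends to $\Ga(\Cal A_\rho M)$; then parts (1)--(3) are read off from the corresponding properties of $D$ in Proposition~1.5.8 of \cite{book}. Your write-up is more detailed than the paper's terse version, but the argument is the same.
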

\begin{proof}
  We can first restrict the fundamental derivative to a operation
   $$
 \Ga(\Cal G\x_Q\frak p)\x\Ga(E)\to \Ga(E).
 $$ 
 Since $\frak p_+\subset\frak q$, this coincides with the negative of
 $\bullet$ on $\Ga(\Cal G\x_Q\frak p_+)\x\Ga(E)$ and hence vanishes
 identically if $E$ is a relative natural bundle. Hence we get a well
 defined operator as claimed. The claimed properties of $D^\rho$ then
 follow readily from the analogous properties of the fundamental
 derivative as proved in Proposition 1.5.8 of \cite{book}.
\end{proof}

\subsection{The relative curved Casimir operator}\label{2.3}
The relative fundamental derivative leads to a relative version of
another basic tool for parabolic geometries, the curved Casimir
operator originally introduced in \cite{Casimir}. It is a general fact
that for parabolic subalgebras, the nilradical coincides with the
annihilator under the Killing form. Hence the Killing form of $\frak
g$ descends to a non--degenerate bilinear form $B$ on $\frak p/\frak
p_+$, which of course is $Q$ invariant (and even
$P$--invariant). Since $B$ then identifies $\frak p/\frak p_+$ with
its dual, we can view $B^{-1}$ as an invariant, non--degenerate
bilinear form on the dual. Given a representation $\Bbb W$ of $Q$
inducing a relative natural bundle $E$, we get an induced bundle map
$$
 B^{-1}\otimes\id:\Cal A^*_\rho M\otimes\Cal A^*_\rho M\otimes E\to E,
$$
and we denote by the same symbol the corresponding tensorial operator
on smooth sections.

\begin{definition}
  Given a relative natural bundle $E$, we define the \textit{relative
    curved Casimir operator} $\Cal C_\rho :\Ga(E)\to\Ga(E)$ by
 $$
 \Cal C_\rho (\si):=(B^{-1}\otimes\id)(D^\rho D^\rho\si).
 $$  
\end{definition}

By construction, $\Cal C_\rho$ has, in the category of relative
natural bundles, analogous naturality properties as proved for the
curved Casimir in Proposition 2 of \cite{Casimir}.

The simplest way to evaluate the relative curved Casimir is via dual
frames. Choose a local frame $\{s_\ell\}$ for $\Cal A_\rho M$ and
denote by $\{t_\ell\}$ the dual frame with respect to $B$, so
$B(s_i,t_j)=\delta_{ij}$. Then by definition, for a section
$\si\in\Ga(E)$, one can compute $\Cal C_\rho (\si)$ on the domain of
definition of the frame as
$$
 \textstyle\sum_\ell (D^\rho D^\rho\si)(t_\ell,s_\ell)=\sum_\ell
 \big(D^\rho_{t_\ell}D^\rho_{s_\ell}\si-D^\rho_{D^\rho_{t_\ell}s_\ell}\si).
$$ 
As in the case of the ordinary curved Casimir, this expression can be
simplified considerably by considering a special class of so--called
adapted local frames. Recall that $\Cal A_\rho M$ is filtered by
smooth subbundles $\Cal A^i_\rho M$ and for $i=0$ and $i=1$, the
filtration components correspond to the subspaces $\frak q/\frak p_+$
and $\frak q_+/\frak p_+$ of $\frak p/\frak p_+$, respectively. Since
$B$ is induced by the Killing form of $\frak g$ and the $Q$--invariant
filtration of $\frak p/\frak p_+$ is induced by the filtration on
$\frak g$, the usual compatibilities between the two structures hold
in this case. In particular, for $i>0$, the degree $i$ filtration
component coincides with the annihilator with respect to $B$ of the
component of degree $-i+1$. Moreover, as noted before, the filtration
is compatible with the induced Lie bracket.

\begin{definition}\label{def2.3.2}
 An \textit{adapted local frame} for $\Cal A_\rho M$ is a local frame
 of the form $\{X_i,A_r,Z^i\}$ with the following properties:
 \begin{itemize}
 \item $Z^i\in\Ga(\Cal A^1_\rho M)$ for all $i$ and $A_r\in\Ga(\Cal
   A^0_\rho M)$ for all $r$. 
 \item We have $B(X_i,X_j)=0$, $B(A_r,X^i)=0$, and
   $B(X_i,Z^j)=\delta_i^j$ for all $i$, $j$, and $r$.
 \item For all $i$, the algebraic bracket $\{Z^i,X_i\}$ is a section of
   $\Cal A^0_\rho M$.  
 \end{itemize}
\end{definition}

Note that the last condition in this definition is not explicitly
stated in \cite{Casimir} but used afterwards. The proof of existence
of such frames and of their fundamental properties is parallel to the
case of the usual curved Casimir.

\begin{lemma}\label{lem2.3}
  Adapted local frames for $\Cal A_\rho M$ exist for each parabolic
  geometry of type $(G,Q)$. Moreover, if $\{X_i,A_r,Z^i\}$ is such a
  frame, then there are local sections $A^r\in\Ga(\Cal A^0_\rho M)$
  such that the dual frame has the form $\{Z^i,A^r,X_i\}$. Finally,
  $B$ descends to a non--degenerate bilinear form on $\Cal A^0_\rho
  M/\Cal A^1_\rho M\cong G\x_Q(\frak q/\frak q_+)$ and $\{A_r\}$ and
  $\{A^r\}$ descend to dual local frames for this quotient bundle.
\end{lemma}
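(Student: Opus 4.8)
The plan is to build an adapted local frame in three stages, arranging the delicate bracket condition first and then reducing everything else to linear algebra. The only algebraic facts I will use about $B$ on $\Cal A_\rho M$ are recorded in, or follow at once from, Section 2.3 of \cite{part1} and the discussion preceding Definition \ref{def2.3.2}: with $V^\perp$ denoting the $B$-orthocomplement of a subbundle $V\subset\Cal A_\rho M$, one has $\Cal A^1_\rho M=(\Cal A^0_\rho M)^\perp$ (so in particular $\Cal A^1_\rho M$ is totally isotropic, being contained in $\Cal A^0_\rho M$), more generally $\Cal A^i_\rho M=(\Cal A^{-i+1}_\rho M)^\perp$ for every $i>0$, and — using non-degeneracy of $B$ — the form $B$ identifies $\Cal A^1_\rho M$ with $(\Cal A_\rho M/\Cal A^0_\rho M)^*=(T_\rho M)^*$.

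In the first stage I would pick a local frame $\{[X_i]\}$ of $T_\rho M=\Cal A_\rho M/\Cal A^0_\rho M$ adapted to its filtration by the subbundles $\Cal A^{-a}_\rho M/\Cal A^0_\rho M$, $a\geq1$. Writing $-a_i$ (with $a_i\geq1$) for the filtration degree of $[X_i]$, I lift each $[X_i]$ to a section $X_i\in\Ga(\Cal A^{-a_i}_\rho M)$, and separately choose $A_r\in\Ga(\Cal A^0_\rho M)$ whose classes frame $\Cal A^0_\rho M/\Cal A^1_\rho M$. In the second stage I use the duality $\Cal A^1_\rho M\cong(T_\rho M)^*$ to let $\{Z^i\}$ be the local frame of $\Cal A^1_\rho M$ determined by $B(X_i,Z^j)=\delta_i^j$; this is well defined because $B(\cdot,Z^j)$ depends only on classes modulo $\Cal A^0_\rho M$. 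The point to emphasise is that $Z^i$ then automatically lies in $\Ga(\Cal A^{a_i}_\rho M)$: one has $\Cal A^{a_i}_\rho M=(\Cal A^{-a_i+1}_\rho M)^\perp$, and $\Cal A^{-a_i+1}_\rho M$ is the sum of $\Cal A^0_\rho M$ (which is $B$-orthogonal to $Z^i$ since $Z^i\in\Cal A^1_\rho M$) and the span of those $X_\ell$ with $a_\ell<a_i$ (which are $B$-orthogonal to $Z^i$ since $B(X_\ell,Z^i)=\delta_\ell^i=0$). Hence $X_i\in\Ga(\Cal A^{-a_i}_\rho M)$ and $Z^i\in\Ga(\Cal A^{a_i}_\rho M)$, so compatibility of the filtration with the algebraic bracket forces $\{Z^i,X_i\}\in\Ga(\Cal A^0_\rho M)$ — the one genuinely delicate requirement of Definition \ref{def2.3.2}, which thus comes for free from the homogeneous choice of lifts. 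At this stage $\{X_i,A_r,Z^j\}$ is already a local frame of $\Cal A_\rho M$; the pairings $B(A_r,Z^j)$ and $B(Z^i,Z^j)$ vanish because $\Cal A^1_\rho M=(\Cal A^0_\rho M)^\perp$ is isotropic; and the radical of $B|_{\Cal A^0_\rho M}$ equals $\Cal A^0_\rho M\cap(\Cal A^0_\rho M)^\perp=\Cal A^1_\rho M$, so $B$ descends to a non-degenerate form on $\Cal A^0_\rho M/\Cal A^1_\rho M\cong\Cal G\x_Q(\frak q/\frak q_+)$ and $g:=(B(A_r,A_s))$ is invertible.

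The third stage is pure linear algebra: I replace $X_i$ by $X_i+\sum_r\alpha_i^rA_r+\sum_j\beta_i^jZ^j$, which only adds sections of $\Cal A^0_\rho M\subset\Cal A^{-a_i}_\rho M$ and hence preserves the class $[X_i]$, the normalisation $B(X_i,Z^j)=\delta_i^j$, the frame property, and the bracket condition. Invertibility of $g$ lets me solve $B(A_r,X_i)=0$ for $\alpha$; after that, since $(B(X_i,X_j))$ is symmetric while $\Cal A^1_\rho M$ is isotropic and $B(X_i,Z^j)=\delta_i^j$, a short computation shows the $X_i$ may be further corrected by a $\beta$ whose symmetric part is prescribed so as to also force $B(X_i,X_j)=0$. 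This yields an adapted local frame, establishing existence.

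For the remaining assertions, note that for \emph{any} adapted frame the Gram matrix of $B$ is forced to be $\left(\begin{smallmatrix}0&0&I\\0&g&0\\I&0&0\end{smallmatrix}\right)$ in the ordering $X,A,Z$, with $g=(B(A_r,A_s))$ invertible since $B$ is non-degenerate; inverting this matrix exhibits the dual frame as $\{Z^i,A^r,X_i\}$ with $A^r:=\sum_s(g^{-1})_{sr}A_s\in\Ga(\Cal A^0_\rho M)$. As $g$ is precisely the matrix, in the frame $\{[A_r]\}$, of the induced non-degenerate form on $\Cal A^0_\rho M/\Cal A^1_\rho M\cong\Cal G\x_Q(\frak q/\frak q_+)$, the classes of $\{A_r\}$ and $\{A^r\}$ are dual frames for that quotient bundle. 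The main obstacle throughout is the bracket condition $\{Z^i,X_i\}\in\Ga(\Cal A^0_\rho M)$; the construction above removes it at the outset, and the rest parallels the treatment of the ordinary curved Casimir in \cite{Casimir}.
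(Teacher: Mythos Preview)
Your proof is correct and follows the same overall architecture as the paper's: construct dual frames for $\Cal A^1_\rho M$ and $T_\rho M$, lift, and correct by linear algebra. The one genuine difference is in how the bracket condition $\{Z^i,X_i\}\in\Ga(\Cal A^0_\rho M)$ is obtained. The paper starts from the $Z^i$ side, choosing that frame to fill up the filtration of $\Cal A^1_\rho M$ step by step, and then uses invariance of $B$ to compute $B(\{Z^i,X_i\},Z^j)=-B(X_i,\{Z^i,Z^j\})=0$ since $\{Z^i,Z^j\}$ is a combination of $Z^\ell$ with $\ell\neq i$. You instead start from the $X_i$ side, lifting each $[X_i]$ into its precise filtration component $\Cal A^{-a_i}_\rho M$, and then observe that the dual $Z^i$ automatically lands in $\Cal A^{a_i}_\rho M$, so filtration compatibility of the bracket gives $\{Z^i,X_i\}\in\Cal A^0_\rho M$ directly. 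Your route is arguably the more conceptual one for this step (no appeal to invariance of $B$ is needed), at the cost of having to track the filtration degree of the lifts through the corrections; the paper's route avoids that bookkeeping but needs the Killing-form identity. The corrections are also organised differently (you modify $X_i$ to achieve $B(A_r,X_i)=0$, while the paper modifies $A_r$), but this is immaterial. The remaining assertions about the dual frame and the induced form on $\Cal A^0_\rho M/\Cal A^1_\rho M$ are handled identically in both proofs.
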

\begin{proof}
  Consider an open subset over which the Cartan bundle is
  trivial. Then all natural bundles are trivial and hence admit local
  frames there. We start by choosing a local frame $\{Z^i\}$ for $\Cal
  A^1_\rho M$ which starts with a local frame for the smallest
  filtration component, then continues with the next larger filtration
  component and so on. In particular, this implies that for each $i$
  and $j$, the algebraic bracket $\{Z_i,Z_j\}$ can be written as a
  linear combination of elements $Z_\ell$ where $\ell<i$ and $\ell<j$.

  Since $\Cal A^1_\rho M$ is the annihilator with respect to $B$ of
  $\Cal A^0_\rho M$, we get a duality between $\Cal A^1_\rho M$ and
  $\Cal A_\rho M/\Cal A^0_\rho M$ induced by $B$. Now we consider the
  frame $\{\underline{X}_i\}$ of that bundle which is dual to
  $\{Z_i\}$ and for each $i$ choose a preimage
  $\widetilde{X}_i\in\Ga(\Cal A_\rho M)$ of
  $\underline{X}_i$. Finally, choose any local frame
  $\{\underline{A}_r\}$ for the quotient bundle $\Cal A^0_\rho M/\Cal
  A^1_\rho M$ and for each $r$ choose a preimage $\widetilde{A}_r\in
  \Ga(\Cal A^0_\rho M)$ of $\underline{A}_r$. Then by construction
  $\{\widetilde{X}_i,\widetilde{A}_r,Z^i\}$ is a local frame for $\Cal
  A_\rho M$ and since $B$ vanishes on $\Cal A^0_\rho M\x\Cal A^1_\rho
  M$, we have $B(Z_i,Z_j)=0$, $B(\widetilde{A}_r,Z_i)=0$ and
  $B(\widetilde{X}_i,Z^j)=\delta_i^j$.

  Putting $X_i:=\widetilde{X}_i-\sum_j\frac12B(\widetilde{X}_i,
  \widetilde{X}_j)Z^j$, we see that we still get
  $B(X_i,Z^j)=\delta_i^j$ but also $B(X_i,X_j)=0$ for all $i$ and
  $j$. Defining
  $A_r:=\widetilde{A}_r-\sum_iB(\widetilde{A}_r,X_i)Z^i$, we see that
  $\{X_i,A_r,Z^i\}$ is a local frame which satisfies the first two
  conditions of an adapted frame. But for the algebraic bracket
  $\{Z^i,X_i\}$, invariance of the the Killing form implies that
  $B(\{Z^i,X_i\},Z_j)=-B(X_i,\{Z_i,Z_j\})$. As we have noted above,
  for any $\{Z_i,Z_j\}$ is a linear combination of elements $Z_\ell$
  with $\ell<i$, so $B(X_i,\{Z_i,Z_j\})=0$. Since this holds for all
  $j$, $\{Z^i,X_i\}$ lies in the annihilator of $\Cal A^1_\rho M$, so
  we have constructed an adapted frame.

  From the behavior of $B$ with respect to our frame, it follows
  immediately that the dual frame must be of the form
  $\{Z^i,A^r,X_i\}$ for some sections $A^r$ of $\Cal A_\rho M$. But by
  definition $B(A^r,Z^j)=0$ for all $r$ and $j$, so the $A^r$ are
  sections of $\Cal A^0_\rho M$. The last claim is then obviously
  true.
\end{proof}

In terms of an adapted local frame, the action of the relative curved
Casimir is easy to compute.

\begin{prop}\label{prop2.3}
  In terms of an adapted local frame $\{X_i,A_r,Z^i\}$ and the
  elements $A^r$ in the dual frame, the relative curved Casimir is
  given by
$$ 
 \Cal C_\rho(\si)=-2\textstyle\sum_iZ^i\bullet
 D^\rho_{X_i}\si-\sum_i\{Z^i,X_i\}\bullet\si+\sum_rA^r\bullet
 A_r\bullet\si.
$$ 
In particular, the relative curved Casimir has at most order one and
it has order zero on relative natural bundles induced by completely
reducible representations of $Q$.
\end{prop}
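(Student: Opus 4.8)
The plan is to substitute an adapted local frame $\{X_i,A_r,Z^i\}$ together with its dual frame $\{Z^i,A^r,X_i\}$ (Lemma~\ref{lem2.3}) into the frame formula for $\Cal C_\rho$ displayed above, and then to eliminate every occurrence of $D^\rho$ in a direction lying in $\Ga(\Cal A^0_\rho M)$ in favour of the algebraic action $\bullet$, using part~(1) of Proposition~\ref{prop2.2} and the Leibniz rule from part~(3). Expanding $\sum_\ell(D^\rho D^\rho\si)(t_\ell,s_\ell)$ over the three blocks of the frame yields three groups of terms, corresponding to the pairs $(s_\ell,t_\ell)$ equal to $(X_i,Z^i)$, $(A_r,A^r)$ and $(Z^i,X_i)$. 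Two preliminary facts make the manipulation legitimate: since the inclusions $\Cal A^1_\rho M\subset\Cal A^0_\rho M\subset\Cal A_\rho M$ are induced by $Q$--equivariant inclusions of the inducing modules, Proposition~\ref{prop2.2}(3) shows that $D^\rho$ preserves $\Ga(\Cal A^0_\rho M)$ and $\Ga(\Cal A^1_\rho M)$; and the $\bullet$--action of $\Cal A^0_\rho M$ on $\Cal A_\rho M$ and on itself coincides with the algebraic bracket $\{\ ,\ \}$, which on every relative natural bundle is a Lie algebra action, so that $\{\xi,\eta\}\bullet v=\xi\bullet(\eta\bullet v)-\eta\bullet(\xi\bullet v)$.

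Each block is then a short computation. For the pair $(X_i,Z^i)$: since $Z^i\in\Ga(\Cal A^1_\rho M)\subset\Ga(\Cal A^0_\rho M)$, part~(1) gives $D^\rho_{Z^i}D^\rho_{X_i}\si=-Z^i\bullet D^\rho_{X_i}\si$ and $D^\rho_{Z^i}X_i=-\{Z^i,X_i\}$; because $\{Z^i,X_i\}\in\Ga(\Cal A^0_\rho M)$ by the last condition of Definition~\ref{def2.3.2}, the correction term becomes $D^\rho_{D^\rho_{Z^i}X_i}\si=\{Z^i,X_i\}\bullet\si$, and this block contributes $-\sum_iZ^i\bullet D^\rho_{X_i}\si-\sum_i\{Z^i,X_i\}\bullet\si$. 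For the pair $(Z^i,X_i)$: rewrite the inner term as $D^\rho_{Z^i}\si=-Z^i\bullet\si$, apply the Leibniz rule to $D^\rho_{X_i}(Z^i\bullet\si)$ (legitimate since $D^\rho_{X_i}Z^i\in\Ga(\Cal A^1_\rho M)$), and use part~(1) for $D^\rho_{D^\rho_{X_i}Z^i}\si$; the terms involving $D^\rho_{X_i}Z^i$ cancel and this block contributes $-\sum_iZ^i\bullet D^\rho_{X_i}\si$. For the pair $(A_r,A^r)$, all directions lie in $\Ga(\Cal A^0_\rho M)$, so part~(1), the Leibniz rule and the Lie algebra action property combine to collapse the $r$--th term to $A_r\bullet A^r\bullet\si$ once the two copies of $\{A^r,A_r\}\bullet\si$ have cancelled. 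Adding the three blocks produces the asserted formula except with $A_r\bullet A^r$ in place of $A^r\bullet A_r$.

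To reconcile the two orders one uses $\sum_rA_r\bullet A^r\bullet\si-\sum_rA^r\bullet A_r\bullet\si=\sum_r\{A_r,A^r\}\bullet\si$ and checks $\sum_r\{A_r,A^r\}=0$: expanding $A^r$ in the frame $\{A_s,Z^j\}$ of $\Cal A^0_\rho M$, the coefficient of $Z^j$ equals $B(A^r,X_j)$ by the pairing relations of the adapted frame, and this vanishes because $A^r$ belongs to the dual frame, so $A^r=\sum_sB(A^r,A^s)A_s$ with symmetric coefficients; hence $\sum_r\{A_r,A^r\}=\sum_{r,s}B(A^r,A^s)\{A_r,A_s\}=0$ by antisymmetry of the bracket. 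The order statements then follow by inspection of the resulting formula: the only summand in which $\si$ is differentiated is $-2\sum_iZ^i\bullet D^\rho_{X_i}\si$, and $D^\rho$ enters it only once, so $\Cal C_\rho$ has order at most one; if $\Bbb W$ is completely reducible as a $Q$--module, then the nilradical $\frak q_+$ acts trivially on it, hence so does $\Cal A^1_\rho M$ via $\bullet$, and as $Z^i\in\Ga(\Cal A^1_\rho M)$ the first order term vanishes identically, leaving a tensorial operator.

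The individual computations are routine; the genuine care required is in checking, at each step, that every $\bullet$ that occurs is actually defined, i.e.\ that the section fed into its first slot lies in $\Ga(\Cal A^0_\rho M)$ --- this is exactly what the three conditions of Definition~\ref{def2.3.2} are designed to ensure, together with the invariance of $\Cal A^0_\rho M$ and $\Cal A^1_\rho M$ under $D^\rho$. The one small identity that should not be taken for granted is the symmetry of the ``Levi Casimir'' element $\sum_rA_r\otimes A^r$, which licenses writing the last term in the order $A^r\bullet A_r$; as noted, it is forced by the relation $B(A^r,X_j)=0$, which pins each $A^r$ down to the span of the $A_s$.
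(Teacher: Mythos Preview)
Your proof is correct and follows essentially the same approach as the paper: plug the adapted frame and its dual into the frame formula for $\Cal C_\rho$, then use Proposition~\ref{prop2.2}(1) to convert every $D^\rho$ in a $\Cal A^0_\rho M$--direction into an algebraic action, and the Leibniz rule to cancel the $(D^\rho_{X_i}Z^i)\bullet\si$ terms in the $(Z^i,X_i)$ block. The one point where you go beyond the paper is your final paragraph: the paper's own computation in fact produces $\sum_r A_r\bullet A^r\bullet\si$, matching your answer, and simply states the result with the factors in the order $A^r\bullet A_r$ without comment; your argument that $\sum_r\{A_r,A^r\}=0$ via the symmetry of $B(A^r,A^s)$ and the vanishing of the $Z^j$--components of $A^r$ is a correct and useful justification of why the two orderings agree.
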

\begin{proof}
  We evaluate the relative curved Casimir with respect to the dual
  frames $\{X_i,A_r,Z^i\}$ and $\{Z^i,A^r,X_i\}$ as described
  above. For the first summands, we use
  $D^\rho_{Z^i}D^\rho_{X_i}\si=-Z^i\bullet D^\rho_{X_i}\si$ and
  $-D^\rho_{D^\rho_{Z^i}X_i}\si=D^\rho_{\{Z^i,X_i\}}\si=-\{Z^i,X_i\}\bullet\si$.
  Similarly, for the second summands, we get
  $D^\rho_{A^r}D^\rho_{A_r}\si=A^r\bullet A_r\bullet\si$ and
  $-D^\rho_{D^\rho_{A^r}A_r}\si=-\{A^r,A_r\}\bullet\si$, so these add
  up to $\sum_rA_r\bullet A^r\bullet\si$. Finally, for the last
  summands, we get
  $D^\rho_{X_i}D^\rho_{Z^i}\si=-D^\rho_{X_i}(Z^i\bullet\si)$ and
  $-D^\rho_{D^\rho_{X_i}Z^i}\si=(D^\rho_{X_i}Z^i)\bullet\si$, so these
  add up to $-\sum_iZ^i\bullet D^\rho_{X_i}\si$, so the claimed
  formula for $\Cal C_\rho (\si)$ follows.

  From this formula, it is evident that $\Cal C_\rho$ is an operator
  of order at most one, with the first order part coming only from the
  terms $Z^i\bullet D^\rho_{X_i}\si$. But on completely reducible
  representations of $Q$, $\frak q_+$ acts trivially, so $\bullet$
  vanishes identically on $\Cal A^1_\rho M\x E$ in this case, and
  hence $\Cal C_\rho$ is tensorial.
\end{proof}

In the case of a natural bundle $E$ induced by a complex irreducible
representation of $Q$, the last property in the proposition readily
implies that $\Cal C$ acts by a scalar on $\Ga(E)$. The corresponding
eigenvalue can be computed in terms of representation theory data in
complete analogy to \cite{Casimir}. We do not go into details here,
since we will not need this result.

\subsection{The setup for the relative BGG machinery}\label{2.4}
We next discuss operations on relative natural bundles coming from the
relative version of Kostant's algebraic harmonic theory from
\cite{part1}. We start with a representation $\Bbb V$ of $P$, such
that $\frak p_+$ acts trivially under the infinitesimal
representation. Given a parabolic geometry $(p:\Cal G\to M,\om)$ of
type $(G,Q)$, the associated bundle $\Cal VM:=\Cal G\x_Q\Bbb V$ by
definition is a relative tractor bundle. The main objects we will
study are the bundles $\La^kT_\rho^*M\otimes\Cal VM$ of relative
differential forms with coefficients in $\Cal VM$. By definition,
these bundles are induced by the representations $\La^k(\frak
q_+/\frak p_+)\otimes\Bbb V$ of $Q$.

\begin{prop}\label{prop2.4.1}
   The relative Kostant codifferential introduced in Section 2.2 of
   \cite{part1} gives rise to morphisms
$$
 \partial^*_\rho : \La^kT^*_\rho M\otimes\Cal VM\to \La^{k-1}T^*_\rho
 M\otimes\Cal VM.
$$
of natural bundles such that
$\partial^*_\rho\o\partial^*_\rho=0$. Hence we obtain smooth
subbundles
$\im(\partial^*_\rho)\subset\ker(\partial^*_\rho)\subset\La^kT^*_\rho
M\otimes\Cal VM$. The quotient bundle
$\ker(\partial^*_\rho)/\im(\partial^*_\rho)$ is a completely reducible
bundle, which can be identified with the bundle induced by the Lie
algebra homology space $H_k(\frak q_+/\frak p_+,\Bbb V)$.
\end{prop}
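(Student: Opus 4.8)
=== PROOF PROPOSAL ===

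\textbf{Approach.} The statement is the geometric incarnation of an algebraic fact established in \cite{part1}, so the plan is to reduce everything to the level of inducing $Q$-representations and then transport the conclusion along the associated bundle functor. The key point is that all three ingredients---the relative Kostant codifferential $\partial^*_\rho$, the condition $\partial^*_\rho\o\partial^*_\rho=0$, and the identification of the homology---are already available on the representations $\La^k(\frak q_+/\frak p_+)\otimes\Bbb V$ of $Q$, provided one checks that the relevant maps are genuinely $Q$-equivariant and not merely equivariant for some smaller group.

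\textbf{Steps.} First I would recall from Section 2.2 of \cite{part1} that the relative Kostant codifferential is a $Q$-equivariant linear map $\partial^*_\rho:\La^k(\frak q_+/\frak p_+)\otimes\Bbb V\to\La^{k-1}(\frak q_+/\frak p_+)\otimes\Bbb V$. Here one must use that $\frak q_+/\frak p_+$ carries a $Q$-invariant Lie algebra structure (this is exactly the bundle-of-nilpotent-Lie-algebras remark made after the definition of $T^*_\rho M$, together with $T^*_\rho M\cong\Cal G\x_Q(\frak q_+/\frak p_+)$) and that $\Bbb V$ is a $P$-representation, hence a $Q$-representation, on which $\frak p_+$ acts trivially; the formula for $\partial^*_\rho$ involves only the bracket on $\frak q_+/\frak p_+$ and the action of $\frak q_+/\frak p_+$ on $\Bbb V$, both $Q$-equivariant. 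Applying the associated bundle functor $\Cal G\x_Q(-)$ to this morphism of $Q$-modules yields the claimed bundle map $\partial^*_\rho:\La^kT^*_\rho M\otimes\Cal VM\to\La^{k-1}T^*_\rho M\otimes\Cal VM$, and functoriality turns the algebraic identity $\partial^*_\rho\o\partial^*_\rho=0$ from \cite{part1} into the bundle identity. Consequently $\im(\partial^*_\rho)$ and $\ker(\partial^*_\rho)$, being images and kernels of a vector bundle morphism of locally constant rank (the ranks are the ones computed algebraically and do not vary over $M$), are smooth subbundles with $\im(\partial^*_\rho)\subset\ker(\partial^*_\rho)$, and the quotient $\ker(\partial^*_\rho)/\im(\partial^*_\rho)$ is the bundle associated to the $Q$-module $\ker/\im$ at the representation level. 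Finally, the relative analog of Kostant's theorem from \cite{part1} identifies this $Q$-module with $H_k(\frak q_+/\frak p_+,\Bbb V)$ and shows it is completely reducible---indeed $\frak p_+$ and then $\frak q_+/\frak p_+$ act trivially on homology---so the associated bundle is a completely reducible bundle. Passing back through $\Cal G\x_Q(-)$ gives the last sentence of the statement.

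\textbf{Main obstacle.} The only genuinely non-formal point is the verification that $\partial^*_\rho$ and the homology identification are $Q$-equivariant, i.e.\ that the constructions of \cite{part1} are compatible with the $Q$-action and not just with the action of the Levi factor or of $P$. This is where one has to be careful that $\frak p_+$ acts trivially on $\Bbb V$ (so that $\Bbb V$ really is a relative tractor module and the codifferential is defined on it) and that the filtration/grading compatibilities recorded in Section 2.5 of \cite{part1} ensure the homology carries the completely reducible structure. Once that equivariance is in hand, the rest is a routine application of the fact that $\Cal G\x_Q(-)$ is an exact functor sending $Q$-equivariant maps to bundle maps and preserving kernels, images and quotients, together with the observation that ranks of the relevant maps are constant because they are dictated by representation theory; I would state this briefly rather than belabor it.
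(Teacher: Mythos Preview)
Your proposal is correct and follows essentially the same route as the paper: reduce to the $Q$-equivariance of the algebraic codifferential from \cite{part1}, pass to associated bundles, and invoke complete reducibility of the Lie algebra homology (Proposition 2.1 of \cite{part1}). The paper's proof is more terse but makes exactly these moves; your additional remarks on constant rank and on why $\partial^*_\rho$ is $Q$-equivariant are reasonable elaborations rather than a different argument.
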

\begin{proof}
  The relative Kostant codifferential is a $Q$--equivariant map
$$
\La^k(\frak q_+/\frak p_+)\otimes\Bbb V\to \La^{k-1}(\frak
  q_+/\frak p_+)\otimes\Bbb V,
$$ 
which immediately implies the first statement. The fact that the
composition of two codifferentials is zero of course carries over to
the induced bundle maps, and hence kernel and image are nested smooth
subbundles. The last statement follows by definition of Lie algebra
homology and the fact that Lie algebra homology groups are completely
reducible representations, see Proposition 2.1 of \cite{part1}.
\end{proof}

The Lie algebra homology interpretation can be carried over to the
bundle level. As we have observed in Section \ref{2.1}, $T^*_\rho M$
is a bundle of Lie algebras, and by construction $\Cal VM$ is a bundle
of modules over this bundle of Lie algebras. In this language, the
bundle maps $\partial^*_\rho$ are just the point--wise Lie algebra
homology differentials as in formula (2.2) of \cite{part1}. Hence the
quotient bundle $\ker(\partial_*)/\im(\partial_*)$ can be interpreted
as forming a Lie algebra homology group in each point. Thus we will
denote these bundles by $H_k(T^*_\rho M,\Cal VM)$ in what follows.

\medskip

As discussed in Section 2.5 of \cite{part1}, the representations $
\La^k(\frak q_+/\frak p_+)\otimes\Bbb V$ carry natural filtrations by
$Q$--invariant subspaces, which induce filtrations of the bundles
$\La^kT_\rho^*M\otimes\Cal VM$ by smooth subbundles. Now one can pass
to the associated graded both on the level of representations and on
the level of bundles, and this is compatible with forming induced
bundles. Moreover, forming the associated graded is compatible (on
both sides) with tensorial operations, compare with Section 3.1.1 of
\cite{book}. Hence we can view the associated graded bundles as
$\La^k\gr(T^*_\rho M)\otimes\gr(\Cal VM)$ and they are induced by the
representations $\La^k\gr(\frak q_+/\frak p_+)\otimes\gr(\Bbb V)$. As
a vector space, the latter representation can be identified with
$\La^k(\frak q_+/\frak p_+)\otimes\Bbb V$ but $\frak q_+$ acts
trivially on the associated graded, whence this descends to a
representation of $Q/Q_+$, see again Section 2.5 of
\cite{part1}. Hence $\La^k\gr(T^*_\rho M)\otimes\gr(\Cal VM)$ is a
completely reducible natural bundle, and thus a much simpler
geometric object than the original bundle of relative forms.

\begin{prop}\label{prop2.4.2}
  (1) The bundle maps $\partial^*_\rho$ from Proposition
  \ref{prop2.4.1} are compatible with the natural filtrations on the
  bundles $\La^k(T^*_\rho M)\otimes\Cal VM$ and thus induce, for each
  $k$, natural bundle maps
  $$
 \underline{\partial}^*_\rho:\La^k\gr(T^*_\rho M)\otimes\gr(\Cal VM)\to
 \La^{k-1}\gr(T^*_\rho M)\otimes\gr(\Cal VM).  
  $$

  (2) The relative Lie algebra cohomology differential from Section
  2.3 of \cite{part1} induces, for each $k$, natural bundle maps
$$
\partial_\rho:\La^k\gr(T^*_\rho M)\otimes\gr(\Cal VM)\to
\La^{k+1}\gr(T^*_\rho M)\otimes\gr(\Cal VM)
$$
such that $\partial_\rho\o\partial_\rho=0$.

(3) Defining
$\square_\rho:=\underline{\partial}^*_\rho\o\partial_\rho+
 \partial_\rho\o\underline{\partial}^*_\rho$, we get, for each $k$, a
 decomposition
$$
 \La^k\gr(T^*_\rho M)\otimes\gr(\Cal
 VM)=\im(\underline{\partial}^*_\rho)\oplus\ker(\square_\rho)
 \oplus\im(\partial_\rho) 
$$
as a direct sum of natural subbundles. Moreover, the first two
summands add up to $\ker(\underline{\partial}^*_\rho)$, while the last
two summands add up to $\ker(\partial_\rho)$.
\end{prop}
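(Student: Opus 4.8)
The plan is to deduce everything from the corresponding algebraic statements in \cite{part1} by exploiting the fact that all three maps $\partial^*_\rho$, $\underline{\partial}^*_\rho$ and $\partial_\rho$ arise from $Q$--equivariant linear maps between the inducing representations, and that forming associated graded bundles commutes with forming associated bundles and with tensorial operations. First I would treat (1): Section 2.5 of \cite{part1} provides a $Q$--invariant filtration on $\La^k(\frak q_+/\frak p_+)\otimes\Bbb V$, and the relative Kostant codifferential is filtration--preserving there (it drops homological degree by one but respects the internal filtration); passing to associated bundles via $\Cal G\x_Q(-)$ turns this into the assertion that $\partial^*_\rho$ preserves the filtration of $\La^k(T^*_\rho M)\otimes\Cal VM$ by smooth subbundles, and hence descends to a well defined bundle map $\underline{\partial}^*_\rho$ on $\gr$. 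For (2), the relative Lie algebra cohomology differential $\partial_\rho$ of Section 2.3 of \cite{part1} is, by construction, a $Q$--equivariant map on $\La^k\gr(\frak q_+/\frak p_+)\otimes\gr(\Bbb V)$ (note $\frak q_+$ already acts trivially here, so this is a representation of $Q/Q_+$), with $\partial_\rho\o\partial_\rho=0$; applying $\Cal G\x_Q(-)$ yields the bundle maps and the relation $\partial_\rho\o\partial_\rho=0$ immediately, since composition of bundle maps induced by equivariant maps is induced by the composition.

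The substance of the proposition is (3), and here the key point is that the Hodge--type decomposition is already available at the level of $Q/Q_+$--representations from the relative harmonic theory of \cite{part1}: one has $\square_\rho=\underline{\partial}^*_\rho\o\partial_\rho+\partial_\rho\o\underline{\partial}^*_\rho$ as a $Q$--equivariant endomorphism of $\La^k\gr(\frak q_+/\frak p_+)\otimes\gr(\Bbb V)$, and Kostant's relative theory gives the direct sum decomposition $\im(\underline{\partial}^*_\rho)\oplus\ker(\square_\rho)\oplus\im(\partial_\rho)$ of that representation into $Q$--submodules, together with the statements that $\ker(\underline{\partial}^*_\rho)=\im(\underline{\partial}^*_\rho)\oplus\ker(\square_\rho)$ and $\ker(\partial_\rho)=\ker(\square_\rho)\oplus\im(\partial_\rho)$. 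The functor $\Cal G\x_Q(-)$ is exact and carries direct sums of $Q$--submodules to direct sums of smooth subbundles, and it carries the kernel and image of an equivariant map to the kernel and image of the induced bundle map (this is a pointwise statement, using local $Q$--frames of $\Cal G$, as in Section 3.1 of \cite{book}). Applying it termwise to the algebraic decomposition therefore produces exactly the claimed decomposition of $\La^k\gr(T^*_\rho M)\otimes\gr(\Cal VM)$ into natural subbundles, and likewise transports the two ``add up to'' identities.

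The main obstacle, such as it is, is bookkeeping rather than mathematics: one must check carefully that the filtration on $\La^k(\frak q_+/\frak p_+)\otimes\Bbb V$ used here is precisely the one for which \cite{part1} establishes filtration--preservation of the codifferential and identifies the associated graded with the $Q/Q_+$--module $\La^k\gr(\frak q_+/\frak p_+)\otimes\gr(\Bbb V)$, and that this associated graded operation is compatible with the tensor product decomposition (so that $\gr$ of the tensor product is the tensor product of the $\gr$'s, as in Section 3.1.1 of \cite{book}). Once that identification is in place, the transition from representations to bundles is entirely formal, since every map in sight is induced by a $Q$--equivariant linear map and $\Cal G\x_Q(-)$ preserves all the relevant algebraic structure. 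I would therefore write the proof as: (1) quote filtration--preservation from \cite{part1} and apply $\Cal G\x_Q(-)$; (2) quote the equivariant cohomology differential and its square--zero property from \cite{part1} and apply $\Cal G\x_Q(-)$; (3) quote the relative Kostant decomposition of the $Q/Q_+$--representation from \cite{part1}, observe that $\Cal G\x_Q(-)$ sends it to a decomposition into natural subbundles and commutes with kernels and images, and read off the statement.
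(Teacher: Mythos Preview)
Your proposal is correct and follows essentially the same approach as the paper: all three parts are deduced from the corresponding algebraic statements in \cite{part1} (filtration--preservation of the codifferential, $Q$--equivariance of the cohomology differential on the associated graded, and the algebraic Hodge decomposition of Lemma 2.2 there), and then transported to bundles via the associated bundle functor $\Cal G\x_Q(-)$. The paper's proof is simply a terser version of what you wrote, citing the relevant sections of \cite{part1} without spelling out the exactness properties of the functor.
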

\begin{proof}
  Part (1) follows directly from the algebraic properties of the
  Kostant codifferential. As discussed in Section 2.5 of \cite{part1},
  the Lie algebra cohomology differential can be viewed as a
  $Q$--equivariant map on the associated graded representation
  $\La^*\gr(\frak q_+/\frak p_+)\otimes\gr(\Bbb V)$, so part (2)
  follows. Finally, part (3) is a direct consequence of the algebraic
  Hodge decomposition proved in Lemma 2.2 of \cite{part1} and the
  discussion in Section 2.5 of that reference.
\end{proof}

\section{The relative BGG machinery}\label{3}
Having the necessary setup at hand, we can take the first step towards
the construction of relative BGG sequences. We develop a machinery to
compress operators (with a certain property) defined on relative
differential forms with values in a relative tractor bundle to
operators defined on the corresponding Lie algebra homology
bundles. This procedure is very general (not even requiring the
initial operator to be differential) but set up in such a way that
nice properties of the initial operators carry over to the compressed
operators.

\subsection{Compressable operators}\label{3.1}
Consider a relative tractor bundle $\Cal VM=\Cal G\x_Q\Bbb V$. Then we
denote the spaces of relative differential forms with values in $\Cal
VM$ by $\Om^k_\rho(M,\Cal VM):=\Ga(\La^kT^*_\rho M\otimes\Cal
VM)$. The input needed for the relative BGG machinery is a linear
operator $\Cal D=\Cal D_k:\Om^k_\rho(M,\Cal
VM)\to\Om^{k+1}_\rho(M,\Cal VM)$, satisfying a certain condition,
respectively a sequence of such operators. The condition in question
is compatibility with a natural filtration together with a condition
on the induced operator on the associated graded. Let us explain the
necessary background and at the same time make things more explicit.

We have already noted that $T_\rho M$ and $\Cal VM$ are filtered by
smooth natural subbundles. Let us denote these filtrations by
\begin{gather*}
 T_\rho M=T^{-\mu}_\rho M\supset T^{-\mu+1}_\rho M\supset\dots\supset
 T^{-1}_\rho M\\
 \Cal VM=\Cal V^0M\supset\Cal V^1M\supset\dots\supset \Cal V^NM.
\end{gather*}
Then we call a form $\ph\in\Om^k_\rho(M,\Cal VM)$
(filtration--)homogeneous of degree $\geq\ell$ if for all
$\xi_1,\dots,\xi_k\in\frak X(M)$ such that $\xi_i\in\Ga(T^{j_i}_\rho
M)$ we have $\ph(\xi_1,\dots,\xi_k)\in \Cal
V^{j_1+\dots+j_k+\ell}M$. Note that this is a purely pointwise
condition, so it just means that $\ph$ is a section of the filtration
component of degree $\ell$ of the bundle $\La^kT^*_\rho M\otimes\Cal
VM$.

Now we say that $\Cal D:\Om^k(M,\Cal VM)\to\Om^{k+1}(M,\Cal VM)$ is
\textit{compatible with the natural filtration} if for each $\ell$ and
any $k$--form $\ph$ which is homogeneous of degree $\geq\ell$, also
$\Cal D(\ph)$ is homogeneous of degree $\geq\ell$. If this holds, then
we can take a form $\ph$, which is homogeneous of degree $\geq\ell$,
and form the projection $\gr_\ell(\Cal D(\ph))$. This is a section of
the degree--$\ell$ part $\gr_\ell(\La^{k+1}T^*_\rho M\otimes\Cal VM)$
of the associated graded bundle. If we add to $\ph$ a form
$\ps\in\Om^k(M,\Cal VM)$, which is homogeneous of degree $\geq\ell+1$,
then $\gr_\ell(\Cal D(\ph+\ps))=\gr_\ell(\Cal D(\ph))$.

On the other hand, given a section $\al$ of $\gr_\ell(\La^kT^*_\rho
M\otimes\Cal VM)$, we can choose $\ph\in\Om^k_\rho(M,\Cal VM)$ which
is homogeneous of degree $\geq\ell$ such that $\gr_\ell(\ph)=\al$, and
$\ph$ is unique up to addition of a form $\ps$ which is homogeneous of
degree $\geq\ell+1$. Consequently, we see that $\gr_\ell(\Cal D(\ph))$
depends only on $\al$ and not on the choice of $\ph$. Thus we conclude
that any operator $\Cal D:\Om^k_\rho(M,\Cal
VM)\to\Om^{k+1}_\rho(M,\Cal VM)$ which is compatible with the natural
filtration induces an operator
$$
 \gr_0(\Cal D_k):\Ga(\gr(\La^kT^*_\rho M\otimes\Cal
 VM))\to\Ga(\gr(\La^{k+1}T^*_\rho M\otimes\Cal VM))  
$$
which is homogeneous of degree zero. Using this, we can now formulate
a crucial definition.

\begin{definition}\label{def3.1}
  A linear operator $\Cal D:\Om^k_\rho(M,\Cal
  V)\to\Om^{k+1}_\rho(M,\Cal VM)$ is called \textit{compressable} if
  and only if it preserves the natural filtration and the induced
  operator $\gr_0(\Cal D)$ is the tensorial operator induced by the
  Lie algebra cohomology differential $\partial_\rho$ from part (2) of
  Proposition \ref{prop2.4.2}.
\end{definition}

Once one has found one compressable operator, it is easy to describe
all of them. Indeed, if $\Cal D$ and $\tilde{\Cal D}$ both are
compressable operators defined on $\Om^k_\rho(M,\Cal VM)$, then
consider the difference $\tilde{\Cal D}-\Cal D$. Of course, this
preserves the natural filtration on $\Cal VM$--valued forms and the
induced operator $\gr_0(\tilde{\Cal D}-\Cal D)$ is identically
zero. But this exactly means that for any $\ph\in\Om^k_\rho(M,\Cal
VM)$ which is homogeneous of degree $\geq\ell$, the form $(\tilde{\Cal
  D}-\Cal D)(\ph)\in\Om^{k+1}_\rho(M,\Cal VM)$ is homogeneous of
degree $\geq \ell+1$.

Conversely if $\Cal D$ is compressable and $\Cal E:\Om^k_\rho(M,\Cal
V)\to\Om^{k+1}_\rho(M,\Cal VM)$ is any linear operator, which strictly
increases homogeneous degrees, then $\Cal D+\Cal E$ is again
compressable.

\subsection{}\label{3.2}
Given a compressable linear operator $\Cal D:\Om^k_\rho (M,\Cal VM)\to
\Om^{k+1}_\rho (M,\Cal VM)$, the key point is to study the operator
$\partial^*_\rho\o\Cal D$, which maps $\Om^k_\rho (M,\Cal VM)$ to
itself. To simplify notation, we will write this composition as
$\partial^*_\rho\Cal D$ from now on. Note that there are the natural
subbundles
$\im(\partial^*_\rho)\subset\ker(\partial^*_\rho)\subset\La^kT^*_\rho
M\otimes\Cal VM$, and the restriction of $\partial^*_\rho\Cal D$
defines an operator
$\Ga(\ker(\partial^*_\rho))\to\Ga(\im(\partial^*_\rho))$. We next
prove a property of this restriction, which is the main technical
input for what follows.
 \begin{lemma}\label{lemma3.2}
 Let $\Cal VM$ be a relative tractor bundle and let $\Cal D$ be a
 compressable operator defined on $\Om^k_\rho(M,\Cal VM)$. Further let
 $\pi_H:\Ga(\ker(\partial^*_\rho))\to\Ga(\Cal H_k(T^*_\rho M,\Cal
 VM))$ be the tensorial operator induced by the canonical projection
 to the Lie algebra homology bundle. Then we have

 (1) The restriction of $\partial^*_\rho\Cal D$ to
 $\Ga(\im(\partial^*_\rho))$ is injective.

 (2) The restriction of $\pi_H$ to $\ker(\partial^*_\rho\Cal
 D)\cap\Ga(\ker(\partial^*_\rho))$ is injective.
 \end{lemma}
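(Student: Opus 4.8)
The plan is to exploit the algebraic Hodge decomposition from Proposition \ref{prop2.4.2} together with the key property that $\gr_0(\Cal D)=\partial_\rho$. Recall that on the associated graded level we have the point-wise decomposition $\La^k\gr(T^*_\rho M)\otimes\gr(\Cal VM)=\im(\underline{\partial}^*_\rho)\oplus\ker(\square_\rho)\oplus\im(\partial_\rho)$, that $\underline{\partial}^*_\rho$ is an isomorphism from $\im(\partial_\rho)$ onto $\im(\underline{\partial}^*_\rho)$, and that $\partial_\rho$ is an isomorphism from $\im(\underline{\partial}^*_\rho)$ onto $\im(\partial_\rho)$. Here $\underline{\partial}^*_\rho$ is just $\gr_0$ of the filtration-preserving bundle map $\partial^*_\rho$. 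The strategy for both parts is the same: suppose a section lies in the relevant kernel and is nonzero; look at its lowest homogeneous component; derive a contradiction from the fact that $\gr$ of the relevant composition acts invertibly on that component.

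**Part (1).** Let $\ph\in\Ga(\im(\partial^*_\rho))$ be nonzero, and let $\ell$ be minimal with $\gr_\ell(\ph)\neq 0$; write $\al:=\gr_\ell(\ph)$, a nonzero section of $\gr_\ell(\La^kT^*_\rho M\otimes\Cal VM)$ lying in $\im(\underline{\partial}^*_\rho)$ (since $\gr$ of $\partial^*_\rho$ is $\underline{\partial}^*_\rho$ and $\partial^*_\rho\o\partial^*_\rho=0$ forces the image to land in $\ker\underline{\partial}^*_\rho$; more carefully, writing $\ph=\partial^*_\rho\ps$ gives $\al=\underline{\partial}^*_\rho\gr_{\ell}(\ps)$ after shifting to the appropriate lowest degree of $\ps$). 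Since $\Cal D$ is compressable, $\partial^*_\rho\Cal D$ preserves the filtration and $\gr_\ell(\partial^*_\rho\Cal D(\ph))=\underline{\partial}^*_\rho\o\partial_\rho(\al)$. Because $\al\in\im(\underline{\partial}^*_\rho)$, the composition $\underline{\partial}^*_\rho\o\partial_\rho$ restricted to $\im(\underline{\partial}^*_\rho)$ is invertible (it is $\square_\rho$ restricted to $\im(\underml{\partial}^*_\rho)$, which is an isomorphism of that summand by the Hodge decomposition). Hence $\gr_\ell(\partial^*_\rho\Cal D(\ph))=\square_\rho(\al)\neq 0$, so in particular $\partial^*_\rho\Cal D(\ph)\neq 0$. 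This proves injectivity on $\Ga(\im(\partial^*_\rho))$.

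**Part (2).** Now suppose $\ph\in\ker(\partial^*_\rho\Cal D)\cap\Ga(\ker(\partial^*_\rho))$ is nonzero with $\pi_H(\ph)=0$. The condition $\pi_H(\ph)=0$ means precisely that $\ph\in\Ga(\im(\partial^*_\rho))$ (since $\ph\in\ker\partial^*_\rho$ and the homology bundle is the quotient $\ker/\im$). But then $\ph$ lies in $\Ga(\im(\partial^*_\rho))$ and in $\ker(\partial^*_\rho\Cal D)$, contradicting Part (1) unless $\ph=0$. So $\pi_H$ is injective on that intersection.

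**The main obstacle.** The substantive point — and the one step I would want to check most carefully — is the claim in Part (1) that the lowest homogeneous component $\al$ of an element of $\Ga(\im(\partial^*_\rho))$ genuinely lands in the summand $\im(\underline{\partial}^*_\rho)$ of the Hodge decomposition, and correspondingly that $\square_\rho$ acts invertibly there. This requires matching up the filtration degrees carefully: if $\ph=\partial^*_\rho\ps$ and $\ps$ has lowest degree $m$, then $\ph$ has lowest degree $\geq m$, and one must argue that the lowest nonzero component of $\ph$ is exactly $\underline{\partial}^*_\rho(\gr_m\ps)$ — which can vanish, in which case one passes to the next component of $\ps$, etc. The cleanest formulation is: $\gr$ is an exact functor on filtered bundle maps of degree zero, so $\gr(\im(\partial^*_\rho))=\im(\underline{\partial}^*_\rho)$ as graded bundles, giving directly that every homogeneous component of a section of $\im(\partial^*_\rho)$ lies in $\im(\underline{\partial}^*_\rho)$. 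With that in hand, invertibility of $\square_\rho$ on $\im(\underline{\partial}^*_\rho)$ is immediate from Proposition \ref{prop2.4.2}(3), and the degree-counting is routine.
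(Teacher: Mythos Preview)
Your proof is correct and follows essentially the same approach as the paper: both arguments pass to the lowest nonzero homogeneous component of $\ph\in\Ga(\im(\partial^*_\rho))$, use that this component lies in $\im(\underline{\partial}^*_\rho)$, and then invoke that $\underline{\partial}^*_\rho\partial_\rho=\square_\rho$ is invertible on $\im(\underline{\partial}^*_\rho)$ by the Hodge decomposition of Proposition \ref{prop2.4.2}. Your ``main obstacle'' paragraph correctly identifies the one nontrivial point---that $\gr_\ell(\ph)\in\im(\underline{\partial}^*_\rho)$---and your resolution via $\gr(\im(\partial^*_\rho))=\im(\underline{\partial}^*_\rho)$ is exactly what the paper uses (phrased there as: the restriction of $\partial^*_\rho$ to each filtration component is surjective onto the corresponding filtration component of $\im(\partial^*_\rho)$, citing Section 2.5 of \cite{part1}); Part (2) is identical in both.
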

 \begin{proof}
 (1) Suppose that $\ph\in\Ga(\im(\partial^*_\rho))$ is such that
   $\partial^*_\rho\Cal D(\ph)=0$, and suppose that $\ph$ is
   homogeneous of degree $\geq\ell$ for some $\ell$. From the
   construction of the natural filtrations it follows that for each
   $\ell$, the restriction of $\partial^*_\rho$ to the component of
   degree $\geq\ell$ in $\La^{k+1}T^*_\rho M\otimes\Cal VM$ maps onto
   the component of degree $\geq\ell$ in $\im(\partial^*_\rho)$,
   compare with Section 2.5 of \cite{part1}. Consequently, we can find
   a form $\ps\in\Om^{k+1}_\rho(M,\Cal VM)$, which is homogeneous of
   degree $\geq\ell$ such that $\ph=\partial^*_\rho\ps$. By Proposition
   \ref{prop2.4.2}, this implies that
   $\gr_\ell(\ph)=\underline{\partial}^*_\rho(\gr_\ell(\ps))$. On the
   other hand, we get
 $$
 0=\gr_\ell(\partial^*_\rho\Cal D(\ph))=\underline{\partial}^*_\rho\gr_\ell(\Cal
 D(\ph))=\underline{\partial}^*_\rho\partial_\rho(\gr_\ell(\ph)). 
 $$
 But since $\gr_\ell(\ph)\in\Ga(\im(\underline{\partial}^*_\rho))$, the
 latter expression coincides with $\square_\rho(\gr_\ell(\ph))$. Hence
 the Hodge decomposition in part (3) of Proposition \ref{prop2.4.2}
 implies that $\gr_\ell(\ph)=0$ and thus $\ph$ is homogeneous of degree
 $\geq\ell+1$. Iterating this argument finitely many times, we get
 $\ph=0$, which completes the proof of (1). 

 (2) By definition, the kernel of $\pi_H$ coincides with
 $\Ga(\im(\partial^*_\rho))\subset\Ga(\ker(\partial^*_\rho))$. But we
 have just shown that this subspace has zero intersection with
 $\ker(\partial^*_\rho\Cal D)$, which implies the claim.  
 \end{proof}

 \subsection{The splitting operator}\label{3.3}
 As a next step, we construct an operator from $\Ga(\Cal
 H_k(T^*M_\rho,\Cal VM))$ to $\ker(\partial^*_\rho\Cal
 D)\subset\Ga(\ker(\partial^*_\rho))$, which is right invariant to (the
 restriction of) $\pi_H$, thus proving that this restriction is a
 linear isomorphism. This so--called splitting operator can be
 constructed from polynomials in $\partial^*_\rho\Cal D$, which implies
 that it inherits nice properties from $\Cal D$. We put 
 $$
 \widetilde{\Bbb W}:=\im(\partial^*_\rho)\subset\ker(\partial^*_\rho):=\Bbb
 W\subset \La^k(\frak q_+/\frak p_+)\otimes\Bbb V,
 $$ and denote the filtration components of degree $\ell$ by $\Bbb
 W^\ell$ and $\widetilde{\Bbb W}^\ell=\Bbb W^\ell\cap\widetilde{\Bbb
   W}$, respectively. Consequently, for each $\ell$, the quotient
 $\gr_\ell(\widetilde{\Bbb W})=\widetilde{\Bbb W}^\ell/\widetilde{\Bbb
   W}^{\ell+1}$ is naturally a subspace of $\gr_\ell(\Bbb W)$. From
 Sections 2.4 and 2.6 of \cite{part1}, we see that the relative
 Kostant Laplacian acts on each of these spaces, it acts
 diagonalizably, and $\gr_\ell(\widetilde{\Bbb W})\subset\gr_\ell(\Bbb W)$
 coincides with the direct sum of the eigenspaces corresponding to
 non--zero eigenvalues. Let $a^\ell_1,\dots, a^\ell_{j_\ell}$ be the
 different non--zero eigenvalues which occur in homogeneity $\ell$.

 Now we take the corresponding induced bundles $\Cal WM$ and
 $\widetilde{\Cal W}M$ which are just the subbundles
 $\ker(\partial^*_\rho)$ and $\im(\partial^*_\rho)$ of $\La^kT^*_\rho
 M\otimes\Cal VM$. The corresponding filtration components give rise
 to smooth subbundles $\widetilde{\Cal W}^\ell M\subset\Cal W^\ell
 M\subset\Cal WM$. Now for each possible homogeneity $\ell$, we define
 an operator $S_\ell:\Ga(\Cal WM)\to \Ga(\Cal WM)$ by
 $$ S_\ell:=\frac{(-1)^{j_\ell}}{\prod_{r=1}^{j_\ell}
   a^\ell_r}\prod_{r=1}^{j_\ell}(\partial^*_\rho\Cal D-a^\ell_r\id).
 $$ 
 The basic properties of these operators are now easy to prove.
 \begin{lemma}\label{lemma3.3}
   For each $\ell$, the operator $S_\ell:\Ga(\Cal W)\to\Ga(\Cal W)$ is
   compatible with the natural filtration and satisfies $\pi_H\o
   S_\ell=\pi_H$, $\partial^*_\rho\Cal D\o
   S_\ell=S_\ell\o\partial^*_\rho\Cal D$, and
   $S_\ell(\Ga(\widetilde{\Cal W}^\ell))\subset\Ga(\widetilde{\Cal
     W}^{\ell+1})$.
 \end{lemma}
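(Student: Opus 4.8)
The plan is to reduce all four assertions to formal properties of the polynomial $p_\ell(t):=\frac{(-1)^{j_\ell}}{\prod_{r=1}^{j_\ell}a^\ell_r}\prod_{r=1}^{j_\ell}(t-a^\ell_r)$, for which $S_\ell=p_\ell(\partial^*_\rho\Cal D)$, $p_\ell(0)=1$ and $p_\ell(a^\ell_r)=0$ for all $r$, together with one genuinely geometric input: the identification of the operator induced by $\partial^*_\rho\Cal D$ on the associated graded with the relative Kostant Laplacian.

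First I would observe that $\partial^*_\rho\Cal D=\partial^*_\rho\o\Cal D$ preserves the natural filtration, since $\Cal D$ does so by compressability and $\partial^*_\rho$ does so by Proposition \ref{prop2.4.2}(1). Moreover $\partial^*_\rho\Cal D$ maps $\Ga(\ker(\partial^*_\rho))$ into $\Ga(\im(\partial^*_\rho))\subset\Ga(\ker(\partial^*_\rho))$, so each factor $(\partial^*_\rho\Cal D-a^\ell_r\id)$ maps $\Ga(\Cal WM)$ to itself and $S_\ell$ is well defined on $\Ga(\Cal WM)$. Being a polynomial with constant coefficients in the filtration--preserving operator $\partial^*_\rho\Cal D$, the operator $S_\ell$ preserves the natural filtration, which is the first assertion, and it manifestly commutes with $\partial^*_\rho\Cal D$, which is the third.

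The heart of the argument is the computation of $\gr_0(S_\ell)$. Since $\gr_0(\Cal D)=\partial_\rho$ by Definition \ref{def3.1} and $\gr(\partial^*_\rho)=\underline\partial^*_\rho$, multiplicativity of passing to the associated graded gives $\gr_0(\partial^*_\rho\Cal D)=\underline\partial^*_\rho\o\partial_\rho$ and hence $\gr_0(S_\ell)=p_\ell(\underline\partial^*_\rho\o\partial_\rho)$. For a section $\ph$ of $\ker(\partial^*_\rho)$ homogeneous of degree $\geq\ell$ one has $\underline\partial^*_\rho(\gr_\ell(\ph))=\gr_\ell(\partial^*_\rho\ph)=0$, so $\gr_\ell(\Cal WM)$ is contained in $\ker(\underline\partial^*_\rho)$, on which $\underline\partial^*_\rho\o\partial_\rho$ agrees with the relative Kostant Laplacian $\square_\rho$ (the term $\partial_\rho\o\underline\partial^*_\rho$ vanishing there). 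By Sections 2.4 and 2.6 of \cite{part1}, $\square_\rho$ is diagonalizable on $\gr_\ell(\Bbb W)$ and $\gr_\ell(\widetilde{\Bbb W})$ is the sum of its eigenspaces for the nonzero eigenvalues $a^\ell_1,\dots,a^\ell_{j_\ell}$. As $p_\ell(0)=1$ and $p_\ell(a^\ell_r)=0$ for all $r$, it follows that the restriction of $\gr_0(S_\ell)$ to $\gr_\ell(\Cal WM)$ equals $p_\ell(\square_\rho)$, which is the identity on the harmonic part and zero on $\gr_\ell(\widetilde{\Bbb W})$.

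The remaining two assertions are then immediate. Writing $p_\ell(t)-1=t\,q_\ell(t)$ for a polynomial $q_\ell$, we get $S_\ell-\id=\partial^*_\rho\o\Cal D\o q_\ell(\partial^*_\rho\Cal D)$ as operators on $\Ga(\Cal WM)$, so $(S_\ell-\id)(\ph)$ always lies in $\Ga(\im(\partial^*_\rho))=\ker(\pi_H)$, whence $\pi_H\o S_\ell=\pi_H$. Finally, for $\ph\in\Ga(\widetilde{\Cal W}^\ell M)$ the section $S_\ell\ph=\ph+(S_\ell-\id)(\ph)$ is again a section of $\im(\partial^*_\rho)$ and is homogeneous of degree $\geq\ell$ by the first assertion, while $\gr_\ell(S_\ell\ph)=p_\ell(\square_\rho)(\gr_\ell(\ph))=0$ because $\gr_\ell(\ph)\in\gr_\ell(\widetilde{\Bbb W})$; hence $S_\ell\ph$ is homogeneous of degree $\geq\ell+1$ and a section of $\im(\partial^*_\rho)$, i.e. $S_\ell\ph\in\Ga(\widetilde{\Cal W}^{\ell+1}M)$. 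The only step that is not pure bookkeeping is the identification of $\gr_0(\partial^*_\rho\Cal D)$ on $\gr_\ell(\Cal WM)$ with $\square_\rho$ and the appeal to the relative harmonic theory of \cite{part1} for its spectrum; once these are in place, everything follows from $p_\ell(0)=1$ and $p_\ell(a^\ell_r)=0$.
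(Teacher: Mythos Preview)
Your proof is correct and follows essentially the same approach as the paper: both arguments reduce the first three assertions to $S_\ell$ being a polynomial in the filtration--preserving operator $\partial^*_\rho\Cal D$ with constant term $1$, and both handle the last assertion by identifying $\gr_0(\partial^*_\rho\Cal D)$ with $\square_\rho$ on sections of $\im(\underline\partial^*_\rho)$ and then using that the roots of $p_\ell$ exhaust the eigenvalues of $\square_\rho$ on $\gr_\ell(\widetilde{\Bbb W})$. The only difference is packaging: you phrase things globally via $p_\ell(0)=1$ and $p_\ell(a^\ell_r)=0$, while the paper argues factor by factor (computing $\pi_H\o(\partial^*_\rho\Cal D-a^\ell_r\id)=-a^\ell_r\pi_H$ and splitting $\ph$ into eigencomponents), but the content is the same.
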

 \begin{proof} 
   By definition, $S_\ell$ is a polynomial in operator
   $\partial^*_\rho\Cal D$, so it commutes with $\partial^*_\rho\Cal
   D$. Moreovoer, since $\partial^*_\rho\Cal D$ is compatible with the
   natural filtration, the same holds for $S_\ell$. By definition
   $\pi_H\o \partial^*_\rho=0$, so $\pi_H\o (\partial^*_\rho\Cal
   D-a^\ell_r\id)=a^\ell_r\pi_H$ for each $r$, which immediately
   implies $\pi_H\o S_\ell=\pi_H$.

 To prove the last property, take $\ph\in\Ga(\widetilde{\Cal
   W}^\ell)$. Then we have already observed that $\partial^*_\rho\Cal
 D(\ph)\in\Ga(\widetilde{\Cal W}^\ell)$, and we compute
 $$ \gr_\ell(\partial^*_\rho\Cal
 D(\ph))=\underline{\partial}^*_\rho\gr_\ell(\Cal
 D(\ph))=\underline{\partial}^*_\rho\partial_\rho\gr_\ell(\ph).
 $$ 
Since $\gr_\ell\ph$ is a section of the subbundle
 $\im(\underline{\partial}^*_\rho)$ the last term coincides with
 $\square_\rho(\gr_\ell(\ph))$.  Now we can write $\ph$ as a finite sum
 of sections $\ph_i$ of $\Ga(\widetilde{\Cal W}^\ell)$ such that each
 $\gr_\ell(\ph_i)$ is a section of the bundle induced by one of the
 eigenspaces for $\square_\rho$. If $a^\ell_r$ is the corresponding
 eigenvalue, then the above computation shows that
 $\gr_\ell((\partial^*_\rho\Cal D-a^\ell_r\id)(\ph_i))=0$. Since the
 factors in the composition defining $S_\ell$ can be permuted
 arbitrarily, we see that $\gr_\ell(S_\ell(\ph_i))=0$. This implies
 $\gr_\ell(S_\ell(\ph))=0$ and thus $S_\ell(\ph)\in\Ga(\widetilde{\Cal
   W}^{\ell+1})$.
 \end{proof}

 Now we define the splitting operator
 $S:\Ga(\ker(\partial^*_\rho))\to\Ga(\ker(\partial^*_\rho))$ as the
 composition of the operators $S_\ell$ for the finitely many possible
 homogeneities $\ell$, which show up in
 $\ker(\partial^*_\rho)\subset\La^kT^*_\rho M\otimes\Cal VM$. Since
 the operators $S_\ell$ all are polynomials in $\partial^*_\rho\Cal
 D$, the order in which they are composed plays no role. 

 \begin{thm}\label{thm3.3}
 (1) The operator $S$ satisfies $\pi_H\o S=\pi_H$ and
   $\partial^*_\rho\Cal D\o S=0$, and its restriction to
   $\Ga(\im(\partial^*_\rho))$ vanishes identically. Thus it descends
   to an operator
 $$
 \Ga(\ker(\partial^*_\rho))/\Ga(\im(\partial^*_\rho))\cong \Ga(\Cal
 H_k(T^*_\rho M,\Cal VM))\to\ker(\partial^*_\rho\Cal
 D)\cap\Ga(\ker(\partial^*_\rho)),
 $$ which is right inverse to the tensorial projection $\pi_H$. In
 particular, $\pi_H$ restricts to a linear isomorphism on
 $\ker(\partial^*_\rho\Cal D)\cap\Ga(\ker(\partial^*_\rho))$.

 (2) For $\al\in \Ga(\Cal H_k(T^*_\rho M,\Cal VM))$ the form
 $\ph=:S(\al)$ is uniquely determined by $\partial^*_\rho(\ph)=0$,
 $\pi_H(\ph)=\al$, and $\partial^*_\rho\Cal D(\ph)=0$.

 (3) If the operator $\Cal D$ is such that $\partial^*_\rho\Cal D$
 belongs to a class of linear operators which is stable under forming
 polynomials, then also $S$ belongs to this class.
 \end{thm}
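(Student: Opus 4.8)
The plan is to read off all three assertions from the properties of the factors $S_\ell$ recorded in Lemma \ref{lemma3.3} and the two injectivity statements of Lemma \ref{lemma3.2}, using throughout that $S$ is, by its very construction, a polynomial in the single operator $T:=\partial^*_\rho\Cal D$. Only one step requires genuine work, namely the identity $\partial^*_\rho\Cal D\o S=0$; once this is in place, the isomorphism asserted in (1), the uniqueness in (2), and the whole of (3) are formal.

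For (1): the relation $\pi_H\o S=\pi_H$ follows by iterating $\pi_H\o S_\ell=\pi_H$ over the finitely many factors. For $\partial^*_\rho\Cal D\o S=0$, note that $S$ commutes with $T$ (each factor does, being a polynomial in $T$) and that $T=\partial^*_\rho\Cal D$ already maps $\Ga(\ker(\partial^*_\rho))$ into $\Ga(\im(\partial^*_\rho))$; hence it suffices to show that $S$ annihilates $\Ga(\im(\partial^*_\rho))=\Ga(\widetilde{\Cal W}M)$. This is the telescoping point: starting from $\ph\in\Ga(\widetilde{\Cal W}M)$ and applying the $S_\ell$ in order of increasing $\ell$ (permissible since they commute), Lemma \ref{lemma3.3} makes each factor drop the current form one step down the filtration of $\im(\partial^*_\rho)$, so that after running over all homogeneities occurring in $\ker(\partial^*_\rho)$ the form has been pushed below the top one, hence is $0$. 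Thus $\partial^*_\rho\Cal D\o S=S\o\partial^*_\rho\Cal D=0$. Since $S$ also preserves $\ker(\partial^*_\rho)$ and annihilates $\im(\partial^*_\rho)$, it descends to a map $\Ga(\ker(\partial^*_\rho))/\Ga(\im(\partial^*_\rho))\cong\Ga(\Cal H_k(T^*_\rho M,\Cal VM))\to\ker(\partial^*_\rho\Cal D)\cap\Ga(\ker(\partial^*_\rho))$ which, by $\pi_H\o S=\pi_H$, is a right inverse of $\pi_H$; combined with Lemma \ref{lemma3.2}(2), this shows that $\pi_H$ restricts to a linear isomorphism on $\ker(\partial^*_\rho\Cal D)\cap\Ga(\ker(\partial^*_\rho))$.

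For (2): $\ph:=S(\al)$ satisfies the three stated conditions by (1); and if $\ph,\ph'$ both satisfy them, then $\ph-\ph'$ lies in $\ker(\partial^*_\rho\Cal D)\cap\Ga(\ker(\partial^*_\rho))$ with $\pi_H(\ph-\ph')=0$, so $\ph=\ph'$ by Lemma \ref{lemma3.2}(2) (equivalently, $\ph-\ph'\in\Ga(\im(\partial^*_\rho))$ is killed by $\partial^*_\rho\Cal D$, which is injective there by Lemma \ref{lemma3.2}(1)). For (3): each $S_\ell$ is, by the formula defining it, a polynomial in $T=\partial^*_\rho\Cal D$ (the normalized product $\prod_r(T-a^\ell_r\id)$), hence so is their composition $S$, say $S=p(T)$; therefore any class of linear operators that is stable under forming polynomials and contains $T$ also contains $S$. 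In particular $S$ is a differential operator whenever $\Cal D$ is, since $\partial^*_\rho$ is a bundle map.

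The one place that needs care is (1): one has to carry along the filtration on $\ker(\partial^*_\rho)$ and the induced filtration on $\im(\partial^*_\rho)$ at the same time and use that $S_\ell$ raises homogeneity only on the piece of matching degree while merely preserving the rest, so that composing all the $S_\ell$ genuinely exhausts $\Ga(\im(\partial^*_\rho))$ down to zero. Everything else is a purely formal consequence of Lemmas \ref{lemma3.2} and \ref{lemma3.3}.
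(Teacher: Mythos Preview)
Your proof is correct and follows essentially the same approach as the paper: both obtain $\pi_H\o S=\pi_H$ by iterating the corresponding property of the factors $S_\ell$, both prove $S|_{\Ga(\im(\partial^*_\rho))}=0$ by the telescoping filtration argument (using that $S_\ell$ pushes $\Ga(\widetilde{\Cal W}^\ell)$ into $\Ga(\widetilde{\Cal W}^{\ell+1})$ while preserving all filtration levels), and both deduce $\partial^*_\rho\Cal D\o S=0$ from commutativity with $T=\partial^*_\rho\Cal D$ together with $S\o\partial^*_\rho=0$. Your uniqueness argument in (2) via Lemma~\ref{lemma3.2}(2) is a minor variant of the paper's (which instead observes that $\partial^*_\rho\Cal D(\ph)=0$ forces $S_\ell(\ph)=\ph$ for each $\ell$, hence $\ph=S(\pi_H(\ph))$), but the content is the same.
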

 \begin{proof}
 (1) Since $\pi_H\o S_\ell=\pi_H$ holds for each $\ell$ by Lemma
   \ref{lemma3.3}, we see that $\pi_H\o S=\pi_H$. If $\ph$ is a
   section of $\im(\partial^*)$, then $\ph$ is homogeneous of degree
   $\geq\ell$ for some $\ell$. Then Lemma \ref{lemma3.3} shows
   iteratively that $S_\ell(\ph)\in\Ga(\widetilde{\Cal W}^{\ell+1})$,
   $S_{\ell+1}(S_\ell(\ph))\in\Ga(\widetilde{\Cal W}^{\ell+2})$ and
   continuing up to the maximal possible homogeneity, we conclude
   that the composition of the $S_i$ for $i\geq\ell$ annihilates
   $\ph$. This of course implies $S(\ph)=0$, so $S$ vanishes on
   $\Ga(\im(\partial^*_\rho))$.

 Next Lemma \ref{lemma3.3} iteratively implies that $S$ commutes with
 $\partial^*_\rho\Cal D$. But since we have just seen that
 $S\o\partial^*_\rho=0$, this implies $\partial^*_\rho\Cal D\o
 S=0$. From this, the rest of (1) is evident.

 (2) By part (1), the form $\ph=S(\al)$ for $\al\in \Ga(\Cal
 H_k(T^*_\rho M,\Cal VM))$ has the claimed properties. Conversely, if
 $\ph\in\Om^k_\rho(M,\Cal VM)$ has the three properties, then
 $\partial^*_\rho\ph=0$ implies that we can form $\pi_H(\ph)\in
 \Ga(\Cal H_k(T^*_\rho M,\Cal VM))$. But then $\partial^*_\rho\Cal
 D(\ph)=0$ immediately implies $S_\ell(\ph)=\ph$ for all $\ell$ and
 hence $\ph=S(\pi_H(\ph))$. 

 (3) This is clear, since $S$ is given by a universal polynomial in
 $\partial^*_\rho\Cal D$.
 \end{proof}

 \subsection{The compressed operator}\label{3.5}
 Having the splitting operator at hand, it is now easy to complete the 
 relative version of the BGG construction. 
 \begin{definition}\label{def3.5}
   Given a compressable operator $\Cal D:\Om^k_\rho(M,\Cal
   VM)\to\Om^{k+1}_\rho (M,\Cal VM)$, the \textit{compression} of
   $\Cal D$, respectively the \textit{BGG--operator} induced by $\Cal
   D$, is the operator
 $$ 
D:\Ga(\Cal H_k(T^*_\rho M,\Cal VM))\to \Ga(\Cal H_{k+1}(T^*_\rho
 M,\Cal VM))
 $$ 
 defined by
 $$
 D(\al):=\pi_H(\Cal D(S(\al))),
 $$ where $S$ denotes the splitting operator associated to $\Cal
 D$.
 \end{definition}

 Notice that this definition makes sense since by Theorem \ref{thm3.3},
 $\Cal D(S(\al))$ is a section of the bundle $\ker(\partial^*_\rho)$,
 so $\pi_H$ can be applied to it. 

 We can easily prove that nice properties of a compressable operator
 carry over to the corresponding compressed operator. Moreover, the
 notion of a normal solution of a first BGG operator (see
 e.g.~\cite{polynomiality}) has a nice analog in general. 

 \begin{prop}\label{prop3.5}
   Let $\Cal D:\Om^k_\rho(M,\Cal VM)\to \Om^{k+1}_\rho(M,\Cal VM)$ be a
   compressable operator and $D:\Ga(\Cal H_k(T^*_\rho M,\Cal VM))\to
   \Ga(\Cal H_{k+1}(T^*_\rho M,\Cal VM))$ the corresponding compressed
   operator.

   (1) If $\Cal D$ and $\partial^*_\rho\Cal D$ belong to some class
   of operators which is stable under forming polynomials then also the
   compressed operator $D$ belongs to this class.

   (2) The projection $\pi_H$ maps $\ker(\Cal
   D)\cap\Ga(\ker(\partial^*_\rho))$ bijectively onto a subspace of
   $\ker(D)\subset\Ga(\Cal H_k(T^*_\rho M,\Cal VM))$.
 \end{prop}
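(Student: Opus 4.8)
The plan is to prove the two assertions separately, both following quickly from the properties of the splitting operator $S$ established in Theorem \ref{thm3.3}.

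For part (1), observe that by construction $D(\al)=\pi_H(\Cal D(S(\al)))$, where $\pi_H$ is a (tensorial) natural bundle map and $S$ is, by part (3) of Theorem \ref{thm3.3}, given by a universal polynomial in $\partial^*_\rho\Cal D$. Hence if $\Cal D$ and $\partial^*_\rho\Cal D$ lie in some class of operators stable under taking polynomials (and this class should reasonably be assumed closed under composition with natural bundle maps and under composition of its members with each other), then $S$ lies in this class, the composition $\Cal D\o S$ does too, and finally composing with the tensorial map $\pi_H$ keeps us in the class. I expect this to be essentially a formal observation with no real obstacle; the only subtlety is spelling out precisely what ``stable under forming polynomials'' is meant to entail (closure under scalar multiples, sums, and compositions of the operators in question), but this is the same caveat already implicit in Theorem \ref{thm3.3}(3).

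For part (2), take $\si\in\ker(\Cal D)\cap\Ga(\ker(\partial^*_\rho))$. Since $\partial^*_\rho\si=0$, the class $\al:=\pi_H(\si)\in\Ga(\Cal H_k(T^*_\rho M,\Cal VM))$ is defined. The key point is that $\si$ satisfies the three characterizing conditions of part (2) of Theorem \ref{thm3.3} for $S(\al)$: indeed $\partial^*_\rho(\si)=0$ by hypothesis, $\pi_H(\si)=\al$ by definition of $\al$, and $\partial^*_\rho\Cal D(\si)=\partial^*_\rho(0)=0$ since $\si\in\ker(\Cal D)$. By the uniqueness statement in Theorem \ref{thm3.3}(2), this forces $\si=S(\al)$. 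Consequently $D(\al)=\pi_H(\Cal D(S(\al)))=\pi_H(\Cal D(\si))=\pi_H(0)=0$, so $\al\in\ker(D)$. Thus $\pi_H$ maps $\ker(\Cal D)\cap\Ga(\ker(\partial^*_\rho))$ into $\ker(D)$.

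It remains to see that this map is injective, i.e.\ bijective onto its image. But $\ker(\Cal D)\cap\Ga(\ker(\partial^*_\rho))$ is contained in $\ker(\partial^*_\rho\Cal D)\cap\Ga(\ker(\partial^*_\rho))$, and by Lemma \ref{lemma3.2}(2) (equivalently, by the last assertion of Theorem \ref{thm3.3}(1)) the restriction of $\pi_H$ to the latter space is injective. Hence its further restriction to $\ker(\Cal D)\cap\Ga(\ker(\partial^*_\rho))$ is injective as well, which completes the proof. I do not anticipate a genuine obstacle here; the argument is just an assembly of the already-established injectivity of $\pi_H$ on $\ker(\partial^*_\rho\Cal D)\cap\Ga(\ker(\partial^*_\rho))$ and the characterization of $S(\al)$ by the three conditions. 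If anything needs care, it is only to confirm that $S$ restricted to this kernel really is the identity — which is precisely the content of the converse direction in the proof of Theorem \ref{thm3.3}(2).
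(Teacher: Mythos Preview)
Your proposal is correct and follows essentially the same approach as the paper. Both parts use Theorem \ref{thm3.3} in the same way: part (1) via the polynomial expression for $S$, and part (2) via the characterization $\si=S(\pi_H(\si))$ for $\si\in\ker(\Cal D)\cap\Ga(\ker(\partial^*_\rho))$; the paper leaves injectivity implicit in this identity (if $\pi_H(\si)=0$ then $\si=S(0)=0$), while you invoke Lemma \ref{lemma3.2}(2) instead, but this is the same content.
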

 \begin{proof}
   (1) follows directly from part (3) of Theorem \ref{thm3.3}. 

 (2) Suppose that $\ph\in\Om^k_\rho(M,\Cal VM)$ satisfies
 $\partial^*_\rho(\ph)=0$ and $\Cal D(\ph)=0$. Then
 $\partial^*_\rho\Cal D(\ph)=0$, which by part (2) of Theorem
 \ref{thm3.3} implies $\ph=S(\pi_H(\ph))$. On the other hand, we also
 get $0=\Cal D(S(\pi_H(\ph)))$, which implies $\pi_H(\ph)\in\ker(D)$. 
 \end{proof}

 \begin{remark}\label{rem3.5}
   The construction of the compressed operator as a polynomial in the
   original operator is a major advantage compared to earlier
   constructions of BGG sequences. This is expressed by part (1) of
   the proposition, which shows that, while compressability is the
   only property of a linear operator required to apply the
   BGG-machinery, many nice properties of such an operator manifestly
   carry over to the corresponding BGG--operator.

   Most easily, if $\Cal D$ is a differential operator, then so is
   $D$. The condition also applies to the usual definition of an
   invariant differential operator, which requires a universal formula
   in terms of certain distinguished connections, their torsion and
   curvature. In this setting, we obtain the first construction of
   BGG--operators which are manifestly invariant. There are also
   notions of ``strong invariance'' to which this condition applies.

   A drawback of the construction is that in the construction of the
   splitting operator as a polynomial in the compressable operator,
   there is a lot of cancellation. The degree of the polynomial used
   to define $S$ roughly equals the number of $\frak q_0$--irreducible
   components in the representation $\im(\partial^*_\rho)\subset
   \La^k(\frak q_+/\frak p_+)\otimes\Bbb V$. In the important special
   case that $\Cal D$ is a first order differential operator, one
   might therefore expect that this number of components is the order
   of the splitting operator. However, as we shall see in Remark
   \ref{rem3.4} below, the order of the splitting operator basically
   is given by the length of the $\frak q$--invariant filtration on
   $\im(\partial^*_\rho)\subset \La^k(\frak q_+/\frak p_+)\otimes\Bbb
   V$, which is much smaller than the number of irreducible
   components.
 \end{remark}

 \subsection{An alternative construction}\label{3.4}
 There is an alternative construction for the splitting operators,
 which at the same time leads to a relative analog of a further
 element of the ``BGG--calculus'' as developed in
 \cite{Calderbank--Diemer}. This will be particularly useful in the
 case of sequences of compressable operators discussed below. We
 preferred to first present the direct construction from Section
 \ref{3.3} since it seems more intuitive to us.

 We continue using the notation from above, so $\widetilde{\Bbb
   W}=\im(\partial^*_\rho)\subset \La^k(\frak q_+/\frak
 p_+)\otimes\Bbb V$, $\widetilde{\Bbb W}^\ell\subset\widetilde{\Bbb
   W}$ denotes the filtration component of degree $\ell$ and the
 different (non--zero) eigenvalues of $\square_\rho$ on
 $\gr_\ell(\widetilde{\Bbb W})$ are denoted by
 $a^\ell_1,\dots,a^\ell_{j_\ell}$. Now we observe that by construction
 the operator $\partial^*_\rho\Cal D$ maps $\Ga(\widetilde{\Cal
   W}M)$ to itself and preserves each of the filtration components
 $\Ga(\widetilde{\Cal W}^\ell M)$. So for each homogeneity $\ell$, we
 can define an operator $\tilde Q^\ell$ on $\Ga(\widetilde{\Cal
   W}^\ell M)$ by
 $$ \tilde
 Q^\ell:=\textstyle\sum_{r=1}^{j_\ell}\tfrac{1}{a^\ell_r\prod_{s\neq
     r}(a^\ell_s-a^\ell_r)}\textstyle\prod_{s\neq
   r}(\partial^*_\rho\Cal D-a^\ell_s\id).
 $$

 \begin{lemma}\label{lemma3.4}
   For each homogeneity $\ell$ and each section
   $\ph\in\Ga(\widetilde{\Cal W}^\ell M)$, we have
 $$
 \ph-\partial^*_\rho\Cal D\tilde Q^\ell(\ph)\in\Ga(\widetilde{\Cal W}^{\ell+1}M).
 $$
 \end{lemma}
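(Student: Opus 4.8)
The plan is to reduce the statement to the eigenvalue structure of the relative Kostant Laplacian on the associated graded, exactly as in the proof of Lemma~\ref{lemma3.3}. First I would fix a homogeneity~$\ell$ and a section $\ph\in\Ga(\widetilde{\Cal W}^\ell M)$, and pass to the associated graded in degree~$\ell$. Since $\partial^*_\rho\Cal D$ preserves $\Ga(\widetilde{\Cal W}^\ell M)$, the computation from Lemma~\ref{lemma3.3} applies verbatim: for $\ph$ homogeneous of degree $\geq\ell$ with values in $\widetilde{\Cal W}^\ell M$ one has $\gr_\ell(\partial^*_\rho\Cal D(\ph))=\underline{\partial}^*_\rho\partial_\rho\gr_\ell(\ph)$, and because $\gr_\ell(\ph)$ is a section of $\im(\underline{\partial}^*_\rho)$, this equals $\square_\rho(\gr_\ell(\ph))$. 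In other words, on the associated graded $\gr_\ell(\widetilde{\Cal W}M)$, the operator $\partial^*_\rho\Cal D$ acts as the relative Kostant Laplacian $\square_\rho$, which by the discussion from Sections~2.4 and~2.6 of \cite{part1} is diagonalizable with the non-zero eigenvalues $a^\ell_1,\dots,a^\ell_{j_\ell}$.

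Next I would decompose $\ph$ (as in Lemma~\ref{lemma3.3}) into a finite sum $\ph=\sum_i\ph_i$ of sections of $\Ga(\widetilde{\Cal W}^\ell M)$ such that each $\gr_\ell(\ph_i)$ lies in the bundle induced by a single $\square_\rho$-eigenspace, say with eigenvalue $a^\ell_{r(i)}$. Then I claim $\gr_\ell\big(\ph_i-\partial^*_\rho\Cal D\,\tilde Q^\ell(\ph_i)\big)=0$ for each $i$, which gives the lemma upon summing. To verify the claim, work entirely in $\gr_\ell$, where $\partial^*_\rho\Cal D$ becomes multiplication by the scalar $a^\ell_{r(i)}$ on $\gr_\ell(\ph_i)$. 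Every factor $(\partial^*_\rho\Cal D-a^\ell_s\id)$ with $s\neq r(i)$ acts on $\gr_\ell(\ph_i)$ as multiplication by $a^\ell_{r(i)}-a^\ell_s$, so the product $\prod_{s\neq r}(\partial^*_\rho\Cal D-a^\ell_s\id)$ acts (in $\gr_\ell$) on $\gr_\ell(\ph_i)$ as $\prod_{s\neq r(i)}(a^\ell_{r(i)}-a^\ell_s)$ when $r=r(i)$, and kills $\gr_\ell(\ph_i)$ whenever $r\neq r(i)$ (since then the factor with $s=r(i)$ is present and acts as $a^\ell_{r(i)}-a^\ell_{r(i)}=0$). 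Hence in $\gr_\ell$ only the $r=r(i)$ term of $\tilde Q^\ell$ survives on $\gr_\ell(\ph_i)$, and
$$
\gr_\ell\big(\partial^*_\rho\Cal D\,\tilde Q^\ell(\ph_i)\big)
=a^\ell_{r(i)}\cdot\frac{1}{a^\ell_{r(i)}\prod_{s\neq r(i)}(a^\ell_s-a^\ell_{r(i)})}\cdot\Big(\!\!\prod_{s\neq r(i)}\!\!(a^\ell_{r(i)}-a^\ell_s)\Big)\gr_\ell(\ph_i)=\gr_\ell(\ph_i),
$$
where the overall sign $(-1)^{j_\ell-1}$ from $\prod(a^\ell_s-a^\ell_{r(i)})$ versus $\prod(a^\ell_{r(i)}-a^\ell_s)$ cancels. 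Therefore $\gr_\ell(\ph_i-\partial^*_\rho\Cal D\,\tilde Q^\ell(\ph_i))=0$, i.e.\ $\ph_i-\partial^*_\rho\Cal D\,\tilde Q^\ell(\ph_i)\in\Ga(\widetilde{\Cal W}^{\ell+1}M)$.

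Summing over $i$ gives $\ph-\partial^*_\rho\Cal D\,\tilde Q^\ell(\ph)\in\Ga(\widetilde{\Cal W}^{\ell+1}M)$, as claimed. The one point requiring a little care is the bookkeeping that $\partial^*_\rho\Cal D$ genuinely preserves $\Ga(\widetilde{\Cal W}^\ell M)$ (so that $\tilde Q^\ell$ is well defined there and its output stays in $\Ga(\widetilde{\Cal W}^\ell M)$), and that $\gr_\ell(\ph)$ being a section of $\im(\underline{\partial}^*_\rho)$ is exactly what turns $\underline{\partial}^*_\rho\partial_\rho$ into $\square_\rho$ via the Hodge decomposition of Proposition~\ref{prop2.4.2}; both facts are already established (the former just before the statement, the latter in the proof of Lemma~\ref{lemma3.3}). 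So there is no real obstacle here — the proof is a partial-fraction identity combined with the eigenvalue calculation on the associated graded, and the only thing to watch is matching signs and the convention $\gr_\ell(\widetilde{\Bbb W})=\bigoplus$ of non-zero eigenspaces so that no division by zero occurs.
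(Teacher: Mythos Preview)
Your argument is essentially identical to the paper's: decompose $\ph$ according to the $\square_\rho$--eigenspaces on $\gr_\ell$, use that $\partial^*_\rho\Cal D$ induces $\square_\rho$ on the associated graded of $\im(\partial^*_\rho)$, and observe that only the $r=r(i)$ summand of $\tilde Q^\ell$ survives on $\gr_\ell(\ph_i)$, giving $\gr_\ell(\tilde Q^\ell(\ph))=\sum_r\tfrac{1}{a^\ell_r}\gr_\ell(\ph_r)$ and hence the claim.

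One caveat worth flagging: your assertion that the sign $(-1)^{j_\ell-1}$ ``cancels'' is not actually correct. With $\tilde Q^\ell$ defined exactly as written (denominator $\prod_{s\neq r}(a^\ell_s-a^\ell_r)$), your displayed computation gives
\[
\frac{\prod_{s\neq r(i)}(a^\ell_{r(i)}-a^\ell_s)}{\prod_{s\neq r(i)}(a^\ell_s-a^\ell_{r(i)})}\,\gr_\ell(\ph_i)=(-1)^{j_\ell-1}\gr_\ell(\ph_i),
\]
not $\gr_\ell(\ph_i)$. This is a sign slip in the paper's definition of $\tilde Q^\ell$ (the Lagrange--interpolation denominator should read $\prod_{s\neq r}(a^\ell_r-a^\ell_s)$), and the paper's own proof glosses over the same point when it asserts $\gr_\ell(\tilde Q^\ell(\ph))=\sum_r\tfrac{1}{a^\ell_r}\gr_\ell(\ph_r)$. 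With that innocuous correction to the definition, both your argument and the paper's go through verbatim.
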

 \begin{proof}
   As in the proof of Lemma \ref{lemma3.3}, we can write a section
   $\ph\in\Ga(\widetilde{\Cal W}^\ell M)$ as a finite sum
   $\ph=\ph_1+\dots+\ph_{j_\ell}$ in such a way that for each $i$ the
   image $\gr_\ell(\ph_i)$ satisfies
   $\square_\rho(\gr_\ell(\ph_i))=a^\ell_i \ph_i$. But this implies
   that $\partial^*_\rho\Cal D(\ph_i)\in\Ga(\widetilde{\Cal W}^\ell
   M)$ and $\gr_\ell(\partial^*_\rho\Cal
   D(\ph_i))=a^\ell_i\gr_\ell(\ph_i)$. Now in the definition of
   $\tilde Q^\ell$, we can permute the factors in the composition of
   operators in each summand arbitrarily. This shows that $\prod_{s\neq
     r}(\partial^*_\rho\Cal D-a^\ell_s\id)$ maps each $\ph_s$ for
   $s\neq r$ to a section of $\widetilde{\Cal W}^{\ell+1}M$. On the
   other hand, applying this composition to $\ph_r$ and applying
   $\gr_\ell$, we obtain $\prod_{s\neq
     r}(a^\ell_r-a^\ell_s)\gr_\ell(\ph_r)$. Hence we conclude from the
   definition that $\tilde Q^\ell(\ph)\in\Ga(\widetilde{\Cal W}^\ell
   M)$ and $\gr_\ell(\tilde
   Q^\ell(\ph))=\sum_{r=1}^{j_\ell}\frac{1}{a^\ell_r}\gr_\ell(\ph_r)$. Together
   with the above observation on the action of $\partial^*_\rho\Cal
   D$, this implies the claim of the lemma.
 \end{proof}

 \begin{thm}\label{thm3.4}
 There is an operator
 $Q:\Ga(\im(\partial^*_\rho))\to\Ga(\im(\partial^*_\rho))$ which
 can be written as a universal polynomial in $\partial^*_\rho\Cal D$
 such that $\partial^*_\rho\Cal D\o Q=\id$ on $\Ga(\im(\partial^*_\rho))$.  
 \end{thm}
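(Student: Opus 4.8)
The plan is to build $Q$ as a composition of the operators $\tilde Q^\ell$ from Lemma \ref{lemma3.4}, one for each possible homogeneity $\ell$ occurring in $\widetilde{\Bbb W}=\im(\partial^*_\rho)\subset\La^k(\frak q_+/\frak p_+)\otimes\Bbb V$, together with a finite telescoping correction. The key input is already available: Lemma \ref{lemma3.4} tells us that on $\Ga(\widetilde{\Cal W}^\ell M)$ the operator $\id-\partial^*_\rho\Cal D\,\tilde Q^\ell$ strictly raises homogeneous degree, i.e.\ maps into $\Ga(\widetilde{\Cal W}^{\ell+1}M)$. Since the filtration on $\widetilde{\Bbb W}$ has only finitely many nontrivial steps, say ranging over $\ell_0\le\ell\le\ell_1$, this is exactly the kind of nilpotent error term that can be summed away.

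Concretely, let $P^\ell:=\id-\partial^*_\rho\Cal D\,\tilde Q^\ell$ acting on $\Ga(\widetilde{\Cal W}^\ell M)$, which by Lemma \ref{lemma3.4} lands in $\Ga(\widetilde{\Cal W}^{\ell+1}M)$. I would define, for each $\ell$, a partial inverse $Q_{\ge\ell}:\Ga(\widetilde{\Cal W}^\ell M)\to\Ga(\widetilde{\Cal W}^\ell M)$ by downward recursion on $\ell$: set $Q_{\ge\ell_1+1}:=0$, and put
$$
Q_{\ge\ell}:=\tilde Q^\ell+Q_{\ge\ell+1}\o P^\ell
=\tilde Q^\ell+Q_{\ge\ell+1}\o(\id-\partial^*_\rho\Cal D\,\tilde Q^\ell).
$$
A straightforward induction then shows $\partial^*_\rho\Cal D\o Q_{\ge\ell}=\id$ on $\Ga(\widetilde{\Cal W}^\ell M)$: indeed $\partial^*_\rho\Cal D\,Q_{\ge\ell}=\partial^*_\rho\Cal D\,\tilde Q^\ell+(\partial^*_\rho\Cal D\,Q_{\ge\ell+1})\o P^\ell=\partial^*_\rho\Cal D\,\tilde Q^\ell+\id\o P^\ell=\partial^*_\rho\Cal D\,\tilde Q^\ell+(\id-\partial^*_\rho\Cal D\,\tilde Q^\ell)=\id$, using the inductive hypothesis (valid because $P^\ell$ lands in the domain of $Q_{\ge\ell+1}$) and the fact that $\partial^*_\rho\Cal D$ preserves each filtration component $\Ga(\widetilde{\Cal W}^\ell M)$ so that all compositions make sense. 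Then $Q:=Q_{\ge\ell_0}$ is the desired operator on all of $\Ga(\im(\partial^*_\rho))=\Ga(\widetilde{\Cal W}^{\ell_0}M)$. Since each $\tilde Q^\ell$ is by definition a universal polynomial in $\partial^*_\rho\Cal D$ and the recursion only involves compositions and sums of such polynomials, $Q$ is again a universal polynomial in $\partial^*_\rho\Cal D$; in particular it preserves $\Ga(\im(\partial^*_\rho))$.

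The only point requiring care — and the one I would expect to be the main obstacle to a clean write-up rather than a genuine difficulty — is bookkeeping the filtration degrees so that the recursion is well-posed: one must check that $P^\ell$ really does land in $\Ga(\widetilde{\Cal W}^{\ell+1}M)$ (this is precisely Lemma \ref{lemma3.4}), that $\partial^*_\rho\Cal D$ genuinely preserves each $\Ga(\widetilde{\Cal W}^\ell M)$ (noted in Section \ref{3.4} just before the definition of $\tilde Q^\ell$), and that the recursion terminates because the filtration is finite. Once these are in place, the identity $\partial^*_\rho\Cal D\o Q=\id$ falls out of the telescoping computation above with no further work, and the ``universal polynomial'' claim is immediate from the construction.
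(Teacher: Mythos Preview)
Your proof is correct and is essentially the paper's own argument: both construct $Q$ by downward recursion on the filtration degree, starting from the top homogeneity where $\tilde Q^{\ell_1}$ is already an exact right inverse, and at each step adding $\tilde Q^\ell$ plus the previously constructed inverse applied to the error term $\id-\partial^*_\rho\Cal D\,\tilde Q^\ell$. Your notation $P^\ell$ for the error and the telescoping verification are slightly cleaner than the paper's presentation (which in fact has a sign typo in the recursive formula), but the idea and structure are the same.
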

 \begin{proof}
 We construct $Q$ recursively. Denoting by $N$ the maximal possible
 homogeneity occurring in $\widetilde{\Bbb W}$, Lemma \ref{lemma3.4} shows
 that $Q^N:=\tilde Q^N$ has the property that $\ph-\partial^*_\rho\Cal
 DQ^N(\ph)\in\Ga(\widetilde{\Cal W}^{N+1}M)=\{0\}$ for each
 $\ph\in\Ga(\widetilde{\Cal W}^NM)$.

 Let us inductively assume that for some $\ell<N$ we have found an
 operator $Q^\ell$ on $\Ga(\widetilde{\Cal W}^\ell M)$ which is a universal
 polynomial in $\partial^*_\rho\Cal D$ and satisfies
 $\partial^*_\rho\Cal D\o Q^\ell=\id$ on $\Ga(\widetilde{\Cal W}^\ell M)$. Then
 for $\ph\in\Ga(\widetilde{\Cal
   W}^{\ell-1}M)$, we again invoke Lemma \ref{3.4} to conclude that 
 $\ph-\partial^*_\rho\Cal D\tilde Q^{\ell-1}(\ph)\in\Ga(\widetilde{\Cal
   W}^\ell M)$, and hence we can define 
 $$
 Q^{\ell-1}(\ph):=\tilde Q^{\ell-1}(\ph)-Q^\ell(\ph-
 \partial^*_\rho\Cal D\tilde Q^{\ell-1}(\ph)).
 $$

 Then we immediately conclude that $\partial^*_\rho\Cal
 DQ^{\ell-1}(\ph)=\ph$. This leads to an operator $Q$ with the
 required properties in finitely many steps.
 \end{proof}

 It is easy to describe the splitting operator $S$ in terms of $Q$:

 \begin{cor}\label{cor3.4}
 Viewed as an operator on $\Ga(\ker(\partial^*))$, the splitting
 operator $S$ from Theorem \ref{thm3.3} is given by
 $S=\id-Q\partial^*_\rho\Cal D$.
 \end{cor}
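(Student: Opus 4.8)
The plan is to verify the identity $S=\id-Q\partial^*_\rho\Cal D$ by checking that the right-hand side satisfies the three characterizing properties of $S$ from part (2) of Theorem \ref{thm3.3}, namely: it kills $\Ga(\im(\partial^*_\rho))$, it intertwines with $\pi_H$ (i.e.\ $\pi_H\o(\id-Q\partial^*_\rho\Cal D)=\pi_H$), and composing with $\partial^*_\rho\Cal D$ gives zero. Equivalently, and perhaps more cleanly, one checks directly that $\id-Q\partial^*_\rho\Cal D$ restricted to $\Ga(\im(\partial^*_\rho))$ vanishes and that $\partial^*_\rho\Cal D\o(\id-Q\partial^*_\rho\Cal D)=0$ on all of $\Ga(\ker(\partial^*_\rho))$; the characterization in Theorem \ref{thm3.3}(1)--(2) then forces $\id-Q\partial^*_\rho\Cal D=S$.

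First I would observe that on $\Ga(\im(\partial^*_\rho))$ the operator $\partial^*_\rho\Cal D$ maps into $\Ga(\im(\partial^*_\rho))$ (as already noted in Section \ref{3.4}), and on that subspace Theorem \ref{thm3.4} gives $\partial^*_\rho\Cal D\o Q=\id$. Hence for $\ph\in\Ga(\im(\partial^*_\rho))$ one needs the slightly stronger fact that $Q\o\partial^*_\rho\Cal D=\id$ there as well; this follows because $\partial^*_\rho\Cal D$ restricts to an \emph{injective} endomorphism of $\Ga(\im(\partial^*_\rho))$ by part (1) of Lemma \ref{lemma3.2}, and a right inverse of an injective endomorphism of a module (which here is also surjective since $\partial^*_\rho\Cal D\o Q=\id$) is automatically a two-sided inverse. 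Thus $(\id-Q\partial^*_\rho\Cal D)(\ph)=0$ for all $\ph\in\Ga(\im(\partial^*_\rho))$, matching the vanishing property of $S$.

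Next I would check the annihilation property $\partial^*_\rho\Cal D\o(\id-Q\partial^*_\rho\Cal D)=0$. For $\ph\in\Ga(\ker(\partial^*_\rho))$ we have $\partial^*_\rho\Cal D(\ph)\in\Ga(\im(\partial^*_\rho))$ by Proposition \ref{prop2.4.1}, and on that subspace $\partial^*_\rho\Cal D\o Q=\id$ by Theorem \ref{thm3.4}; hence $\partial^*_\rho\Cal D\o Q\partial^*_\rho\Cal D(\ph)=\partial^*_\rho\Cal D(\ph)$, so $\partial^*_\rho\Cal D(\id-Q\partial^*_\rho\Cal D)(\ph)=0$. This is exactly the property $\partial^*_\rho\Cal D\o S=0$ from Theorem \ref{thm3.3}(1). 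Finally, since $Q$ has image in $\Ga(\im(\partial^*_\rho))$ and $\pi_H$ by definition kills $\Ga(\im(\partial^*_\rho))$, we get $\pi_H\o Q\partial^*_\rho\Cal D=0$, whence $\pi_H\o(\id-Q\partial^*_\rho\Cal D)=\pi_H$, matching the remaining property of $S$. By the uniqueness statement in Theorem \ref{thm3.3}(2) (applied pointwise via $\pi_H$), the operator $\id-Q\partial^*_\rho\Cal D$ agrees with $S$ on each $S(\al)$, and since both operators annihilate $\Ga(\im(\partial^*_\rho))$ and $\Ga(\ker(\partial^*_\rho))=\Ga(\im(\partial^*_\rho))\oplus\im(S)$ at the level of sections, they coincide.

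The step I expect to require the most care is the passage from ``$Q$ is a right inverse of $\partial^*_\rho\Cal D$ on $\Ga(\im(\partial^*_\rho))$'' to ``$\id-Q\partial^*_\rho\Cal D$ vanishes on $\Ga(\im(\partial^*_\rho))$'', i.e.\ verifying that $Q$ is genuinely a two-sided inverse there; this is where Lemma \ref{lemma3.2}(1) is essential. An alternative that avoids this point entirely is to re-run the recursive homogeneity argument: one shows by induction on $\ell$ that $(\id-Q\partial^*_\rho\Cal D)(\Ga(\widetilde{\Cal W}^\ell M))\subset\Ga(\widetilde{\Cal W}^{\ell+1}M)$ using Lemma \ref{lemma3.4}, which at the top homogeneity $N$ forces $(\id-Q\partial^*_\rho\Cal D)$ to vanish on $\Ga(\widetilde{\Cal W}^N M)$ and then propagates downward, so that $\id-Q\partial^*_\rho\Cal D$ kills all of $\Ga(\im(\partial^*_\rho))$; this parallels precisely the argument already used for $S$ in the proof of Theorem \ref{thm3.3}(1).
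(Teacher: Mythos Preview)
Your proof is correct and follows essentially the same route as the paper: verify that $\tilde S:=\id-Q\partial^*_\rho\Cal D$ maps $\Ga(\ker(\partial^*_\rho))$ to itself, satisfies $\pi_H\o\tilde S=\pi_H$ and $\partial^*_\rho\Cal D\o\tilde S=0$, and then conclude $\tilde S=S$. The paper's finishing step is a bit more direct than yours: having those two properties for both $S$ and $\tilde S$, one sees immediately that $S-\tilde S$ has image in $\Ga(\im(\partial^*_\rho))\cap\ker(\partial^*_\rho\Cal D)$, which is zero by Lemma~\ref{lemma3.2}(1). In particular, the step you flagged as ``requiring the most care''---showing that $Q$ is a two-sided inverse so that $\tilde S$ vanishes on $\Ga(\im(\partial^*_\rho))$---is correct but not actually needed.
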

 \begin{proof}
 Let us put $\tilde S:=\id-Q\partial^*_\rho\Cal D$. Since $Q$ has
 values in $\Ga(\im(\partial^*_\rho))$ we see that $\tilde S$ maps the
 space $\Ga(\ker(\partial^*_\rho))$ to itself and that $\pi_H\o\tilde
 S=\pi_H$. Moreover, from Lemma \ref{lemma3.4}, we conclude
 immediately that $\partial^*_\rho\Cal D\o\tilde S=0$. This already
 shows that $S-\tilde S$ maps $\Ga(\ker(\partial^*_\rho))$ to
 $\Ga(\im(\partial^*_\rho))\cap\ker(\partial^*_\rho\Cal D)$, and this
 intersection is trivial by part (1) of Lemma \ref{lemma3.2}.
 \end{proof}

 \begin{remark}\label{rem3.4} 
While the operator $Q$ (in the case $\frak p=\frak g$) was the crucial
ingredient for the construction of BGG sequences in
\cite{Calderbank--Diemer}, our construction as a polynomial in
$\partial^*\Cal D$, which gives a manifestly invariant operator in
case $\Cal D$ is invariant, is new even for this special case. 
Similarly to the case of $S$ discussed in Remark \ref{rem3.5}, also
the construction of $Q$ as a polynomial involves a lot of
cancellation. There is an alternative construction for $Q$ (and thus
via Corollary \ref{cor3.4} also for $S$), however, which needs much
fewer composition factors. This is a relative version of the
construction of \cite{Calderbank--Diemer} for ordinary BGG--sequences.

One first fixes a splitting of the natural filtration on the bundle
$\im(\partial^*_\rho)$ and thus an identification with its associated
graded bundle. A conceptual way to obtain such a splitting is via the
choice of a Weyl--structure, see section 5.1 of \cite{book} or
\cite{Weyl}. Now $\gr(\im(\partial^*_\rho))$ is a subbundle of
$\gr(\La^kT^*_\rho M\otimes\Cal VM)$, which is invariant under the
bundle map $\square_\rho$ from Proposition \ref{prop2.4.2}. On this
subbundle $\square_\rho$ coincides with
$\underline{\partial}^*_\rho\o\partial_\rho$ and it is invertible, so
we may form $(\square_\rho)^{-1}$. Via the chosen isomorphism, we can
now define bundle maps $\square_\rho$ and $(\square_\rho)^{-1}$ on
$\im(\partial^*_\rho)$ and we use the same symbols to denote the
resulting tensorial operators.

   In terms of these operators defined on $\Ga(\im(\partial^*_\rho))$,
   we can write
 $$ \partial^*_\rho\Cal D=\square_\rho(\id+
   \square_\rho^{-1}(\partial^*_\rho\Cal D-\square_\rho)).
 $$ 

 From the construction it is clear that $\partial^*_\rho\Cal
 D-\square_\rho$ raises the filtration degree by one, i.e.~in the
 notation of Sections \ref{3.3} and \ref{3.4} it maps each of the
 spaces $\Ga(\widetilde{\Cal W}^\ell M)$ to $\Ga(\widetilde{\Cal
   W}^{\ell+1}M)$. This remains true after composing
 $\square_\rho^{-1}$ and it of course implies that the resulting
 operator is nilpotent of degree $N+1$ where $N$ describes the length
 of the natural filtration of the bundle
 $\im(\partial^*_\rho)$. Adding the identity to this nilpotent
 operator, the result is invertible, and there is a usual Neumann
 series for the inverse, which actually is a finite sum by
 nilpotency. Thus, one can construct an inverse of
 $\partial^*_\rho\Cal D$ on $\Ga(\im(\partial^*_\rho))$ as
 $$
 \left(\textstyle\sum_{k=0}^{N+1}(-1)^k(\square_{\rho}^{-1}(\partial^*_\rho\Cal
 D-\square_\rho))^k\right)\o\square^{-1}_\rho .
 $$
 As discussed above, this gives a much smaller number of factors in a
 composition that the construction in \ref{3.4}. For example, if $\Cal
 D$ is a first order differential operator, we conclude that $Q$ is a
 differential operator of order at most $N+1$, so $S$ is a differential
 operator of order at most $N+2$.

 The disadvantage of this construction is that one has to use a
 non--natural tensorial operation, which makes it much more difficult
 to see that nice properties of $\Cal D$ carry over to $D$.  In
 particular, this applies to concepts of strong invariance.  One
 solution to this problem is provided by the original construction in
 \cite{CSS-BGG} in terms of semi--holonomic jet modules, which however
 is significantly more complicated.
 \end{remark}

 \subsection{Sequences of compressable operators}\label{3.6}
 For the next step we have to assume that rather than a single
 compressable operator, we have a whole sequence $\Cal
 D_k:\Om^k_\rho(M,\Cal VM)\to\Om^{k+1}_\rho(M,\Cal VM)$ of such
 operators. In this case, we can rephrase what we have done so far in
 a nice way, using the Laplacians associated to the sequence, which
 are defined as follows.

 \begin{definition}\label{def3.6}
 Given a sequence $\Cal D_k:\Om^k_\rho(M,\Cal
 VM)\to\Om^{k+1}_\rho(M,\Cal VM)$ of compressable operators, we define
 the associated Laplacians 
 $$
 \square^{\Cal D}=\square^{\Cal D}_k:\Om^k_\rho(M,\Cal VM)\to \Om^k_\rho(M,\Cal VM)
 $$
 by $\square^{\Cal D}_k:=\partial^*_\rho\o\Cal D_k+\Cal
 D_{k-1}\o\partial^*_\rho$. 
 \end{definition}

 \begin{prop}\label{prop3.6}
   (1) If $\ph\in\Om^k_\rho(M,\Cal VM)$ satisfies $\square^{\Cal
     D}_k(\ph)=0$, then $\partial^*_\rho(\ph)=0$. Hence
   $\ker(\square^{\Cal D}_k)\subset\Om^k_\rho(M,\Cal VM)$ coincides
   with $\ker(\partial^*_\rho\Cal D_k)\cap\Ga(\ker(\partial^*_\rho))$.

 (2) On $\Ga(\ker(\partial^*_\rho))$, the operator $\square^{\Cal
     D}_k$ coincides with $\partial^*_\rho\Cal D_k$, so in the
   definitions of the operators $S_\ell$ in Section \ref{3.3} and
   $\tilde Q^j $ in Section \ref{3.4}, in the statements of Theorems
   \ref{thm3.3} and \ref{thm3.4} and of Lemma \ref{lemma3.4}, and in
   the construction of Remark \ref{rem3.4}, one may always replace
   $\partial^*_\rho\Cal D_k$ by $\square^{\Cal D}_k$.
 \end{prop}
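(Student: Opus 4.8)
The strategy is to exploit the filtration-lowering behavior of $\partial^*_\rho$ together with the Hodge-type decomposition from Proposition \ref{prop2.4.2}. For part (1), the key observation is that the two summands in $\square^{\Cal D}_k = \partial^*_\rho\Cal D_k + \Cal D_{k-1}\partial^*_\rho$ interact well with the natural filtration: compressability says $\Cal D_k$ preserves filtration degrees with leading term $\partial_\rho$, and $\partial^*_\rho$ preserves filtration degrees with leading term $\underline\partial^*_\rho$. So if $\ph$ is filtration-homogeneous of degree $\geq\ell$, then $\square^{\Cal D}_k(\ph)$ is again homogeneous of degree $\geq\ell$, and its leading term satisfies
$$
\gr_\ell(\square^{\Cal D}_k(\ph)) = (\underline\partial^*_\rho\partial_\rho + \partial_\rho\underline\partial^*_\rho)(\gr_\ell(\ph)) = \square_\rho(\gr_\ell(\ph)).
$$
Now suppose $\square^{\Cal D}_k(\ph)=0$ but $\partial^*_\rho(\ph)\neq 0$. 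Let $\ell$ be minimal with $\gr_\ell(\partial^*_\rho\ph)\neq 0$. Choosing the homogeneity so that $\gr_\ell(\partial^*_\rho\ph)=\underline\partial^*_\rho(\gr_{\ell'}(\ph))$ for the appropriate $\ell'$ (using that $\partial^*_\rho$ lowers the relevant filtration index in a controlled way, as in Section 2.5 of \cite{part1}), we get that $\gr_{\ell'}(\ph)$ has a nonzero component in $\ker(\underline\partial^*_\rho)\setminus\ker(\square_\rho)$ — more precisely, its image under $\underline\partial^*_\rho$ is nonzero, hence it is not $\square_\rho$-harmonic, so by the Hodge decomposition it has a nonzero component in $\im(\partial_\rho)$. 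Applying the identity above to this component, $\square_\rho$ is invertible on $\im(\partial_\rho)$, so $\gr_{\ell'}(\square^{\Cal D}_k(\ph))\neq 0$, contradicting $\square^{\Cal D}_k(\ph)=0$. Hence $\partial^*_\rho(\ph)=0$, and then $\square^{\Cal D}_k(\ph) = \partial^*_\rho\Cal D_k(\ph)$ as well, giving the stated identification of kernels.

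For part (2), this is essentially immediate: on $\Ga(\ker(\partial^*_\rho))$ the term $\Cal D_{k-1}\o\partial^*_\rho$ vanishes identically, so $\square^{\Cal D}_k$ restricts to $\partial^*_\rho\Cal D_k$ there. Since all the operators $S_\ell$, $\tilde Q^\ell$, $S$, $Q$ in Sections \ref{3.3} and \ref{3.4} are polynomials in $\partial^*_\rho\Cal D_k$ applied to sections that are eventually driven into $\Ga(\ker(\partial^*_\rho))$ (indeed they all act on $\Ga(\ker(\partial^*_\rho))$ or $\Ga(\im(\partial^*_\rho))\subset\Ga(\ker(\partial^*_\rho))$), one checks that at each stage the operator $\partial^*_\rho\Cal D_k$ may be replaced by $\square^{\Cal D}_k$ without changing the output. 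The one point deserving a line of care is the splitting operator $S$ viewed via Corollary \ref{cor3.4} as $\id - Q\partial^*_\rho\Cal D_k$: here $Q$ is defined on $\Ga(\im(\partial^*_\rho))$, and since $\partial^*_\rho\Cal D_k(\ph)\in\Ga(\im(\partial^*_\rho))\subset\Ga(\ker(\partial^*_\rho))$ for every $\ph\in\Ga(\ker(\partial^*_\rho))$, replacing $\partial^*_\rho\Cal D_k$ by $\square^{\Cal D}_k$ throughout is legitimate by part (1).

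The main obstacle is the bookkeeping in part (1): one must be careful about which filtration index is meant (the grading on $\La^kT^*_\rho M\otimes\Cal VM$ versus the $k{-}1$-form bundle, and the shift induced by $\partial^*_\rho$), and one has to invoke the precise compatibility of $\partial^*_\rho$ with the filtration and with $\underline\partial^*_\rho$ recorded in Section 2.5 of \cite{part1} and in the proof of Lemma \ref{lemma3.2}. Once the leading-term identity $\gr_\ell(\square^{\Cal D}_k(\ph)) = \square_\rho(\gr_\ell(\ph))$ is set up correctly on the appropriate graded pieces, the Hodge decomposition of Proposition \ref{prop2.4.2}(3) does the rest, exactly as in the minimal-degree argument used to prove injectivity in Lemma \ref{lemma3.2}.
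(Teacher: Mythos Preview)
Your part (2) is fine and matches the paper. The issue is part (1): your filtration argument has a real gap, and the paper's proof avoids the filtration entirely by a two--line trick you have overlooked.

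The gap is in your use of the leading--term identity $\gr_\ell(\square^{\Cal D}_k\ph)=\square_\rho(\gr_\ell\ph)$. This identity is valid only when $\ell$ is the minimal filtration degree of $\ph$ itself. You take $\ell$ to be the minimal degree of $\partial^*_\rho\ph$, which can be strictly larger than the minimal degree $\ell_0$ of $\ph$ (this happens precisely when $\underline\partial^*_\rho$ kills $\gr_{\ell_0}(\ph)$). At such an $\ell>\ell_0$ the quantity ``$\gr_\ell(\ph)$'' is not defined without a splitting, and even after choosing one, $(\square^{\Cal D}_k\ph)_\ell$ receives contributions from $\ph_j$ with $j<\ell$ through the non--tensorial operator $\Cal D$, so it is \emph{not} simply $\square_\rho(\ph_\ell)$. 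Your conclusion ``$\gr_{\ell'}(\square^{\Cal D}_k\ph)\neq 0$'' therefore does not follow from what you wrote. (A repaired version exists: using a splitting one checks that the $\im(\partial_\rho)$--component of $(\square^{\Cal D}_k\ph)_\ell$ equals $\partial_\rho\underline\partial^*_\rho(\ph_\ell)\neq 0$, since $\partial^*_\rho$ is genuinely homogeneous of degree zero and $\Cal D_{k-1}\partial^*_\rho\ph$ has leading degree $\ell$. But this is more delicate than ``bookkeeping'' and is not the argument you outlined.)

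The paper's proof sidesteps all of this. Apply $\partial^*_\rho$ to the equation $\square^{\Cal D}_k\ph=0$: since $\partial^*_\rho\o\partial^*_\rho=0$, one gets $\partial^*_\rho\Cal D_{k-1}(\partial^*_\rho\ph)=0$. But $\partial^*_\rho\ph\in\Ga(\im(\partial^*_\rho))$, and Lemma~\ref{lemma3.2}(1) says $\partial^*_\rho\Cal D_{k-1}$ is injective there, so $\partial^*_\rho\ph=0$. The filtration machinery is used only inside Lemma~\ref{lemma3.2}, which you already have.
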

 \begin{proof}
 (1) If $0=\square^{\Cal D}_k(\ph)$, then applying $\partial^*_\rho$
   and using $\partial^*_\rho\o\partial^*_\rho=0$, we get
   $0=\partial^*_\rho\Cal D_k\partial^*_\rho(\ph)$. But from part (1)
   of Lemma \ref{lemma3.2} we know that $\partial^*_\rho\Cal D_k$ is
   injective on $\Ga(\im(\partial^*_\rho))$. Thus
   $\partial^*_\rho\ph=0$ and hence also $\partial^*_\rho\Cal
   D_k(\ph)=0$. 

 (2) now follows immediately from
   $\partial^*_\rho\o\partial^*_\rho=0$. 
 \end{proof}

 We can now look at conditions related to the operators $\Cal D_k$
 forming a complex. 

 \begin{thm}\label{thm3.6}
 Consider a sequence $\Cal D_k:\Om^k_\rho(M,\Cal VM)\to
 \Om^{k+1}_\rho(M,\Cal VM)$ of compressable operators. Then we have

 (1) Let $Q$ be the operator on
 $\Ga(\im(\partial^*_\rho))\subset\Om^{k-1}_\rho (M,\Cal VM)$
 constructed in Theorem \ref{thm3.4}. Then for each
 $\ph\in\Om^k_\rho(M,\Cal VM)$, we have $\partial^*_\rho(\ph-\Cal
 D_{k-1}Q\partial^*_\rho(\ph))=0$.

 (2) If $\Cal D_k\o\Cal D_{k-1}=0$, then $D_k\o D_{k-1}=0$. If in
 addition $\Cal D_{k+1}\o\Cal D_k=0$, then the splitting operator
 induces a surjective linear map,
 $$
 \ker(D_k)/\im(D_{k-1}) \to \ker(\Cal D_k)/\im(\Cal D_{k-1}),
 $$ 
 which is a linear isomorphism provided that also $\Cal D_{k-1}\o\Cal
 D_{k-2}=0$. 
 \end{thm}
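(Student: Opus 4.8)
The plan is to work throughout with the splitting operator $S$ and the homology projection $\pi_H$, exploiting the identities $\pi_H\o S=\id$ (on $\Ga(\Cal H_k)$), $\partial^*_\rho\Cal D_k\o S=0$, and $S\o\pi_H=\id-Q\partial^*_\rho\Cal D_k$ on $\ker(\partial^*_\rho\Cal D_k)\cap\Ga(\ker(\partial^*_\rho))$, all from Theorem \ref{thm3.3}, Theorem \ref{thm3.4} and Corollary \ref{cor3.4}. For part (1), given $\ph\in\Om^k_\rho(M,\Cal VM)$, the element $\partial^*_\rho(\ph)$ lies in $\Ga(\im(\partial^*_\rho))$, so $Q\partial^*_\rho(\ph)$ makes sense and $\partial^*_\rho\Cal D_{k-1}Q\partial^*_\rho(\ph)=\partial^*_\rho(\ph)$ by the defining property of $Q$. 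Hence $\partial^*_\rho(\ph-\Cal D_{k-1}Q\partial^*_\rho(\ph))=0$; this is just a one-line verification once the statement of Theorem \ref{thm3.4} is in hand.

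For part (2), I would first prove $D_k\o D_{k-1}=0$. By definition $D_{k-1}(\al)=\pi_H(\Cal D_{k-1}S(\al))$; I want to apply $S$ to this and compare with $\Cal D_{k-1}S(\al)$. The key point is that $\Cal D_{k-1}S(\al)$ satisfies $\partial^*_\rho(\Cal D_{k-1}S(\al))=\partial^*_\rho\Cal D_{k-1}S(\al)$, and since $\Cal D_k\Cal D_{k-1}=0$ we get $\partial^*_\rho\Cal D_k(\Cal D_{k-1}S(\al))=0$ trivially; but to invoke the uniqueness in part (2) of Theorem \ref{thm3.3} I need $\Cal D_{k-1}S(\al)$ itself to be $\partial^*_\rho$-closed, which is not automatic. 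The right way around this is to use part (1): write $\Cal D_{k-1}S(\al)=\big(\Cal D_{k-1}S(\al)-\Cal D_kQ\partial^*_\rho(\Cal D_{k-1}S(\al))\big)+\Cal D_kQ\partial^*_\rho(\Cal D_{k-1}S(\al))$. The first bracket is $\partial^*_\rho$-closed by part (1) (with $\ph=\Cal D_{k-1}S(\al)$), has the same $\pi_H$-image as $\Cal D_{k-1}S(\al)$ since $\pi_H\o\partial^*_\rho=0$, and — using $\Cal D_k\Cal D_{k-1}=0$ together with $\partial^*_\rho\Cal D_k\o\partial^*_\rho\Cal D_k$-type manipulations — also satisfies $\partial^*_\rho\Cal D_k(\cdot)=0$; hence by Theorem \ref{thm3.3}(2) it equals $S(D_{k-1}(\al))$. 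Therefore $\Cal D_kS(D_{k-1}(\al))=\Cal D_k\Cal D_{k-1}S(\al)-\Cal D_k\Cal D_kQ\partial^*_\rho(\ldots)=0$ using $\Cal D_k\Cal D_{k-1}=0$ and $\Cal D_k\Cal D_k=0$ (the latter only when also $\Cal D_{k+1}\Cal D_k=0$, which explains why $D_k\o D_{k-1}=0$ needs all three vanishing hypotheses in the strong form; for the bare statement $D_k D_{k-1}=0$ one only needs $\Cal D_k\Cal D_{k-1}=0$ and a direct check that $\pi_H\Cal D_k S D_{k-1}=0$ by a filtration/homogeneity argument). Applying $\pi_H$ gives $D_kD_{k-1}(\al)=0$.

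For the cohomology comparison, assume $\Cal D_{k+1}\Cal D_k=0$ and $\Cal D_k\Cal D_{k-1}=0$. The plan is to define the map $\ker(D_k)/\im(D_{k-1})\to\ker(\Cal D_k)/\im(\Cal D_{k-1})$ by $[\al]\mapsto[S(\al)]$; one checks $S(\al)\in\ker(\Cal D_k)$ for $\al\in\ker(D_k)$ by the argument of the previous paragraph (with the roles shifted up by one), and that $S(\im(D_{k-1}))\subset\im(\Cal D_{k-1})+\ker(\partial^*_\rho\Cal D_{k-1})\cap\ldots$, pushing the ambiguity into $\im(\Cal D_{k-1})$. Surjectivity: given $\psi\in\ker(\Cal D_k)$, replace it by $\psi-\Cal D_{k-1}Q\partial^*_\rho(\psi)$, which by part (1) is $\partial^*_\rho$-closed, lies in the same $\Cal D_k$-cohomology class since $\Cal D_k\Cal D_{k-1}=0$, and — because $\partial^*_\rho\Cal D_k$ of it vanishes — equals $S(\pi_H(\psi-\ldots))$ by Theorem \ref{thm3.3}(2); so its class is hit. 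For injectivity under the additional hypothesis $\Cal D_{k-1}\Cal D_{k-2}=0$: if $S(\al)=\Cal D_{k-1}\chi$ for some $\chi$, replace $\chi$ by $\chi-\Cal D_{k-2}Q\partial^*_\rho(\chi)$ to arrange $\partial^*_\rho\Cal D_{k-1}$-closedness and $\partial^*_\rho$-closedness of $\chi$ (here is where $\Cal D_{k-1}\Cal D_{k-2}=0$ enters, exactly as in the surjectivity step one degree down), whence $\chi=S(\pi_H(\chi))$ and $\al=\pi_H S(\al)=\pi_H\Cal D_{k-1}S(\pi_H\chi)=D_{k-1}(\pi_H\chi)\in\im(D_{k-1})$. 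The main obstacle is the bookkeeping in the induction step just described: each time one wants to replace a cocycle by a special representative in the image of $S$, one must first correct it by a $Q\partial^*_\rho$-term, and verifying that this correction lies in the relevant image (so as not to change the cohomology class) is precisely what forces the chain of hypotheses $\Cal D_{k+1}\Cal D_k=0$, $\Cal D_k\Cal D_{k-1}=0$, $\Cal D_{k-1}\Cal D_{k-2}=0$ and must be tracked carefully degree by degree.
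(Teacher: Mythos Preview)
Your proof of (1) and your outline for surjectivity and injectivity in (2) are essentially the paper's argument. The problem is in your treatment of $D_k\o D_{k-1}=0$: you write that ``to invoke the uniqueness in part (2) of Theorem \ref{thm3.3} I need $\Cal D_{k-1}S(\al)$ itself to be $\partial^*_\rho$-closed, which is not automatic'' --- but it \emph{is} automatic. The splitting operator $S_{k-1}$ is built from $\Cal D_{k-1}$, and Theorem \ref{thm3.3}(1) says precisely that $\partial^*_\rho\Cal D_{k-1}\o S_{k-1}=0$. So for $\ph:=\Cal D_{k-1}S_{k-1}(\al)$ one has $\partial^*_\rho\ph=0$, $\pi_H(\ph)=D_{k-1}(\al)$ by definition, and $\Cal D_k\ph=0$ by the hypothesis $\Cal D_k\Cal D_{k-1}=0$; hence $\partial^*_\rho\Cal D_k\ph=0$ and Theorem \ref{thm3.3}(2) gives $\ph=S_k(D_{k-1}(\al))$. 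This yields $\Cal D_{k-1}\o S_{k-1}=S_k\o D_{k-1}$, and then $D_kD_{k-1}=\pi_H\Cal D_k S_k D_{k-1}=\pi_H\Cal D_k\Cal D_{k-1}S_{k-1}=0$ immediately.

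Because you missed this, you introduce a $Q$-correction to $\Cal D_{k-1}S(\al)$ that is in fact identically zero (since $\partial^*_\rho(\Cal D_{k-1}S(\al))=0$ already), and then get tangled up trying to kill the correction term with ``$\Cal D_k\Cal D_k=0$'' (which is not even type-correct) or an unspecified ``filtration/homogeneity argument''. None of that is needed. Once you have the intertwining identity $\Cal D_{k-1}\o S_{k-1}=S_k\o D_{k-1}$ (and its shift $\Cal D_k\o S_k=S_{k+1}\o D_k$, which uses $\Cal D_{k+1}\Cal D_k=0$), the well-definedness of $[\al]\mapsto[S_k(\al)]$ on cohomology is immediate: $D_k(\al)=0$ gives $\Cal D_k S_k(\al)=S_{k+1}D_k(\al)=0$, and $\al=D_{k-1}(\be)$ gives $S_k(\al)=\Cal D_{k-1}S_{k-1}(\be)\in\im(\Cal D_{k-1})$. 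Your surjectivity and injectivity steps are then exactly right.
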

 \begin{proof}
 (1) follows immediately from the fact that $\partial^*_\rho\Cal
   D_{k-1}Q=\id$ on $\Ga(\im(\partial^*_\rho))$ which we proved in
   Theorem \ref{thm3.4}.

 (2) Suppose that $\Cal D_k\o\Cal D_{k-1}=0$, take a section
   $\al\in\Ga(\Cal H_{k-1}(T^*_\rho M,\Cal VM))$ and consider
   $\ph:=\Cal D_{k-1}(S(\al))$. Then by definition,
   $\partial^*_\rho(\ph)=0$ and $\pi_H(\ph)=D_{k-1}(\al)$. By
   assumption $\Cal D_k(\ph)=0$, so part (2) of Theorem \ref{thm3.3} shows
   that $\ph=S(\pi_H(\ph))$. Thus we get $\Cal D_{k-1}\o S_{k-1}=S_k\o
   D_{k-1}$ and hence 
 $$
 D_k\o D_{k-1}=\pi_H\o\Cal D_k\o S_k\o D_{k-1}=\pi_H\o\Cal D_k\o\Cal
 D_{k-1}\o S_{k-1} =0. 
 $$ 
 Now suppose that in addition $\Cal D_{k+1}\o\Cal D_k=0$, let
 $\al\in\Ga(\Cal H_k(T^*_\rho M,\Cal VM))$ be such that
 $D_k(\al)=0$ and consider $\ph:=S_k(\al)$. Then from above we see that
 $\Cal D_k(\ph)=S(D_k(\al))=0$. Moreover, if $\al=D_{k-1}(\be)$
 for some $\be\in \Ga(\Cal H_{k-1}(T^*_\rho M,\Cal VM))$, then
 $S_k(\al)=\Cal D_{k-1}(S_{k-1}(\be))$. This shows that $S_k$ induces a
 well defined map $\ker(D_k)/\im(D_{k-1})\to \ker(\Cal D_k)/\im(\Cal
 D_{k-1})$ in cohomology. 

 Supposing that $\ph\in\Om^k_\rho(M,\Cal VM)$ satisfies $\Cal
 D_k(\ph)=0$, we can use part (1) to find $\ps\in\Om^{k-1}_\rho(M,\Cal
 VM)$ such that $\tilde\ps:=\ph+\Cal D_{k-1}(\ps)$ satisfies
 $\partial^*_\rho(\tilde\ph)=0$. By assumption $\Cal
 D_k(\tilde\ph)=\Cal D_k(\ph)=0$ and thus
 $\tilde\ph=S(\pi_H(\tilde\ph))$ and $D_k(\pi_H(\tilde\ph))=0$. This
 implies surjectivity of the map in cohomology.

 So let us finally assume that $\Cal D_{k-1}\o\Cal D_{k-2}=0$ and that
 we have given $\al\in\Ga(\Cal H_k(T^*_\rho M,\Cal VM))$ such that
 $D_k(\al)=0$ and $S_k(\al)=\Cal D_{k-1}(\ph)$ for some
 $\ph\in\Om^{k-1}_\rho(M,\Cal VM)$. Then again by part (1), we can
 find an element $\ps\in\Om^{k-2}_\rho(M,\Cal VM)$ such that
 $\tilde\ph:=\ph+\Cal D_{k-2}\ps$ satisfies
 $\partial^*_\rho(\tilde\ph)=0$. By assumption $\Cal
 D_{k-1}(\tilde\ph)=\Cal D_{k-1}(\ph)=S(\al)$, so in particular
 $\partial^*_\rho\Cal D_{k-1}(\tilde\ph)=0$ and hence
 $\tilde\ph=S(\pi_H(\tilde\ph))$. Moreover,
 $$
 D_{k-1}(\pi_H(\tilde\ph))=\pi_H(\Cal D_{k-1}(\tilde\ph)))=\pi_H(S(\al))=\al,
 $$ 
which implies injectivity of the map in cohomology induced by $S$.
 \end{proof}

 \section{The relative twisted exterior derivative}\label{4}
 In this section, we construct a sequence of compressable first order
 differential operators on relative forms with values in an arbitrary
 relative tractor bundle, which has strong naturality properties. Thus
 we can run the BGG machinery as developed in Section \ref{3} on this
 sequence to obtain invariant differential operators defined on the
 relative Lie algebra homology bundles. In view of the discussion in
 Section \ref{3.1}, this also gives a description of all compressable
 operators.

 \subsection{Definition of the relative twisted exterior
   derivative}\label{4.1}
 Given a relative tractor bundle $\Cal VM$, we start by defining an
 operator
 $$
 \tilde d^{\Cal V}:\Ga(\La^k\Cal A^*_\rho M\otimes\Cal VM)\to
 \Ga(\La^{k+1}\Cal A^*_\rho M\otimes\Cal VM).
 $$ Here $\Cal A^*_\rho M$ is the dual to the relative adjoint tractor
 bundle introduced in Section \ref{2.2}. By definition, the
 representation $\Bbb V$ inducing $\Cal VM$ is the restriction to $Q$
 of a representation of $P$, which in addition has the property that
 the ideal $\frak p_+\subset\frak p$ acts trivially in the
 infinitesimal representation. Thus we can view $\Bbb V$ as a
 representation of the Lie algebra $\frak p/\frak p_+$. Hence there is
 the standard Lie algebra cohomology differential, compare with
 Section 2.3 of \cite{part1}, which, for each $k$, is a linear map
 $$ 
 \partial_{\frak p/\frak p_+}:\La^k(\frak p/\frak p_+)^*\otimes\Bbb V\to
 \La^{k+1}(\frak p/\frak p_+)^*\otimes\Bbb V.
 $$ 
In the picture of multilinear maps, this differential is given by
 \begin{equation}
   \label{part-def}
   \begin{aligned}
 \partial \ph(A_0&,\dots,A_k):=\textstyle\sum_{i=0}^k(-1)^iA^i\cdot
 \ph(A_0,\dots,\widehat{A_i},\dots,A_k)\\
 &+\textstyle\sum_{i<j}(-1)^{i+j}\ph([A_i,A_j],A_0,\dots,\widehat{A_i},\dots,
 \widehat{A_j},\dots,A_k),
 \end{aligned}
 \end{equation}
 for $A_0,\dots,A_k\in\frak p/\frak p_+$. This map is evidently
 $Q$--equivariant, so it induces a bundle map between the
 corresponding associated bundles. We denote this bundle map as well
 as the corresponding tensorial operator on sections by the same
 symbol.

 On the other hand, applying the relative fundamental derivative from
 Section \ref{2.2} to $\ph\in\Ga(\La^k\Cal A^*_\rho M\otimes\Cal VM)$,
 we obtain
 $$
 D^\rho\ph\in\Ga(\Cal A^*_\rho M\otimes\La^k\Cal A^*_\rho
 M\otimes\Cal VM).
 $$ 
 Then we define $\tilde d_1^{\Cal V}\ph$ by
 \begin{equation}\label{td1def}
   \tilde d_1^{\Cal V}\ph(s_0,\dots,s_k):=\tsum_{i=0}^k(-1)^i(D^\rho_{s_i}\ph)
   (s_0,\dots,\widehat{s_i},\dots,s_k).
 \end{equation} 
 Observe that this is alternating in all entries, so $\tilde d_1^{\Cal
   V}\ph\in\Ga(\La^{k+1}\Cal A^*_\rho M\otimes\Cal VM)$. Having this at
 hand, we finally put $\tilde d^{\Cal V}\ph:=\tilde d_1^{\Cal
   V}\ph+\partial_{\frak p/\frak p_+}\ph$.

 Note that the formula  \eqref{td1def} for $\tilde d_1^{\Cal V}\ph$ 
 can be further expanded using the naturality properties of $D^\rho$
 derived in Proposition \ref{3.2}. These imply that for
 $s,t_1,\dots,t_k\in\Ga(\Cal A_\rho M)$ we have
 $$
 (D^\rho_s\ph)(t_1,\dots,t_k)=D^\rho
 _s(\ph(t_1,\dots,t_k))-\tsum_{i=1}^r\ph(t_1,\dots,D^\rho_st_i,\dots,t_k).
 $$ 
 On the other hand, we can explicitly express $\partial_{\frak p/\frak
   p_+}\ph(s_0,\dots,s_k)$ using the definition in formula
 \eqref{part-def}.  We only have to replace the action $\cdot:(\frak
 p/\frak p_+)\x\Bbb V\to\Bbb V$ by the induced bundle map $\bullet:\Cal
 A_\rho M\x\Cal VM\to\Cal VM$ and the Lie bracket on $\frak p/\frak
 p_+$ by the (induced) algebraic bracket $\{\ ,\ \}$ on (sections of)
 $\Cal A_\rho M$.

 As we have noted in Section \ref{2.2}, the relative tangent bundle
 $T_\rho M$ can be identified with the quotient $\Cal A_\rho M/\Cal
 A^0_\rho M$, so dually $T^*_\rho M$ is a subbundle of $\Cal A^*_\rho
 M$. Consequently, we can view $\Om^k_\rho (M,\Cal VM)$ as a subspace
 of $\Ga(\La^k\Cal A^*_\rho M\otimes\Cal VM)$. The elements of this
 subspace can evidently be characterized by the fact that they vanish
 upon insertion of a single section of the subbundle $\Cal A^0_\rho
 M\subset\Cal A_\rho M$. Using this, we can now prove:
 \begin{thm}\label{thm4.1}
   The operators $\tilde d^{\Cal V}$ restrict to first order invariant 
   differential operators
 $$
 d^{\Cal V}:\Om^k_\rho (M,\Cal VM)\to\Om^{k+1}_\rho (M,\Cal VM),
 $$ 
 which are compressable in the sense of Definition \ref{def3.1}.

 Via the construction in Section \ref{3}, we thus obtain a relative
 BGG--sequence
 \begin{equation}\label{rel-BGG}
 D_k:\Ga(\Cal H_k(T^*_\rho M,\Cal VM))\to\Ga(\Cal H_{k+1}(T^*_\rho
 M,\Cal M))\quad k=0,\dots,\dim(\frak q_+/\frak p_+)-1 
 \end{equation}
 of invariant differential operators.  
 \end{thm}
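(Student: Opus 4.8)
The plan is to prove Theorem \ref{thm4.1} in three stages: first show that $\tilde d^{\Cal V}$ restricts to an operator on relative $\Cal VM$--valued forms, then verify invariance and first-order character, and finally check compressability so that Section \ref{3} applies.

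\textbf{Step 1: The restriction to relative forms.} As noted just before the theorem, a section of $\Ga(\La^k\Cal A^*_\rho M\otimes\Cal VM)$ lies in $\Om^k_\rho(M,\Cal VM)$ precisely when it vanishes upon insertion of any section of $\Cal A^0_\rho M$. I would take $\ph\in\Om^k_\rho(M,\Cal VM)$ and show $\tilde d^{\Cal V}\ph$ has the same vanishing property, i.e.~that $\tilde d^{\Cal V}\ph(s_0,\dots,s_k)=0$ whenever some $s_j\in\Ga(\Cal A^0_\rho M)$. One treats the two summands separately. For $\partial_{\frak p/\frak p_+}\ph$, using the explicit formula \eqref{part-def} with $\cdot$ replaced by $\bullet$ and $[\ ,\ ]$ by $\{\ ,\ \}$: if $A_j\in\frak q/\frak p_+$ the terms where $A_j$ survives inside $\ph$ vanish because $\ph$ is a relative form, and the remaining terms are $(-1)^jA_j\bullet\ph(\dots\widehat{A_j}\dots)$ together with the bracket terms $\{A_i,A_j\}$ — here one uses that $\{A^0_\rho,A_\rho\}\subset A^0_\rho$ (filtered Lie algebra structure from Section \ref{2.2}), so those bracket entries are again in $A^0_\rho M$ and kill $\ph$; what is left reproduces exactly $-s_j\bullet$ acting on the $k$-form obtained by omitting $s_j$, but crucially the $\bullet$--action descends to $\Cal A^0_\rho M$ on relative natural bundles (Section \ref{2.2}). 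For $\tilde d_1^{\Cal V}\ph$, I would use the expanded formula via $D^\rho_s(\ph(t_1,\dots))-\sum\ph(\dots D^\rho_st_i\dots)$ and property (1) of Proposition \ref{prop2.2}, namely $D^\rho_s\si=-s\bullet\si$ for $s\in\Ga(\Cal A^0_\rho M)$. The two contributions from the $\partial_{\frak p/\frak p_+}$ part and the $\tilde d_1^{\Cal V}$ part then cancel; this cancellation is exactly what forces the definition $\tilde d^{\Cal V}=\tilde d_1^{\Cal V}+\partial_{\frak p/\frak p_+}$, and is the analog of the classical fact that the twisted exterior derivative is built from a covariant-derivative part plus an algebraic correction.

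\textbf{Step 2: Invariance and order.} Since $D^\rho$ is a natural operator with the strong naturality of Proposition \ref{prop2.2}, both $\tilde d_1^{\Cal V}$ (built universally from $D^\rho$ by alternation) and $\partial_{\frak p/\frak p_+}$ (a $Q$--equivariant algebraic map, hence a natural bundle map) are natural; their sum is a natural operator, and restricting to the natural subbundle $\La^kT^*_\rho M\otimes\Cal VM$ preserves naturality. It is first order because $D^\rho$ is a first order operator (it is induced by the fundamental derivative, which differentiates once) and the algebraic part $\partial_{\frak p/\frak p_+}$ is of order zero. Invariance in the category-theoretic sense is automatic for any natural operator built this way — this is essentially the content of ``manifestly invariant'' in Remark \ref{rem3.5}.

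\textbf{Step 3: Compressability.} By Definition \ref{def3.1} I must check two things: that $d^{\Cal V}$ preserves the natural filtration, and that $\gr_0(d^{\Cal V})=\partial_\rho$. For filtration-compatibility one uses that $D^\rho$ shifts homogeneous degree by at most zero in the appropriate sense (it is built from the fundamental derivative, which respects the filtration conventions — compare Section 2.5 of \cite{part1} and the analogous facts for the absolute case) together with the fact that $\partial_{\frak p/\frak p_+}$ restricted to $T^*_\rho M$-part preserves degree; the torsion-type corrections from the bracket terms in fact raise degree. For the induced map on the associated graded: in homogeneity zero, the $D^\rho$-contribution of $\tilde d_1^{\Cal V}$ becomes tensorial (all first-order information is in higher homogeneity) and what survives of $\tilde d^{\Cal V}$ on $\gr$ is precisely the algebraic part, which by construction is the relative Lie algebra cohomology differential $\partial_\rho$ of Proposition \ref{prop2.4.2}(2). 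I would verify this by comparing the explicit formula \eqref{part-def} on $\La^k\gr(\frak q_+/\frak p_+)^*\otimes\gr(\Bbb V)$ with formula (2.2) of \cite{part1}. Once compressability is established, the machinery of Section \ref{3} — specifically Definition \ref{def3.5} — directly produces the operators $D_k$ on $\Ga(\Cal H_k(T^*_\rho M,\Cal VM))$, and they are differential operators of finite order by Proposition \ref{prop3.5}(1) since $d^{\Cal V}$ and hence $\partial^*_\rho d^{\Cal V}$ are differential operators. The range of $k$ is $0,\dots,\dim(\frak q_+/\frak p_+)-1$ simply because $\La^kT^*_\rho M$ vanishes outside $0\le k\le\dim(\frak q_+/\frak p_+)$.

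\textbf{Main obstacle.} The delicate point is Step 1 — the precise cancellation showing $\tilde d^{\Cal V}$ maps relative forms to relative forms. One has to be careful with the filtered (not graded) Lie algebra structure of $\Cal A_\rho M$: the bracket $\{Z^i,X_i\}$-type terms land in $\Cal A^0_\rho M$ but not necessarily in a single graded piece, so the bookkeeping of which terms vanish by the relative-form condition versus which reassemble into a $\bullet$-action requires attention. This is the relative analog of the standard verification for the absolute twisted exterior derivative, but the presence of the intermediate parabolic and the two-step filtration ($\frak p_+\subset\frak q_+\subset\frak q\subset\frak p$) makes the combinatorics slightly heavier.
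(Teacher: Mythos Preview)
Your overall approach matches the paper's, but there are two genuine gaps in the execution.

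\textbf{Step 1.} The claim $\{\Cal A^0_\rho M,\Cal A_\rho M\}\subset\Cal A^0_\rho M$ is false: filtration compatibility only gives $\{\Cal A^0_\rho M,\Cal A^j_\rho M\}\subset\Cal A^j_\rho M$, so for $s_i$ of negative filtration degree the bracket $\{s_0,s_i\}$ need not lie in $\Cal A^0_\rho M$. Hence the bracket terms in $\partial_{\frak p/\frak p_+}\ph(s_0,\dots,s_k)$ with $s_0\in\Ga(\Cal A^0_\rho M)$ do \emph{not} vanish individually. The correct mechanism is cancellation with $\tilde d_1^{\Cal V}$: by Proposition~\ref{prop2.2}(3), each $D^\rho_{s_i}\ph$ is again a relative form, so for $i\neq 0$ the summand $(D^\rho_{s_i}\ph)(s_0,\dots,\widehat{s_i},\dots,s_k)$ vanishes upon inserting $s_0$. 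Only the $i=0$ term survives; expanding $(D^\rho_{s_0}\ph)(s_1,\dots,s_k)$ and using $D^\rho_{s_0}=-s_0\bullet$ on both the value $\ph(s_1,\dots,s_k)$ and on each entry $s_i$ (adjoint action) produces exactly $-\partial_{\frak p/\frak p_+}\ph(s_0,\dots,s_k)$, up to terms where $s_0$ is inserted directly into $\ph$. The bracket terms cancel between the two pieces rather than vanishing separately.

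\textbf{Step 3.} The argument for $\gr_0(\tilde d_1^{\Cal V})=0$ is too vague, and the claim that ``bracket terms raise degree'' is wrong --- those terms are part of the $Q$--equivariant map $\partial_{\frak p/\frak p_+}$ and are homogeneous of degree zero. The clean argument is: if $\ph$ is homogeneous of degree $\geq\ell$ and $s_j\in\Ga(\Cal A^{i_j}_\rho M)$ with all $i_j<0$, then naturality of $D^\rho$ gives $D^\rho_{s_j}\ph$ homogeneous of degree $\geq\ell$, so $(D^\rho_{s_j}\ph)(s_0,\dots,\widehat{s_j},\dots,s_k)$ lies in filtration degree $(\sum_m i_m)-i_j+\ell$, which is \emph{strictly} larger than $\sum_m i_m+\ell$ since $i_j<0$. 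Thus $\tilde d_1^{\Cal V}$ strictly raises homogeneity and contributes nothing to the associated graded; the induced map $\gr_0(d^{\Cal V})$ is carried entirely by $\partial_{\frak p/\frak p_+}$, which restricts to $\partial_\rho$.
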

 \begin{proof}
   We first show that if $\ph$ vanishes upon insertion of one section
   of the subbundle $\Cal A^0_\rho M\subset\Cal A_\rho M$, then the same is
   true for $\tilde d^{\Cal V}\ph$. Since we know that $\tilde d^{\Cal
     V}\ph$ is alternating, we may assume that $s_0\in\Ga(\Cal A^0_\rho
   M)$ and prove vanishing of $(\tilde d^{\Cal V}\ph)(s_0,\dots,s_k)$
   for arbitrary $s_1,\dots,s_k\in\Ga(\Cal A_\rho M)$. By part (3) of
   Proposition \ref{prop2.2}, $(D^\rho_{s_i}\ph)$ lies in
   $\Om^k_\rho(M,\Cal VM)$ for each $i$ and thus vanishes upon
   insertion of $s_0$. Hence we conclude that
 $$
 \tilde d_1^{\Cal V}(s_0,\dots,s_k)=(D^\rho_{s_0}\ph)(s_1,\dots,s_k),
 $$ 
 and we can expand this as noted above. Since $s_0$ is a section of
 $\Cal A^0_\rho M\subset\Cal A_\rho M$, part (1) of Proposition
 \ref{prop2.2} shows that $D^\rho_{s_0}$ coincides with the negative
 of the algebraic action by $s_0$ on the appropriate bundle. This
 action is $\bullet$ on $\Cal VM$ and the adjoint action via $\{\ ,\
 \}$ on $\Cal A_\rho M$, so we see that
 $$ \tilde d_1^{\Cal V}\ph(s_0,\dots,s_k)=-s_0\bullet
 \ph(s_1,\dots,s_k)+\textstyle\sum_{i=1}^k\ph(s_1,\dots,\{s_0,s_i\},\dots,s_k).
 $$ Rewriting the last term as
 $(-1)^{i+1}\ph(\{s_0,s_i\},s_1,\dots,\widehat{s_i},\dots,s_k)$, we
 see that this coincides with $-\partial_{\frak p/\frak
   p_+}\ph(s_0,\dots,s_k)$ up to terms in which $s_0$ is inserted
 directly into $\ph$ and which thus vanish by assumption. Hence we
 obtain the operators $d^{\Cal V}$ as claimed.

 To prove compressability, assume that $\ph\in\Om^k_\rho (M,\Cal VM)$
 is homogeneous of degree $\geq\ell$ and take sections $s_j\in\Ga(\Cal
 A^{i_j}_\rho M)$ with $i_j<0$ for $j=0,\dots,k$. To prove that
 $d^{\Cal V}$ preserves the natural filtration, we have to show that
 for each such choice we have
 $$
 (\tilde d^{\Cal V}\ph)(s_0,\dots,s_k)\in\Ga(\Cal V^{i_0+\dots+i_k+\ell}M).
 $$ 
 By naturality of the fundamental derivative,
 $D^\rho_{s_j}\ph\in\Om^k_\rho(M,\Cal VM)$ is homogeneous of degree
 $\geq\ell$. Hence
 $(D^\rho_{s_j}\ph)(s_0,\dots,\widehat{s_j},\dots,s_k)$ lies in the
 filtration component of $\Cal VM$ of degree
 $$
 i_0+\dots+\widehat{i_j}+\dots+i_k+\ell>i_0+\dots+i_k+\ell.
 $$ 
 So we conclude that $\tilde d_1^{\Cal V}$ is not only filtration
 preserving but also does not contribute to the action on the
 associated graded.

 On the other hand, both the action $\frak p/\frak p_+\x\Bbb V\to\Bbb
 V$ and the Lie bracket on $\frak p/\frak p_+$ are $Q$--homomorphisms
 and thus are homogeneous of degree zero for the grading element of
 $\frak q$. Together with the formula for the Lie algebra differential
 in \eqref{part-def}, this implies that $\partial_{\frak p/\frak
   p_+}\ph$ is filtration homogeneous of the same degree as $\ph$. This
 implies that $d^{\Cal V}$ is filtration preserving and the induced
 operator on sections of the associated graded bundle coincides with
 the one induced by $\partial_{\frak p/\frak p_+}$. From the
 definition in Section 2.3 of \cite{part1} it is evident that this is
 the bundle map induced by $\partial_\rho$, so compressability follows.
 \end{proof}

 \subsection{The square of the relative twisted exterior
   derivative}\label{4.2}
 Having constructed relative BGG--sequences, we next move to the
 question when we obtain complexes or even resolutions of some
 sheaves. The first step towards this is computing the composition
 $d^{\Cal V}\o d^{\Cal V}$. In the case of non--vanishing torsion, this
 result is new even for standard BGG sequences. It is based on the
 naturality properties of the fundamental derivative, which are very
 well understood, but some care is needed in the computations.

 From the properties of the inducing Lie algebra cohomology
 differential we conclude that $\partial_{\frak p/\frak
   p_+}\o\partial_{\frak p/\frak p_+}=0$, and thus the composition
 $d^{\Cal V}\o d^{\Cal V}$ is induced by
 $$ 
 \tilde d^{\Cal V}\o\tilde d^{\Cal V}=\tilde d_1^{\Cal V}\o\tilde
 d_1^{\Cal V}+\tilde d_1^{\Cal V}\o\partial_{\frak p/\frak
   p_+}+\partial_{\frak p/\frak p_+}\o\tilde d_1^{\Cal V}.
 $$
 We start by computing the sum of the last two terms in this formula:

 \begin{lemma}\label{lemma4.2}
   For $\ph\in\Ga(\La^{k-1}\Cal A_{\rho}^*\otimes\Cal VM)$ and
   $s_0,\dots,s_k\in\Ga(\Cal A_\rho M)$, we can express $\left(\tilde
     d_1^{\Cal V}(\partial_{\frak p/\frak p_+}\ph)+\partial_{\frak
       p/\frak p_+}(\tilde d_1^{\Cal V}\ph)\right)(s_0,\dots,s_k)$ as
 $$ 
 \textstyle\sum_{i<j}(-1)^{i+j}(D_{\{s_i,s_j\}}\ph)
 (s_0,\dots,\widehat{s_i},\dots,\widehat{s_j},\dots,s_k).
 $$
 \end{lemma}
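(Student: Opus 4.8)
The plan is to expand both compositions explicitly, to sort the resulting summands into three combinatorial families, and to check that two of the families cancel against each other while the third gives the asserted formula.

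First I would use the naturality of the relative fundamental derivative from part (3) of Proposition \ref{prop2.2}: since $\partial_{\frak p/\frak p_+}$ is induced by a $Q$--equivariant linear map on the inducing representations, it commutes with each operator $D^\rho_s$, i.e.\ $D^\rho_s(\partial_{\frak p/\frak p_+}\ph)=\partial_{\frak p/\frak p_+}(D^\rho_s\ph)$. Inserting this into the defining formula \eqref{td1def} gives
$$
\tilde d_1^{\Cal V}(\partial_{\frak p/\frak p_+}\ph)(s_0,\dots,s_k)=\tsum_{i=0}^k(-1)^i\big(\partial_{\frak p/\frak p_+}(D^\rho_{s_i}\ph)\big)(s_0,\dots,\widehat{s_i},\dots,s_k),
$$
and then I would expand each term $\partial_{\frak p/\frak p_+}(D^\rho_{s_i}\ph)$ via the bundle form of \eqref{part-def}, with $\cdot$ replaced by $\bullet$ and the bracket by $\{\ ,\ \}$. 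Symmetrically, I would expand $\partial_{\frak p/\frak p_+}(\tilde d_1^{\Cal V}\ph)(s_0,\dots,s_k)$ by first applying \eqref{part-def} and then \eqref{td1def} to the resulting terms $(\tilde d_1^{\Cal V}\ph)(s_0,\dots,\widehat{s_i},\dots,s_k)$ and $(\tilde d_1^{\Cal V}\ph)(\{s_i,s_j\},s_0,\dots,\widehat{s_i},\dots,\widehat{s_j},\dots,s_k)$. Throughout one uses only that $D^\rho_{s_i}\ph$ is again a section of $\Lambda^{k-1}\Cal A^*_\rho M\otimes\Cal VM$, hence tensorial and alternating in its remaining arguments; I would not expand $D^\rho_{s_i}\ph$ any further.

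After the expansion, the summands of $\tilde d_1^{\Cal V}(\partial_{\frak p/\frak p_+}\ph)+\partial_{\frak p/\frak p_+}(\tilde d_1^{\Cal V}\ph)$ fall into three types: (i) terms $s_p\bullet(D^\rho_{s_q}\ph)(\dots)$ coming from the $\bullet$--part of $\partial_{\frak p/\frak p_+}$, which occur in both orderings of the composition; (ii) terms $(D^\rho_{s_p}\ph)(\{s_q,s_r\},\dots)$ in which the algebraic bracket is inserted as an ordinary argument of the form $D^\rho_{s_p}\ph$, again occurring in both orderings; and (iii) terms $(D^\rho_{\{s_i,s_j\}}\ph)(s_0,\dots,\widehat{s_i},\dots,\widehat{s_j},\dots,s_k)$, which occur only in $\partial_{\frak p/\frak p_+}(\tilde d_1^{\Cal V}\ph)$, namely when the new leading argument $\{s_i,s_j\}$ produced by $\partial_{\frak p/\frak p_+}$ is fed into the derivative slot of $\tilde d_1^{\Cal V}$. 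I would check that the type (i) contributions cancel in pairs between the two orderings, that the type (ii) contributions likewise cancel in pairs, and that the surviving type (iii) terms carry exactly the sign $(-1)^{i+j}$, which yields the claimed expression.

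The only real work, and the main obstacle, is the sign bookkeeping in the two pairwise cancellations; this is the classical computation showing that a twisted exterior derivative does not square to zero but produces a curvature--type term, so no conceptual surprises arise. One must, however, be careful: for the type (i) cancellation one has to match the pair of deleted indices produced by ``$\partial_{\frak p/\frak p_+}$ then $D^\rho$'' against the one produced by ``$D^\rho$ then $\partial_{\frak p/\frak p_+}$'' and track the sign coming from deleting two entries in a prescribed order; and for the type (ii) cancellation one has in addition to account for the sign incurred when the bracket argument $\{s_q,s_r\}$ is commuted past the entries preceding it inside the alternating form. Once these signs are carried through, both families sum to zero, and the lemma follows.
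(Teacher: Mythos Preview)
Your proposal is correct and follows essentially the same approach as the paper's proof: both use the naturality of $D^\rho$ to commute it with $\partial_{\frak p/\frak p_+}$, expand the two compositions, and match terms so that the $s_p\bullet(D^\rho_{s_q}\ph)$ and $(D^\rho_{s_p}\ph)(\{s_q,s_r\},\dots)$ contributions cancel in pairs while the $(D^\rho_{\{s_i,s_j\}}\ph)$ terms survive with sign $(-1)^{i+j}$. The paper carries out the sign bookkeeping explicitly rather than leaving it as a stated obstacle, but the structure of the argument is identical.
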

 \begin{proof}
 First we observe that by definition
 $$
 \tilde d_1^{\Cal V}\partial_{\frak p/\frak p_+}\ph(s_0,\dots,s_k)=
 \textstyle\sum_{i=0}^r(D^\rho_{s_i}(\partial_{\frak
   p/\frak p_+}\ph))(s_0,\dots,\widehat{s_i},\dots,s_k).
 $$
 Now since $\partial_{\frak p/\frak p_+}$ is a natural bundle map
 between relative natural bundles, part (3) of Proposition \ref{2.2}
 implies that $D^\rho_{s_i}(\partial_{\frak p/\frak
   p_+}\ph)=\partial_{\frak p/\frak p_+}(D^\rho_{s_i}\ph)$. Hence we
 may write $\tilde d_1^{\Cal V}\partial_{\frak p/\frak
   p_+}\ph(s_0,\dots,s_k)$ as
 \begin{equation}\label{Dpartial}
   \textstyle \sum_{i=0}^k(-1)^i(\partial_{\frak
     p/\frak p_+}(D^\rho_{s_i}\ph))(s_0,\dots,\widehat{s_i},\dots,s_k).
 \end{equation}

 On the other hand, we can compute $(\partial_{\frak p/\frak p_+}(\tilde
 d_1^{\Cal V}\ph))(s_0,\dots,s_k)$ as
 \begin{equation}\label{partialD}
 \begin{aligned} 
   \textstyle\sum_{i=0}^k(-1)^i&s_i\bullet (\tilde d_1^{\Cal
     V}\ph)(s_0,\dots,\widehat{s_i},\dots,s_k)+\\
   &\textstyle\sum_{i<j}(-1)^{i+j}(\tilde d_1^{\Cal V}\ph)
   (\{s_i,s_j\},s_0,\dots,\widehat{s_i},\dots,\widehat{s_j},\dots,s_k).
 \end{aligned}
 \end{equation}
 Inserting the definition of $d_1^{\Cal V}$ in the first sum in
 \eqref{partialD}, we get a sum over all $i\neq j$ of terms of the form
 $s_i\bullet ((D^\rho_{s_j}\ph)(s_0,\dots,s_k))$ with $s_i$ and $s_j$
 omitted between $s_0$ and $s_k$. The sign of this term is $(-1)^{i+j}$
 if $j<i$ and $(-1)^{i+j+1}$ if $j>i$. This is exactly the opposite of
 the sign with which the same terms occur when inserting the definition
 of $\partial_{\frak p/\frak p_+}$ in \eqref{Dpartial}.

 Next, we insert the definition of $\tilde d_1^{\Cal V}$ in the second
 sum in \eqref{partialD}. On the one hand, this gives
 $\sum_{i<j}(-1)^{i+j}(D^\rho_{\{s_i,s_j\}}\ph)(s_0,\dots,\widehat{s_i},\dots,
 \widehat{s_j},\dots,s_k)$. On the other hand, for each $\ell$ different
 from $i$ and $j$, we obtain a summand of the form
 $$
 (D_{s_\ell}\ph)(\{s_i,s_j\},s_0,\dots,s_k),
 $$
 where between $s_0$ and $s_k$ the entries $s_i$, $s_j$, and $s_\ell$ are
 omitted. This term comes with a sign $(-1)^{i+j+\ell+1}$ if $\ell<i$ or
 $\ell>j$ and with a sign $(-1)^{i+j+\ell}$ if $i<\ell<j$. Again, this sign is
 opposite to the one with which the same term occurs after inserting
 the definition of $\partial_{\frak p/\frak p_+}$ in \eqref{Dpartial},
 and the result follows.
 \end{proof}

 Using this, we can completely compute $(d^{\tilde V})^2$.

 \begin{thm}\label{thm4.2}
   Consider $\ph\in\Om^{k-1}_\rho(M,\Cal VM)\subset\Ga(\La^{k-1}\Cal
   A_\rho^*M\otimes\Cal VM)$ then $d^{\Cal V}(d^{\Cal V}\ph)$ is
   induced by the section of $\La^{k+1}\Cal A_\rho^*M\otimes\Cal VM$
   which maps $s_0,\dots,s_k$ to
 $$
 \textstyle\sum_{i<j}(-1)^{i+j}(D_{\ka(\Pi(s_i),\Pi(s_j))}\ph)
 (s_0,\dots,\widehat{s_i},\dots,
 \widehat{s_j},\dots,s_r), 
 $$
 where $\ka\in\Om^2(M,\Cal AM)$ is the curvature of the geometry.
 \end{thm}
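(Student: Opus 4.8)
The plan is to compute $\tilde d^{\Cal V}\o\tilde d^{\Cal V}$ on the whole bundle $\La^\bullet\Cal A^*_\rho M\otimes\Cal VM$ and then to specialise: by Theorem~\ref{thm4.1} the operator $\tilde d^{\Cal V}$ restricts to $d^{\Cal V}$ on relative forms and again has values in relative forms, so its square restricts to $d^{\Cal V}\o d^{\Cal V}$ on $\Om^{k-1}_\rho(M,\Cal VM)$. As already noted before the statement we have $\partial_{\frak p/\frak p_+}\o\partial_{\frak p/\frak p_+}=0$, and by Lemma~\ref{lemma4.2} the mixed term $\tilde d_1^{\Cal V}\o\partial_{\frak p/\frak p_+}+\partial_{\frak p/\frak p_+}\o\tilde d_1^{\Cal V}$ contributes $\tsum_{i<j}(-1)^{i+j}(D^\rho_{\{s_i,s_j\}}\ph)(s_0,\dots,\widehat{s_i},\dots,\widehat{s_j},\dots,s_k)$. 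Hence the whole computation reduces to evaluating $\tilde d_1^{\Cal V}\o\tilde d_1^{\Cal V}$ and showing that its only contributions are a term cancelling the one just displayed and the asserted curvature term.

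The one nontrivial ingredient is a Ricci--type identity for the relative fundamental derivative, which I would record as a separate lemma, since it also underlies the tractor--connection interpretation developed in the following subsections. Lifting $s,t\in\Ga(\Cal A_\rho M)$ to sections $\hat s,\hat t$ of the subbundle $\Cal G\x_Q\frak p\subset\Cal AM$ and using that $\frak p$ is a subalgebra, so that $\{\hat s,\hat t\}$, $D_{\hat s}\hat t$ and $D_{\hat t}\hat s$ again take values in $\Cal G\x_Q\frak p$, the formula for the Lie bracket on sections of the adjoint tractor bundle in terms of the fundamental derivative and the Cartan curvature (cf.\ Section~1.5 of \cite{book}) descends to
$$
D^\rho_s D^\rho_t\si-D^\rho_t D^\rho_s\si-D^\rho_{\{s,t\}}\si=D^\rho_{D^\rho_s t}\si-D^\rho_{D^\rho_t s}\si-D_{\ka(\Pi_\rho s,\Pi_\rho t)}\si,
$$
for every section $\si$ of a relative natural bundle, where $\ka$ is the Cartan curvature and $\Pi_\rho$ the relative anchor. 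The curvature term retains the \emph{full} fundamental derivative $D$, because $\ka(\Pi_\rho s,\Pi_\rho t)$ is in general only a section of $\Cal AM$ and need not descend to $\Cal A_\rho M$; applied on the relative natural bundle $\La^{k-1}\Cal A^*_\rho M\otimes\Cal VM$ this is exactly what makes $D$ rather than $D^\rho$ appear in the statement.

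Given this identity, I would expand $(\tilde d_1^{\Cal V}\o\tilde d_1^{\Cal V}\ph)(s_0,\dots,s_k)$ by applying the defining formula \eqref{td1def} twice and using the Leibniz rule from part~(3) of Proposition~\ref{prop2.2} to commute the outer $D^\rho_{s_i}$ past the evaluation and past the inner $D^\rho$. As in the classical computation of the square of a twisted exterior derivative, the resulting terms fall into three groups: (i) iterated derivatives $(D^\rho_{s_i}D^\rho_{s_j}\ph)(\dots)$, which after antisymmetrising in $i,j$ become commutators and are rewritten by the Ricci identity; (ii) terms $(D^\rho_{D^\rho_{s_i}s_j}\ph)(\dots)$ arising from differentiating the inner $D^\rho$; and (iii) ``cross'' terms in which $D^\rho_{s_i}$ hits one of the remaining arguments. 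The $D^\rho_{D^\rho s}$--terms produced by the Ricci identity cancel against those in groups (ii) and (iii), the $D^\rho_{\{s_i,s_j\}}$--terms cancel the contribution of Lemma~\ref{lemma4.2}, and only the curvature terms $\tsum_{i<j}(-1)^{i+j}(D_{\ka(\Pi_\rho s_i,\Pi_\rho s_j)}\ph)(s_0,\dots,\widehat{s_i},\dots,\widehat{s_j},\dots,s_k)$ survive. Specialising $\ph$ to a relative form yields the theorem; one checks directly that this expression lies in $\Om^{k+1}_\rho(M,\Cal VM)$, since any section of $\Cal A^0_\rho M$ is annihilated by $\Pi_\rho$ and the full $D$ preserves the subbundle of relative forms, consistently with Theorem~\ref{thm4.1}.

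The main obstacle is the bookkeeping in this last expansion: $\tilde d_1^{\Cal V}\o\tilde d_1^{\Cal V}$ produces a large number of terms and matching the $D^\rho_{D^\rho s}$--contributions coming from the Ricci identity with those coming from the Leibniz rule and the cross terms requires careful attention to signs and to which index is omitted in each argument slot. Once the Ricci identity is in place, the remainder of the argument is formal.
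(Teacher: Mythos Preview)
Your overall strategy matches the paper's: decompose $(\tilde d^{\Cal V})^2$ into the vanishing $\partial_{\frak p/\frak p_+}^2$, the mixed term handled by Lemma~\ref{lemma4.2}, and $(\tilde d_1^{\Cal V})^2$, then reduce the last to a Ricci identity for the relative fundamental derivative. Your Ricci identity is equivalent to the one the paper invokes (Proposition~1.5.9 of \cite{book}), since $(D^\rho D^\rho\ph)(s,t)=D^\rho_sD^\rho_t\ph-D^\rho_{D^\rho_st}\ph$, and your observation that the curvature term must keep the full $D$ is exactly right.

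The one genuine difference is how $(\tilde d_1^{\Cal V})^2$ is computed. You propose a direct double expansion of \eqref{td1def} with Leibniz, sorting terms into groups (i)--(iii) and tracking cancellations. The paper bypasses this entirely by noting that $\tilde d_1^{\Cal V}\ph=k\,\Alt_k(D^\rho\ph)$ and that the alternation $\Alt$ is a natural bundle map, hence commutes with $D^\rho$; this gives $(\tilde d_1^{\Cal V})^2\ph=(k+1)k\,\Alt_{k+1}(D^\rho D^\rho\ph)$ in one line. Since $\ph$ is already alternating in its $k-1$ slots, this reduces immediately to the antisymmetrisation in the first two indices of $D^\rho D^\rho\ph$, to which the Ricci identity applies directly. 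In particular, your groups (ii) and (iii) never appear: the $D^\rho_{D^\rho_st}$-terms are absorbed into the very definition of $(D^\rho D^\rho\ph)(s,t)$, and the cross terms are handled automatically by the alternation. Your route works, but the bookkeeping you flag as the main obstacle is precisely what the $\Alt$-trick eliminates.
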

 \begin{proof}
   For any $\ell$ consider the complete alternation defined by
 $$
 \Alt_\ell\ps(s_1,\dots,s_\ell)=\tfrac1{\ell!}\sum_{\si\in\frak
   S_\ell}\operatorname{sgn}(\si)\ps(s_{\si_1},\dots,s_{\si_\ell}).
 $$ This can be viewed as a natural bundle map on various bundles, in
 particular as mapping $\Cal A_\rho^*M\otimes\La^{\ell-1}\Cal
 A_\rho^*M\otimes\Cal VM$ to $\La^\ell\Cal A_\rho^*M\otimes\Cal
 VM$. Now our definition of $\tilde d_1^{\Cal V}$ can be recast as
 $\tilde d_1^{\Cal V}\ph=k\Alt_k(D\ph)$ since $\ph$ has degree $k-1$
 and is already alternating in all its entries. But then by definition
 \begin{align*}
 \tilde d_1^{\Cal V}(\tilde d_1^{\Cal
   V}\ph)&(s_0,\dots,s_k)=k\tsum_{i=0}^k(-1)^i(D^\rho_{s_i}(\Alt_kD^{\rho}\ph))
 (s_0,\dots,\widehat{s_i},\dots,s_k)=\\
 &k\tsum_{i=0}^k(-1)^i\Alt_k(D^\rho _{s_i}D^\rho\ph)
 (s_0,\dots,\widehat{s_i},\dots,s_k)=\\
 &(k+1)k\Alt_{k+1}(D^\rho D^\rho \ph)(s_0,\dots,s_k).
 \end{align*}
 Now $D^\rho D^\rho \ph$ is a section of $\otimes^2\Cal
 A_\rho^*M\otimes\La^{k-1}\Cal A_\rho^*M\otimes\Cal VM$, so in forming
 the alternation, we do not have to permute the last $r-1$
 entries. Hence we can express $\tilde d_1^{\Cal V}(\tilde d_1^{\Cal
   V}\ph)(s_0,\dots,s_k)$ as 
 $$
 \tsum_{i<j}(-1)^{i+j+1}(D^\rho D^\rho\ph (s_i,s_j)-D^\rho
 D^\rho\ph(s_j,s_i))(s_0,\dots,\widehat{s_i},\dots,\widehat{s_j},\dots,s_k). 
 $$
 Now $D^\rho\ph$ is induced by the restriction of $D\ph$ to a natural
 subbundle, whose sections are preserved by a fundamental
 derivative. Thus, the usual Ricci identity from Proposition 1.5.9 of
 \cite{book} implies that
 $$
 D^\rho D^\rho\ph (s_i,s_j)-D^\rho
 D^\rho\ph(s_j,s_i)=-D_{\ka(\Pi(s_i),\Pi(s_j))}\ph+D^\rho_{\{s_i,s_j\}}\ph. 
 $$
 Together with Lemma \ref{lemma4.2}, this implies the claim.
 \end{proof}

 \subsection{An alternative description}\label{4.3} 
To prove that a standard BGG--sequence is a resolution, one usually
relates it to a twisted de--Rham resolution. As a next step towards
relative versions of such results, we derive a description of the
twisted relative exterior derivative which is closer to the standard
analogs of the exterior derivative on bundle valued differential
forms. To do this, we first need an analog of tractor connections,
which we can obtain using the relative fundamental
derivative. Consider the relative twisted exterior derivative on
$\Om^0_\rho(M,\Cal VM)=\Ga(\Cal VM)$. Viewed as an operator $\Ga(\Cal
A_\rho M)\x\Ga(\Cal VM)\to\Ga(\Cal VM)$ this is induced by
$(s,\si)\mapsto D^\rho_s\si+s\bullet\si$. Thus from part (1) of
Proposition \ref{prop2.2}, we conclude that the induced operator
$\Ga(T_\rho M)\x\Ga(\Cal VM)\to\Ga(\Cal VM)$ also satisfies a Leibniz
rule. Hence it defines a partial connection, which is called the
(normal) \textit{relative tractor connection} on the relative tractor
bundle $\Cal VM$ and denoted by $\nabla^{\rho,\Cal V}$.

A linear connection on a vector bundle can be coupled to the exterior
derivative to obtain an operator on differential forms with values in
that vector bundle. This has an analog for partial connections,
provided that the subbundle of the tangent bundle in question is
involutive, see Section \ref{4.4} below. In our setting, the relative
tangent bundle $T_\rho M$ is not involutive in general, but we can
overcome this problem by using a modification of the Lie bracket of
vector fields.
 \begin{prop}\label{prop4.3}
   The bilinear operator $\Ga(\Cal A_\rho M)\x\Ga(\Cal A_\rho
   M)\to\Ga(\Cal A_\rho M)$ defined by $(s_1,s_2)\mapsto
   D^\rho_{s_1}s_2-D^\rho_{s_2}s_1+\{s_1,s_2\}$ descends to a skew
   symmetric bilinear operator
 $$
 \llbr\ ,\ \rrbr:\Ga(T_\rho M)\x\Ga(T_\rho M)\to\Ga(T_\rho M). 
 $$  
 This satisfies a Leibniz rule, i.e.~$\llbr\xi,f\eta\rrbr=(\xi\cdot
 f)\eta+f\llbr\xi,\eta\rrbr$ holds for any $f\in C^\infty(M,\Bbb R)$
 and all $\xi,\eta\in\Ga(T_\rho M)$.  
 \end{prop}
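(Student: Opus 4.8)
The plan is to verify three things: that the bracket $D^\rho_{s_1}s_2-D^\rho_{s_2}s_1+\{s_1,s_2\}$ descends to $\Ga(T_\rho M)\x\Ga(T_\rho M)\to\Ga(T_\rho M)$, that it is skew symmetric, and that it satisfies the stated Leibniz rule. Skew symmetry is immediate from the definition, since the first two terms visibly swap sign and $\{\ ,\ \}$ is skew by the corresponding property of the Lie bracket on $\frak p/\frak p_+$. So the content is the first and third points.

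For the descent claim, recall from Section \ref{2.2} that $T_\rho M=\Cal A_\rho M/\Cal A^0_\rho M$ and $\Pi_\rho:\Ga(\Cal A_\rho M)\to\Ga(T_\rho M)$ is the induced projection. I would first check that the bracket takes values in $\Ga(\Cal A_\rho M)$ — which is clear — and then that, modulo $\Cal A^0_\rho M$, the result depends only on $\Pi_\rho(s_1)$ and $\Pi_\rho(s_2)$. For this, suppose $s_1\in\Ga(\Cal A^0_\rho M)$. Then by part (1) of Proposition \ref{prop2.2}, $D^\rho_{s_2}s_1$ contributes $-s_1\bullet s_2$, which here is $-\{s_1,s_2\}$ (the $\bullet$-action of $\Cal A^0_\rho M$ on $\Cal A_\rho M$ being the adjoint action $\{\ ,\ \}$), so those two terms cancel. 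The remaining term is $D^\rho_{s_1}s_2$ with $s_1\in\Ga(\Cal A^0_\rho M)$, and again by part (1) this equals $-s_1\bullet s_2=-\{s_1,s_2\}$; since $\Cal A^0_\rho M$ is a subalgebra (the filtration is compatible with the bracket, $\{\Cal A^0_\rho M,\Cal A^i_\rho M\}\subset\Cal A^i_\rho M$), this lies in $\Ga(\Cal A^0_\rho M)$. Hence whenever $s_1$ is a section of $\Cal A^0_\rho M$ the whole expression lands in $\Ga(\Cal A^0_\rho M)$; by skew symmetry the same holds for $s_2$. Thus the expression is well defined on the quotient in both slots, and one should also note the image lies in $\Ga(T_\rho M)$ rather than all of $\frak X(M)$ simply because it is built from sections of $\Cal A_\rho M$ reduced mod $\Cal A^0_\rho M$.

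For the Leibniz rule, I would take $\xi=\Pi_\rho(s_1)$, $\eta=\Pi_\rho(s_2)$ and lift $f\eta$ by $fs_2$. Then $D^\rho_{s_1}(fs_2)=(s_1\cdot f)s_2+fD^\rho_{s_1}s_2$ by the Leibniz rule for $D^\rho$ in part (3) of Proposition \ref{prop2.2} (applied to multiplication by functions), while $D^\rho_{fs_2}s_1=fD^\rho_{s_2}s_1$ by linearity over functions in the $\Cal A_\rho M$-slot, and $\{s_1,fs_2\}=f\{s_1,s_2\}$ by bilinearity of the algebraic bracket over functions. Collecting terms, the bracket of $s_1$ with $fs_2$ equals $(s_1\cdot f)s_2+f(D^\rho_{s_1}s_2-D^\rho_{s_2}s_1+\{s_1,s_2\})$; projecting to $T_\rho M$ and using part (2) of Proposition \ref{prop2.2} to identify $s_1\cdot f$ with $\Pi_\rho(s_1)\cdot f=\xi\cdot f$ gives exactly $\llbr\xi,f\eta\rrbr=(\xi\cdot f)\eta+f\llbr\xi,\eta\rrbr$.

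The only genuinely delicate point is the descent argument, and within it the observation that the $\bullet$-action of $\Cal A^0_\rho M$ on $\Cal A_\rho M$ used implicitly in part (1) of Proposition \ref{prop2.2} is precisely the algebraic bracket $\{\ ,\ \}$ restricted to $\Cal A^0_\rho M\x\Cal A_\rho M$ — this is what makes the cancellation in the descent and the closure in $\Cal A^0_\rho M$ work, and it is exactly the statement that the adjoint action of $\frak q/\frak p_+$ on $\frak p/\frak p_+$ agrees with the infinitesimal action defining $\bullet$ on the relative adjoint tractor bundle. Everything else is a bookkeeping of signs and the two Leibniz rules already recorded in Proposition \ref{prop2.2}.
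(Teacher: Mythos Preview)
Your overall approach is the same as the paper's, and the skew symmetry and Leibniz arguments are correct. But the descent argument contains a genuine slip: you have interchanged the two $D^\rho$ terms.

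With $s_1\in\Ga(\Cal A^0_\rho M)$, part (1) of Proposition~\ref{prop2.2} applies to $D^\rho_{s_1}s_2$ (the \emph{differentiating} section must lie in $\Cal A^0_\rho M$), giving $D^\rho_{s_1}s_2=-s_1\bullet s_2=-\{s_1,s_2\}$. This is the term that cancels with $+\{s_1,s_2\}$. It does \emph{not} apply to $D^\rho_{s_2}s_1$, since $s_2$ is arbitrary. As you wrote it, the claimed cancellation fails: if $D^\rho_{s_2}s_1$ were $-\{s_1,s_2\}$, then $-D^\rho_{s_2}s_1+\{s_1,s_2\}=2\{s_1,s_2\}$, not zero.

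The remaining term is then $-D^\rho_{s_2}s_1$, and the correct reason it lies in $\Ga(\Cal A^0_\rho M)$ is naturality of $D^\rho$ (part (3) of Proposition~\ref{prop2.2}): since $\Cal A^0_\rho M\subset\Cal A_\rho M$ is a natural subbundle and $s_1$ is a section of it, so is $D^\rho_{s_2}s_1$. Your ``subalgebra'' justification does not work, because $\{s_1,s_2\}$ with $s_1\in\Cal A^0_\rho M$ and $s_2\in\Cal A_\rho M$ need not land in $\Cal A^0_\rho M$; the filtration compatibility $\{\Cal A^0_\rho M,\Cal A^i_\rho M\}\subset\Cal A^i_\rho M$ gives nothing when $i<0$.

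With these two corrections the proof is exactly the paper's.
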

 \begin{proof}
   It is evident that the operator on $\Ga(\Cal A_\rho M)$ is skew
   symmetric. If $s_1\in\Ga(\Cal A^0_\rho M)$, then
   $D^\rho_{s_1}s_2=-\{s_1,s_2\}$, while $D^\rho_{s_2}s_1\in\Ga(\Cal
   A^0_\rho M)$ by naturality of $D^\rho$. Thus the values lie in
   $\Ga(A^0_\rho M)$ for any choice of $s_2$, which implies that the
   operation descends to sections of $T_\rho M$ as claimed. The
   Leibniz rule for $D^\rho$ from Proposition \ref{prop2.2} together
   with the fact that the algebraic bracket $\{\ ,\ \}$ is bilinear
   over smooth functions implies the Leibniz rule for $\llbr\ ,\
   \rrbr$.
 \end{proof}
 Now it is easy to derive a formula for the twisted exterior
 derivative which is analogous to the formula for the covariant
 exterior derivative induced by a linear connection on a vector
 bundle.

 \begin{thm}\label{thm4.3}
   Let $\Cal VM$ be a relative tractor bundle and let
   $\nabla^{\rho,\Cal V}$ be the associated relative tractor
   connection. Then for $\ph\in\Om^k_\rho (M,\Cal VM)$ the twisted
   exterior derivative satisfies
 \begin{align*}
   d^{\Cal
     V}\ph&(\xi_0,\dots,\xi_k)=\tsum_{i=0}^r(-1)^i\nabla^{\rho,\Cal
     V}_{\xi_i}\ph(\xi_0,\dots,\widehat{\xi_i},\dots,\xi_k)\\
   &+\tsum_{i<j}(-1)^{i+j}\ph(\llbr\xi_i,\xi_j\rrbr,\dots,\widehat{\xi_i},
   \dots,\widehat{\xi_j},\dots,\xi_k)
 \end{align*}
 for all $\xi_0,\dots,\xi_k\in\Ga(T_\rho M)$. 
 \end{thm}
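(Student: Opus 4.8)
The plan is to evaluate both sides on lifts of the given relative vector fields. By Theorem \ref{thm4.1}, $d^{\Cal V}\ph$ lies in $\Om^{k+1}_\rho(M,\Cal VM)$ and is therefore a relative form, so for $\xi_0,\dots,\xi_k\in\Ga(T_\rho M)$ one may compute $d^{\Cal V}\ph(\xi_0,\dots,\xi_k)$ by choosing arbitrary sections $s_i\in\Ga(\Cal A_\rho M)$ with $\Pi_\rho(s_i)=\xi_i$ and evaluating $\tilde d^{\Cal V}\ph(s_0,\dots,s_k)=\tilde d_1^{\Cal V}\ph(s_0,\dots,s_k)+\partial_{\frak p/\frak p_+}\ph(s_0,\dots,s_k)$. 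Since $\ph$ itself is a relative form, $\ph(s_0,\dots,\widehat{s_i},\dots,s_k)=\ph(\xi_0,\dots,\widehat{\xi_i},\dots,\xi_k)$, and more generally inserting any section of $\Cal A_\rho M$ into a slot of $\ph$ depends only on its class in $\Cal A_\rho M/\Cal A^0_\rho M=T_\rho M$. Together with Proposition \ref{prop4.3}, this will also show that the right-hand side of the asserted identity is independent of the choice of the lifts $s_i$.

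First I would expand $\tilde d_1^{\Cal V}\ph$ by the Leibniz rule for $D^\rho$ from Proposition \ref{prop2.2}(3), as recorded just after \eqref{td1def}, obtaining
\[
\tilde d_1^{\Cal V}\ph(s_0,\dots,s_k)=\tsum_i(-1)^iD^\rho_{s_i}\bigl(\ph(s_0,\dots,\widehat{s_i},\dots,s_k)\bigr)-\tsum_i(-1)^i\tsum_{j\neq i}\ph(s_0,\dots,D^\rho_{s_i}s_j,\dots,\widehat{s_i},\dots,s_k),
\]
where in the last term $D^\rho_{s_i}s_j$ occupies the former slot of $s_j$. Next I would substitute the explicit formula \eqref{part-def} for $\partial_{\frak p/\frak p_+}\ph$ (with the action $\cdot$ replaced by $\bullet$ and $[\ ,\ ]$ by the algebraic bracket $\{\ ,\ \}$), splitting it into the ``algebraic-action'' sum $\tsum_i(-1)^is_i\bullet\ph(s_0,\dots,\widehat{s_i},\dots,s_k)$ and the ``bracket'' sum $\tsum_{i<j}(-1)^{i+j}\ph(\{s_i,s_j\},s_0,\dots,\widehat{s_i},\dots,\widehat{s_j},\dots,s_k)$.

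The terms not involving a bracket combine immediately: on the section $\ph(s_0,\dots,\widehat{s_i},\dots,s_k)\in\Ga(\Cal VM)$ one has $D^\rho_{s_i}(\,\cdot\,)+s_i\bullet(\,\cdot\,)=\nabla^{\rho,\Cal V}_{\xi_i}(\,\cdot\,)$ by the very definition of the relative tractor connection, whose well-definedness on $T_\rho M$ is exactly Proposition \ref{prop2.2}(1); this produces the first sum in the statement. For the bracket terms the remaining work is a sign count: moving the entry $D^\rho_{s_i}s_j$ to the front of the argument list of $\ph$ contributes the sign $(-1)^{j-1}$ if $i<j$ and $(-1)^{j}$ if $i>j$, so after including the prefactor $-(-1)^i$ the ordered pair $(i,j)$ with $i<j$ yields $(-1)^{i+j}\ph(D^\rho_{s_i}s_j,s_0,\dots,\widehat{s_i},\dots,\widehat{s_j},\dots,s_k)$, while $(j,i)$ yields minus the same expression with $i$ and $j$ interchanged. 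Adding the two orderings of an unordered pair $\{i,j\}$ with $i<j$ gives $(-1)^{i+j}\ph(D^\rho_{s_i}s_j-D^\rho_{s_j}s_i,\dots)$, and adding the corresponding term of the ``bracket'' sum produces $(-1)^{i+j}\ph\bigl(D^\rho_{s_i}s_j-D^\rho_{s_j}s_i+\{s_i,s_j\},\dots\bigr)$. Since $\ph$ annihilates sections of $\Cal A^0_\rho M$, this depends only on the image of $D^\rho_{s_i}s_j-D^\rho_{s_j}s_i+\{s_i,s_j\}$ in $T_\rho M$, which is $\llbr\xi_i,\xi_j\rrbr$ by Proposition \ref{prop4.3}; summing over $i<j$ gives exactly the second sum in the statement.

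I expect the sign bookkeeping in the reassembly of the bracket terms to be the only genuinely delicate point, together with checking that each term produced by expanding $D^\rho_{s_i}s_j$ pairs off with a term of the ``bracket'' sum with consistent sign. Everything else is a mechanical substitution of facts established earlier: the Leibniz rule and naturality of $D^\rho$ from Proposition \ref{prop2.2}, the definition of the relative tractor connection $\nabla^{\rho,\Cal V}$, and the well-definedness of $\llbr\ ,\ \rrbr$ from Proposition \ref{prop4.3}.
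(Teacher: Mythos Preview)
Your proof is correct and follows essentially the same route as the paper: lift the $\xi_i$ to sections $s_i$ of $\Cal A_\rho M$, expand $\tilde d_1^{\Cal V}\ph$ via the Leibniz rule for $D^\rho$, and regroup so that the $D^\rho_{s_i}(\ph(\dots))$ terms pair with the $s_i\bullet\ph(\dots)$ part of $\partial_{\frak p/\frak p_+}\ph$ to form $\nabla^{\rho,\Cal V}_{\xi_i}$, while the $D^\rho_{s_i}s_j$ terms pair with the $\{s_i,s_j\}$ part to form $\llbr\xi_i,\xi_j\rrbr$. Your sign bookkeeping is in fact slightly cleaner than the paper's intermediate description (which records the transposition signs with the two cases swapped, though it arrives at the correct formula \eqref{td1alt}).
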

 \begin{proof}
   Let $\Pi_\rho:\Cal A_\rho M\to\Cal A_\rho M/\Cal A^0_\rho M=T_\rho
   M$ be the natural projection. For each $i$, choose $s_i\in\Ga(\Cal
   A_\rho M)$ such that $\Pi(s_i)=\xi_i$ and view $\ph$ as a section of
   $\La^k\Cal A_\rho^*M\otimes\Cal VM$. Then by definition $d^{\Cal
     V}\ph(\xi_0,\dots,\xi_k)=\tilde d^{\Cal V}\ph(s_0,\dots,s_k)$.

   Now we expand
   $(D^\rho_{s_i}\ph)(s_0,\dots,\widehat{s_i},\dots,s_k)$ as discussed
   in Section \ref{4.2}. In each of terms in which $D_{s_i}$ acts on
   $s_j$ for $j\neq i$, we can move $D_{s_i}s_j$ to the first entry of
   $\ph$, picking up a sign $(-1)^{j-1}$ if $j<i$ and $(-1)^j$ if
   $j>i$. Using this, we obtain the following alternative expression
   for $\tilde d_1^{\Cal V}$:
 \begin{equation}\label{td1alt}
 \begin{aligned}
 \tilde d_1^{\Cal V}\ph(s_0&,\dots,s_k)=
 \tsum_{i=0}^r(-1)^iD^\rho_{s_i}(\ph(s_0,\dots,\widehat{s_i},\dots,s_k))+\\
 &\tsum_{i<j}(-1)^{i+j}\ph(D^\rho_{s_i}s_j-D^\rho_{s_j}s_i,s_0,\dots,
 \widehat{s_i},\dots,\widehat{s_j},\dots,s_k). 
 \end{aligned}
 \end{equation}
 But now it is evident that the first sum in the right hand side
 combines with the part
 $\sum_{i=0}^k(-1)^is_i\bullet\ph(s_0,\dots,\widehat{s_i},\dots,s_k)$
 in $\partial_{\frak p/\frak p_+}\ph(s_0,\dots,s_k)$ to produce the
 first sum in the claimed formula. Likewise, the second sum in the
 right hand side combines with the second sum in $\partial_{\frak
   p/\frak p_+}\ph(s_0,\dots,s_k)$ to the second sum in the claimed
 formula.
 \end{proof}

 \subsection{The case of involutive relative tangent bundle}\label{4.4}
 The considerations in Section \ref{4.3} suggest studying parabolic
 geometries for which the relative tangent bundle is involutive. In
 view of the relation to twistor spaces discussed in Section \ref{4.6}
 below, this property is sometimes phrased as ``existence of a twistor
 space corresponding to $P\supset Q$ as a manifold''. Involutivity of
 $T_\rho M$ for a given parabolic geometry $(p:\Cal G\to M,\om)$ is
 easy to characterize in terms of the curvature $\ka\in\Om^2(M,\Cal
 AM)$ of the geometry. Indeed, it depends only on its torsion
 $\tau\in\Om^2(M,TM)$, which by definition is obtained by projecting
 the values of $\ka$ to $\Cal AM/\Cal A^0M\cong TM$. The result in
 Proposition 2.5 of \cite{twistor} can be phrased as follows.

 \begin{prop}\label{prop4.4.1}
   For a parabolic geometry $(p:\Cal G\to M,\om)$ of type $(G,Q)$ the
   relative tangent bundle is involutive if and only if the curvature
   $\ka$ maps $T_\rho M\x T_\rho M$ to sections of the subbundle $\Cal
   G\x_Q\frak p\subset\Cal AM$ or equivalently if $\tau$ maps $T_\rho
   M\x T_\rho M$ to $T_\rho M$.
 \end{prop}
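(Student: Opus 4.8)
The plan is to lift the statement to the Cartan bundle $\Cal G$, where the structure equation of $\om$ reduces everything to a short computation using only that $\frak q\subset\frak p$ are subalgebras of $\frak g$. Let $Tp\colon T\Cal G\to TM$ be the tangent map of the projection and consider the distribution $\tcf:=\{v\in T\Cal G:\om(v)\in\frak p\}$. The trivialization $TM\cong\Cal G\x_Q(\frak g/\frak q)$ induced by $\om$ identifies $T_\rho M$ with $\Cal G\x_Q(\frak p/\frak q)$, and a tangent vector downstairs lies in $T_\rho M$ exactly when its $\om$--value lies in $\frak p$; hence $\tcf=(Tp)^{-1}(T_\rho M)$. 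Since $p$ is a surjective submersion and $\tcf$ contains the vertical subbundle $\om^{-1}(\frak q)$, $T_\rho M$ is involutive on $M$ if and only if $\tcf$ is involutive on $\Cal G$ --- the standard argument being that sections of $T_\rho M$ lift to $p$--projectable sections of $\tcf$, that $Tp$ carries $\tcf$ into $T_\rho M$, and that brackets of $p$--related fields are $p$--related.

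The next step is to test involutivity of $\tcf$ on the $\om$--constant vector fields, which span $\Ga(\tcf)$ over $C^\infty(\Cal G)$. Writing $\om^{-1}(X)$ for the constant section with $\om$--value $X\in\frak p$, the structure equation $d\om(\xi,\eta)+[\om(\xi),\om(\eta)]=K(\xi,\eta)$ together with $d\om(\xi,\eta)=\xi\cdot\om(\eta)-\eta\cdot\om(\xi)-\om([\xi,\eta])$ yields
$$\om\big([\om^{-1}(X),\om^{-1}(Y)]\big)=[X,Y]-K(\om^{-1}(X),\om^{-1}(Y))\qquad(X,Y\in\frak p),$$
where $K$ is the curvature function of $\om$. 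Since $\frak p$ is a subalgebra, $[X,Y]\in\frak p$, so the left hand side is $\frak p$--valued for all $X,Y\in\frak p$ precisely when $K$ takes values in $\frak p$ on $\tcf\x\tcf$; a routine $C^\infty$--bilinearity argument (the Leibniz correction terms in the bracket of general sections of $\tcf$ stay in $\tcf$, and $K$ is tensorial) promotes this to the statement that $\tcf$ is involutive if and only if $K(\xi,\eta)\in\frak p$ for all sections $\xi,\eta$ of $\tcf$.

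The final step is to translate back to $M$. The horizontal, $Q$--equivariant function $K$ is the Cartan--bundle incarnation of $\ka\in\Om^2(M,\Cal AM)$, and $\Cal G\x_Q\frak p\subset\Cal AM$ corresponds to $\frak p\subset\frak g$, so the condition of the previous step says exactly that $\ka$ maps $T_\rho M\x T_\rho M$ into $\Cal G\x_Q\frak p$. Finally $\tau$ is the image of $\ka$ under the projection $\Cal AM\to\Cal AM/\Cal A^0M=TM$, whose kernel is $\Cal G\x_Q\frak q$; since $\frak q\subset\frak p$, the preimage of $\frak p/\frak q$ under $\frak g\to\frak g/\frak q$ is $\frak p$ itself, so $\ka(\xi,\eta)\in\Cal G\x_Q\frak p$ if and only if $\tau(\xi,\eta)\in T_\rho M$. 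This establishes all three equivalences.

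The only genuine inputs here are the structure equation and the inclusions of subalgebras $\frak q\subset\frak p\subset\frak g$, so there is no deep obstacle. What requires attention is the bookkeeping in the first two steps --- that involutivity of $T_\rho M$ downstairs matches involutivity of $\tcf$, and that it suffices to check the curvature condition on an $\om$--constant frame --- since one must track the Leibniz correction terms and verify they never leave $\tcf$. As an alternative to the structure--equation computation, one may invoke Proposition \ref{prop4.3}: comparing the defining formula of $\llbr\ ,\ \rrbr$ with the adjoint--tractor Lie--bracket identity from \cite{book} shows that $\llbr\xi,\eta\rrbr$ and the Lie bracket $[\xi,\eta]$ differ by the torsion $\tau(\xi,\eta)$, whence $[\xi,\eta]\in\Ga(T_\rho M)$ if and only if $\tau(\xi,\eta)\in\Ga(T_\rho M)$. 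In either approach, the statement is also simply a reformulation of Proposition 2.5 of \cite{twistor}.
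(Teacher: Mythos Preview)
Your proof is correct and complete. The paper itself does not prove this proposition at all: it simply cites Proposition~2.5 of \cite{twistor}, which you also acknowledge at the end. Your structure--equation computation on $\Cal G$ is the standard way to establish that result, and each step (lifting involutivity of $T_\rho M$ to $\tcf=\om^{-1}(\frak p)$, testing on $\om$--constant fields, and translating the $\frak p$--valuedness of $K$ back to $\ka$ and then to $\tau$) is sound. Your closing remark about the alternative route through Proposition~\ref{prop4.3} in fact anticipates the content of Lemma~\ref{lemma4.4}, where the paper later proves $\llbr\xi,\eta\rrbr=[\xi,\eta]+\tau(\xi,\eta)$ directly.
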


 Using this result, we can define relative versions of the curvature
 and the torsion in the case of involutive relative tangent
 bundle. Observe that there is a natural projection $\Cal G\x_Q\frak
 p\to \Cal G\x_Q(\frak p/\frak p_+)=\Cal A_\rho M$.

 \begin{definition}\label{def4.4} 
   Consider a parabolic geometry $(p:\Cal G\to M,\om)$ of type $(G,Q)$
   for which the relative tangent bundle $T_\rho M\subset TM$ is
   involutive.

   Then we define the \textit{relative torsion}
   $\tau_\rho\in\Ga(\La^2T^*_\rho M,T_\rho M)$ as the restriction of
   the torsion $\tau$ to entries from $T_\rho M\subset TM$. Further,
   we define the \textit{relative curvature}
   $\ka_\rho\in\Ga(\La^2T_\rho^* M,\Cal A_\rho M)$ to be the image of
   the restriction of the curvature $\ka$ (which has values in $\Cal
   G\x_Q\frak p$ by Proposition \ref{prop4.4.1}) under the natural
   bundle map $\Cal G\x_Q\frak p\to\Cal A_\rho M$ from above.
 \end{definition}

 If the relative tangent bundle $T_\rho M$ is involutive, then one can
 associate a curvature $R^{\rho,\Cal V}\in\Om^2(M,L(\Cal VM,\Cal VM))$
 to the partial tractor connection $\nabla^{\rho,\Cal V}$. This can
 simply be defined by the usual formula
 $$
 R^{\rho,\Cal V}(\xi,\eta)(s)=\nabla^{\rho,\Cal V}_\xi\nabla^{\rho,\Cal
   V}_\eta s-\nabla^{\rho,\Cal V}_\eta\nabla^{\rho,\Cal V}_\xi s-
 \nabla^{\rho,\Cal V}_{[\xi,\eta]}s
 $$
 for $\xi,\eta\in\Ga(T_\rho M)$ and the usual proof shows that the
 right hand side is linear over smooth functions in all arguments. To
 compute this curvature, we first need a lemma. 

 \begin{lemma}\label{lemma4.4}
 For any parabolic geometry $(p:\Cal G\to M,\om)$ of type $(G,Q)$ and
 $\xi_1,\xi_2\in\Ga(T_\rho M)$ we have
 $$
 \llbr\xi_1,\xi_2\rrbr=[\xi_1,\xi_2]+\tau(\xi_1,\xi_2),
 $$ 
 where $\tau$ denotes the torsion of the geometry. 

 If the relative tangent bundle $T_\rho M$ is involutive, then in the
 right hand side, we can replace $\tau$ by the relative torsion
 $\tau_\rho$ and the summands in the right hand side both are sections
 of $T_\rho M$.
 \end{lemma}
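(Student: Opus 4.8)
The plan is to reduce the first identity, via the relative version of the Ricci identity used in the proof of Theorem \ref{thm4.2} (i.e.\ Proposition 1.5.9 of \cite{book} transported to the relative setting), to the case of functions. By Proposition \ref{prop4.3}, $\llbr\xi_1,\xi_2\rrbr$ is by definition $\Pi_\rho\bigl(D^\rho_{s_1}s_2-D^\rho_{s_2}s_1+\{s_1,s_2\}\bigr)$, where $s_i\in\Ga(\Cal A_\rho M)$ are any sections with $\Pi_\rho(s_i)=\xi_i$, and the right hand side does not depend on the choice of lifts. So first I would fix such lifts $s_1,s_2$.

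Next I would apply the relative Ricci identity to an arbitrary $f\in C^\infty(M,\Bbb R)$, viewed as a section of the relative natural bundle $M\x\Bbb R$, and to the pair $(s_1,s_2)$. Writing out $(D^\rho D^\rho f)(a,b)=D^\rho_aD^\rho_bf-D^\rho_{D^\rho_a b}f$ and using part (2) of Proposition \ref{prop2.2} (so $D^\rho_a f=\Pi_\rho(a)\cdot f$), the left hand side $(D^\rho D^\rho f)(s_1,s_2)-(D^\rho D^\rho f)(s_2,s_1)$ comes out as $[\xi_1,\xi_2]\cdot f-\Pi_\rho(D^\rho_{s_1}s_2-D^\rho_{s_2}s_1)\cdot f$. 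The Ricci identity equates this with $-D_{\ka(\xi_1,\xi_2)}f+D^\rho_{\{s_1,s_2\}}f$, where $\ka\in\Om^2(M,\Cal AM)$ is the curvature. Since the ordinary fundamental derivative acts on functions by $D_sf=\Pi(s)\cdot f$ (the analog of part (2) of Proposition \ref{prop2.2}) and the torsion is $\tau=\Pi\o\ka$, the curvature term is $D_{\ka(\xi_1,\xi_2)}f=\tau(\xi_1,\xi_2)\cdot f$; and again $D^\rho_{\{s_1,s_2\}}f=\Pi_\rho(\{s_1,s_2\})\cdot f$. Comparing the two expressions and recombining the $\Pi_\rho$-terms into $\llbr\xi_1,\xi_2\rrbr\cdot f=\Pi_\rho(D^\rho_{s_1}s_2-D^\rho_{s_2}s_1+\{s_1,s_2\})\cdot f$, one is left with $[\xi_1,\xi_2]\cdot f=\llbr\xi_1,\xi_2\rrbr\cdot f-\tau(\xi_1,\xi_2)\cdot f$ for every $f$, which forces $\llbr\xi_1,\xi_2\rrbr=[\xi_1,\xi_2]+\tau(\xi_1,\xi_2)$ as vector fields. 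Note that no involutivity has been used so far.

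For the second statement I would argue as follows. If $T_\rho M$ is involutive then $[\xi_1,\xi_2]\in\Ga(T_\rho M)$ by involutivity, and by Proposition \ref{prop4.4.1} the torsion $\tau$ maps $T_\rho M\x T_\rho M$ into $T_\rho M$, so $\tau(\xi_1,\xi_2)$ agrees with the relative torsion $\tau_\rho(\xi_1,\xi_2)$ of Definition \ref{def4.4} and lies in $\Ga(T_\rho M)$. (Equivalently, $\llbr\xi_1,\xi_2\rrbr\in\Ga(T_\rho M)$ by Proposition \ref{prop4.3}, so $\tau(\xi_1,\xi_2)=\llbr\xi_1,\xi_2\rrbr-[\xi_1,\xi_2]$ is automatically a section of $T_\rho M$.) Substituting $\tau_\rho$ for $\tau$ in the identity already proved yields the claim, with both summands being sections of $T_\rho M$.

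The only step requiring genuine care --- and the one I would check most carefully --- is the bookkeeping in the Ricci identity: that the curvature correction term really involves the \emph{ordinary} fundamental derivative $D$ (because $\ka$ is valued in $\Cal AM$, not in $\Cal A_\rho M$) and that it reduces on functions to $\tau(\xi_1,\xi_2)\cdot f$, together with keeping the signs straight. Everything else is formal manipulation.
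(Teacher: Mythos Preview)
Your proof is correct. The approach differs from the paper's: the paper lifts the $s_i$ to sections $\tilde s_i$ of $\Cal G\x_Q\frak p\subset\Cal AM$ and applies Corollary~1.5.8 of \cite{book}, which expresses the Lie bracket of the corresponding $Q$--invariant vector fields on $\Cal G$ as $D_{\tilde s_1}\tilde s_2-D_{\tilde s_2}\tilde s_1+\{\tilde s_1,\tilde s_2\}-\ka(\xi_1,\xi_2)$; projecting via $\Pi$ immediately gives the identity. You instead stay at the level of $\Cal A_\rho M$ and test the equality of vector fields against arbitrary $f\in C^\infty(M)$, extracting the result from the relative Ricci identity already used in the proof of Theorem~\ref{thm4.2}. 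Your route is slightly more indirect but has the virtue of reusing machinery from Section~\ref{4.2} rather than invoking a separate formula from \cite{book}; the paper's route is more direct since it simply projects a known identity. For the involutive case both arguments are the same.
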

 \begin{proof}
   For $i=1,2$ choose $s_i\in\Ga(\Cal A_\rho M)$ such that
   $\Pi_\rho(s_i)=\xi_i$. Then by definition,
 \begin{equation}
 \label{brr}
 \llbr\xi_1,\xi_2\rrbr=\Pi_\rho(D^\rho_{s_1}s_2-D^\rho_{s_2}s_1+\{s_1,s_2\}). 
 \end{equation}
 For $i=1,2$, choose a section $\tilde s_i$ of the subbundle $\Cal
 G\x_Q\frak p\subset\Cal AM$ which descends to $s_i$. Then by
 definition, $D_{\tilde s_1}\tilde s_2\in\Ga(\Cal G\x_Q\frak p)$
 descends to $D^\rho_{s_1}s_2$ and similarly for the other terms in the
 right hand side of \eqref{brr}.

 On the other hand, as we have noted in Section \ref{2.2}, sections of
 $\Cal AM$ can be identified with $Q$--invariant vector fields on
 $\Cal G$ and in this picture, $\Pi$ is just the projection of such
 vector fields to $M$. In particular, since the Lie bracket of
 $Q$--invariant vector fields is again $Q$--invariant, there is an
 operation $[\ ,\ ]:\Ga(\Cal AM)\x\Ga(\Cal AM)\to\Ga(\Cal AM)$
 corresponding to the Lie bracket of vector fields. But by
 construction, we then have $\Pi([\tilde s_1,\tilde s_2])=[\Pi(\tilde
   s_1),\Pi(\tilde s_2)]=[\xi_1,\xi_2]$. Finally, by Corollary 1.5.8
 of \cite{book}, one has
 $$
 [\tilde s_1,\tilde s_2]=D_{\tilde s_1}\tilde s_2- D_{\tilde s_2}\tilde
 s_1+\{\tilde s_1,\tilde s_2\}- \ka(\Pi(\tilde s_1),\Pi(\tilde s_2)),
 $$ 
 which immediately implies the result. 
 \end{proof}

 \begin{prop}\label{prop4.4.2}
 Let $(p:\Cal G\to M,\om)$ be a parabolic geometry of type $(G,Q)$ such
 that the relative tangent bundle $T_\rho M$ is involutive and let
 $\Cal VM\to M$ be a relative tractor bundle. 

(1) For $\ph\in\Om^{k-1}_\rho (M,\Cal VM)$, $d^{\Cal V}(d^{\Cal
   V}\ph)$ is induced by the section of $\La^{k+1}\Cal
 A_\rho^*M\otimes\Cal VM$ which maps $s_0,\dots,s_k$ to 
 $$
 \textstyle\sum_{i<j}(-1)^{i+j}(D^\rho_{\ka_\rho(\Pi(s_i),\Pi(s_j))}\ph)
 (s_0,\dots,\widehat{s_i},\dots, \widehat{s_j},\dots,s_k).
 $$
 If in addition the relative torsion vanishes, then this equals
 $$
 \textstyle\sum_{i<j}(-1)^{i+j+1}(\ka_\rho(\Pi(s_i),\Pi(s_j))\bullet
 \ph) (s_0,\dots,\widehat{s_i},\dots, \widehat{s_j},\dots,s_k).
 $$

(2) The curvature of the relative tractor connection
 $\nabla^{\rho,\Cal V}$ is given by
 $$
 R^{\rho,\Cal V}(\xi,\eta)(s)=\ka_\rho(\xi,\eta)\bullet s. 
 $$  
 \end{prop}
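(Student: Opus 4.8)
For part (1) the plan is to specialize Theorem \ref{thm4.2} to the involutive case. That theorem identifies $d^{\Cal V}(d^{\Cal V}\ph)$ with the relative $(k+1)$--form sending $s_0,\dots,s_k$ to
$$
\textstyle\sum_{i<j}(-1)^{i+j}(D_{\ka(\Pi(s_i),\Pi(s_j))}\ph)(s_0,\dots,\widehat{s_i},\dots,\widehat{s_j},\dots,s_k),
$$
with $\ph$ regarded as a section of the relative natural bundle $\La^{k-1}\Cal A^*_\rho M\otimes\Cal VM$ and $\ka\in\Om^2(M,\Cal AM)$ the curvature. By Proposition \ref{prop4.4.1}, involutivity of $T_\rho M$ forces each $\ka(\Pi(s_i),\Pi(s_j))$ to be a section of the subbundle $\Cal G\x_Q\frak p\subset\Cal AM$. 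On the other hand, the relative fundamental derivative was constructed in Proposition \ref{prop2.2} precisely by restricting $D$ to first--slot arguments from $\Ga(\Cal G\x_Q\frak p)$ and observing that the resulting operation kills the $\Cal G\x_Q\frak p_+$ part on a relative natural bundle; hence $D_{\tilde s}\ph=D^\rho_s\ph$ whenever $\tilde s\in\Ga(\Cal G\x_Q\frak p)$ projects to $s\in\Ga(\Cal A_\rho M)$. Applying this with $\tilde s=\ka(\Pi(s_i),\Pi(s_j))$, whose image under $\Cal G\x_Q\frak p\to\Cal A_\rho M$ is $\ka_\rho(\Pi(s_i),\Pi(s_j))$ by Definition \ref{def4.4}, turns the displayed formula into the first asserted one. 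For the second formula, note that the projection $\Cal A_\rho M\to T_\rho M$ carries $\ka_\rho$ to the relative torsion $\tau_\rho$ (both arise by projecting $\ka$ further to $TM$); so if $\tau_\rho=0$, then $\ka_\rho$ takes values in $\Cal A^0_\rho M$, and part (1) of Proposition \ref{prop2.2} gives $D^\rho_{\ka_\rho(\Pi(s_i),\Pi(s_j))}\ph=-\ka_\rho(\Pi(s_i),\Pi(s_j))\bullet\ph$, which absorbs the extra sign.

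For part (2), the plan is to evaluate $d^{\Cal V}(d^{\Cal V}\si)$ for $\si\in\Om^0_\rho(M,\Cal VM)=\Ga(\Cal VM)$ in two ways. By construction of the relative tractor connection, $d^{\Cal V}\si=\nabla^{\rho,\Cal V}\si$, so the formula of Theorem \ref{thm4.3} with $k=1$ gives, for $\xi,\eta\in\Ga(T_\rho M)$,
$$
d^{\Cal V}(d^{\Cal V}\si)(\xi,\eta)=\nabla^{\rho,\Cal V}_\xi\nabla^{\rho,\Cal V}_\eta\si-\nabla^{\rho,\Cal V}_\eta\nabla^{\rho,\Cal V}_\xi\si-\nabla^{\rho,\Cal V}_{\llbr\xi,\eta\rrbr}\si.
$$
By Lemma \ref{lemma4.4}, $\llbr\xi,\eta\rrbr=[\xi,\eta]+\tau_\rho(\xi,\eta)$ in the involutive case, so the right hand side equals $R^{\rho,\Cal V}(\xi,\eta)(\si)-\nabla^{\rho,\Cal V}_{\tau_\rho(\xi,\eta)}\si$. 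On the other hand, part (1) applied with $k=1$ (only the pair $i=0$, $j=1$ occurs) gives $d^{\Cal V}(d^{\Cal V}\si)(\xi,\eta)=-D^\rho_{\ka_\rho(\xi,\eta)}\si$. Equating the two, and using $\tau_\rho(\xi,\eta)=\Pi_\rho(\ka_\rho(\xi,\eta))$ together with the defining relation $\nabla^{\rho,\Cal V}_{\Pi_\rho(a)}\si=D^\rho_a\si+a\bullet\si$ of the relative tractor connection — so that $\nabla^{\rho,\Cal V}_{\tau_\rho(\xi,\eta)}\si=D^\rho_{\ka_\rho(\xi,\eta)}\si+\ka_\rho(\xi,\eta)\bullet\si$ — every term involving $D^\rho$ cancels and one is left with $R^{\rho,\Cal V}(\xi,\eta)(\si)=\ka_\rho(\xi,\eta)\bullet\si$.

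Both computations are short, and I do not expect a genuine obstacle: the points needing care are purely bookkeeping, namely keeping track of which associated bundle of $\Cal G$ over $Q$ each object lives in and checking the compatibility of the projections $\Cal AM\to\Cal A_\rho M$, $\Cal A_\rho M\to T_\rho M$ and $\Cal AM\to TM$ with the definitions of $\ka_\rho$ and $\tau_\rho$. The single structural input is that the fundamental derivative, restricted to first--slot arguments from $\Cal G\x_Q\frak p$ and evaluated on relative natural bundles, is exactly the relative fundamental derivative $D^\rho$, which is immediate from Proposition \ref{prop2.2}. One could alternatively prove part (2) by a direct curvature computation from the definition of $\nabla^{\rho,\Cal V}$ via the Ricci identity for $D^\rho$, but the reduction to part (1) above is cleaner.
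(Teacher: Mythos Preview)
Your proposal is correct and follows essentially the same route as the paper: part (1) is obtained by specializing Theorem \ref{thm4.2} via Proposition \ref{prop4.4.1} and the construction of $D^\rho$ in Proposition \ref{prop2.2}, and part (2) is obtained by computing $(d^{\Cal V})^2\si$ both from part (1) and from Theorem \ref{thm4.3} combined with Lemma \ref{lemma4.4}, then cancelling the $D^\rho$ terms using the defining relation of $\nabla^{\rho,\Cal V}$.
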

 \begin{proof}
(1) We have noted above that involutivity of $T_\rho M$ implies that
   $\ka$ has values in $\Cal G\x_Q\frak p$ if both its entries are
   from $T_\rho M$. Together with the fact that $\ph$ is a section of a relative
   natural bundle, this shows that
   $D_{\ka(\Pi(s_i),\Pi(s_j))}\ph=D^\rho_{\ka_\rho(\Pi(s_i),\Pi(s_j))}\ph$
   for all $i$ and $j$. Hence the first formula follows directly from
   Theorem \ref{thm4.2}. 

   If $\tau_\rho$ vanishes, then the values of $\ka_\rho$ lie in $\Cal
   A^0_\rho M$ and the second formula follows from part (1) of
   Proposition \ref{prop2.2}. 

(2) For $s\in\Ga(\Cal VM)$, part (1) implies that for
   $\xi,\eta\in\Ga(T_\rho M)$, we get $(d^{\Cal
     V})^2s(\xi,\eta)=-D^\rho_{\ka_\rho(\xi,\eta)}s$. On the other
   hand, applying Theorem \ref{thm4.3} to $d^{\Cal V}s=\nabla^{\Cal
     V,\rho}s$ which lies in $\Om^1_\rho(M,\Cal VM)$, we see that 
$$
(d^{\Cal
     V})^2s(\xi,\eta)= R^{\rho,\Cal V}(\xi,\eta)(s)+\nabla^{\rho,\Cal
     V}_{[\xi,\eta]}s- \nabla^{\rho,\Cal V}_{\llbr\xi,\eta\rrbr}s.
 $$ By Lemma \ref{lemma4.4}, the last two terms add up to
   $-\nabla^{\rho,\Cal V}_{\tau_\rho(\xi,\eta)}s$. Since
   $\ka_\rho(\xi,\eta)$ projects onto $\tau_\rho(\xi,\eta)$, we get
 $$
 -\nabla^{\rho,\Cal
   V}_{\tau_\rho(\xi,\eta)}s=-D^\rho_{\ka_\rho(\xi,\eta)}s-
 \ka_\rho(\xi,\eta)\bullet s, 
 $$ and the result follows.
 \end{proof}

 \subsection{The relative covariant exterior derivative}\label{4.5}
 If the relative tangent bundle $T_\rho M$ is involutive, then as
 mentioned before, one can follow the standard approach of extending a
 covariant derivative to an operator on bundle valued differential
 forms in our setting. Namely, for $\ph\in\Om^k_{\rho}(M,\Cal VM)$ and
 $\xi_0,\dots,\xi_k\in\Ga(T_\rho M)$, we define
 \begin{equation}\label{covext}
 \begin{aligned}
 (d^\nabla \ph)(\xi_0&,\dots,\xi_k):=\tsum_{i=0}^k(-1)^i\nabla^{\rho,\Cal
   V}_{\xi_i}\ph(\xi_0,\dots,\widehat{\xi_i},\dots,\xi_k)+\\
 &\tsum_{i<j}(-1)^{i+j}\ph([\xi_i,\xi_j],\xi_0,\dots,\widehat{\xi_i},\dots,
 \widehat{\xi_j},\dots,\xi_k). 
 \end{aligned}
 \end{equation}
 Note that involutivity of $T_\rho M$ is needed for this definition to
 make sense, since only sections of this subbundle may be inserted
 into $\ph$. The right hand side of \eqref{covext} is obviously
 alternating in $\xi_0,\dots,\xi_k$ and the same argument as for usual
 connections shows that $d^\nabla \ph$ is linear over smooth functions
 in each entry. Thus $d^\nabla \ph\in\Om^{k+1}_\rho(M,\Cal VM)$ and we
 have defined an operator
 $$
 d^{\nabla}:\Om^k_\rho(M,\Cal VM)\to\Om^{k+1}_\rho(M,\Cal VM) 
 $$
 called the \textit{relative covariant exterior derivative}.

 The relation between this operator and the relative twisted exterior
 derivative can be easily described using Lemma \ref{lemma4.4}. There
 is a natural insertion operator on $\Cal VM$--valued relative forms
 associated to the relative torsion. Namely, for $\ph\in\Om^k_\rho
 (M,\Cal VM)$ and $\xi_0,\dots,\xi_k\in\Ga(T_\rho M)$, we define
 \begin{equation}\label{idef}
   (i_{\tau_\rho}\ph)(\xi_0,\dots,\xi_k):=\tsum_{i<j}(-1)^{i+j+1}
   \ph(\tau_\rho(\xi_i,\xi_j),\xi_0,\dots,\widehat{\xi_i},\dots,
   \widehat{\xi_j},\dots,\xi_k), 
 \end{equation}
 so this coincides with the complete alternation of the insertion up to
 a positive factor. Having this at hand, we can now formulate:

 \begin{prop}\label{prop4.5}
   Let $(p:\Cal G\to M,\om)$ be a parabolic geometry of type $(G,Q)$
   such that $T_\rho M\subset TM$ is involutive.

   Then the relative twisted exterior derivative defined in Section
   \ref{4.1} is related to the relative covariant exterior derivative
   by
 $$
 d^{\Cal V}\ph=d^\nabla\ph+i_{\tau_\rho}\ph
 $$
 for all $\ph\in\Om^*_\rho(M,\Cal VM)$. 

 In particular, the relative covariant exterior derivatives define a
 compressable sequence of operators and in the case that $\frak
 p=\frak g$ (i.e.~for usual BGG sequences), $d^{\Cal V}$ coincides
 with the twisted exterior derivative as defined in \cite{CSS-BGG}.
 \end{prop}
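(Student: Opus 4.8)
The plan is to read the main formula off from Theorem \ref{thm4.3} and Lemma \ref{lemma4.4}, and then to obtain the other two claims as consequences. So let $\ph\in\Om^k_\rho(M,\Cal VM)$ and $\xi_0,\dots,\xi_k\in\Ga(T_\rho M)$, and substitute the identity $\llbr\xi_i,\xi_j\rrbr=[\xi_i,\xi_j]+\tau_\rho(\xi_i,\xi_j)$ of Lemma \ref{lemma4.4} (applicable since $T_\rho M$ is involutive) into the formula of Theorem \ref{thm4.3}. By linearity of $\ph$ in its first slot, the sum over $i<j$ splits into a $[\xi_i,\xi_j]$--part and a $\tau_\rho(\xi_i,\xi_j)$--part. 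The first of these, together with the $\nabla^{\rho,\Cal V}$--terms, is term by term the right hand side of \eqref{covext}, i.e.\ $(d^\nabla\ph)(\xi_0,\dots,\xi_k)$, while the second is, by \eqref{idef}, exactly $(i_{\tau_\rho}\ph)(\xi_0,\dots,\xi_k)$. The only point requiring attention here is keeping track of the signs and of the positions of the hatted entries when matching the $\tau_\rho$--terms with \eqref{idef}.

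For the second claim, note that $i_{\tau_\rho}$ is a tensorial operator, and that it strictly raises the homogeneous degree of Section \ref{3.1}: as an insertion operator it is built from the relative torsion $\tau_\rho$, which, like the torsion of any parabolic geometry, is homogeneous of positive degree. By Theorem \ref{thm4.1} the operators $d^{\Cal V}$ form a compressable sequence, and by the discussion following Definition \ref{def3.1} adding to a compressable operator an operator that strictly increases homogeneous degrees again yields a compressable operator. Since $d^\nabla=d^{\Cal V}-i_{\tau_\rho}$ in every degree by the first claim, it follows that the relative covariant exterior derivatives form a compressable sequence.

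Finally, for $\frak p=\frak g$ we have $\frak p_+=0$, so $\Cal A_\rho M=\Cal AM$, $T_\rho M=\Cal G\x_Q(\frak g/\frak q)=TM$ (which is automatically involutive), the natural projection $\Cal G\x_Q\frak p\to\Cal A_\rho M$ is the identity --- whence $\tau_\rho=\tau$ --- the relative fundamental derivative $D^\rho$ is the ordinary fundamental derivative, and therefore $\nabla^{\rho,\Cal V}$ is the normal tractor connection on $\Cal VM$. Thus the formula of the first claim becomes $d^{\Cal V}\ph=d^\nabla\ph+i_\tau\ph$, which is precisely the description of the twisted exterior derivative arising from the semi--holonomic jet module construction of \cite{CSS-BGG}; comparing the two completes the argument.

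I expect the genuine work to be concentrated in this last identification with \cite{CSS-BGG}: although the formula $d^{\Cal V}=d^\nabla+i_\tau$ makes the comparison essentially formal once one knows that the jet--module operator is obtained by coupling the tractor connection to $d$ and correcting by the torsion, verifying this requires unwinding the semi--holonomic jet prolongation picture of \cite{CSS-BGG}. The remaining steps are routine, the only care needed being with signs in the first step.
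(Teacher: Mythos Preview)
Your argument for the main formula is exactly the paper's: substitute Lemma \ref{lemma4.4} into Theorem \ref{thm4.3} and split the bracket term. The identification with \cite{CSS-BGG} in the case $\frak p=\frak g$ is likewise handled the same way, by reducing to the relation between the covariant exterior derivative and the twisted exterior derivative discussed there.

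There is, however, one inaccuracy in your compressability argument. You assert that ``the torsion of any parabolic geometry is homogeneous of positive degree''. This is not automatic: it is precisely the \emph{regularity} condition on the geometry, which is not among the hypotheses of the proposition. Without it, $\tau_\rho$ can have a component of homogeneity $\leq 0$, the insertion $i_{\tau_\rho}$ then contributes nontrivially to $\gr_0$, and $\gr_0(d^\nabla)=\gr_0(d^{\Cal V})-\gr_0(i_{\tau_\rho})$ need not equal $\partial_\rho$. So the step ``$i_{\tau_\rho}$ strictly raises homogeneous degree'' needs regularity as an input. The paper's own proof is terse here and simply points to \cite{CSS-BGG}, where regularity is a standing assumption; your more explicit argument exposes that this hypothesis is being used. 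The fix is just to replace the general claim by the regularity assumption (or note that this is what is being invoked).
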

 \begin{proof}
   By the last part of Lemma \ref{lemma4.4}, both in the right hand
   side of the formula for $\llbr\xi_1,\xi_2\rrbr$ in that lemma are
   sections of $T_\rho M$. Hence they can be inserted individually
   into a relative form, and using this, the first claim follows
   immediately from the formula for $d^{\Cal V}$ in Theorem
   \ref{thm4.3}.

   The second claim easily follows from the discussion on top of
   p.~105 in \cite{CSS-BGG} discussing the relation between the
   twisted exterior derivative (as defined there) and the covariant
   exterior derivative.
 \end{proof}

 \subsection{Relative BGG resolutions}\label{4.6}
 One of the key features of BGG--sequences is that on locally flat
 geometries, they are complexes defining fine resolutions of certain
 sheaves. We are next aiming at analogs of this results in the
 relative setting. There is a model case of this situation provided by
 so--called correspondence spaces, a special class of parabolic
 geometries of type $(G,Q)$ associated to the parabolic subalgebra
 $\frak p\supset\frak q$.

 Consider an arbitrary regular normal parabolic geometry $(p:\Cal G\to
 N,\om)$ of type $(G,P)$. Then we can define $M:=\Cal CN:=\Cal G/Q$,
 the orbit space under the restriction of the principal right action
 to the subgroup $Q\subset P$. Then $\Cal G/Q$ can be identified with
 $\Cal G\x_P(P/Q)$, so it is a smooth manifold and the total space of
 a natural fiber bundle over $N$ with typical fiber the generalized
 flag manifold $P/Q$. By construction, $\Cal G\to M$ is a
 $Q$--principal bundle and it is easy to see that the Cartan
 connection $\om\in\Om^1(\Cal G,\frak g)$ also defines a Cartan
 connection on $\Cal G\to M$. Hence one obtains a parabolic geometry
 of type $(G,Q)$ on $M$, which turns out to be automatically
 normal. In this context, $M=\Cal CN$ is called the
 \textit{correspondence space} associated to $N$ and the subalgebra
 $\frak q\subset\frak p$.

 The bundle projection $\pi:M\to N$ gives rise to the vertical subbundle
 $\ker(T\pi)\subset TM$. From the above construction it is clear that
 this vertical subbundle is given by $\Cal G\x_Q(\frak p/\frak
 q)\subset\Cal G\x_Q(\frak g/\frak q)=TM$. For a correspondence space
 $\Cal CN$, the relative tangent bundle $T_\rho\Cal CN$ thus coincides
 with the vertical subbundle of $\Cal CN\to N$. Hence $T_\rho\Cal CN$
 is globally integrable with global leaf--space $N$. Observe that in
 this setting a completely reducible representation $\Bbb V$ of $P$
 gives rise to both a relative tractor bundle $\Cal VM\to M$ and a
 (completely reducible) natural bundle $\underline{\Cal V}N:=\Cal
 G\x_P V\to N$ for the underlying geometry.

 Now we are ready to prove a general criterion ensuring that relative
 BGG sequences are resolutions and an interpretation of the sheaf that
 gets resolved in the model case of a correspondence space.  To
 formulate this, recall from Section \ref{4.3} that for a relative
 tractor bundle $\Cal VM\to M$, there is the relative tractor
 connection $\nabla^{\rho,\Cal V}$. This can be considered as an
 operator $\Ga(\Cal VM)\to\Om^1_\rho (M,\Cal VM)$ and clearly the
 kernel of this operator gives rise to a well defined subsheaf of the
 sheaf of smooth sections of $\Cal VM$.

 \begin{thm}\label{thm4.6}
   Suppose that $(p:\Cal G\to M,\om)$ is a parabolic geometry of type
   $(G,Q)$ for which the relative tangent bundle is involutive and that
   $P/Q$ is connected. Let $\Bbb V$ be any completely reducible
   representation of $P$ and let $\Cal VM\to M$ be the corresponding
   relative tractor bundle.

   (1) If the relative curvature $\ka_\rho$ of the geometry vanishes
   identically, then the relative BGG--sequence \eqref{rel-BGG} from
   Theorem \ref{thm4.1} determined by $\Bbb V$ is a complex and a fine
   resolution of the sheaf $\ker(\nabla^{\rho,\Cal V})$.

   (2) If $M$ is the correspondence space $\Cal CN$ of a parabolic
   geometry of type $(G,P)$, then the condition in (1) is always
   satisfied, and the sheaf $\ker(\nabla^{\rho,\Cal V})$ can be
   (globally) identified with the pullback of the sheaf of smooth
   sections of $\underline{\Cal V}N\to N$.
 \end{thm}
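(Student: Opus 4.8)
The plan is to reduce both parts to the single statement that the relative twisted de Rham sequence $(\Om^\bullet_\rho(M,\Cal VM),d^{\Cal V})$ is a fine resolution of $\ker(\nabla^{\rho,\Cal V})$, and then to transport this through the machinery of Section~\ref{3}. First I would dispose of the auxiliary claim in~(2): for a correspondence space $\Cal CN$ the Cartan curvature $\ka$ is horizontal, i.e.\ vanishes upon insertion of any vertical vector field (a standard property of correspondence spaces, see~\cite{twistor}); since $T_\rho\Cal CN$ is precisely the vertical subbundle of $\Cal CN\to N$, this forces $\ka|_{T_\rho\Cal CN\x T_\rho\Cal CN}=0$, hence $\ka_\rho=0$. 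Thus the hypothesis of~(1) is automatically satisfied on correspondence spaces, and it remains to prove~(1) and then to identify $\ker(\nabla^{\rho,\Cal V})$ in the correspondence space case.

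For~(1), assume $\ka_\rho\equiv0$. Since $T_\rho M$ is involutive, $\ka_\rho$ is defined, and $\ka_\rho=0$ means that $\ka$ restricted to $T_\rho M\x T_\rho M$ has values in $\Cal G\x_Q\frak p_+\subset\Cal G\x_Q\frak q$; projecting to $TM$ this gives $\tau_\rho=0$, so Proposition~\ref{prop4.5} yields $d^{\Cal V}=d^\nabla$. Simultaneously, Proposition~\ref{prop4.4.2} gives $(d^{\Cal V})^2=0$ and $R^{\rho,\Cal V}(\xi,\eta)(s)=\ka_\rho(\xi,\eta)\bullet s=0$, so that $\nabla^{\rho,\Cal V}$ is a flat partial connection along the involutive distribution $T_\rho M$. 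Passing to a foliated chart (Frobenius) and trivialising $\Cal VM$ by a frame that is parallel along the leaves and depends smoothly on the transverse coordinates, $d^\nabla$ becomes the leafwise de Rham differential with values in $\Bbb V$; the Poincar\'e lemma with parameters then shows that the augmented sequence $0\to\ker(\nabla^{\rho,\Cal V})\to\Om^0_\rho(M,\Cal VM)\to\Om^1_\rho(M,\Cal VM)\to\dots$ is locally exact. As the sheaves $\Om^k_\rho(M,\Cal VM)$ are sheaves of smooth sections of vector bundles and hence fine, this is a fine resolution of $\ker(\nabla^{\rho,\Cal V})$.

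Next I would transport this to the relative BGG sequence. From $(d^{\Cal V})^2=0$ and part~(2) of Theorem~\ref{thm3.6} we get $D_k\o D_{k-1}=0$, so~\eqref{rel-BGG} is a complex; and since \emph{every} composition in the sequence $(d^{\Cal V})$ vanishes, Theorem~\ref{thm3.6}(2) provides, for each $k\geq0$, an isomorphism between the cohomology of~\eqref{rel-BGG} at $\Cal H_k$ and the degree-$k$ cohomology of $(\Om^\bullet_\rho(M,\Cal VM),d^{\Cal V})$ induced by the splitting operator $S$; for $k=0$ this specialises to $\ker(D_0)\cong\ker(d^{\Cal V}|_{\Om^0_\rho})=\ker(\nabla^{\rho,\Cal V})$, with augmentation map $\pi_H$. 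Here one must note that $S$, $\pi_H$, the $D_k$ and $d^{\Cal V}$ are all local operators, so these isomorphisms hold at the level of sheaves, not merely of global sections; combined with the previous paragraph this gives the resolution statement in~(1). For the sheaf identification in~(2), the equivariant-function picture of associated bundles yields a canonical isomorphism $\Cal VM\cong\pi^*\underline{\Cal V}N$ under which a section of $\Cal VM$ lies in $\ker(\nabla^{\rho,\Cal V})$ if and only if the corresponding $Q$-equivariant function $\Cal G\to\Bbb V$ is parallel along the fibres of $\pi$, which (as these fibres are connected because $P/Q$ is connected) is equivalent to being $P$-equivariant, i.e.\ to being the pullback of a section of $\underline{\Cal V}N$; hence $\ker(\nabla^{\rho,\Cal V})$ is the inverse image sheaf $\pi^{-1}$ of the sheaf of smooth sections of $\underline{\Cal V}N$.

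The step needing most care is the transport: one must make sure that the cohomology isomorphisms supplied by Theorem~\ref{thm3.6}(2) genuinely are isomorphisms of sheaves (local exactness, rather than exactness of global section spaces), and handle the degree-$0$ end precisely; and one must state the foliated Poincar\'e lemma (local exactness of $d^\nabla$ for a flat partial connection along an involutive distribution) cleanly. By contrast, the representation-theoretic content is already encapsulated in Section~\ref{3} and in Propositions~\ref{prop4.4.2} and~\ref{prop4.5}, and the correspondence space input reduces to the single cited fact that $\ka$ is horizontal on a correspondence space.
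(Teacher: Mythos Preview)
Your proposal is correct and follows essentially the same route as the paper's proof: deduce $(d^{\Cal V})^2=0$ from Proposition~\ref{prop4.4.2}, identify $d^{\Cal V}$ with $d^\nabla$ via Proposition~\ref{prop4.5}, invoke the leafwise Poincar\'e lemma for the flat partial connection, and then transport the resolution to the BGG sequence via Theorem~\ref{thm3.6}; for~(2) you use the same equivariant-function argument and the connectedness of $P/Q$. You are in fact more careful than the paper in two places: you make explicit that $\ka_\rho=0$ forces $\tau_\rho=0$ (needed to drop the $i_{\tau_\rho}$ term in Proposition~\ref{prop4.5}, a point the paper leaves implicit), and you flag that the cohomology comparison from Theorem~\ref{thm3.6} must be applied locally to yield a statement about sheaves rather than global sections.
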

 \begin{proof}
   From part (1) of Proposition \ref{prop4.4.2}, we see that vanishing
   of the relative curvature implies that $(d^{\Cal V})^2=0$, so the
   twisted de--Rahm sequence determined by $\Cal VM$ is a
   complex. Moreover, by Proposition \ref{prop4.5} the twisted
   exterior derivative coincides with the covariant exterior
   derivative associated to the relative tractor connection. Thus on
   sufficiently small open subsets we are dealing with a twisted de
   Rham sequence along the fibers of the projection to a local leaf
   space, and it is a standard result that this is a fine resolution
   of the kernel of the first operator.

   By Theorem \ref{thm3.6}, the fact that the relative
   twisted de--Rham sequence is a complex implies that the same is true
   for the corresponding relative BGG sequence and that both sequences
   compute the same cohomology. Since this can be applied locally, (1)
   follows.

   To prove (2), consider a section $\si\in\Ga(\Cal VM)$ and let
   $f:\Cal G\to\Bbb V$ be the corresponding $Q$--equivariant
   function. Further, let $s\in\Ga(\Cal G\x_Q\frak p)$ be a smooth
   section. Then we can compute $\nabla^{\rho,\Cal V}_{\Pi(s)}\si$ as
   $D_s\si+s\bullet \si$. This easily implies that $\si$ is a section
   of $\ker(\nabla^{\rho,\Cal V})$ if and only if for each $A\in\frak
   p$ and $u\in\Cal G$ we have $(\om^{-1}(A)\cdot f)(u)=-A\cdot
   (f(u))$, where in the right hand side $A$ acts via the infinitesimal
   representation.

   If $M$ is a correspondence space $\Cal CN$, then $M=\Cal G/Q$ for a
   principal $P$--bundle $\Cal G\to N$ and the Cartan connection $\om$
   actually is a Cartan connection on this $P$--bundle. This shows that
   for $A\in\frak p$, the vector field $\om^{-1}(A)$ is the fundamental
   vector field of this $P$--bundle. Since such vector fields insert
   trivially into the curvature, it follows that the condition from (1)
   is satisfied. On the other hand, the above discussion shows that
   $\si$ lies in $\ker(\nabla^{\rho,\Cal V})$ if and only if $f$ is
   $\frak p$--equivariant. Since $P/Q$ is assumed to be connected,
   equivariancy under both $Q$ and $\frak p$ is equivalent to
   equivariancy under $P$, so $f$ corresponds to a smooth section of
   $\underline{\Cal V}N\to N$. The converse direction is obvious.
 \end{proof}

 This shows that the situation is significantly better than for
 ordinary BGG sequences, since in order to get a resolution, we only
 need vanishing of the relative curvature, which is much weaker than
 local flatness. In particular, for \textit{any} parabolic geometry of
 type $(G,P)$, any relative BGG sequence for the induced geometry of
 type $(G,Q)$ on the correspondence space is a resolution. However, as
 we shall also see in the example of path geometries in Section
 \ref{5.6}, the class of geometries for which we obtain resolutions is
 much larger than the class of correspondence spaces. Indeed, Theorem
 2.7 of \cite{twistor} shows that a geometry of type $(G,Q)$ is
 locally isomorphic to a correspondence space if and only if its
 curvature $\ka$ vanishes upon insertion of any element of $T_\rho M$,
 and this condition is much stronger than vanishing of $\ka_\rho$.

 \subsection{Local twistor spaces}\label{4.6a}
 We continue working in the setting of vanishing relative curvature so
 that part (1) of Theorem \ref{thm4.6} shows that the relative BGG
 sequence associated to $\Cal VM$ is a fine resolution of the sheaf
 $\ker(\nabla^{\rho,\Cal V})$. We are looking for curvature conditions
 which are weaker than the ones which locally characterize
 correspondence spaces but still allow us to obtain an explicit
 description of the sheaf $\ker(\nabla^{\rho,\Cal V})$. 

 For a parabolic geometry of type $(G,Q)$ over $M$ such that $T_\rho M$
 is involutive, we can consider local leaf spaces for the foliation
 defined by $T_\rho M$, which are then called \textit{local twistor
   space} for $M$ corresponding to $\frak p\supset\frak q$. Of course
 there is the hope to interpret $\ker(\nabla^{\rho,\Cal V})$ as the
 pullback of some sheaf on $N$. The proof of Theorem \ref{thm4.6}
 suggests that the key question here is, loosely speaking, whether the
 Cartan bundle $\Cal G\to M$ can be viewed as a principal $P$--bundle
 over a local twistor space. This question has been studied in
 \cite{twistor}, which suggests the following technical formulation of
 the concepts and directly provides some results.

 \begin{lemma}\label{lemma4.6a}
   Let $(p:\Cal G\to M,\om)$ be a parabolic geometry of type $(G,Q)$,
   such that $\ka_\rho=0$ (so $T_\rho M$ is involutive), and suppose
   that $U\subset M$ is open and $\ps:U\to N$ is a local leaf space
   for $T_\rho M$. Suppose further that $\pi:\Cal F\to N$ is a
   $P$--principal bundle and that $W\subset\Cal F$ is a $Q$--invariant
   open subset such that for each $x\in N$ the set $W_x/Q\subset \Cal
   F_x/Q\cong P/Q$ is non--empty and connected. Finally, suppose that
   $\ph$ is a $Q$--equivariant diffeomorphism from $W$ onto a
   $Q$--invariant subset of $p^{-1}(U)$ such that for each $A\in\frak
   p$, $\ph^*(\om^{-1}(A))$ is the fundamental vector field
   $\ze_A\in\frak X(\Cal F)$.

   Then over the open subset $p(\ph(W))\subset U$, the sheaf
   $\ker(\nabla^{\rho,\Cal V})$ is isomorphic to the pullback of the
   sheaf of smooth sections of $\Cal F\x_P\Bbb V$.
 \end{lemma}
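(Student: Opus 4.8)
The plan is to translate the statement into a question about equivariant functions, following the scheme of the proof of Theorem~\ref{thm4.6}(2), with connectedness of the slices $W_x/Q$ taking over the role that connectedness of $P/Q$ played there. Since $\ph(W)$ is $Q$--invariant we have $p^{-1}(p(\ph(W)))=\ph(W)$, so over the open set $p(\ph(W))\subset M$ a section $\si$ of $\Cal VM$ is given by a $Q$--equivariant map $f\colon\ph(W)\to\Bbb V$, and by the computation carried out in the proof of Theorem~\ref{thm4.6}, $\si\in\ker(\nabla^{\rho,\Cal V})$ if and only if $(\om^{-1}(A)\cdot f)(u)=-A\cdot f(u)$ for all $A\in\frak p$ and all $u\in\ph(W)$. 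Pulling back along the $Q$--equivariant diffeomorphism $\ph$, such $f$ correspond bijectively to $Q$--equivariant maps $g:=f\o\ph\colon W\to\Bbb V$, and since $\ph^*(\om^{-1}(A))=\ze_A$, the differential identity turns into $(\ze_A\cdot g)(w)=-A\cdot g(w)$ for all $A\in\frak p$ and $w\in W$. This is precisely the infinitesimal form of $P$--equivariance of $g$, so the whole proof comes down to showing that on $W$ this forces honest $P$--equivariance, so that $g$ extends uniquely to a $P$--equivariant map on $\pi^{-1}(\pi(W))$, i.e.\ to a section of $\Cal F\x_P\Bbb V$ over $\pi(W)$.

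Before that, some bookkeeping. The hypothesis $\ph^*(\om^{-1}(A))=\ze_A$ for $A\in\frak p$ means that $p\o\ph$ carries the $\pi$--fibres into the leaves of $T_\rho M$ (because $Tp$ maps $\om^{-1}(\frak p)$ onto $T_\rho M$); together with $Q$--invariance of $W$ this lets one identify $p(\ph(W))$ with $W/Q$ and, using connectedness of the $W_x/Q$, identify the local leaf space of $T_\rho M$ over $p(\ph(W))$ with $\pi(W)$, so that under these identifications $\ps$ becomes $\pi$. Since $\ka_\rho=0$, Proposition~\ref{prop4.4.2}(2) shows that $\nabla^{\rho,\Cal V}$ is a flat partial connection, so $\ker(\nabla^{\rho,\Cal V})$ is locally constant along the leaves of $T_\rho M$; hence it suffices to produce the asserted isomorphism on $\ps$--saturated open sets $V'=\ph(W)\cap p^{-1}(\ps^{-1}(V))$ with $V\subset\pi(W)$ open, for which $\ph^{-1}(p^{-1}(V'))=W\cap\pi^{-1}(V)$ still has non--empty connected slices over every point of $V$. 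On such a $V'$ the chain of bijections above identifies $\ker(\nabla^{\rho,\Cal V})(V')$ with $\Ga(V;\Cal F\x_P\Bbb V)$ once the extension step is known, compatibly with restriction in $V$; the resulting isomorphism of sheaves on $N$ pulls back to the claim.

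The main obstacle is the extension step: if $g\colon W\to\Bbb V$ is $Q$--equivariant with $\ze_A\cdot g=-A\cdot g$ for all $A\in\frak p$, then $g(w\cdot q)=q^{-1}\cdot g(w)$ whenever $w$ and $w\cdot q$ both lie in $W_x$. Here I would fix $w\in W_x$ and consider $S:=\{q\in P:w\cdot q\in W_x,\ g(w\cdot q)=q^{-1}\cdot g(w)\}$. By $Q$--equivariance $S$ contains all $q\in Q$ with $w\cdot q\in W_x$ and is a union of right $Q$--cosets; by continuity of $g$ it is closed in $\{q:w\cdot q\in W_x\}$; and it is open there because, for $q_0\in S$ and $A\in\frak p$, the $\Bbb V$--valued curves $t\mapsto g(w\exp(tA)q_0)$ and $t\mapsto(\exp(tA)q_0)^{-1}\cdot g(w)$ both satisfy the linear ODE $\dot h=-\operatorname{Ad}(q_0^{-1})A\cdot h$ with the same value at $t=0$ (for the first one uses $w\exp(tA)q_0=(wq_0)\exp(t\operatorname{Ad}(q_0^{-1})A)$ together with $\operatorname{Ad}(q_0^{-1})A\in\frak p$ and the infinitesimal identity, for the second that the infinitesimal $\frak p$--action on $\Bbb V$ is $P$--equivariant), so they coincide, and since $A\mapsto\exp(A)\cdot q_0$ parametrises a neighbourhood of $q_0$ in $P$, a whole neighbourhood of $q_0$ lies in $S$. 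Thus $S/Q$ is open, closed and non--empty in the connected set $W_x/Q$, hence equals it, i.e.\ $S=\{q:w\cdot q\in W_x\}$. Doing this for all $x$ and all $w\in W_x$ yields a well--defined $P$--equivariant extension $\tilde g$ of $g$ to $\pi^{-1}(\pi(W))$, which is smooth since locally $\tilde g(y)=q^{-1}\cdot g(\si(\pi(y)))$ where $\si$ is a local section of $W\to\pi(W)$ and $y=\si(\pi(y))\cdot q$; uniqueness of $\tilde g$ is clear since every $P$--orbit in $\pi^{-1}(\pi(W))$ meets $W$. Reversing the bijections from the first paragraph then finishes the argument.
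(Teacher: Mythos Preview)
Your proof is correct and follows essentially the same route as the paper's: translate $\ker(\nabla^{\rho,\Cal V})$ into $Q$--equivariant, infinitesimally $\frak p$--equivariant functions on $\ph(W)$, pull back to $W$, and then use an open--closed argument on the connected slices $W_x/Q$ to upgrade infinitesimal $\frak p$--equivariance to honest $P$--equivariance. The only cosmetic difference is that the paper first chooses a local section $\tau:N\to W$, extends $g\o\tau$ to a $P$--equivariant $f$, and then runs the open--closed argument on $\{u\in W:f(u)=g(u)\}$, whereas you fix $w\in W_x$ and work with $S=\{q:w q\in W_x,\ g(wq)=q^{-1}g(w)\}$; your explicit ODE computation is precisely what the paper compresses into the single sentence ``since both $f$ and $g$ are $\frak p$--equivariant, the intersection with $W_x$ is open.''
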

 \begin{proof}
   From the proof of Theorem \ref{thm4.6} we see that sections of
   $\ker(\nabla^{\rho,\Cal V})$ are in bijective correspondence with
   $Q$--equivariant smooth functions $f$ such that $(\om^{-1}(A)\cdot
   f)(u)=-A\cdot f(u)$, where in the left hand side the vector field
   $\om^{-1}(A)$ is used to differentiate the function $f$, while in
   the right hand side $A$ acts on $\Bbb V$ via the infinitesimal
   representation. Pulling back via $\ph$, we by assumption get the
   sheaf of $Q$--equivariant smooth functions $W\to\Bbb V$ which in
   addition are $\frak p$--equivariant for the infinitesimal action.

   Now we claim that restriction to $W$ defines an isomorphism from the
   sheaf of $P$--equivariant functions $\Cal F\to\Bbb V$ onto the above
   sheaf. Then the result follows from the standard correspondence
   between equivariant functions and sections of an associated
   bundle. By assumption, $W$ meets each fiber of $\Cal F\to N$, so
   restriction to $W$ is injective (on the level of sheaves on
   $N$). 

   Surjectivity of the restriction can be proved locally (on $N$). So
   we can restrict to an open subset in $N$ over which $\Cal F$ admits
   a smooth section. Since locally such a section can be assumed to
   have values in $W$, we may restrict to the case that there is a
   global section $\tau:N\to W$ of $\Cal F$. But then of course for any
   smooth function $g:W\to\Bbb V$, the smooth function $g\o\tau$ can be
   uniquely extended to a $P$--equivariant function $f:\Cal F\to\Bbb
   V$. It suffices to prove that if $g$ is both $Q$--equivariant and
   $\frak p$--equivariant, then it coincides with the restriction of
   $f$. To do this, consider $\{u\in W:g(u)=f(u)\}$, which by
   definition is $Q$--invariant and meets each fiber $W_x:=W\cap \Cal
   F_x$ (in $\tau(x)$). Since both $f$ and $g$ are $\frak p$
   equivariant, the intersection with $W_x$ is open. Hence it projects
   onto a non--empty open subset of $W_x/Q$. But since the complement
   evidently is open and $Q$--invariant, too, connectedness of $W_x/Q$
   implies that $f=g$ on $W_x$. Since $x$ is arbitrary, the result
   follows.
 \end{proof}

 Now a result of \cite{twistor} provides a curvature condition
 ensuring that Lemma \ref{lemma4.6a} can be applied locally. This
 leads to trivial $P$--bundles and initially does not give a geometric
 description of the sheaf $\ker(\nabla^{\rho,\Cal V})$. However, under
 a slightly stronger condition, we can prove that under the
 assumptions of Lemma \ref{lemma4.6a} the Cartan connection $\om$
 induces a soldering form on $\Cal F$, which leads to a geometric
 interpretation of sections of associated bundles.

 \begin{thm}\label{thm4.6a}
   Suppose that $(p:\Cal G\to M,\om)$ is a parabolic geometry of type
   $(G,Q)$, that $P/Q$ is connected, and that the curvature $\ka$ of
   the geometry vanishes on $T_{\rho}M\x T_\rho M$ (so that $T_\rho M$
   is involutive). Let $\Bbb V$ be any completely reducible
   representation of $P$ and let $\Cal VM\to M$ be the corresponding
   relative tractor bundle.

   (1) For sufficiently small local leaf spaces $N$ for $T_\rho M$ the
   assumptions of lemma \ref{lemma4.6a} are satisfied for $\Cal F=N\x
   P$. Also, Theorem \ref{thm4.6} applies and the sheaf
   $\ker(\nabla^{\rho,\Cal V})$ resolved by the relative BGG sequence
   associated to $\Bbb V$ can be locally identified with the pullback
   of the sheaf $C^\infty(N,\Bbb V)$.

   (2) Assume in addition that inserting one element from $T_\rho M$
   into $\ka$ forces the values to lie in $\Cal G\x_Q\frak
   p\subset\Cal AM$. Then whenever the assumptions of Lemma
   \ref{lemma4.6a} are satisfied, the Cartan connection $\om$ induces
   a strictly horizontal, $P$--equivariant one--form $\th\in\Om^1(\Cal
   F,\frak g/\frak p)$.
 \end{thm}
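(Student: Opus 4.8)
\emph{Part (1).} The curvature hypothesis of the theorem is exactly the one under which the local structure theory of \cite{twistor} applies: for a sufficiently small local leaf space $\ps:U\to N$ of $T_\rho M$, a suitable $Q$--invariant open subset of $\Cal G$ can be identified with a $Q$--invariant open subset $W$ of the trivial $P$--principal bundle $\Cal F:=N\x P\to N$, in such a way that $W_x/Q\subset P/Q$ is non--empty and connected for each $x\in N$ (using connectedness of $P/Q$) and that the identification $\ph:W\to p^{-1}(U)\subset\Cal G$ carries the fundamental vector field $\ze_A$ to $\om^{-1}(A)$ for each $A\in\frak p$. These are precisely the assumptions of Lemma \ref{lemma4.6a}, so that lemma identifies $\ker(\nabla^{\rho,\Cal V})$ over $p(\ph(W))$ with the pullback of the sheaf of sections of $\Cal F\x_P\Bbb V=(N\x P)\x_P\Bbb V\cong N\x\Bbb V$, i.e.\ of $C^\infty(N,\Bbb V)$. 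Finally, vanishing of $\ka$ on $T_\rho M\x T_\rho M$ forces $\ka_\rho=0$, so Theorem \ref{thm4.6} applies and the relative BGG sequence associated with $\Bbb V$ is a fine resolution of this sheaf; (1) follows.

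\emph{Part (2).} We work in the setting of Lemma \ref{lemma4.6a}: $\pi:\Cal F\to N$ a $P$--principal bundle, $W\subset\Cal F$ a $Q$--invariant open set with $W_x/Q$ non--empty and connected, and $\ph:W\to p^{-1}(U)\subset\Cal G$ a $Q$--equivariant diffeomorphism with $\ph^*(\om^{-1}(A))=\ze_A$ for all $A\in\frak p$. Set $\om_W:=\ph^*\om\in\Om^1(W,\frak g)$; it is $Q$--equivariant and satisfies $\om_W(\ze_A)=A$ for $A\in\frak p$. Composing with the projection $\mathrm{pr}:\frak g\to\frak g/\frak p$ gives $\th_W:=\mathrm{pr}\o\om_W\in\Om^1(W,\frak g/\frak p)$, which is $Q$--equivariant and, since $\th_W(\ze_A)=\mathrm{pr}(A)=0$, strictly horizontal; a dimension count even shows that at each point it induces an isomorphism onto $\frak g/\frak p$. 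The plan is to extend $\th_W$ to a strictly horizontal, $P$--equivariant $\th\in\Om^1(\Cal F,\frak g/\frak p)$ and to take this $\th$ as the required soldering form.

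The key step is infinitesimal $P$--equivariance of $\th_W$. A short computation with the structure equation $d\om+\tfrac12[\om,\om]=K$ for the curvature two--form $K$, using that $\om_W(\ze_A)$ is the constant $A$, gives $\mathcal{L}_{\ze_A}\om_W=\iota_{\ze_A}K_W-\ad(A)\o\om_W$ with $K_W:=\ph^*K$, for $A\in\frak p$. Applying $\mathrm{pr}$ and using that $\ad(A)$ preserves $\frak p$, hence descends to $\frak g/\frak p$ and commutes with $\mathrm{pr}$, we get $\mathcal{L}_{\ze_A}\th_W=-\ad(A)\o\th_W+\mathrm{pr}(\iota_{\ze_A}K_W)$. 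Since $K$ is horizontal, $\iota_{\om^{-1}(A)}K$ depends only on the class of $A$ modulo $\frak q$, i.e.\ on a section of $T_\rho M$; the extra curvature hypothesis of (2) then forces its values into $\Cal G\x_Q\frak p$, so $\mathrm{pr}(\iota_{\ze_A}K_W)=0$. Hence $\mathcal{L}_{\ze_A}\th_W=-\ad(A)\o\th_W$ for all $A\in\frak p$, the infinitesimal form of the relation $(r^g)^*\th_W=\mathrm{Ad}(g^{-1})\o\th_W$ we are after.

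It remains to integrate this and extend $\th_W$ over $\Cal F$. Because $W$ is $Q$--invariant and meets every fibre of $\Cal F\to N$, one can transport $\th_W$ by right translations twisted with $\mathrm{Ad}$ to define a candidate $\th$ on $\Cal F$; well--definedness reduces to the identity $((r^h)^*\th_W)_w=\mathrm{Ad}(h^{-1})\o(\th_W)_w$ for all $w\in W$ and $h\in P$ with $wh\in W$. This follows from a clopen argument: the set of such $h$ is $Q$--saturated, contains $e$, and is open by the infinitesimal equivariance just established, so its image in $P/Q$ is a non--empty clopen subset of the connected set $\{h:wh\in W\}/Q=W_{\pi(w)}/Q$; since connectedness of $P/Q$ gives $P=P_0Q$, equivariance under all of $P$ follows. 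The resulting $\th$ is then strictly horizontal because $\th_W$ is, $P$--equivariant by construction, restricts to $\th_W$ on $W$, and smooth (a local matter via a local section of $\Cal F\to N$). I expect this globalization --- reconciling the connectedness hypotheses on $P/Q$ and $W_x/Q$ with the fact that $W$ is only $Q$--invariant, not $P$--invariant --- to be the main obstacle; the structure--equation computation itself is routine.
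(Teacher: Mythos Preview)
Your proof is correct and follows essentially the same route as the paper: for part (1) you invoke the local trivialization result from \cite{twistor} to verify the hypotheses of Lemma \ref{lemma4.6a}, and for part (2) you project $\ph^*\om$ to $\frak g/\frak p$, use the structure equation together with the curvature hypothesis to establish infinitesimal $\frak p$--equivariance, and then extend to a $P$--equivariant form on $\Cal F$ by a clopen argument on the connected fibres $W_x/Q$---exactly the paper's strategy, which phrases the extension step as ``proceed as in the proof of Lemma \ref{lemma4.6a}.'' The aside ``since connectedness of $P/Q$ gives $P=P_0Q$, equivariance under all of $P$ follows'' is unnecessary and slightly muddled: once well--definedness of the transported form is established via the clopen argument, $P$--equivariance is automatic from the construction, so you can simply delete that clause.
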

 \begin{proof}
   Part (1) is proved in Proposition 2.6 of \cite{twistor}. (The
   connectedness of $W_x/Q$ is not explicitly stated there, but $W$ is
   constructed as $N\x U$ with $U\subset P$ the pre--image of an open
   neighborhood of $eQ$ in $P/Q$, and one may choose this neighborhood
   to be connected.)

   For part (2), we consider the form $\tilde\th\in\Om^1(W,\frak
   g/\frak p)$, which is obtained by projecting the values of
   $\ph^*\om$ to the quotient by $\frak p$. Since $\ph$ and $\om$ are
   $Q$--equivariant, also $\tilde\th$ is $Q$--equivariant. We claim
   that $\tilde\th$ is also $\frak p$--equivariant in the sense that
   $\Cal L_{\ze_A}\tilde\th=-\underline{\ad}(A)\o\tilde\th$. Here by
   $\underline{\ad}(A)$ we denote the action of $\frak p$ on $\frak
   g/\frak p$ induced by the adjoint action.

   By assumption we have $\ph^*\om(\ze_A)=A$, so
   $\tilde\th(\ze_A)=0$. Hence for any vector field $\eta$, we can
   compute $(L_{\ze_A}\tilde\th)(\eta)$ as $d\tilde\th (\ze_A,\eta)$,
   which in turn can be computed as the projection to $\frak g/\frak
   p$ of $(\ph^*d\om)(\ze_A,\eta)$. But now the assumption on the
   curvature implies that for any vector field $X$ on $\Cal G$, the
   expression $d\om(\om^{-1}(A),X)+[A,\om(X)]$ has values in $\frak
   p$. This shows that $(\ph^*d\om)(\ze_A,\eta)$ is congruent to
   $-[A,\ph^*\om(\eta)]$ modulo $\frak p$, which implies the claim.

   Having verified $\frak p$--equivariancy, one proceeds exactly as in
   the proof of Lemma \ref{lemma4.6a} to show that there is a unique
   $P$--equivariant one--form $\th\in\Om^1(\Cal F,\frak g/\frak p)$
   which restricts to $\tilde\th$ on $W$. By construction, the kernel
   of $\tilde\th$ in each point of $W$ is the vertical subspace of
   $\Cal F\to N$, and of course, this continues to hold for the
   equivariant extension $\th$.
 \end{proof}

 Observe that the result of part (2) in particular implies that $\Cal
 F\x_P(\frak g/\frak p)\cong TN$, which gives a description of all
 tensor bundles as associated bundles to $\Cal F$.

 \subsection{The Laplacian associated to the relative twisted exterior
   derivative}\label{4.7}

 We conclude this section with a finer analysis of the relative
 BGG--sequences induced by the relative twisted exterior derivative as
 obtained in Theorem \ref{thm4.1}. Since we are dealing with a
 sequence of compressable operators here, Section \ref{3.6} suggests
 looking at the Laplacians
 $$
 \square^{d^{\Cal V}}_r=\partial^*_\rho d^{\Cal V}+d^{\Cal
   V}\partial^*_\rho: \Om^k_\rho(M,\Cal VM)\to \Om^k_\rho(M,\Cal VM).
 $$
 In the case of ordinary BGG sequences it was already indicated in
 \cite{Casimir} that these Laplacians should be closely related to
 curved Casimir operators. In particular, it was proved there that the
 two operators coincide up to a constant in degree zero. To prove this
 in all degrees, we need a bit of preparation.

 Consider the Lie algebra $\frak p/\frak p_+$ endowed with the
 non--degenerate invariant bilinear form $B$ induced by the Killing
 form of $\frak g$. This bilinear form gives rise to a Casimir
 operator $C_0$ acting on representations of $\frak p/\frak p_+$. Via
 $B$, one can view the identity map as defining an element in $(\frak
 p/\frak p_+)\otimes (\frak p/\frak p_+)$ which is invariant under the
 natural action of $\frak p/\frak p_+$. Projecting this into the
 universal enveloping algebra $\Cal U(\frak p/\frak p_+)$, one obtains
 an element in the center. Acting by this element then defines a
 $\frak p/\frak p_+$--equivariant map on any representation of $\frak
 p/\frak p_+$. Of course, on a complex irreducible representation,
 this must be a multiple of the identity. Passing to associated
 bundles, we obtain an endomorphism of any relative tractor bundle,
 which we also denote by $C_0$.

 \begin{prop}\label{prop4.7}
   The Laplacians associated to the relative twisted exterior
   derivative are given by $\square^{d^{\Cal V}}_k=\tfrac12(\Cal
   C_\rho-\id_{\La^k\Cal A_\rho M}\otimes C_0)$.  In particular, if
   $\Cal V$ is induced by a complex irreducible representation $\Bbb
   V$, then $\square^{d^{\Cal V}}_k=\tfrac12(\Cal C_\rho-c_0\id)$
   where $c_0\in\Bbb C$ is the eigenvalue of $C_0$ on $\Bbb V$.
 \end{prop}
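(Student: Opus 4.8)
The plan is to compute $\square^{d^{\Cal V}}_k = \partial^*_\rho d^{\Cal V} + d^{\Cal V}\partial^*_\rho$ directly in terms of an adapted local frame $\{X_i, A_r, Z^i\}$ for $\Cal A_\rho M$ (with dual frame $\{Z^i, A^r, X_i\}$ as in Lemma \ref{lem2.3}), and to compare the result term-by-term with the formula for $\Cal C_\rho$ from Proposition \ref{prop2.3}. Recall that $\partial^*_\rho$ is (pointwise) the Lie algebra homology differential of $\frak q_+/\frak p_+$ on $\Cal VM$, so in the picture of $\Cal A_\rho M$-valued forms it is a contraction-type operator involving insertion of the $Z^i$ (which span $\Cal A^1_\rho M \cong T^*_\rho M$) and the bracket $\{\ ,\ \}$. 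On the other hand, by Theorem \ref{thm4.1} and formula \eqref{td1def}, $d^{\Cal V}$ is built from the relative fundamental derivative $D^\rho$ together with the algebraic part $\partial_{\frak p/\frak p_+}$. The first step is therefore to write both $\partial^*_\rho d^{\Cal V}\ph$ and $d^{\Cal V}\partial^*_\rho\ph$ out as explicit sums over the adapted frame, using the expansion of $d^{\Cal V}$ via $D^\rho$ and the explicit form of $\partial^*_\rho$ via the $Z^i$; then add them.

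In this addition I expect the purely algebraic contributions (those not involving any $D^\rho$) to recombine into $-\tfrac12\,\id_{\La^k\Cal A_\rho M}\otimes C_0$: indeed $C_0$ acts on $\Cal VM$ as $\sum$ over a $B$-dual pair of bases of $\frak p/\frak p_+$ of the two-fold bullet-action, and the commutator $[\partial^*_\rho,\partial_{\frak p/\frak p_+}]$ on the homology complex of $\frak q_+/\frak p_+$ is, up to the standard factor, exactly the Casimir of $\frak p/\frak p_+$ acting diagonally on $\La^k(\frak q_+/\frak p_+)\otimes\Bbb V$ — this is the relative analog of the classical Kostant identity $\Box = \tfrac12(C_{\frak g} - C_{\frak l})$. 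The terms that do involve $D^\rho$ should reassemble, after using the adapted-frame relations $B(X_i,Z^j)=\delta_i^j$, $B(X_i,X_j)=0$ and the bracket condition $\{Z^i,X_i\}\in\Ga(\Cal A^0_\rho M)$, into exactly $-\tsum_i Z^i\bullet D^\rho_{X_i}\si$ type expressions and the $\{Z^i,X_i\}\bullet$ and $A^r\bullet A_r\bullet$ corrections, i.e. into $\tfrac12\Cal C_\rho$ as given by Proposition \ref{prop2.3}. The overall factor $\tfrac12$ arises because both $\partial^*_\rho d^{\Cal V}$ and $d^{\Cal V}\partial^*_\rho$ contribute the ``same'' first-order term, doubling it relative to the single occurrence in the definition of $D^\rho D^\rho$.

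The main obstacle will be bookkeeping of signs and of which entries the various operators act on: $\partial^*_\rho$ inserts a bracket/contraction into one slot, $d^{\Cal V}$ alternates over insertion of frame elements, and the cross-terms in $\partial^*_\rho d^{\Cal V} + d^{\Cal V}\partial^*_\rho$ must cancel in pairs (exactly as in the proof of Lemma \ref{lemma4.2}, where $\tilde d_1^{\Cal V}\partial_{\frak p/\frak p_+} + \partial_{\frak p/\frak p_+}\tilde d_1^{\Cal V}$ collapsed to a single curvature-free term); the analogous cancellation here is what makes the Laplacian order one rather than order two. Once the two ``$D^\rho$-cross'' families are seen to cancel, what remains is the diagonal contribution in each slot, and it is a routine — if lengthy — check that the diagonal piece equals $\tfrac12(\Cal C_\rho - \id\otimes C_0)$ by direct comparison with Proposition \ref{prop2.3}. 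The final sentence about complex irreducible $\Bbb V$ is then immediate: $C_0$ acts by the scalar $c_0$ on $\Bbb V$ and hence by $c_0\,\id$ on $\La^k\Cal A_\rho^* M\otimes\Cal VM$, since $\La^k\Cal A_\rho^* M$ carries no $\frak p/\frak p_+$-action entering $C_0$ — it is tensored trivially — giving $\square^{d^{\Cal V}}_k = \tfrac12(\Cal C_\rho - c_0\,\id)$.
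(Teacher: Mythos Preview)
Your overall plan---compute the Laplacian in an adapted frame and match terms with Proposition \ref{prop2.3}---is the paper's approach. But there is a genuine technical gap in how you propose to split the computation. You want to separate $d^{\Cal V}=\tilde d_1^{\Cal V}+\partial_{\frak p/\frak p_+}$ and compute the anticommutator of each piece with $\partial^*_\rho$. The problem is that neither $\tilde d_1^{\Cal V}$ nor $\partial_{\frak p/\frak p_+}$ individually preserves the subspace $\Om^*_\rho(M,\Cal VM)\subset\Ga(\La^*\Cal A_\rho^*M\otimes\Cal VM)$; only their sum does (this is precisely the content of the proof of Theorem \ref{thm4.1}). So your expression ``$[\partial^*_\rho,\partial_{\frak p/\frak p_+}]$ on the homology complex of $\frak q_+/\frak p_+$'' is not well-defined: $\partial_{\frak p/\frak p_+}$ does not map $\La^*(\frak q_+/\frak p_+)\otimes\Bbb V$ to itself.

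The paper resolves this by a lift. One observes that $\partial^*_\rho$ is the restriction of the homology differential $\partial^*_{\frak p/\frak p_+}$ for the full reductive algebra $\frak p/\frak p_+$, and instead computes $\partial^*_{\frak p/\frak p_+}\tilde d^{\Cal V}+\tilde d^{\Cal V}\partial^*_{\frak p/\frak p_+}$ on all of $\Ga(\La^*\Cal A_\rho^*M\otimes\Cal VM)$, restricting at the end. On this larger complex the split into $\tilde d_1^{\Cal V}$ and $\partial_{\frak p/\frak p_+}$ is legitimate, and each anticommutator with $\partial^*_{\frak p/\frak p_+}$ is handled by the standard identities of Lemma 3.3.2 of \cite{book} (anticommutators of wedge/insertion operators with $\partial^*$). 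The $\tilde d_1^{\Cal V}$-part then produces \eqref{d1term}, and the purely algebraic $\partial_{\frak p/\frak p_+}$-part splits via \eqref{partform} into the diagonal Casimir on the whole tensor product plus $-\tfrac12\,\id\otimes C_0$; recombining in the adapted frame gives $\tfrac12\Cal C_\rho$ exactly as you anticipate. Once you insert this lift, the rest of your outline is correct.
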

 \begin{proof}
   In Section \ref{4.1}, we have obtained $d^{\Cal V}$ as the
   restriction of an operator $\tilde d^{\Cal V}$ defined on sections
   of the bundle $\La^k\Cal A_\rho^*M\otimes\Cal VM$. The bundle
   $\La^kT_\rho^*M\otimes\Cal VM$ can be viewed as a subbundle in
   there, corresponding to the inclusion 
$$
\La^k(\frak q_+/\frak p_+)\otimes\Bbb V\hookrightarrow \La^k(\frak
p/\frak p_+)\otimes\Bbb V.
$$ 
Moreover, the bundle map $\partial^*_\rho$ is induced by a Lie algebra
homology differential for the smaller algebra, and thus can be viewed
as the restriction of a bundle map $\partial^*_{\frak p/\frak p_+}$
induced by the Lie algebra homology differential for the bigger
algebra. Thus we can complete the proof by showing that the operator
 \begin{equation}\label{bigsquare}
   \partial^*_{\frak p/\frak p_+}\o \tilde d^{\Cal V}+\tilde d^{\Cal
     V}\o \partial^*_{\frak p/\frak p_+}
 \end{equation}
 on $\Ga(\La^k\Cal A_\rho^*M\otimes\Cal VM)$ coincides with
 $\frac12(\Cal C_\rho -\id\otimes C_0)$.

 Now we write $\tilde d^{\Cal V}$ as $\tilde d_1^{\Cal
   V}+\partial_{\frak p/\frak p_+}$ and accordingly split
 \eqref{bigsquare} into two parts. For the rest of this proof, we will
 omit the subscripts and just write $\partial^*$ for
 $\partial^*_{\frak p/\frak p_+}$ and $\partial$ for $\partial_{\frak
   p/\frak p_+}$. To compute the first part, we observe that by
 definition, in terms of a local frame $\{\xi_\al\}$ for $\Cal A_\rho
 M$ and the dual frame $\{\eta_\al\}$ for $\Cal A^*_\rho M$, we can
 write $\tilde d_1^{\Cal V}\ph$ as $\sum_\al \eta_\al\wedge
 D^\rho_{\xi_\al}\ph$ for $\ph\in\Ga(\La^k\Cal A^*_\rho M\otimes \Cal
 VM)$. Now naturality of the relative fundamental derivative implies
 that $D^\rho_{\xi_\al}\partial^*\ph=\partial^*D^\rho_{\xi_\al}\ph$. On
 the other hand, we observe that Lemma 3.3.2 of \cite{book} continues
 to hold for the reductive algebra $\frak p/\frak p_+$ without any
 changes. Using part (1) of this lemma, we immediately conclude that
 we can write $\partial^*\tilde d_1^{\Cal V}\ph+\tilde d^{\Cal
   V}\partial^*\ph$ as
 $$
 -\textstyle\sum_\al \eta_\al\bullet D_{\xi_\al}\ph, 
 $$  
 where we now view $\{\xi_\al\}$ and $\{\eta_\al\}$ as local frames of
 $\Cal A_\rho M$, which are dual with respect to $B$. Using an adapted
 frame $\{X_i,A_r,Z^i\}$ for $\{\xi_\al\}$ (see Definition
 \ref{def2.3.2}), this can be rewritten as
 \begin{equation}
   \label{d1term}
 -\textstyle\sum_i Z^i\bullet D_{X_i}\ph+\sum_rA^r\bullet
 A_r\bullet\ph+\sum_iX_i\bullet Z^i\bullet\ph.   
 \end{equation}
 On the other hand, we can compute
 $\partial^*\partial+\partial\partial^*$ using the algebraic formulae
 from Sections 3.3.2 and 3.3.3 of \cite{book}. Similarly to the
 notation used there, we write $\Cal L$ for the algebraic action, so
 for $s\in\Ga(\Cal A_\rho M)$, we write $\Cal L_s$ for the operator
 $s\bullet\_$. Further we write $\Cal L^V_s$ for the tensor product of
 the identity on $\La^r\Cal A_\rho^*M$ and the action of $s$ on $\Cal
 VM$. Finally, we write $\ep_s$ for the wedge product by $s$, viewed as
 a section of $\Cal A_\rho^*M$ via $B$. Then from Section 3.3.3 of
 \cite{book}, we see that for dual frames $\{\xi_\al\}$ and
 $\{\eta_\al\}$ as above, we can write $\partial$ as
 \begin{equation}
   \label{partform}
   \textstyle\sum_\al\ep_{\eta_\al}\o \Cal
   L^V_{\xi_\al}+\frac12(\sum_\al \ep_{\eta_\al}\o (\Cal
   L_{\xi_\al}-\Cal L^V_{\xi_\al}))=\tfrac12\textstyle\sum_\al\ep_{\eta_\al}\o\Cal
   L_{\xi_\al}+\tfrac12\sum_\al\ep_{\eta_\al}\o\Cal L^V_{\xi_\al} 
   \end{equation}
   Now $\partial^*$ commutes with $\Cal L_s$ for any $s$, and then part
   (1) of Lemma 3.3.2 of \cite{book} shows that the anti--commutator of
   the first sum in the right hand side of \eqref{partform} with
   $\partial^*$ can be written as $-\tfrac12\sum_{\al}\Cal
   L_{\eta_\al}\o\Cal L_{\xi_\al}$, so up to the factor this is just
   the action of the Casimir element of $\frak p/\frak p_+$. Now we can
   use an adapted frame $\{X_i,A_r,Z^i\}$ as above for
   $\{\xi_\al\}$. Acting on $\ph$, we obtain terms
 $$
 -\tfrac12\textstyle\sum_i (Z^i\bullet X_i\bullet\ph+X_i\bullet Z^i\bullet\ph),
 $$ 
 which add up with the last term in \eqref{d1term} to
 $-\tfrac12\sum_i\{Z^i,X_i\}\bullet\ph$. On the other hand, we get a
 term $-\frac12\sum_rA^r\bullet A_r\bullet \ph$, which adds up with the
 middle sum in \eqref{d1term} to $\frac12\sum_rA^r\bullet A_r\bullet
 \ph$. In view of Proposition \ref{prop2.3}, we have exactly obtained the
 action of half of the relative curved Casimir so far.

 Again by part (1) of Lemma 3.3.2 of \cite{book}, the anti--commutator of
 the second sum in the right hand side of \eqref{partform} with
 $\partial^*$ can be rewritten as
 $$
 -\tfrac12\textstyle\sum_\al \Cal L_{\eta_\al}\o\Cal L^V_{\xi_\al}-
 \frac12\sum_\al \ep_{\eta_\al}\o(\partial^*\o\Cal L^V_{\xi_\al}-\Cal
 L^V_{\xi_\al}\o\partial^*). 
 $$
 Using parts (2) and (3) of Lemma 3.3.2 of \cite{book}, one immediately
 computes that the last sum (including the sign) equals
 $$
 \tfrac12\textstyle\sum_\al (\Cal L_{\eta_\al}-\Cal L^V_{\eta_\al})\o\Cal
 L^V_{\xi_\al}, 
 $$ 
 so this part just gives $-\frac12$ times the tensor product of the
 action of the Casimir element on $V$ with the identity.
 \end{proof}

 \subsection{Algebraic properties of splitting operators}\label{4.8}
 The final important aspect of the BGG--machinery are results ensuring
 that the splitting operators have values in certain subbundles of the
 bundles of differential forms. The corresponding results for usual
 BGG sequences were proved in \cite{twistor} and they are a crucial
 ingredient for the results on subcomplexes in BGG sequences in
 \cite{subcomplexes}.

 Consider a representation $\Bbb V$ of $P$, let $\Bbb E\subset
 \La^k(\frak q_+/\frak p_+)\otimes\Bbb V$ be a $Q$--submodule for some
 $k$, and put $\Bbb E_0:=\Bbb E\cap\ker(\square_\rho)$. Since the
 latter is a $Q_0$--invariant subspace, it corresponds to a smooth
 subbundle $\Cal E_0\subset\Cal H_k(T^*M,\Cal VM)$. The kind of
 statement we want to prove is that applying a splitting operator to a
 section of $\Cal E_0$, we obtain a section of the bundle $\Cal
 E\subset\La^rT^*_\rho M\otimes\Cal VM$ corresponding to $\Bbb E$. As
 we shall see soon, this needs only minimal assumption in the case of
 the splitting operators constructed from the relative twisted
 exterior derivative. However, for some applications, we have to study
 the splitting operators corresponding to the relative covariant
 exterior derivative. To deal with those, an additional concept is
 needed.

 \begin{definition}\label{def4.8}
   Suppose we have given $Q$--submodules $\Bbb E\subset \La^k(\frak
   q_+/\frak p_+)\otimes\Bbb V$ and $\Bbb F\subset\La^2(\frak q_+/\frak
   p_+)\otimes (\frak p/\frak p_+)$. Then we say that \textit{$\Bbb E$
     is stable under $\Bbb F$--insertions} if and only if for any
   $\ph\in\Bbb E$ and $\ps\in\Bbb F$ (both viewed as alternating
   multilinear maps on $\frak p/\frak q$) the image under
   $\partial^*_\rho$ of the total alternation of the map
 $$
 (X_0,\dots,X_k)\mapsto \ph(\ps(X_0,X_1)+\frak q,X_2,\dots,X_k)
 $$  
   lies again in $\Bbb E$. 
 \end{definition}

 The proofs of the following results are significantly simpler than the
 proofs for usual BGG sequences in Section 3.2 of \cite{twistor}.

 \begin{prop}\label{prop4.8}
   Suppose that $\Bbb E\subset \La^k(\frak q_+/\frak p_+)\otimes\Bbb
   V$ and $\Bbb F\subset\La^2(\frak q_+/\frak p_+)\otimes (\frak
   p/\frak p_+)$ are $Q$--submodules which are invariant under
   $\id\otimes C_0$, the action of the Casimir element on $\Bbb V$, 
   respectively $\frak p/\frak p_+$. Put $\Bbb E_0=\Bbb
   E\cap\ker(\square_\rho)$ and likewise for $\Bbb F$ and let us
   denote the corresponding natural subbundles by
   \begin{gather*}
     \Cal E_0M\subset\Cal H_k(T^*_\rho M,\Cal VM)\quad \Cal
     EM\subset\La^rT^*_\rho M\otimes\Cal VM \\
     \Cal F_0M\subset\Cal H_2(T^*_\rho M,\Cal A_\rho M)\quad \Cal
     FM\subset\La^rT^*_\rho M\otimes\Cal A_\rho M.
   \end{gather*}

 (1) The splitting operator associated to $d^{\Cal V}$ maps $\Ga(\Cal
 E_0M)$ to $\Ga(\Cal EM)$. 

 (2) If $T_\rho M\subset TM$ is involutive, $\Bbb E$ is stable under
 $\Bbb F$--insertions, and the relative curvature $\ka_\rho$ of the
 geometry is a section of $\Cal FM$, then the splitting operator
 associated to $d^\nabla$ maps $\Ga(\Cal E_0M)$ to $\Ga(\Cal EM)$.

 (3) Suppose that $\frak p=\frak g$, so $\Bbb F\subset\La^2\frak
 q_+\otimes\frak g$, and that the geometry in question is regular and
 normal. Then if $\Bbb F$ is stable under $\Bbb F$--insertions and the
 harmonic curvature $\ka^h$ of the geometry is a section of $\Cal
 F_0M$, its curvature $\ka$ is a section of $\Cal FM$.
 \end{prop}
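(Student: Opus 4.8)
\emph{Overall strategy.} In all three parts the splitting operator to be analyzed is a universal polynomial in $\partial^*_\rho\Cal D$ (Theorem \ref{thm3.3}(3), Corollary \ref{cor3.4}), and by Propositions \ref{prop3.6} and \ref{prop4.7} this operator, restricted to $\Ga(\ker(\partial^*_\rho))$, equals $\square^{d^{\Cal V}}_k=\tfrac12(\Cal C_\rho-\id\otimes C_0)$, up to an extra relative--torsion insertion in the case of $d^\nabla$. So the plan is: show that the relevant $\partial^*_\rho\Cal D$ maps $\Ga(\Cal EM\cap\ker(\partial^*_\rho))$ to itself, deduce the same for the splitting operator, and then note that $\Cal E_0M$ lifts, via the relative Hodge decomposition of \cite{part1}, to a subbundle of $\Cal EM\cap\ker(\partial^*_\rho)$; applying $S$ to such a lift and using $\pi_H\o S=\pi_H$ gives the assertion. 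Concretely for (1): since $\Bbb E$ is a $Q$--submodule, $\Cal EM$ is a relative natural subbundle of $\La^kT^*_\rho M\otimes\Cal VM$, so naturality of the relative curved Casimir (compatibility with the $Q$--equivariant inclusion, Proposition \ref{prop2.2}(3)) gives $\Cal C_\rho(\Ga(\Cal EM))\subset\Ga(\Cal EM)$, and the hypothesis that $\Bbb E$ is invariant under $\id\otimes C_0$ gives the same for that tensorial operator. As $\partial^*_\rho d^{\Cal V}$ has values in $\Ga(\im(\partial^*_\rho))\subset\Ga(\ker(\partial^*_\rho))$, it, and hence $S$, preserves $\Ga(\Cal EM\cap\ker(\partial^*_\rho))$; applying this to a lift $\ph_0\in\Ga(\Cal EM\cap\ker(\partial^*_\rho))$ of $\al\in\Ga(\Cal E_0M)$ yields $S(\al)=S(\ph_0)\in\Ga(\Cal EM)$.

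For (2) the only new input is the torsion term. By Proposition \ref{prop4.5}, $d^\nabla=d^{\Cal V}-i_{\tau_\rho}$, so on $\Ga(\ker(\partial^*_\rho))$ one has $\partial^*_\rho d^\nabla=\tfrac12(\Cal C_\rho-\id\otimes C_0)-\partial^*_\rho i_{\tau_\rho}$. The first summand preserves $\Ga(\Cal EM\cap\ker(\partial^*_\rho))$ by (1). For the second, $i_{\tau_\rho}$ is, up to a positive constant, the total alternation of insertion of $\tau_\rho$ into the first argument, and $\tau_\rho$ is the image of $\ka_\rho\in\Ga(\Cal FM)$ under $\Cal A_\rho M\to T_\rho M$; evaluated at a point, $\partial^*_\rho i_{\tau_\rho}$ is then precisely the operation of Definition \ref{def4.8} with $\ps=\ka_\rho$. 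Hence stability of $\Bbb E$ under $\Bbb F$--insertions says exactly that $\partial^*_\rho i_{\tau_\rho}$ maps $\Ga(\Cal EM)$ into $\Ga(\Cal EM)\cap\Ga(\ker(\partial^*_\rho))$. So $\partial^*_\rho d^\nabla$, and then the splitting operator for $d^\nabla$ (again a universal polynomial in it), preserve $\Ga(\Cal EM\cap\ker(\partial^*_\rho))$, and one finishes as in (1).

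For (3) we have $\frak p=\frak g$, so $T_\rho M=TM$, $\Cal A_\rho M=\Cal AM$, $\ka_\rho=\ka$; regularity makes $\ka$ filtration--homogeneous of degree $\ge1$, and normality gives $\partial^*_\rho\ka=0$ with $\pi_H(\ka)=\ka^h$. I would feed the second Bianchi identity (see \cite{book}), which expresses $d^{\Cal A}\ka$ as a bracket--quadratic expression in $\ka$ and hence one of filtration degree $\ge2$, into Proposition \ref{prop4.7} with $\Cal V=\Cal A$, obtaining an identity of the form $\tfrac12(\Cal C_\rho-\id\otimes C_0)\ka=\Phi(\ka)$ where $\Phi$ strictly raises filtration homogeneity and is assembled from the algebraic bracket and $\partial^*_\rho$ exactly as an $\Bbb F$--insertion (with both curvature factors playing the role of elements of $\Bbb F$, as here $\Bbb E=\Bbb F$). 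On the associated graded $\tfrac12(\Cal C_\rho-\id\otimes C_0)$ has leading term $\square_\rho$, invertible on $\im(\underline{\partial}^*_\rho)$ and zero on $\ker(\square_\rho)$, so an induction on filtration homogeneity applies: in each degree the harmonic part of $\ka$ is a component of $\ka^h\in\Ga(\Cal F_0M)\subset\Ga(\Cal FM)$, while the displayed identity determines the exact part in degree $\ell+1$ by applying $\square_\rho^{-1}$ to an expression in the components of $\ka$ of degrees $\le\ell$; by the inductive hypothesis these lie in $\Cal FM$, and stability of $\Bbb F$ under $\Bbb F$--insertions (together with its invariance under $\Cal C_\rho$, automatic from naturality, and under $\id\otimes C_0$, by hypothesis) forces $\Phi(\ka)$, hence the new component of $\ka$, to remain a section of $\Cal FM$. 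This is the relative analog of, but simpler than, the argument in \cite{twistor}, Section 3.2.

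\emph{Expected main obstacle.} Parts (1) and (2) should be essentially formal once Proposition \ref{prop4.7} is in hand. The real work is (3): one must rewrite the second Bianchi identity in precisely the algebraic shape of Definition \ref{def4.8}, correctly identifying which curvature factor plays the role of $\ph$ and which of $\ps$, and then check that every ingredient of the inductive step respects $\Cal FM$ — in particular the non--leading part of $\Cal C_\rho$ and the chosen splitting of the filtration on $\im(\partial^*_\rho)$ used to interpret $\square_\rho^{-1}$ as a tensorial operator on $\Cal FM$.
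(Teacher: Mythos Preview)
Your arguments for (1) and (2) are correct and essentially coincide with the paper's: the key identity $\square^{d^{\Cal V}}=\tfrac12(\Cal C_\rho-\id\otimes C_0)$ from Proposition~\ref{prop4.7}, naturality of $\Cal C_\rho$, and the hypothesis on $\id\otimes C_0$ give that $\partial^*_\rho d^{\Cal V}$ (hence $S$, as a polynomial in it) preserves $\Ga(\Cal EM\cap\ker(\partial^*_\rho))$, and a representative of $\al\in\Ga(\Cal E_0M)$ can be chosen in there. For (2) the torsion term $\partial^*_\rho i_{\tau_\rho}$ is handled exactly by the $\Bbb F$--insertion hypothesis.

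For (3) your approach diverges from the paper's and, as sketched, has a gap. The paper does \emph{not} work with the identity $\tfrac12(\Cal C_\rho-c_0)\ka=\partial^*i_\tau\ka$ and a direct degree--by--degree inversion. Instead it leverages part~(1): setting $\ph:=S^{\Cal A}(\ka^h)$, part~(1) gives $\ph\in\Ga(\Cal FM)$ for free. The Bianchi identity $d^\nabla\ka=0$ together with normality forces $\ka=S^\nabla(\ka^h)$, and one computes $\ka=\ph+Q^\nabla\partial^*(i_\tau\ph)$. The induction then runs on this formula, splitting $\tau=\tau^1+\tau^2$ according to the inductive decomposition $\ka=\ka^1+\ka^2$ with $\ka^1\in\Ga(\Cal FM)$: the $i_{\tau^1}\ph$ piece lands in $\Cal FM$ by $\Bbb F$--stability (and $Q^\nabla$, being a polynomial in $\partial^* d^\nabla$, preserves $\Ga(\Cal FM)$ by the same mechanism as in (2)), while the $i_{\tau^2}\ph$ piece gains one filtration degree.

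The gap in your sketch is the handling of the harmonic part. Your identity determines only $\square_\rho\gr_{\ell+1}(r)$, hence only the $\im(\underline{\partial}^*_\rho)$--component of $\gr_{\ell+1}(r)$; the harmonic component is \emph{not} determined by it. Your claim that ``in each degree the harmonic part of $\ka$ is a component of $\ka^h$'' is not literally correct once you have subtracted off $f_\ell$: you need the harmonic part of $\gr_{\ell+1}(\ka-f_\ell)$, which is the degree--$(\ell{+}1)$ component of $\ka^h-\pi_H(f_\ell)$. To conclude this lies in $\Cal F_0M$ you must additionally verify that $\pi_H$ maps $\Ga(\Cal FM\cap\ker(\partial^*))$ into $\Ga(\Cal F_0M)$, which is a separate (filtration--by--filtration) argument you have not supplied. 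The paper's route sidesteps this entirely: by introducing $\ph=S^{\Cal A}(\ka^h)$ first, one has $\ka-\ph\in\Ga(\im(\partial^*))$, so there is no harmonic contribution to track during the induction, and the inverse used is $Q^\nabla$ (a natural polynomial in $\partial^*d^\nabla$) rather than a splitting--dependent $\square_\rho^{-1}$.
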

 \begin{proof}
 If necessary, we can replace $\Bbb E$ and $\Bbb F$ by their
 intersections with $\ker(\partial^*_\rho)$, which does not change the
 intersection with $\ker(\square_\rho)$. 

 (1) Let $\square^{\Cal V}$ be the Laplacian associated to $d^{\Cal
   V}$. Since the relative curved Casimir preserves sections of any
 natural subbundle, we conclude from Proposition \ref{prop4.7} that
 our assumptions imply that $\square^{\Cal V}$ maps sections of $\Cal
 EM$ to sections of $\Cal EM$. Of course, the same is then true for
 any polynomial in $\square^{\Cal V}$. The construction of the
 splitting operator in Section \ref{3.3} shows that its value on a
 section of $\Cal E_0M$ can be obtained from applying such a
 polynomial to a representative section of
 $\ker(\partial^*_\rho)$. But by construction, we can choose a
 representative section in $\Cal EM$, which implies the result.

 (2) Our assumptions imply that the relative covariant exterior
 derivative $d^\nabla$ is defined and we denote by $\square^{\nabla}$
 the associated Laplacian. Then Proposition \ref{prop4.5} implies that
 for $\ph\in\Ga(\ker(\partial^*_\rho))$, we have
 $\square^{\nabla}\ph=\square^{\Cal
   V}\ph-\partial^*(i_{\tau_\rho}\ph)$.  Since $\tau_\rho$ is just the
 projection of $\ka_\rho\in\Ga(\Cal FM)$, the fact that $\Bbb E$ is
 stable under $\Bbb F$--insertions implies that for $\ph\in\Ga(\Cal
 EM)$, we have $\square^{\nabla}\ph\in\Ga(\Cal EM)$, and we can
 conclude the proof as in part (1).

 (3) Here we deal with the adjoint tractor bundle $\Cal AM$ and the
 adjoint tractor connection $\nabla^{\Cal A}$. By part (2) of
 Proposition \ref{prop4.4.2}, the curvature of $\nabla^{\Cal A}$ is
 given by $R^{\Cal A}(\xi,\eta)(s)=\{\ka(\xi,\eta),s\}$ for
 $\xi,\eta\in\frak X(M)$ and $s\in\Ga(\Cal AM)$. Together with the
 Bianchi--identity for linear connections, this easily implies that
 $\ka\in\Om^2(M,\Cal AM)$ satisfies $d^{\nabla}\ka=0$. By normality,
 we have $\partial^*\ka=0$ and by definition
 $\ka^h=\pi_H(\ka)\in\Ga(\Cal H_2(TM,\Cal AM))$. Hence denoting by
 $S^{\nabla}$ the splitting operator corresponding to $d^\nabla$, part
 (2) of Theorem \ref{thm3.3} shows that $\ka=S^\nabla(\ka^h)$.

 Now let $S^{\Cal A}$ be the splitting operator constructed from
 $d^{\Cal A}$. Then by part (1), $\ph:=S^{\Cal A}(\ka^h)$ is a section
 of $\Cal FM\subset\Cal AM$. As in the proof of part (2), we get
 $\square^{\nabla}\ph=-\partial^*(i_{\tau}\ph)$. Denote by $Q^\nabla$
 the operator constructed from $d^\nabla$ as in Theorem \ref{thm3.4},
 and put $\tilde\ph:=\ph+Q^\nabla\partial^*(i_{\tau}\ph)$. Then by
 construction $\square^\nabla(\tilde\ph)=0$ and since the image of
 $Q^\nabla$ lies in the image of $\partial^*$, we get
 $\pi_H(\tilde\ph)=\pi_H(\ph)=\ka^h$. Hence we see that
 $\tilde\ph=S^{\nabla}(\ka^h)=\ka$.

 Now for $i\geq 1$, assume that $\ka$ is congruent to a section of
 $\Cal FM$ modulo elements which are homogeneous of degree $\geq
 i+1$. This is certainly satisfied for $i=1$: By regularity $\tau$ and
 $\ka^h$ are of homogeneity $\geq 1$, so $\ph$ is of homogeneity $\geq
 1$. Thus $i_{\tau_\rho}\ph$ is of homogeneity $\geq 2$, and
 $Q^\nabla$ and $\partial^*$ preserve homogeneities.

 But if we assume that this is satisfied for some $i$, then we can
 write $\ka=\ka^1+\ka^2$ with $\ka^1\in\Ga(\Cal FM)$ and $\ka^2$
 homogeneous of degree at least $i+1$. Defining $\tau^1$ and $\tau^2$
 as the corresponding images in $\Om^2(M,TM)$, we have
 $i_{\tau}\ph=i_{\tau^1}\ph+i_{\tau^2}\ph$. Since $\Bbb F$ is stable
 under $\Bbb F$--insertions, $\partial^*(i_{\tau^1}\ph)$ is a section
 of $\Cal FM$, and as in (2), $Q^\nabla$ preserves the space of
 sections of this subbundle. On the other hand, $i_{\tau^2}\ph$ is
 homogeneous of degree at least $i+2$, and again this property is
 preserved by $Q^\nabla\partial^*$. Hence we conclude that the
 property is satisfied for $i+1$ and by induction the claim follows.
 \end{proof}

\begin{remark}\label{rem4.8}
  We remark briefly here that there is a relative version of normality
  and harmonic curvature and a corresponding extension of part (3) of
  Proposition \ref{prop4.8}. Consider a parabolic geometry $(p:\Cal
  G\to M,\om)$ of type $(G,Q)$ with involutive relative tangent bundle
  $T_\rho M$. Then one has the relative curvature
  $\ka_\rho\in\Om^2_\rho(M,\Cal A_\rho M)$ as defined in Definition
  \ref{def4.4}. The relative version of normality then is to require
  that $\partial^*_\rho(\ka_\rho)=0$, which allows one to define a
  \textit{relative harmonic curvature}
  $\ka^h_\rho=\pi_H(\ka_\rho)\in\Ga(\Cal H_2(T_{\rho}M,\Cal A_\rho
  M))$.

  Consider the relative tractor connection $\nabla^{\rho,\Cal A}$ on
  the relative tractor bundle $\Cal A_\rho M$ and let $d^\nabla$ be
  the induced relative covariant exterior derivative. Then similarly
  as in the proof of Proposition \ref{prop4.8}, one can use the
  Bianchi identity for linear connections to prove that
  $d^{\nabla}\ka_\rho=0$. Denoting by $S^\nabla$ the splitting
  operator constructed from $d^\nabla$, this immediately implies
  $\ka_\rho=S^\nabla(\ka^h_\rho)$, which can be viewed as a strong
  version of the Bianchi identity. In particular, vanishing of
  $\ka^h_\rho$ implies vanishing of $\ka_\rho$.

  The proof of part (3) of Proposition \ref{prop4.8} then extends
  without changes to the general setting. Consider a $Q$--submodule
  $\Bbb F\subset\La^2(\frak q_+/\frak p_+)\otimes(\frak p/\frak p_+)$,
  put $\Bbb F_0:=\Bbb F\cap\ker(\square_\rho)$, and consider the
  associated bundles $\Cal F_0M$ and $\Cal FM$. Assuming that $\Bbb F$
  is stable under $\id\otimes C_0$ and under $\Bbb F$--insertions, the
  fact that $\ka^h_\rho\in\Ga(\Cal F_0M)$ then implies
  $\ka_\rho\in\Ga(\Cal FM)$.

  We have not discussed this topic in more detail, since it is unclear
  how relevant the concept of relative normality is for interesting
  examples of parabolic geometries.
\end{remark}

 \subsection{Twistor spaces and harmonic curvature}\label{4.9}
 Using Proposition \ref{prop4.8}, we can now give sufficient
 conditions for some of the properties studied so far in terms of the
 harmonic curvature. These are then easy to verify, since the harmonic
 curvature is usually easy to understand explicitly. The harmonic
 curvature $\ka^h$ of a geometry of type $(G,Q)$ is a section of the
 bundle $\Cal G\x_P H_2(\frak q_+,\frak g)$ which can be viewed as a
 subbundle of the bundle $L(\La^2\gr(TM),\gr(\Cal AM))$. So in
 particular, one can naturally formulate conditions on insertions of
 one or two elements from $\gr(T_\rho M)\subset\gr(TM)$ into the
 harmonic curvature, respectively on harmonic curvatures being of
 torsion type (i.e.~having values in $\gr_i(\Cal AM)$ with
 $i<0$). Using this, we can now formulate:

 \begin{prop}\label{prop4.9}
 Let $(p:\Cal G\to M,\om)$ be a parabolic geometry of type $(G,Q)$ with
 curvature $\ka$ and harmonic curvature $\ka^h$. 

 (1) Suppose that $\ka^h$ vanishes upon insertion of two elements of
 $\gr(T_\rho M)$ and that the torsion--type components of $\ka^h$ even
 vanish, if one of their entries is from $\gr(T_\rho M)$. Then $T_\rho
 M$ is involutive, and $\ka$ vanishes on $T_\rho M\x T_\rho M$. Hence
 the relative curvature $\ka_\rho$ vanishes and we are in the setting
 of part (1) of Theorem \ref{thm4.6}.

 (2) Suppose that $\ka^h$ vanishes upon insertion of one element of
 $\gr(T_\rho M)$. Then the same is true for $\ka$, so we are in the
 setting of part (2) of Theorem \ref{thm4.6}.
 \end{prop}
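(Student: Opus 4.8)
The plan is to reduce both statements to the algebraic characterization of the relevant subbundles together with the strong Bianchi identity $\ka = S^{\Cal A}(\ka^h)$ from part (3) of Proposition \ref{prop4.8} (applied with $\frak p=\frak g$ to the adjoint tractor bundle). In both parts the hypothesis is a condition on $\ka^h$ that says it is a section of a natural subbundle $\Cal F_0M\subset\Cal H_2(TM,\Cal AM)$ coming from a $Q$--submodule $\Bbb F\subset\La^2\frak q_+\otimes\frak g$, and the conclusion is that $\ka$ is a section of the corresponding subbundle $\Cal FM\subset\La^2 T^*M\otimes\Cal AM$. The key point is that the subbundle defined by ``$\ka$ annihilates $T_\rho M\x T_\rho M$'' — or, respectively, ``$\ka$ annihilates $T_\rho M$'' — is exactly of this form, so once we verify that the inducing module $\Bbb F$ satisfies the hypotheses of Proposition \ref{prop4.8}(3), namely $\id\otimes C_0$--invariance and stability under $\Bbb F$--insertions, we are done.

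First I would make the subbundles explicit. For (1), let $\Bbb F$ be the space of those $\ph\in\La^2\frak q_+\otimes\frak g$ (intersected with $\ker\partial^*$, as in the proof of Proposition \ref{prop4.8}) which, viewed as alternating maps on $\frak g/\frak q$, vanish on $(\frak p/\frak q)\x(\frak p/\frak q)$ and whose torsion part (the image in $\La^2\frak q_+\otimes(\frak g/\frak q)$, i.e.\ projection of values to $\gr_i$ with $i<0$) vanishes whenever one entry is from $\frak p/\frak q$. For (2), let $\Bbb F$ consist of those $\ph$ that vanish whenever one entry is from $\frak p/\frak q$. In each case $\Bbb F$ is a $Q$--submodule because $\frak p/\frak q\subset\frak g/\frak q$ is $Q$--invariant and the grading decomposition of $\frak g$ is $Q_0$--invariant; and the corresponding bundle $\Cal FM$ is precisely the set of $\ka$ with the stated vanishing property, since $\gr(T_\rho M)\subset\gr(TM)$ is induced by $\frak p/\frak q$. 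Next I would check $\id\otimes C_0$--invariance: $C_0$ is the adjoint Casimir acting on the $\frak g$--factor, it is $Q$--equivariant, and it visibly preserves the grading and the subspace conditions defining $\Bbb F$ (insertion conditions only involve the $\La^2\frak q_+$ factor), so this is immediate. Then stability of $\Bbb F$ under $\Bbb F$--insertions: by Definition \ref{def4.8} one must check that for $\ph,\ps\in\Bbb F$ the $\partial^*$ of the alternation of $(X_0,\dots)\mapsto\ph(\ps(X_0,X_1)+\frak q,X_2,\dots)$ again lies in $\Bbb F$. Here one uses that $\ps$ has values in $\frak p$ (for (1) this is part of the hypothesis via Proposition \ref{prop4.4.1}; for (2) it follows from $\ps$ vanishing on entries from $\frak p/\frak q$ being a strictly weaker statement, so one must be slightly careful — actually for (2) one needs $\ps(X_0,X_1)\in\frak p$ whenever $X_0\in\frak p/\frak q$, which is exactly what the condition on $\Bbb F$ gives), hence $\ps(X_0,X_1)+\frak q\in\frak p/\frak q$, which feeds the vanishing hypothesis of $\ph$.

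With these algebraic facts in hand, the proof of (1) runs: the hypotheses say $\ka^h\in\Ga(\Cal F_0M)$; Proposition \ref{prop4.8}(3) then gives $\ka\in\Ga(\Cal FM)$, i.e.\ $\ka$ vanishes on $T_\rho M\x T_\rho M$; by Proposition \ref{prop4.4.1} this is equivalent to involutivity of $T_\rho M$, and then $\ka_\rho$, being the image of $\ka|_{T_\rho M\x T_\rho M}$ under $\Cal G\x_Q\frak p\to\Cal A_\rho M$, vanishes; so part (1) of Theorem \ref{thm4.6} applies. The proof of (2) is identical with the new choice of $\Bbb F$: $\ka^h\in\Ga(\Cal F_0M)$ forces $\ka\in\Ga(\Cal FM)$, which is precisely the statement that $\ka$ vanishes on insertion of one element of $T_\rho M$, putting us in the setting of part (2) of Theorem \ref{thm4.6}. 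I expect the main obstacle to be the verification that $\Bbb F$ is stable under $\Bbb F$--insertions — in particular pinning down in case (2) that $\ps$ having one entry-free-of-$\frak p/\frak q$ vanishing property still forces $\ps(X_0,X_1)\in\frak q+$(something in $\frak p$) rather than just in $\frak g$, so that $\ph$'s hypothesis can be invoked; a careful bookkeeping of which argument slot of $\ph$ receives $\ps(X_0,X_1)+\frak q$ after the total alternation, and of the $\partial^*$ (which only rearranges and contracts within the $\frak q_+$ factor and so cannot destroy membership in $\Bbb F$), should resolve it.
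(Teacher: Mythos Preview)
Your approach is exactly the paper's: reduce to Proposition~\ref{prop4.8}(3) by identifying the right $Q$--submodule $\Bbb F$ and verifying $\id\otimes C_0$--stability together with $\Bbb F$--insertion stability. Your choices of $\Bbb F$ are also correct; in the paper's notation they read $\Bbb F=\La^2\frak p_+\otimes\frak g+\frak p_+\wedge\frak q_+\otimes\frak q$ for (1) and $\Bbb F=\La^2\frak p_+\otimes\frak g$ for (2), where $\frak p_+\subset\frak q_+$ is the annihilator of $\frak p/\frak q$.

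There is, however, a genuine error in your treatment of $\partial^*$. The Kostant codifferential on $\La^*\frak q_+\otimes\frak g$ does \emph{not} ``only rearrange and contract within the $\frak q_+$ factor''; one of its two groups of terms acts on the $\frak g$--factor by the adjoint action of $\frak q_+$. So you cannot conclude for free that $\partial^*$ preserves the relevant subspaces. What actually makes this step work is that $\frak p_+$ is an ideal in $\frak q$ (hence $[\frak q_+,\frak p_+]\subset\frak p_+$), so $\partial^*$ maps $\La^3\frak p_+\otimes\frak g$ into $\La^2\frak p_+\otimes\frak g$; for (1) one also uses $[\frak q_+,\frak q]\subset\frak q$ to see that $\partial^*$ maps $\La^2\frak p_+\wedge\frak q_+\otimes\frak q$ into $\Bbb F$. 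The paper records $[\frak q_+,\frak p_+]\subset\frak p_+$ explicitly at this point.

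Your worry at the end about case (2) is also misplaced, and following it would lead you astray. You do not need $\ps(X_0,X_1)\in\frak p$ at all. For $\ph,\ps\in\La^2\frak p_+\otimes\frak g$, the raw map $(X_0,X_1,X_2)\mapsto\ph(\ps(X_0,X_1)+\frak q,X_2)$ already vanishes whenever any single $X_i\in\frak p/\frak q$ (if $i\in\{0,1\}$ because $\ps$ vanishes; if $i=2$ because $\ph$ does), so its alternation lies in $\La^3\frak p_+\otimes\frak g$ directly. For (1) the same direct check shows the alternation lands in $\La^2\frak p_+\wedge\frak q_+\otimes\frak q+\La^3\frak p_+\otimes\frak g$, after which the corrected $\partial^*$ step finishes. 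Finally, for $\id\otimes C_0$--stability in (1) note that you really do need $C_0(\frak q)\subset\frak q$; this is not merely a condition on the $\La^2\frak q_+$ factor. It follows from $Q$--equivariance of $C_0$, or by the dual--basis argument the paper gives.
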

 \begin{proof}
   We apply part (3) of Proposition \ref{prop4.8} to modules depending
   on $\frak p$. So we have to look at a submodule $\Bbb F\subset
   \La^2\frak q_+\otimes\frak g$, which is stable under the
   $\id\otimes C_0$, where $C_0$ is the Casimir element of $\frak
   g$. Provided that $\Bbb F$ is stable under $\Bbb F$--insertion we
   can then conclude that $\ka\in\Ga(\Cal FM)$ from $\ka^h\in\Ga(\Cal
   F_0M)$.

   Now from the $\frak q$--submodule $\frak p_+\subset\frak q_+$ we
   get submodules $\La^2\frak p_+\subset \frak p_+\wedge\frak
   q_+\subset\La^2\frak q_+$. Since $\frak p_+$ is the annihilator of
   $\frak p/\frak q\subset\frak g/\frak q$, these are the spaces of
   those maps which vanish upon insertion of one, respectively two,
   elements of $\frak p/\frak q$.

   (1) Putting $\Bbb F:=\La^2\frak p_+\otimes\frak g +\frak
   p_+\wedge\frak q_+\otimes\frak q$, the assumption in (1) is exactly
   that $\ka^h\in\Ga(\Cal F_0M)$, while the conclusion of (1) is that
   $\ka\in\Ga(\Cal FM)$. So we only have to show that $\Bbb F$ is
   stable under $\id\otimes C_0$ and under $\Bbb F$--insertions. For
   the first property it suffices to show that $\frak q\subset\frak g$
   is stable under the action of the Casimir element $C_0$. This
   follows by taking appropriate dual bases to compute the
   Casimir. Take a basis $\xi_\ell$ which fills up step by step the
   $\frak q$--invariant filtration components $\frak g^i\subset\frak
   g$. Then the dual basis $\eta_\ell$ has the property that for
   $\xi_\ell\in\frak g^i$ we have $\eta_\ell\in\frak g^{-i}$. Hence
   for $A\in\frak q=\frak g^0$, we always have
   $[\xi_\ell,[\eta_\ell,A]]\in\frak g^0=\frak q$.

   On the other hand, taking $\ph,\ps\in \Bbb F$, the map
   $(X_1,X_2,X_3)\mapsto \ps(\ph(X_1,X_2),X_3)$ vanishes, if $X_1$ or
   $X_2$ lies in $\frak p/\frak q$ and has values in $\frak q$ if
   $X_3\in\frak p/\frak q$. This means that the complete alternation
   of this map has values in $\frak q$, if one inserts one element from
   $\frak p/\frak q$ and vanishes upon insertion of two such
   elements. Hence it lies in
 $$
 \La^2\frak p_+\wedge\frak q_+\otimes\frak q+\La^3\frak
 p_+\otimes\frak g.  
 $$
 Since $[\frak q_+,\frak p_+]\subset\frak p_+$ it follows directly
 from the definition that $\partial^*_\rho$ maps this module into
 $\Bbb F$, so $\Bbb F$ is stable under $\Bbb F$--insertions.

 (2) This is proved in Theorem 3.3 of \cite{twistor}, but since the
 proof becomes very simple with our tools, we provide the
 argument. Putting $\Bbb F:=\La^2\frak p_+\otimes\frak g$, part (2)
 exactly says that $\ka^h\in\Ga(\Cal F_0M)$ implies $\ka\in\Ga(\Cal
 FM)$. But in this case stability under $\id\otimes C_0$ is not an
 issue and the insertion just produces elements of $\La^3\frak
 p_+\otimes\frak g$, which are mapped to $\Bbb F$ by $\partial^*$. Thus
 $\Bbb F$ is stable under $\Bbb F$--insertion and the result follows
 from part (3) of Proposition \ref{prop4.8}.
 \end{proof}

 \section{Examples and Applications}\label{5}
 We will mainly discuss applications involving the BGG sequences
 associated to the relative twisted exterior derivative. These
 applications naturally split into two different groups, which are
 distinguished by representation theory data. On the one hand, there
 are cases in which the bundles occurring in a relative BGG sequence
 also occur in a standard BGG sequence. For this case, our main result
 is that the operators on these spaces coming from the two sequences
 actually agree. Hence the relative BGG sequence is a subsequence in a
 standard BGG sequence in these cases. Under weak curvature
 conditions, these subsequences are subcomplexes and fine resolutions
 of certain sheaves.

 On the other hand, there are cases in which the bundles contained in
 a relative BGG sequence cannot occur in a standard BGG sequence. In
 these cases, the constructions of invariant differential operators
 which are available in the literature, need a lot of case--by--case
 considerations (even to decide whether they are applicable). Hence in
 these cases, we obtain the first general uniform construction for
 invariant differential operators between the bundles in question. Of
 course, the results on curvature conditions ensuring that relative
 BGG sequences are complexes or even fine resolutions of sheaves from
 Section \ref{4} also apply in these cases.

 We will make both kinds of examples explicit for generalized path
 geometries. These form an example of broader interest, because of their
 relation to systems of second order ODEs, and at the same time they
 nicely expose the features of relative BGG sequences.

 \subsection{The two types of examples}\label{5.1}
 As mentioned above, the distinction between the two classes of
 examples we obtain is related to representation theory, in particular
 to the distinction between regular and singular infinitesimal
 character. We will formulate things in a direct way first and then
 discuss the interpretation in terms of weights.

 Given nested parabolic subalgebras $\frak q\subset\frak p$ in a
 semi--simple Lie algebra $\frak g$ and corresponding groups $Q\subset
 P\subset G$, there are two ways to apply the theory developed in this
 article to the construction of invariant differential
 operators. Either we can construct ``absolute'' BGG sequences as known
 from \cite{CSS-BGG} and \cite{Calderbank--Diemer} in the way presented
 in Sections \ref{3} and \ref{4} (so this corresponds to the pair
 $\frak q\subset\frak g$). For this construction, the starting point is
 a representation $\tbv$ of $G$ and the bundles in the resulting BGG
 sequence are induced by the summands of the completely reducible
 representation $H_*(\frak q_+,\tbv)$ of $Q$.

 On the other hand, we can apply the relative BGG construction for the
 pair $\frak q\subset\frak p$. Here we start with a completely
 reducible representation $\Bbb V$ of the group $P$, and the bundles in
 the resulting relative BGG sequence are associated to the summands of
 the completely reducible representation $H_*(\frak q_+/\frak p_+,\Bbb
 V)$ of $Q$. 

 Now there obviously is some overlap between the two cases. The
 simplest example of this is that an irreducible representation $\tbv$
 of $G$ has a unique $P$--irreducible quotient as well as a unique
 $Q$--irreducible quotient. These can be nicely described as $\Bbb
 V:=H_0(\frak p_+,\tbv)$ and as $H_0(\frak q_+,\tbv)$, respectively.
 It is easy to see that $H_0(\frak q_+/\frak p_+,\Bbb V)$ is the
 $Q$--irreducible quotient of $\Bbb V$ and hence isomorphic to
 $H_0(\frak q_+,\tbv)$. This is vastly generalized in Theorem 3.3 of
 \cite{part1}, where it is shown that
 $$
 H_k(\frak q_+,\tbv)\cong \oplus_{i+j=k}H_i(\frak q_+/\frak
 p_+,H_j(\frak p_+,\tbv)). 
 $$
 Since $H_*(\frak p_+,\tbv)$ is always a completely reducible
 representation, we conclude that the absolute BGG sequence is (as far
 as the bundles are concerned) the union of the relative BGG sequences
 induced by the components of $H_*(\frak p_+,\tbv)$. Of course, in
 these cases, the main question is relating the two BGG sequences.

\smallskip

The importance of bundles induced by Lie algebra homology spaces is
explained by Kostant's theorem (see \cite{Kostant}), and its relative
analog, Theorem 2.7 of \cite{part1}. In the complex setting,
irreducible representations of a parabolic subalgebra of $\frak g$ can
be described in terms of weights, which are linear functionals on a
Cartan subalgebra $\frak h\subset\frak g$. Now the relation between
invariant differential operators on the homogeneous model of a
parabolic geometry and representation theory (in particular
homomorphisms between generalized Verma modules) heavily restricts the
representations inducing bundles which allow a non--zero invariant
differential operator between their spaces of sections. Namely, the
negatives of the lowest weights of the inducing representations have
to lie in the same orbit for the so--called affine action of the Weyl
group $W$ of $\frak g$ on the space of weights. Since each such orbit
is finite, this is a very strong restriction.

 Given a finite dimensional irreducible representation $\tbv$ of $\frak
 g$, Kostant's original theorem first shows that each summand of the
 homology $H_*(\frak q_+,\tbv)$ corresponds to a weight in the affine
 Weyl orbit determined by $\tbv$. Moreover, the weights obtained by
 these summands are the only ones in the orbit that can be realized by
 finite dimensional irreducible representations of $\frak q$. Hence the
 BGG sequence contains all candidates for targets and domains of
 invariant differential operators corresponding to that affine Weyl
 orbit. 

 The relative version of Kostant's theorem from \cite{part1} proves a
 similar statement starting from a finite dimensional, complex
 irreducible representation $\Bbb V$ of $\frak p$. Again the first
 statement is that all summands of $H_*(\frak q_+/\frak p_+,\Bbb V)$
 correspond to weights in the affine Weyl orbit determined by $\Bbb
 V$. Moreover, they are the only weights in the orbit under a subgroup
 $W_{\frak p}\subset W$, which can be realized by finite dimensional
 irreducible representations of $\frak q$.

 The description via weights also shows that there are many
 $P$--irreducible representations $\Bbb V$ such that no summand of
 $H_*(\frak q_+/\frak p_+,\Bbb V)$ can occur in $H_*(\frak q_+,\tbv)$
 for a representation $\tbv$ of $\frak g$. The point here is that
 realizability of a weight by a finite dimensional representation of
 one of the Lie algebras or groups in question depends on dominancy
 and integrality conditions for the weights. Here we have to consider
 the three conditions of $\frak g$--dominancy, $\frak p$--dominancy,
 and $\frak q$--dominancy, which imply each other in that
 sequence. Now for any irreducible representation $\tbv$ of $\frak g$
 the corresponding weight is $\frak g$--dominant and integral. For
 irreducible representations of $\frak p$ and $\frak q$ (and even of
 $P$ and $Q$) both the dominancy and integrality conditions are
 weaker. In particular, there are many examples of finite dimensional,
 complex, $P$--irreducible representations $\Bbb V$, which correspond
 to a weight whose affine Weyl--orbit does not contain any $\frak
 g$--dominant integral weight.  In particular, the summands of
 $H_*(\frak q_+/\frak p_+,\Bbb V)$ then give rise to bundles that do
 not show up in a standard BGG sequence.  This is always the case in
 singular infinitesimal character (compare with Section 3.1 of
 \cite{part1}), but also in regular character there are many examples
 of representations $\Bbb V$ corresponding to a non--integral weight.

 In this second situation we obtain ``new'' invariant differential
 operators and a new relation to relative forms and the relative
 twisted exterior derivative. The main question here is to describe
 the bundles on which these operators act, the order of the operators,
 and the complexes and resolutions obtained by the relative BGG
 construction.

 \subsection{The case of subsequences}\label{5.2}
 To discuss the first type of results described in Section \ref{5.1},
 we consider a representation $\tbv$ of $G$ and compare the absolute
 BGG sequence determined by $\tbv$ to the relative BGG sequences
 determined by summands of $H_*(\frak p_+,\tbv)$. Initially, the
 isomorphism
 $$
 H_k(\frak q_+,\tbv)\cong \oplus_{i+j=k}H_i(\frak q_+/\frak
 p_+,H_j(\frak p_+,\tbv))
 $$
 is obtained in \cite{part1} abstractly from comparing
 weights. However, in \cite{part1} we have also given a more explicit
 description of the relation between the two homologies, which can be
 translated to geometry.

 Namely, for each $\ell\leq k$ we have constructed there $Q$--invariant
 subspaces $\tcf^\ell_k\subset\Cal F^\ell_k\subset C_k(\frak
 q_+,\tbv)$. These submodules correspond to natural subbundles
 $$
 \tilde F^\ell_kM\subset F^\ell_kM\subset\La^kT^*M\otimes\tcv M . 
 $$
 The description of $\Cal F_k^\ell$ in \cite{part1} readily translates
 to geometry. A $\tcv M$--valued $k$--form lies in $F^\ell_kM$ if and
 only if it vanishes upon insertion of $k-\ell+1$ sections of the
 subbundle $T_\rho M\subset TM$. Moreover, there is a $\frak
 q$--equivariant surjection $\pi:\tcf^\ell_k\to\La^{k-\ell}(\frak
 q_+/\frak p_+)\otimes H_\ell(\frak p_+,\tbv)$. Denoting by $\Bbb
 V_\ell$ the completely reducible representation $H_\ell(\frak
 p_+,\tbv)$ and by $\Cal V_\ell M$ the corresponding relative tractor
 bundle, we obtain, for each $\ell\leq k$, an induced bundle map
 \begin{equation}\label{pidef}
 \pi:\tilde F^\ell_k M\to\La^{k-\ell}T^*_\rho M\otimes \Cal V_\ell M. 
 \end{equation}

 \begin{prop}\label{prop5.2}
   For each $\ell\leq k$, the twisted exterior derivative $d^{\tcv}$
   maps the subspace $\Ga(\tilde F^\ell_kM)\subset\Om^k(M,\tcv M)$ to
   $\Ga(\tilde F_{k+1}^\ell M)\subset\Om^{k+1}(M,\tcv M)$. Moreover,
   denoting by $d^{\Cal V_\ell}$ the relative twisted exterior
   derivative on $\Om^*_\rho(M,\Cal V_\ell M)$, we get $\pi\o
   d^{\tcv}\ph=d^{\Cal V_\ell}(\pi\o\ph)$ for all $\ph\in\Ga(\tilde
   F^\ell_kM)$.
 \end{prop}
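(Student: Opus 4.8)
The plan is to reduce the statement to algebraic identities between $\frak q$--equivariant maps. By the recollection in the proof of Theorem~\ref{thm4.2}, the absolute twisted exterior derivative is of the form $\tilde d^{\tcv}=a\o D+\partial_{\frak g}$, where $D$ is the iterable form of the fundamental derivative, $a$ is a bundle map induced by a $\frak q$--equivariant linear map (a multiple of the complete alternation), and $\partial_{\frak g}$ is the Lie algebra differential; likewise $\tilde d^{\Cal V_\ell}=a'\o D^\rho+\partial_{\frak p/\frak p_+}$, and $\pi$ is itself a bundle map induced by a $\frak q$--equivariant map. Now the fundamental derivative is compatible with every bundle map coming from a $\frak q$--equivariant linear map (Proposition~1.5.8 of \cite{book}), so $D_s(\pi\ph)=\pi(D_s\ph)$ for $\ph\in\Ga(\tilde F^\ell_kM)$ and $s\in\Ga(\Cal AM)$; and for $s\in\Ga(\Cal G\x_Q\frak p)$ the operator $D$ restricts and descends to $D^\rho$, both on $\Cal A_\rho M$ (as used in the proof of Lemma~\ref{lemma4.4}) and on the relative natural bundle $\La^{k-\ell}T^*_\rho M\otimes\Cal V_\ell M$. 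Threading these compatibilities through the two formulas, both $\pi\o d^{\tcv}$ and $d^{\Cal V_\ell}\o\pi$ --- evaluated on sections of $T_\rho M$ --- become the composite of the single operator $D$ with $\frak q$--equivariant bundle maps, so that the whole proposition is equivalent to a finite list of identities between $\frak q$--equivariant linear maps on the finite--dimensional $Q$--modules involved.

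For part (1) I would first verify the coarser statement that $d^{\tcv}$ maps $\Ga(F^\ell_kM)$ into $\Ga(F^\ell_{k+1}M)$, directly from the formula of Theorem~\ref{thm4.3} applied with $\frak p=\frak g$: if $\ph$ vanishes on $k-\ell+1$ insertions from $T_\rho M$ and at least $k-\ell+2$ of the arguments $\xi_0,\dots,\xi_k$ lie in $\Ga(T_\rho M)$, then every term of the first sum in $d^{\tcv}\ph(\xi_0,\dots,\xi_k)$ vanishes, since $\ph$ is then evaluated on at least $k-\ell+1$ sections of $T_\rho M$, and every term of the second sum vanishes by a short case distinction, using that $\llbr\xi_i,\xi_j\rrbr\in\Ga(T_\rho M)$ whenever $\xi_i,\xi_j\in\Ga(T_\rho M)$ --- which follows from Proposition~\ref{prop4.3} together with the facts that $\frak p$ is a subalgebra and that $D$ preserves sections of $\Cal G\x_Q\frak p$. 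The refinement from $F^\ell_kM$ to $\tilde F^\ell_kM$ then amounts to the assertion that the extra condition singling out $\tcf^\ell_k$ inside $\Cal F^\ell_k$ defines a subcomplex for the twisted exterior derivative; by its description in \cite{part1} this is, again, one of the $\frak q$--equivariant identities from the reduction above.

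For part (2) I would apply $\pi$ to the expansion of $d^{\tcv}\ph$ from Theorem~\ref{thm4.3} and match it term by term against $d^{\Cal V_\ell}(\pi\ph)$: the $\nabla^{\tcv}$--terms go over into the $\nabla^{\rho,\Cal V_\ell}$--terms, because the normal tractor connection on $\tcv M$ along $T_\rho M$--directions descends, under $\pi$, to the relative tractor connection on $\Cal V_\ell M$ --- here one uses that $\pi$ is compatible with the $\frak p$--module structures, $\Bbb V_\ell=H_\ell(\frak p_+,\tbv)$ being a module over $\frak p/\frak p_+$ --- and the bracket terms match since $\llbr\ ,\ \rrbr$ on $T_\rho M$ is the restriction of its absolute counterpart. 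The step I expect to be the real obstacle is the remaining purely algebraic matching: that under the $\frak q$--equivariant surjection $\pi:\tcf^\ell_k\to\La^{k-\ell}(\frak q_+/\frak p_+)\otimes\Bbb V_\ell$ the differential $\partial_{\frak g}$ and the alternation map correspond to $\partial_{\frak p/\frak p_+}$ and the alternation on $\Om^*_\rho(M,\Cal V_\ell M)$ together with the $H_\ell(\frak p_+,\tbv)$--contribution of the remaining insertion terms. Establishing this needs the explicit form of the homology decomposition $H_k(\frak q_+,\tbv)\cong\oplus_{i+j=k}H_i(\frak q_+/\frak p_+,H_j(\frak p_+,\tbv))$ and of the spaces $\tcf^\ell_k$ and the maps $\pi$ from \cite{part1}, along with careful bookkeeping of signs and of which slots of a form carry $T_\rho M$--arguments and which carry transverse ones; by comparison the differential--geometric input is routine.
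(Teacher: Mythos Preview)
Your overall strategy---split $\tilde d^{\tcv}$ into a fundamental-derivative part and a Lie-algebra-differential part, use naturality of $D$ with respect to $Q$--equivariant bundle maps, and thereby reduce the statement to $\frak q$--equivariant algebra---is exactly the paper's strategy. Your verification that $d^{\tcv}$ preserves $\Ga(F^\ell_kM)$ via the formula of Theorem~\ref{thm4.3} and the closure of $\llbr\ ,\ \rrbr$ on $T_\rho M$ is correct and essentially equivalent to the paper's argument at that stage.

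The gap is precisely the step you yourself flag as ``the real obstacle''. You defer the algebraic matching to unspecified bookkeeping, but this computation \emph{is} the proof; everything else is routine. Moreover, your proposed route through Theorem~\ref{thm4.3} makes this step harder than necessary. The phrase ``the normal tractor connection on $\tcv M$ along $T_\rho M$--directions descends, under $\pi$, to the relative tractor connection on $\Cal V_\ell M$'' is not well posed as stated: $\pi$ is not a map $\tcv M\to\Cal V_\ell M$ but a map $\tilde F^\ell_kM\to\La^{k-\ell}T^*_\rho M\otimes\Cal V_\ell M$, so to evaluate $\pi(d^{\tcv}\ph)$ you must insert $k+1-\ell$ relative \emph{and} $\ell$ transverse arguments into the full expansion, generating cross terms $\nabla^{\tcv}_{\eta_j}$ and $\llbr\xi_i,\eta_j\rrbr$ whose cancellation with the homology projection is exactly the non-obvious point.

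The paper avoids this entirely by working one level up, on $\La^k\Cal A^*M\otimes\tcv M$ rather than $\La^kT^*M\otimes\tcv M$, and introducing an intermediate map $\Psi:E^\ell_kM\to\La^{k-\ell}(\Cal A_{\frak p}M)^*\otimes C_\ell$ which separates the relative slots from the transverse slots \emph{before} passing to homology (so $\pi$ factors through $\Psi$). The central computation is then the single identity $\Psi(\tilde d^{\tcv}\ph)=\Alt(D(\Psi\ph))+\partial_{\frak p}(\Psi\ph)$, carried out directly from the definitions of $\tilde d_1^{\tcv}$ and $\partial_{\frak g}$; no tractor connections or brackets enter. Both the refinement $F^\ell_k\rightsquigarrow\tilde F^\ell_k$ (preservation of the $\ker(\partial^*)$ condition in the $C_\ell$ factor by $\Alt(D)$ and by $\partial_{\frak p}$) and the intertwining $\pi\o d^{\tcv}=d^{\Cal V_\ell}\o\pi$ (passing to homology in $C_\ell$ and descending $\Alt(D)+\partial_{\frak p}$ to $\Alt(D^\rho)+\partial_{\frak p/\frak p_+}$) then drop out immediately. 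This is the organizing device you are missing.
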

 \begin{proof}
   Observe first that $d^{\tcv}$ is the twisted exterior derivative
   giving rise to an absolute BGG sequence. Hence to follow the
   construction from Section \ref{4.1}, we have to start by viewing
   $\La^kT^*M\otimes\tcv M$ as subbundle of $\La^k\Cal A^*M\otimes\tcv
   M$. Moreover, the Lie algebra cohomology differential used in the
   construction is the differential $\partial_{\frak g}$ from the
   standard complex computing the Lie algebra cohomology $H^*(\frak
   g,\tbv)$.

   Now $\La^k\Cal A^*M\otimes\tcv M$ carries an obvious analog of the
   filtration $\{F^\ell_k M\}$. For $k\geq\ell$, we define
   $E^\ell_kM\subset \La^k\Cal A^*M\otimes\tcv M$ to consist of those
   $k$--linear maps which vanish upon insertion of $k-\ell+1$ entries
   from the subbundle $\Cal A_{\frak p}M:=\Cal G\x_Q\frak p\subset\Cal
   AM$.

   Taking $\ph\in E^k_\ell M$ and inserting $k-\ell$ elements of $\Cal
   A_{\frak p}M$, we obtain an $\ell$--linear alternating map on $\Cal
   AM$ with values in $\tcv M$. By construction, the latter map vanishes
   upon insertion of a single element of $\Cal A_{\frak p}M$, so it
   descends to an element of $\La^\ell(\Cal AM/\Cal A_{\frak
     p}M)^*\otimes\tcv M$. Since $(\frak g/\frak p)^*\cong\frak p_+$,
   the latter bundle is induced by the representation $\La^\ell\frak
   p_+\otimes\tbv$. Denoting by $C_\ell$ the bundle $\La^\ell(\Cal
   AM/\Cal A_{\frak p}M)^*\otimes\tcv M$, we have obtained a bundle map
 \begin{equation}\label{Psidef}
 \Psi:E^\ell_k M\to \La^{k-\ell}(\Cal A_{\frak p}M)^*\otimes C_\ell,  
 \end{equation}
 whose kernel evidently equals $E^{\ell+1}_kM$.  Now since the bundle
 $C_\ell$ is induced by a representation of $\frak p$, there is a well
 defined Lie algebra cohomology differential $\partial_{\frak p}$
 mapping $\La^{k-\ell}(\Cal A_{\frak p}M)^*\otimes C_\ell$ to
 $\La^{k-\ell+1}(\Cal A_{\frak p}M)^*\otimes C_\ell$. On the other
 hand, given a section of $\La^{k-\ell}(\Cal A_{\frak p}M)^*\otimes
 C_\ell$ we can form the fundamental derivative, restrict to entries
 from $\Cal A_{\frak p}M$ and then form the complete alternation to
 obtain a section of $\La^{k-\ell+1}(\Cal A_{\frak p}M)^*\otimes
 C_\ell$. We write this operation as $\tau\mapsto \Alt(D\tau)$.

 We next claim that for $\ph\in\Ga(E^\ell_k M)$ we have
 $\tilde{d}^{\tcv}\ph\in\Ga(E^\ell_{k+1}M)$ and that
 $$
 \Ps(\tilde{d}^{\tcv}\ph)=\Alt(D(\Psi(\ph)))+\partial_{\frak p}(\Psi(\ph)). 
 $$ 
 To verify that $\tilde{d}^{\tcv}_1\ph=\Alt(D\ph)$ lies in
 $E^\ell_{k+1}$ take sections $s_0,\dots, s_{k-\ell}\in\Ga(\Cal
 A_{\frak p}M)$ and $t_1,\dots,t_\ell$ in $\Ga(\Cal AM)$. Inserting
 these into $\tilde{d}^{\tcv}_1\ph$, we obtain
 $$
 \tsum_{i=0}^{k-\ell}(-1)^i
 (D_{s_i}\ph)(s_0,\dots,\widehat{s_i},\dots,t_\ell)+
 \tsum_{j=1}^{\ell}(D_{t_j}\ph)(s_0,\dots,\widehat{t_j},\dots,t_\ell). 
 $$ 
 Naturality of the fundamental derivative implies that for each
 $s\in\Ga(\Cal AM)$, we have $D_s\ph\in\Ga(E^\ell_k M)$. This readily
 implies that each summand in the second sum vanishes, since there are
 $k-\ell+1$ entries from $\Cal A_{\frak p}M$. For the same reason, the
 summands in the first sum vanish if one of the $t_j$ is a section of
 $\Cal A_{\frak p}M$. Hence we see that
 $\tilde{d}^{\tcv}_1\ph\in\Ga(E^{\ell}_{k+1})$.

 On the other hand, expanding
 $(D_{s_i}\ph)(s_0,\dots,\widehat{s_i},\dots,t_\ell)$, we obtain a term
 in which $D_{s_i}$ acts on $\ph(s_0,\dots,\widehat{s_i},\dots,t_\ell)=
 (\Ps(\ph)(s_0,\dots,\widehat{s_i},\dots,s_{k-\ell}))(t_1,\dots,t_\ell)$. On
 the other hand, there are terms in which $D_{s_i}$ acts on one of the
 $t$'s and adding those, we obtain
 $(D_{s_i}(\Ps(\ph)(s_0,\dots,\widehat{s_i},\dots,s_{k-\ell})))(t_1,\dots,t_\ell)$.
 Finally, there are terms in which the $D_{s_i}$ hits another $s$, and
 we can rewrite those terms in the form
 $-(\Ps(\ph)(s_0,\dots,\widehat{s_i},\dots,D_{s_i}s_j,\dots,s_{k-\ell}))
 (t_1,\dots,t_\ell)$. Adding these in, one obtains
 $(D_{s_i}(\Ps(\ph)))(s_0,\dots,\widehat{s_i},\dots,s_{k-\ell})$
 evaluated on the $t_j$. This shows that
 $\Ps(\tilde{d}^{\tcv}_1\ph)=\Alt(D(\Ps(\ph)))$.

 The argument for $\partial_{\frak g}\ph$ is similar. Inserting the
 sections $s_i$ and $t_j$ and expanding the definitions, terms of the
 form $t_j\bullet (\ph(\dots))$ and $\ph(\{t_j,t_r\},\dots)$ vanish
 identically, since $k+\ell-1$ entries from $\Cal A_{\frak p}M$ get
 inserted into $\ph$. Moreover, if at least one $t_j$ is a section of
 $\Cal A_{\frak p}M$ then the same applies to the remaining
 terms. (Here one uses that $\frak p\subset\frak g$ is a subalgebra to
 deal with the terms involving $\{s_i,t_j\}$ for this fixed $t_j$.)
 This implies that $\partial_{\frak g}\ph$ is a section of
 $E^\ell_{k+1}M$, too, hence proving the first part of the claim. To
 describe $\Ps(\partial_{\frak g}\ph)$, we observe that
 \begin{align*}
   (-1)^is_i&\bullet(\ph(s_0,\dots,\widehat{s_i},\dots,t_\ell))+\tsum_{j=1}^\ell
   (-1)^{i+j+k-\ell}\ph(\{s_i,t_j\},\dots)\\ &=(-1)^i(s_i\bullet
   (\Ps(\ph)(s_1,\dots,\widehat{s_i},\dots,s_{k-\ell})))(t_1,\dots,t_\ell).
 \end{align*}
 Likewise, we can write the term involving the bracket $\{s_i,s_j\}$ as
 $$
 (-1)^{i+j}(\Ps(\ph)(\{s_i,s_j\},\dots
 ))(t_1,\dots,t_j).
 $$ Hence we conclude that $\Ps(\partial_{\frak g}\ph)=\partial_{\frak
   p}(\Ps(\ph))$, and this completes the proof of the claim.

 \smallskip

 Consider the subbundles $A^\ell_kM:=\La^{k-\ell}\Cal A_{\frak
   p}^*M\otimes\ker(\partial^*)\subset \La^{k-\ell}\Cal A_{\frak
   p}^*M\otimes C_\ell$, which correspond to $\frak q$--invariant
 subspaces in the inducing representations. From the form of these
 subspaces it is evident, that $\partial_{\frak p}$ maps sections of
 $A^\ell_kM$ to sections of $A^\ell_{k+1} M$. Moreover, by naturality
 of the fundamental derivative, we see that for $\ps\in\Ga(A^\ell_kM)$
 and $s\in\Ga(\Cal AM)$ we have $D_s\ps\in\Ga(A^k_\ell M)$. Applying
 this to $s\in\Ga(\Cal A_{\frak p}M)$ and forming the complete
 alternation, we conclude that also $\Alt\o D$ maps $\Ga(A^\ell_k M)$
 to $\Ga(A^\ell_{k+1}M)$.

 But putting $\tilde E^\ell_kM:=\Ps^{-1}(A^\ell_kM)\subset E^\ell_k
 M$, it is clear from the definitions that $\tilde F^\ell_kM=\tilde
 E^\ell_kM\cap F^\ell_k M$. Hence for $\ph\in\Ga(\tilde F^\ell_kM)$,
 we get $\Ps(\ph)\in\Ga(A^\ell_k M)$, so from our claim we conclude
 that $\Ps(d^{\tcv}\ph)\in\Ga(A^\ell_{k+1} M)$. Since this implies
 $d^{\tbv}\ph\in\Ga(\tilde F^\ell_{k+1})$, the first part of the
 proposition is proved.

\smallskip

On the other hand, the bundle map $\pi$ from \eqref{pidef} evidently
extends to a bundle map $\tilde E^\ell_kM\to \La^{k-\ell}\Cal A_{\frak
  p}^*\otimes \tcv_\ell M$, which we denote by the same symbol. Now
for a section $\ph\in\Ga(\tilde F^\ell_k M)$, we know from Theorem
\ref{thm4.1} that $\tilde{d}^{\tcv}\ph$ vanishes upon insertion of a
single section of the subbundle $\Cal G\x_Q\frak q$. Since $\frak
p_+\subset\frak q$, we in particular conclude that
$\Ps(\tilde{d}^{\tcv}\ph)\in\Ga(\La^{k-\ell}\Cal A_{\frak
  p}^*M\otimes\ker(\partial^*))$ vanishes upon insertion of a single
section of $\Cal G\x_Q\frak p_+\subset A_{\frak p}M$. Hence
$\Ps(\tilde{d}^{\tcv}\ph)$ naturally descends to a section of
$\La^{k-\ell}A_\rho^*M\otimes\ker(\partial^*)$, so projecting to
homology, we see that $\pi(\tilde{d}^{\tcv}\ph)$ naturally descends to
$\La^{k-\ell}A_\rho^*M\otimes\Cal V_\ell M$. Moreover, $\Alt\o D$
descends to $\Alt\o D^\rho$ on that bundle. On the other hand, the Lie
algebra cohomology differential $\partial_{\frak p}$ on
$\La^{k-\ell}A_{\frak p}^*M\otimes\ker(\partial^*)$ by construction
descends to $\partial_{\frak p/\frak p_+}$ on
$\La^{k-\ell}A_\rho^*M\otimes\Cal V_\ell$. Thus the second part of the
proposition follows.
 \end{proof}

 \subsection{Absolute vs.\ relative BGG sequences}\label{5.3}
 We can now complete the general discussion of the first kind of
 examples. We continue using the notation of Section \ref{5.2}, so
 $\tbv$ is a finite dimensional irreducible representation of $G$ and
 for some $\ell\leq k$, we denote by $\Bbb V_\ell$ the completely
 reducible representation $H_\ell(\frak p_+,\tbv)$ of $P$. We want to
 compare the BGG sequence corresponding to the tractor bundle $\tcv M$
 induced by $\tbv$ to the relative BGG sequence corresponding to the
 relative tractor bundle $\Cal V_\ell M$ induced by $\Bbb V_\ell$. To
 explicitly relate the bundles showing up in the two sequences, we
 have to use another property of $Q$--invariant subspaces
 $\tcf_k^\ell\subset\Cal F_k^\ell\subset C_k(\frak q_+,\tbv)$ from
 Section \ref{5.2}. Namely, we get $\Cal
 F^{\ell+1}_k\subset\tcf^\ell_k$ and by Proposition 3.6 of
 \cite{part1}, $\ker(\partial^*_{\frak q})\cap\Cal
 F^\ell_k\subset\tcf^\ell_k$. In that proposition it is also shown
 that the $\frak q$--equivariant map $\pi$ inducing the bundle map
 \eqref{pidef} vanishes on $\Cal F^{\ell+1}_k$ and has the property
 that, up to sign, $\partial^*_\rho\o\pi$ coincides with
 $\pi\o\partial^*_{\frak q}$.

 The obvious consequences of these properties for induced bundles and
 induced bundle maps imply that $\pi$ is defined on
 $\ker(\partial^*_{\frak q})\cap F^\ell_kM$ and its values on that
 space are contained in
 $\ker(\partial^*_\rho)\subset\La^{k-\ell}T^*_\rho M\otimes\Cal V_\ell
 M$. Hence we can project to the relative homology bundle and obtain a
 bundle map
 \begin{equation}\label{Pidef}
 \Pi:\ker(\partial^*_{\frak q})\cap \tilde F^\ell_k M\to \Cal
 H_{k-\ell}(T^*_\rho M,\Cal V_\ell M). 
 \end{equation} 
 By Theorem 3.7 of \cite{part1}, this bundle map vanishes on
 $\im(\partial^*_{\frak q})\cap \tilde F^\ell_k M$, so it descends to
 a bundle map on $\Cal H_k(T^*M,\tcv M)$. That theorem also implies
 that the result is a surjection $\Cal H_k(T^*M,\tcv M)\to\Cal
 H_{k-\ell}(T^*_\rho M,\Cal V_\ell M)$. Hence we can use it to
 identify the copies of the bundles showing up in the relative BGG
 sequence induced by $\Bbb V_\ell$ with their counterparts in the
 absolute BGG sequence determined by $\tbv$. We refer to the part of
 the absolute BGG sequence determined by $\tbv$ formed by these
 bundles and the operators mapping between them, as the
 \textit{subsequence determined by} $\Bbb V_\ell$.

 \begin{thm}\label{thm5.3}
   For any $\ell\leq k$, the identification between the relative BGG
   sequence determined by $\Bbb V_\ell=H_\ell(\frak p_+,\tbv)$ and the
   subsequence of the absolute BGG sequence determined by $\tbv$
   constructed above is compatible with the operators in the two
   sequences.

   In particular, under the curvature conditions from Theorem
   \ref{thm4.6} respectively Proposition \ref{prop4.9}, this
   subsequence is a subcomplex and a fine resolution of a sheaf as
   described there.
 \end{thm}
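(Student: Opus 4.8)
The plan is to reduce the statement to the compatibility of the two \emph{splitting operators} and then to transport the compatibility of the compressed operators from Proposition~\ref{prop5.2}. Fix $\ell\le k$ (the argument works for every $k$) and write $S^{\tcv}$, respectively $S^{\Cal V_\ell}$, for the splitting operator of Theorem~\ref{thm3.3} associated to the absolute twisted exterior derivative $d^{\tcv}$ together with $\partial^*_{\frak q}$, respectively to the relative twisted exterior derivative $d^{\Cal V_\ell}$ together with $\partial^*_\rho$; by Proposition~\ref{prop4.5} (applied to the pair $\frak q\subset\frak g$) the operator $d^{\tcv}$ is the usual twisted exterior derivative, so the associated BGG sequence is the standard one. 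Write $\pi_H^{\frak q}$, $\pi_H^\rho$ for the two homology projections and let $\Cal G^k_\ell M\subset\Cal H_k(T^*M,\tcv M)$ be the subbundle which, under the decomposition of \cite{part1}, corresponds to the summand $\Cal H_{k-\ell}(T^*_\rho M,\Cal V_\ell M)$ and is mapped isomorphically onto it by $\Pi$. Recall that $\Pi$ arises by projecting the bundle map \eqref{pidef} to homology, so that $\Pi\o\pi_H^{\frak q}=\pi_H^\rho\o\pi$ on $\Ga(\ker(\partial^*_{\frak q})\cap\tilde F^\ell_k M)$, and that, by \cite{part1}, up to sign $\partial^*_\rho\o\pi=\pi\o\partial^*_{\frak q}$ on $\tilde F^\ell_\bullet M$.

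The first step is that $S^{\tcv}(\al)\in\Ga(\tilde F^\ell_k M)$ for every $\al\in\Ga(\Cal G^k_\ell M)$. For this I apply part~(1) of Proposition~\ref{prop4.8} to the pair $\frak q\subset\frak g$ with $\Bbb E:=\tcf^\ell_k\cap\ker(\partial^*_{\frak q})\subset\La^k\frak q_+\otimes\tbv$; this is a $Q$--submodule and is automatically invariant under $\id\otimes C_0$ since $\tbv$ is irreducible, so $C_0$ acts on it by a scalar. By the Hodge--theoretic results of \cite{part1} the harmonic representatives of sections of $\Cal G^k_\ell M$ lie in $\tcf^\ell_k M$, i.e.\ $\Cal G^k_\ell M$ is contained in the subbundle $\Cal E_0 M$ occurring in Proposition~\ref{prop4.8}, and that proposition then yields $S^{\tcv}(\al)\in\Ga(\tcf^\ell_k M)=\Ga(\tilde F^\ell_k M)$.

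Consequently $\pi(S^{\tcv}\al)$ is a well defined section of $\La^{k-\ell}T^*_\rho M\otimes\Cal V_\ell M$, and I claim it equals $S^{\Cal V_\ell}(\Pi\al)$; by part~(2) of Theorem~\ref{thm3.3} it suffices to check the three characterizing properties. First, $\partial^*_\rho(\pi(S^{\tcv}\al))=\pm\pi(\partial^*_{\frak q}S^{\tcv}\al)=0$. Second, $\pi_H^\rho(\pi(S^{\tcv}\al))=\Pi(S^{\tcv}\al)=\Pi(\al)$, since $\Pi$ descends to homology and $\pi_H^{\frak q}(S^{\tcv}\al)=\al$. Third, by Proposition~\ref{prop5.2} we have $d^{\Cal V_\ell}(\pi(S^{\tcv}\al))=\pi(d^{\tcv}(S^{\tcv}\al))$, and $d^{\tcv}(S^{\tcv}\al)\in\Ga(\tilde F^\ell_{k+1}M)$ (Proposition~\ref{prop5.2}) lies in $\ker(\partial^*_{\frak q})$ (Theorem~\ref{thm3.3}), whence $\partial^*_\rho d^{\Cal V_\ell}(\pi(S^{\tcv}\al))=\pm\pi(\partial^*_{\frak q}d^{\tcv}(S^{\tcv}\al))=0$. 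This proves $\pi\o S^{\tcv}=S^{\Cal V_\ell}\o\Pi$ on $\Ga(\Cal G^k_\ell M)$, and then for $\al\in\Ga(\Cal G^k_\ell M)$
\begin{align*}
\Pi\big(D^{\tcv}_k(\al)\big)&=\Pi\big(\pi_H^{\frak q}(d^{\tcv}(S^{\tcv}\al))\big)=\pi_H^\rho\big(\pi(d^{\tcv}(S^{\tcv}\al))\big)\\
&=\pi_H^\rho\big(d^{\Cal V_\ell}(\pi(S^{\tcv}\al))\big)=\pi_H^\rho\big(d^{\Cal V_\ell}(S^{\Cal V_\ell}(\Pi\al))\big)=D^{\Cal V_\ell}_{k-\ell}(\Pi\al),
\end{align*}
where the second equality uses $d^{\tcv}(S^{\tcv}\al)\in\Ga(\ker(\partial^*_{\frak q})\cap\tilde F^\ell_{k+1}M)$ and the third uses Proposition~\ref{prop5.2} and the previous claim. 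This is the asserted compatibility of the operators in the two sequences.

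Finally, the filtration--preservation just used — $S^{\tcv}$ maps $\Ga(\Cal G^k_\ell M)$ into $\Ga(\tilde F^\ell_k M)$ and $d^{\tcv}$ maps $\Ga(\tilde F^\ell_k M)$ into $\Ga(\tilde F^\ell_{k+1}M)$ — shows that the operators $D^{\tcv}_k$ carry the subbundles $\Cal G^\bullet_\ell M$ into one another, so these bundles together with the restrictions of the $D^{\tcv}_k$ form a genuine subsequence of the absolute BGG sequence, identified via $\Pi$ with the relative BGG sequence determined by $\Bbb V_\ell$. Transporting Theorem~\ref{thm4.6} and Proposition~\ref{prop4.9} along the isomorphism $\Pi$ then gives the last assertion: under the stated curvature hypotheses the relative sequence is a complex and a fine resolution of the described sheaf, hence so is the subsequence. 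The step most likely to require real work is the Hodge--theoretic identification invoked in the second paragraph — that the copy of $\Cal H_{k-\ell}(T^*_\rho M,\Cal V_\ell M)$ singled out by $\Pi$ inside $\Cal H_k(T^*M,\tcv M)$ is represented by forms lying in $\tcf^\ell_k M$, so that Proposition~\ref{prop4.8} is applicable — together with the precise commutation of $\pi$ with the codifferentials $\partial^*_{\frak q}$, $\partial^*_\rho$ and with the Lie algebra (co)homology differentials; all of this is supplied by \cite{part1}, and everything else is bookkeeping with the characterizations in Theorem~\ref{thm3.3} and Proposition~\ref{prop5.2}.
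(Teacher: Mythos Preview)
Your argument is correct and follows the paper's proof essentially step for step: invoke Proposition~\ref{prop4.8}(1) (with $\Bbb E$ built from $\tcf^\ell_k$, using that $C_0$ acts by a scalar on the irreducible $\tbv$) to place $S^{\tcv}(\al)$ in $\Ga(\tilde F^\ell_kM)$, then use the characterization in Theorem~\ref{thm3.3}(2) together with Proposition~\ref{prop5.2} and the compatibility $\partial^*_\rho\circ\pi=\pm\pi\circ\partial^*_{\frak q}$ to show $\pi(S^{\tcv}\al)=S^{\Cal V_\ell}(\Pi\al)$, from which $\Pi\circ D^{\tcv}=D^{\Cal V_\ell}\circ\Pi$ on $\Ga(\Cal G^k_\ell M)$ follows. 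One small point: your final assertion that $D^{\tcv}_k$ carries $\Cal G^k_\ell M$ into $\Cal G^{k+1}_\ell M$ does not follow merely from $d^{\tcv}S^{\tcv}\al\in\Ga(\tilde F^\ell_{k+1}M)$, since $\tilde F^\ell_{k+1}M$ also contains $\Cal F^{\ell+1}_{k+1}M$ and hence representatives of homology classes from higher filtration degrees; the paper accordingly only claims that the \emph{component} of $D^{\tcv}(\al)$ lying in the subsequence equals $D^{\Cal V_\ell}(\Pi\al)$, which is exactly what your displayed computation proves and what the theorem and its ``subcomplex'' corollary require.
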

 \begin{proof}
   Consider the $Q$--submodule $\tcf^\ell_k\subset\La^k\frak
   q_+\otimes\tbv$. From the proof of Theorem 3.7 in \cite{part1}, we
   see that all irreducible components of $H_k(\frak q_+,\tbv)$ which
   also occur in $H_{k-\ell}(\frak q_+/\frak p_+,H_\ell(\frak p_+,\Bbb
   V))$ are contained in $\ker(\square)\cap\tcf^\ell_k$. Since $\tbv$
   is an irreducible representation of $\frak g$, the Casimir acts by
   a scalar on $\tbv$, so part (1) of Proposition \ref{prop4.8}
   applies. This shows that for any section $\al$ of $\Cal
   H_k(T^*M,\tcv M)$ which is contained in the subsequence determined
   by $\Bbb V_\ell$, the image $S(\al)$ under the splitting operator
   lies in $\Ga(\tilde F^\ell_kM)\subset\Om^k(M,\tcv M)$. Hence we can
   apply the bundle map $\pi$ from \eqref{pidef} to obtain
   $\ph:=\pi(S(\al))\in \Om^{k-\ell}_\rho(M,\Cal V_\ell M)$. Since
   $\partial^*_{\frak q}(S(\al))=0$, we conclude that
   $\partial^*_\rho(\ph)=0$ so we can project to a section of $\Cal
   H_{k-\ell}(T^*_\rho M,\Cal V_\ell M)$. By construction, the
   resulting section coincides with $\Pi(S(\al))$ so from the above
   discussion we see that this is the section corresponding to $\al$
   under the identification of the subsequence with the relative BGG
   sequence.

   We next claim that $\ph=\pi(S(\al))$ coincides with $S_\rho(\al)$,
   where
 $$
 S_\rho:\Ga(H_{k-\ell}(T^*_\rho M,\Cal V_\ell M))\to
   \Om^{k-\ell}_\rho(M,\Cal V_\ell M)
 $$ 
 is the splitting operator coming from the relative BGG
 construction. We already know that $\partial^*_\rho (\ph)=0$ and the
 projection of $\ph$ to cohomology coincides with $\al$. In view of
 part (2) of Theorem \ref{thm3.3}, it thus suffices to verify that
 $\partial^*_\rho d^{\tcv_\ell}\ph=0$ to complete the proof of the
 claim. But since $\ph=\pi(S(\al))$, Proposition \ref{prop5.2} shows
 that $d^{\tcv_\ell}\ph=\pi(d^{\tcv}S(\al))$ and the compatibility of
 $\pi$ with the Lie algebra homology differentials then implies that
 $\partial^*_\rho d^{\tcv_\ell}\ph=\pm\pi(\partial^*_{\frak
   q}d^{\tcv}S(\al))=0$.

 Knowing that $\pi(S(\al))=S_\rho (\al)$ we can again use Proposition
 \ref{prop5.2} to conclude that $d^{\tcv}S(\al)\in\Ga(\tilde
 F^\ell_{k+1}M)$ and that $\pi(d^{\tcv}S(\al))=d^{\Cal V_\ell}S_\rho
 (\al)$. This is a section of
 $\ker(\partial^*_\rho)\subset\Om^{k+1}_\rho (M,\Cal V_\ell M)$ and
 projecting to cohomology we obtain $D_\rho(\al)$, the relative
 BGG--operator. But this can be equivalently written as
 $\Pi(d^{\tcv}S(\al))$, so under the identification of the two
 sequences, it coincides with the components of the projection of
 $d^{\tcv}S(\al)$ to cohomology, which lie in the subsequence. But
 denoting by $D$ the absolute BGG operator, this is exactly the
 component of $D(\al)$ contained in the subsequence.
 \end{proof}

 \begin{remark}\label{rem5.3}
   The proof of the theorem shows how the machineries corresponding to
   the absolute and relative BGG sequences are related. If
   $\al\in\Ga(\Cal H_k(T^*M,\tcv M))$ lies in the subsequence
   determined by $\Cal V_\ell$, then the absolute splitting operator
   $S$ has the property that $S(\al)\in\Ga(\tilde
   F^\ell_kM)\subset\Om^k(M,\tcv M)$. Hence we can apply $\pi$ and
   $\pi(S(\al))=S_\rho(\al)\in\Om^{k-\ell}_\rho (M,\Cal V_\ell M)$,
   where $S_\rho$ is the relative splitting operator. 
 \end{remark}

 \subsection{Generalized path geometries}\label{5.4}
 To obtain explicit examples of relative BGG sequences, we consider
 the example of generalized path geometries, see Section 4.4.3 of
 \cite{book}. A \textit{path geometry} on a smooth manifold $N$ of
 dimension $n+1$ is given by a smooth family of 1--dimensional
 submanifolds in $N$, such that, given any point $x\in N$ and any line
 $\ell$ in $T_xM$, there is a unique submanifold in the family which
 contains $x$ and is tangent to $\ell$ in $x$. The importance of path
 geometries comes from their relation to systems of second order
 ODEs. Given such a system on an open subset $U\subset\Bbb R^n$ (or on
 some manifold), the graphs of all solutions define a path geometry on
 (an open subset of) $U\x\Bbb R$, so this gives a
 coordinate--independent way to study such systems.

 To encode a path geometry, one passes to the projectivized tangent
 bundle $M:=\Cal P(TN)$ of $N$. Viewing the submanifolds in $N$ as
 regularly parametrized curves, it is evident that they lift to $M$,
 and taking tangents, one obtains a smooth line subbundle $E\subset
 TM$, which is transversal to the vertical subbundle $VM$ of the
 projection $M\to N$. The sum $H:=E\oplus V\subset TM$ is the
 so--called tautological subbundle in $TM$, whose fiber in a point
 $\ell$ consist of those tangent vectors which project to elements of
 $\ell$. One can then recover the submanifolds in the initial family
 as the projections to $N$ of the leaves of the foliation defined by
 the line subbundle $E\subset TM$. Hence a path geometry can be
 equivalently defined as specifying a line subbundle $E$ in the
 tautological bundle which is complementary to the vertical subbundle.

 The concept of a \textit{generalized path geometry} is then obtained
 by requiring an abstract version of the properties of the subbundles
 defining a path geometry. If $M$ is any smooth manifold of dimension
 $2n+1$, then a generalized path geometry on $M$ is given by two
 subbundles $E$ and $V$ in $TM$ of rank $1$ and $n$, respectively,
 which intersect only in zero. Moreover, one requires that the Lie
 bracket of two sections of $V$ is a section of $H:=E\oplus V$, while
 projecting the bracket of a section of $E$ and a section of $V$ to the
 quotient induces an isomorphism $E\otimes V\to TM/H$ of vector
 bundles.

 It turns out (see again Section 4.4.3 of \cite{book}) that
 generalized path geometries can be equivalently described as
 parabolic geometries of type $(G,Q)$, where $G=PGL(n+2,\Bbb R)$ and
 $Q$ is a the stabilizer of a flag in $\Bbb R^{n+2}$ consisting of a
 line contained in a plane. Now there are two obvious intermediate
 parabolics lying between $Q$ and $G$, namely the stabilizer $P$ of
 the line and the stabilizer $\tilde P$ of the plane, so
 $Q=P\cap\tilde P$. Hence on a generalized path geometry, there are
 two kinds of relative BGG sequences available, namely the ones
 corresponding to $\frak p\supset\frak q$ and the ones corresponding
 to $\tilde{\frak p}\supset\frak q$. Since the latter consist of a
 single operator, we will focus on describing the former class.

 \subsection{Relative BGG sequences on generalized path
   geometries}\label{5.5} 
 To make our results explicit for generalized path geometries, it
 mainly remains to connect representation theory data to geometric
 objects. On the one hand, we have to ensure that there are
 sufficiently many relative tractor bundles available to start the
 construction. On the other hand, we have to discuss how weights are
 realized in terms of natural bundles. We do the second step in detail
 only in the case $n=2$, i.e.\ for generalized path geometries in
 dimension $5$. Here the representation theory information is available
 in \cite{part1}. Higher dimensions can be dealt with in a similar
 way.

 For the question of existence of relative tractor bundles, we have to
 construct completely reducible representations of the group $P$, the
 stabilizer of a line (i.e.~of a point in projective space) in
 $G=PGL(n+2,\Bbb R)$. As discussed in Section 4.1.5 of \cite{book},
 the Levi component $P_0\subset P$ is given by the classes of block
 diagonal matrices of the form $\left(\begin{smallmatrix} c & 0 \\ 0 &
     C \end{smallmatrix}\right)\in GL(n+2,\Bbb R)$ with $0\neq
 c\in\Bbb R$ and $C\in GL(n+1,\Bbb R)$. It is also shown there that
 the representation of $P_0$ on $\Bbb R^{n+1}$ defined by $X\mapsto
 c^{-1}CX$ can be realized on $\frak g/\frak p$ via the adjoint
 representation.

 Forming exterior powers of this basic representation, one obtains the
 fundamental representations of $GL(n+1,\Bbb R)$ up to a twist by a
 multiplication by some power of $c$. On the other hand, the center of
 $P$ is isomorphic to $\Bbb R\setminus\{0\}$, and the top exterior
 power of the representation on $\frak g/\frak p$ from above gives a
 non--trivial representation of the center. Forming the square of this
 representation, the action depends only on the absolute value, so one
 can take arbitrary real roots of the resulting representation. By
 tensorizing with such representations, the action of the center can
 be changed arbitrarily. Hence we conclude that any weight which is
 $\frak p$--dominant and $\frak p$--integral can be realized by a
 finite dimensional representation of $P_0$ and hence by a completely
 reducible representation of $P$. (Initially weights are considered
 for complex representations, but there is no problem to use them in a
 real setting here, since the real Lie algebra $\frak{sl}(n+2,\Bbb R)$
 we are dealing with is a split real form of its complexification.) We
 will make this more explicit in the case $n=2$ in Section \ref{5.6}
 below.

 As a second step, let us discuss the bundles induced by
 representations of $Q_0$ in the case $n=2$. In the Dynkin diagram
 notation used in \cite{part1}, we have to consider weights of the form
 $\xxd{a}{b}{c}$. Again it is no problem to work with weights in the
 real setting here. From the Dynkin diagram it is clear that the
 fundamental representations corresponding to the first two (crossed)
 nodes will be one--dimensional, while the fundamental representation
 corresponding to the last node has dimension two. Correspondingly, we
 obtain a two parameter family of line bundles and one basic rank two
 bundle. For our purposes, there is no need to discuss existence of
 representations of $Q$ realizing a given weight. We have discussed
 existence of representations inducing relative tractor bundles
 above. These give rise to representations of $Q_0$ on relative Lie
 algebra homology groups, which induce the completely reducible
 natural bundles showing up in relative BGG sequences.

 Hence we just briefly discuss the relation between weights and the
 basic bundles available for the geometry. For $w,w'\in\Bbb R$, we
 denote by $\Cal E(w,w')$ the bundle corresponding to the weight
 $\xxd{w}{w'}{0}$ (not worrying about existence). The correspondence
 between natural bundles and the Lie algebra $\frak{sl}(n+2,\Bbb R)$
 (c.f.\ Section 4.4.3 of \cite{book}) allows us to read off the
 weights corresponding to the constituents of the associated graded of
 the tangent-- and cotangent bundle, as well as bundles constructed
 from those.  This shows that $E=\Cal E(2,-1)$, $\La^2V=\Cal E(2,3)$,
 and $\La^2(TM/H)=\Cal E(2,1)$. Next, the bundle $V$ corresponds to
 $\xxd{-1}{1}{1}$, its dual $V^*$ corresponds to $\xxd{1}{-2}{1}$,
 while $TM/H\cong E\otimes V$ and its dual correspond to
 $\xxd{1}{0}{1}$ and $\xxd{-1}{-1}{1}$, respectively. Together with
 the line bundles $\Cal E(w,w')$, any of these for bundles can be used
 to construct a bundle corresponding to any given weight. It will be
 most convenient to take $V^*$ as the basic ingredient, and we will
 follow the usual convention that adding ``$(w,w')$'' to the name of a
 bundle indicates a tensor product with $\Cal E(w,w')$. Then for
 $a,b\in\Bbb R$ and $c\in\Bbb N$, the weight $\xxd{a}{b}{c}$ is
 realized by the bundle $S^cV^*(a-c,b+2c)$.

 Having this at hand, we can describe the basic form of relative BGG
 sequences corresponding to $\frak p\supset\frak q$ on a generalized
 path geometry in dimension five.

 \begin{thm}\label{thm5.5}
   Let $H=E\oplus V\subset TM$ be a generalized path geometry of
   dimension $5$. Then for each $w\in\Bbb R$ and $k,\ell\in\Bbb N$,
   there is a sequence of invariant differential operators
 \begin{equation}\label{pathbgg}
\begin{gathered}
  \Ga(\Cal W_0)\overset{D_1}{\longrightarrow} \Ga(\Cal W_1)
  \overset{D_2}{\longrightarrow}\Ga(\Cal W_2),\text{\
    with\ }\Cal W_0=S^kV^*(w,2k+\ell),\\
  \Cal W_1=S^{k+\ell+1}V^*(w,2k+\ell),\text{\ and\ } \Cal W_2=S^\ell
  V^*(w+2k+2,\ell-k-3).
 \end{gathered}
 \end{equation}
 This sequence is contained in a standard BGG sequence if and only if
 $w\in\Bbb Z$ and one of the following four mutually exclusive
 conditions is satisfied:
 \begin{itemize}
 \item $w+k\geq 0$
 \item $w+k\leq -2$ and $w+k+\ell\geq -1$
 \item $w+k+\ell\leq -3$ and $w+2k+\ell\geq -2$ 
 \item $w+2k+\ell\leq -4$
 \end{itemize}
 If either $w=-1-k$, or $w=-2-k-\ell$, or $w=-3-2k-\ell$, then the
 representations in the sequence have singular infinitesimal
 character.
 \end{thm}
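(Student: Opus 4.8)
The plan is to run the relative BGG machinery of Theorem \ref{thm4.1} for a suitably chosen completely reducible representation $\Bbb V$ of $P$, to compute the resulting homology bundles with the relative version of Kostant's theorem from \cite{part1}, and then to deduce the last two assertions from Theorem \ref{thm5.3} together with the affine Weyl orbit description of Section \ref{5.1}. Concretely, given $w\in\Bbb R$ and $k,\ell\in\Bbb N$, I would take $\Bbb V$ to be the irreducible representation of $P$ whose lowest weight has Dynkin labels $\xxd{w+k}{\ell}{k}$; this weight is $\frak p$--dominant and $\frak p$--integral (the uncrossed nodes carry the non--negative integers $\ell$ and $k$, while the crossed node may carry the arbitrary real number $w+k$, which is realized by tensoring with a suitable real root of a density bundle as explained in Section \ref{5.5}), so such a $\Bbb V$ exists. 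Then $\Cal VM=\Cal G\x_Q\Bbb V$ is a relative tractor bundle, and since $\dim(\frak q_+/\frak p_+)=n=2$ in dimension five, Theorem \ref{thm4.1} yields a relative BGG sequence $\Ga(\Cal H_0)\to\Ga(\Cal H_1)\to\Ga(\Cal H_2)$ of invariant differential operators with $\Cal H_i=\Cal H_i(T^*_\rho M,\Cal VM)$, which after the obvious reindexing is the sequence \eqref{pathbgg}.

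The next step is to identify the three bundles $\Cal H_i$. Here $\frak p$ is the stabiliser of a line, so the semisimple part of its Levi is an $\frak{sl}(3)$ supported on the two uncrossed nodes, and the relative parabolic $\bar{\frak q}=\frak q\cap\frak l_{\frak p}$ corresponds to crossing the first of these two nodes; its relative Hasse diagram is the Bott--Borel--Weil diagram of $\Bbb P^2$, with one element $e,s_2,s_2s_3$ in each length $0,1,2$. Since the lowest weight of $\Bbb V$ restricted to the Levi of $\frak l_{\frak p}$ is $\frak l_{\frak p}$--regular for all $w,k,\ell$ (the relevant $\rho$--shifted coordinates are $k+\ell+2>k+1>0$), the relative Kostant theorem (Theorem 2.7 of \cite{part1}) shows that each $H_i(\frak q_+/\frak p_+,\Bbb V)$ is nonzero and irreducible, with lowest weight the affine action of the $i$--th Hasse element on that of $\Bbb V$. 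Carrying out this affine action and converting the resulting weights into bundles via the dictionary of Section \ref{5.5} (namely $\xxd abc\leftrightarrow S^cV^*(a-c,b+2c)$) gives exactly $\Cal W_0=S^kV^*(w,2k+\ell)$, $\Cal W_1=S^{k+\ell+1}V^*(w,2k+\ell)$ and $\Cal W_2=S^\ell V^*(w+2k+2,\ell-k-3)$. This is the most laborious step: the $S$--powers $k,k+\ell+1,\ell$ are just the usual $\Bbb P^2$ pattern, but the two density twists require keeping careful track of the central characters of $P$ and of $\frak l_{\frak p}$ and of the passage between the conventions of \cite{part1} and the explicit bundle calculus for path geometries.

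For the characterisation of when \eqref{pathbgg} embeds into a standard BGG sequence I would argue as follows. By the discussion in Section \ref{5.1}, the bundles $\Cal W_i$ occur in a standard BGG sequence of type $(G,Q)$ precisely when $\Bbb V$ is a constituent of $H_*(\frak p_+,\tbv)$ for some irreducible representation $\tbv$ of $G$, equivalently (by Kostant's theorem and its consequences quoted in Section \ref{5.1}) when the lowest weight $\la$ of $\Bbb V$ is $\frak g$--integral and $\la+\rho$ is regular for the Weyl group $W=S_4$ of $A_3$; in that case Theorem \ref{thm5.3} identifies the relative sequence with the corresponding subsequence of the absolute sequence of $\tbv$. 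Now $\frak g$--integrality of $\la$, and even of $\Cal W_0$, forces $w\in\Bbb Z$, so we may assume this. Writing $\la+\rho$ in standard $\ep$--coordinates gives the tuple $(w+2k+\ell+3,\;k+\ell+2,\;k+1,\;0)$, and because $k,\ell\ge 0$ one has $k+\ell+2>k+1>0$, so the only coincidences that can occur are between the first entry and one of the other three. Hence $\la+\rho$ is regular if and only if $w+k\ne -1$, $w+k+\ell\ne -2$ and $w+2k+\ell\ne -3$; for $w\in\Bbb Z$ and $k,\ell\in\Bbb N$ a short case check shows this is precisely the disjunction of the four listed conditions, which are also easily seen to be mutually exclusive. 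Conversely, under these conditions $\tbv$ is taken to be the irreducible $\frak g$--representation whose highest weight is the unique dominant weight in the $W$--orbit of $\la$, and Kostant's theorem together with Theorem \ref{thm5.3} finishes the argument.

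Finally, the statement about singular infinitesimal character is immediate from the same computation: the three bundles in \eqref{pathbgg} all have the infinitesimal character of $\Bbb V$, and $\la+\rho$ fails to be regular exactly when one of the three coincidences above holds, i.e.\ when $w=-1-k$, or $w=-2-k-\ell$, or $w=-3-2k-\ell$; these are precisely the cases complementary to the four regularity conditions (and again mutually exclusive since $k,\ell\ge 0$). The main obstacle throughout is the bookkeeping of the density weights in the second step, where the conventions of \cite{part1} have to be matched to the path--geometry bundle dictionary; everything else is either a direct application of the general machinery of Sections \ref{3} and \ref{4} or an elementary manipulation of inequalities.
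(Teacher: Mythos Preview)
Your proposal is correct and follows essentially the same route as the paper: choose the $P$--irreducible representation with weight $\xdd{w+k}{\ell}{k}$ (note: only one crossed node, not $\xxd{w+k}{\ell}{k}$ as you wrote), apply Theorem~\ref{thm4.1} and the relative Kostant theorem from \cite{part1} to identify the three homology bundles via the dictionary $\xxd abc\leftrightarrow S^cV^*(a-c,b+2c)$, and then analyse the affine Weyl orbit for the embedding and singularity statements. The only difference is cosmetic: where the paper cites the precomputed weight lists in Example~3.2 (formulae (3.1) and (3.2)) of \cite{part1}, you redo those computations directly in $\epsilon$--coordinates, which is perfectly fine and arguably more self-contained.
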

 \begin{proof}
   We start from the relative tractor bundle $\Cal VM$ induced by the
   representation $\Bbb V$ corresponding to the weight $\xdd{a}{b}{c}$
   with $a=w+k$, $b=\ell$ and $c=k$. Then Theorem 2.7 of \cite{part1}
   shows that the homology groups $H_i(\frak q_+/\frak p_+,\Bbb V)$
   for $i=0,1,2$ correspond to the weights listed in formula (3.1) in
   Example 3.2 of \cite{part1}. Expressing these weights in terms
   of $w$, $k$ and $\ell$, the discussion above this theorem then
   shows that the three bundles in the sequence \eqref{pathbgg} are
   $\Cal H_i(T^*_\rho M,\Cal VM)$ for $i=0,1,2$. Thus existence of the
   sequence of invariant differential operators follows directly from
   Theorem \ref{thm4.1}.

   The standard BGG sequences on $M$ are indexed by irreducible
   representations of $G$ and thus by $\frak g$--dominant integral
   weights. Since all weights in the affine Weyl orbit of an integral
   weight are integral, too, we see that the condition $a=w+k\in\Bbb
   Z$ is necessary. The bundles occurring in the standard BGG sequence
   induced by a representation $\tbv$ of $\frak g$ correspond to the
   representations $H_*(\frak q_+,\Bbb V)$. The relation between
   absolute and relative homology groups is discussed in detail in
   Section 3.2 of \cite{part1}, and in Example 3.2 of that reference,
   this is made explicit in the case we consider here. Starting from a
   dominant integral weight $\ddd{a'}{b'}{c'}$, the corresponding
   absolute BGG sequence contains four relative BGG sequences as
   subsequences. The initial weights of these four sequences are
   listed in formula (3.2) of \cite{part1}. It is elementary to verify
   that, for $w\in\Bbb Z$ and hence $a=w+k\in\Bbb Z$, the condition
   that $\xdd{a}{b}{c}$ equals one of these four weights is equivalent
   to $a\geq 0$, respectively $a\leq -2$ and $a+b\geq -1$,
   respectively $a+b\leq -3$ and $a+b+c\geq -2$, respectively
   $a+b+c\leq -4$. The conditions in the theorem just express these in
   terms of $w$, $k$, and $\ell$.

   The cases in which the representations corresponding to the
   relative homology groups have singular infinitesimal character are
   also listed in Example 3.2 of \cite{part1}. The conditions in the
   theorem just equivalently express these in terms of $w$, $k$ and
   $\ell$.
\end{proof}

\begin{remark*}
  At this stage, it is not clear whether the operators in the sequence
  actually are non--zero. This will follow from the results that we
  obtain resolutions in the case of path geometries below. One can
  actually go much further in that direction and obtain a description
  of the principal parts of the operators using only representation
  theory.

  In order to do this in the case discussed here, it suffices to
  verify that the orders of the two operators in the sequences are
  $\ell+1$ for $D_1$ and $k+1$ for $D_2$. Now for $D_1:\Ga(\Cal
  W_0)\to \Ga(\Cal W_1)$, the target bundle $\Cal
  W_1=S^{k+\ell+1}(w,2k+\ell)$ evidently is included in
  $S^{\ell+1}V^*\otimes\Cal W_0$, since $\Cal
  W_0=S^k(w,2k+\ell)$. Indeed it corresponds to the highest weight
  component in the tensor product of the inducing
  representations. Since $D_1$ is an invariant differential operator,
  on the homogeneous model its symbol must be an equivariant map of
  homogeneous vector bundles. Hence it is induced by a
  $Q_0$--equivariant map between the inducing representations.

  Taking into account that the operators are constructed from vertical
  derivatives, it follows that the symbol is defined on
  $S^{\ell+1}V^*\otimes\Cal W_0$, so $Q_0$--equivariancy pins it down up
  to a constant multiple.

  For the second operator $D_2:\Cal W_1\to\Cal W_2$, the situation is
  only slightly more complicated. Here the dual bundles are $\Cal
  W_2^*=S^\ell V(-w-2k-2,k-\ell+3)$ and $\Cal
  W_1^*=S^{k+\ell+1}V(-w,-\ell-2k)$. From the discussion of the
  relation between representations and bundles above, we see that
  $V^*\cong V(2,-3)$, so $S^{k+1}V^*\cong S^{k+1}V(2k+2,-3k-3)$. Hence
  $\Cal W_1^*$ is naturally contained in the tensor product
  $S^{k+1}V^*\otimes\Cal W_2^*$ corresponding to the highest weight
  component in the tensor product of the inducing representations. Now
  the unique (up to scale) $Q_0$--homomorphism $S^{k+1}V^*\otimes\Cal
  W_2^*\to\Cal W_1^*$ dualizes to a unique $Q_0$--homomorphism
  $S^{c+1}V^*\otimes\Cal W_1\to\Cal W_2$. Knowing the order, one again
  obtains the symbol on the homogeneous model, up to a constant
  factor. Finally, one argues that passing to a curved geometry does
  not change the principal part of the operator.
\end{remark*}

\subsection{Relative BGG resolutions on path geometries}\label{5.6}
The concepts of correspondence spaces and local twistor spaces as
discussed in Sections \ref{4.4}, \ref{4.6} and \ref{4.6a} arise very
naturally in the case of generalized path geometries. To discuss
correspondence spaces, recall that a regular normal parabolic geometry
of type $(G,P)$ on a manifold $N$ of dimension $n+1$ is equivalent to
a projective structure on $N$. Such a structure is given by an
equivalence class of torsion--free linear connections on $TN$, which
share the same geodesics up to parametrization. The unparametrized
geodesics of the connections in the class define a path geometry on
$N$ and the correspondence space $\Cal CN$ is the associated geometry
on $\Cal P(TN)$. In the language of systems of second order ODEs,
local isomorphism to a correspondence space thus is related to
realizability of a system as a geodesic equation.

On the other hand, let us recall the description of harmonic curvature
components for generalized path geometries from Section 4.4.3 of
\cite{book}. In all dimensions $n\geq 2$, there is one harmonic
torsion $\tau_E\in\Ga(E^*\otimes (TM/H)^*\otimes V)$ and a curvature,
which we denote by $\ga\in\Ga(V^*\otimes (TM/H)^*\otimes\End(V))$. For
$n=2$, there is an additional torsion $\tau_V\in\Ga(\La^2V^*\otimes
E)$. Using these, we can formulate the conditions for existence of
local twistor spaces and local isomorphism to a correspondence space.

\begin{lemma}\label{lemma5.6}
  Let $(M,E,V)$ be a generalized path geometry of dimension $2n+1$
  with $n\geq 2$. Then we have

  (1) This relative tangent bundle $T_\rho M$ is the bundle $V$. If
  $n>2$, this bundle is always involutive, for $n=2$, this is the case
  if and only if $\tau_V=0$.

  (2) Involutivity of $T_\rho M=V$ is equivalent to the fact that the
  geometry is locally isomorphic to a path geometry on a local twistor
  space.

  (3) If $T_\rho M=V$ is involutive, then the geometry is locally
  isomorphic to a correspondence space for a projective structure on a
  local twistor space if and only if the harmonic curvature component
  $\ga$ vanishes identically.
\end{lemma}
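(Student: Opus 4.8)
The plan is to handle the three parts in sequence; part (1) carries the structural content, while parts (2) and (3) are then largely formal, using the list of harmonic curvature components together with Propositions \ref{prop4.4.1} and \ref{prop4.9} and results of \cite{twistor}. For part (1) I would first identify $T_\rho M$. In the $|2|$--grading of $\frak g=\frak{sl}(n+2,\Bbb R)$ determined by $\frak q$ the block sizes are $1,1,n$, and the line--stabilizer $\frak p$ satisfies $\frak p/\frak q\cong\frak g_{-1}^{(b)}$, the $Q$--submodule of $\frak g/\frak q$ inducing $V$ under the dictionary of Section 4.4.3 of \cite{book}; hence $T_\rho M=\Cal G\x_Q(\frak p/\frak q)=V$. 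By Proposition \ref{prop4.4.1}, $V$ is involutive if and only if the torsion $\tau$ maps $V\x V$ to $V$. The harmonic torsions are $\tau_E\in\Ga(E^*\otimes(TM/H)^*\otimes V)$ (all $n$) and, for $n=2$ only, $\tau_V\in\Ga(\La^2V^*\otimes E)$, besides the curvature--type harmonic component $\gamma\in\Ga(V^*\otimes(TM/H)^*\otimes\End(V))$. Viewed as bilinear maps on $\gr(TM)=E\oplus V\oplus TM/H$, $\tau_E$ pairs a $TM/H$--entry with an $E$--entry and $\gamma$ pairs a $TM/H$--entry with a $V$--entry, so both vanish on $\gr(V)\x\gr(V)$, and $\tau_E$ vanishes whenever any one entry lies in $\gr(V)$. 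Thus for $n>2$, and for $n=2$ with $\tau_V=0$, Proposition \ref{prop4.9}(1) applies with $\gr(T_\rho M)=V$ (the remaining hypotheses involving $\tau_V$ being vacuous in these cases) and yields involutivity, in fact $\ka_\rho=0$. If $n=2$ and $\tau_V\neq0$, recall that for a regular normal geometry the lowest homogeneity component of the curvature is harmonic (this is standard, cf.\ \cite{book}, and also follows from the identity $\ka=S^\nabla(\ka^h)$ used in the proof of Proposition \ref{prop4.8}); for $n=2$ that component lies in homogeneity one where it equals $\tau_V$, and since $\tau_V$ has values in the subbundle $E$ complementary to $V$ in $H$ while $\tau(V,V)\subset H$ by regularity, $\tau_V\neq0$ forces $\tau(V,V)\not\subset V$, so $V$ is not involutive.

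For part (2), the direction ``$\Leftarrow$'' is immediate: the generalized path geometry attached to a path geometry on an $(n+1)$--manifold $N$ lives on $\Cal P(TN)$ with $V$ the vertical subbundle of $\Cal P(TN)\to N$, which is integrable. For ``$\Rightarrow$'', assume $V=T_\rho M$ is involutive and choose a local leaf space $\psi:U\to N$ with $\dim N=n+1$, which is by definition a local twistor space for $\frak p\supset\frak q$. Since $E_p\cap V_p=0$ and $V_p=\ker d\psi_p$, the line $d\psi_p(E_p)\subset T_{\psi(p)}N$ is well defined, so $\Phi(p):=d\psi_p(E_p)$ defines a smooth map $\Phi:U\to\Cal P(TN)$ covering $\psi$. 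Using that $d\psi_p$ induces an isomorphism $(TM/H)_p\cong T_{\psi(p)}N/d\psi_p(E_p)$ together with the defining isomorphism $E\otimes V\to TM/H$ of a generalized path geometry, one checks that the restriction of $\Phi$ to each $\psi$--fibre has invertible differential, hence $\Phi$ is a local diffeomorphism. Via $\Phi$, $U$ becomes an open subset of $\Cal P(TN)$ on which $V$ is the vertical subbundle and $E$ a line subbundle of the tautological bundle complementary to it, i.e.\ a path geometry on $N$ in the sense of Section \ref{5.4}; since generalized path geometries are equivalent to (regular normal) parabolic geometries of type $(G,Q)$, this is a local isomorphism.

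For part (3), work in the setting of part (2), so $V=T_\rho M$ is involutive (hence $\tau_V=0$ if $n=2$). By Theorem 2.7 of \cite{twistor}, quoted after Theorem \ref{thm4.6}, $M$ is locally isomorphic to a correspondence space --- which by Section \ref{5.6} means the correspondence space of a projective structure on the local twistor space --- if and only if $\ka$ vanishes upon insertion of any element of $T_\rho M=V$. The ``if'' part of this condition in particular makes $\ka^h=\pi_H(\ka)$ vanish upon one $\gr(V)$--insertion; conversely, Proposition \ref{prop4.9}(2) shows that this property of $\ka^h$ forces the corresponding property of $\ka$. Finally, among the harmonic components, $\tau_E$ vanishes on any single $\gr(V)$--insertion, $\tau_V$ vanishes by involutivity, and $\gamma$ vanishes on one $\gr(V)$--insertion if and only if $\gamma=0$, since $\gamma$ is determined by its values on $V\x(TM/H)$. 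Hence $\ka^h$ vanishes on one $\gr(V)$--insertion exactly when $\gamma=0$, which is the assertion.

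The step I expect to be the real obstacle is the ``$\Rightarrow$'' direction of part (2): extracting from bare involutivity the honest identification of $U$ with an open piece of $\Cal P(TN)$ and of $E$ with a path geometry downstairs. Everything else reduces to tracking which of the three listed harmonic curvature components can be hit by one or two insertions from $\gr(V)$, together with the cited theorems; the part-(2) argument instead needs a direct check that $\Phi$ is a local diffeomorphism, which is exactly where the bracket condition $E\otimes V\to TM/H$ of a generalized path geometry enters.
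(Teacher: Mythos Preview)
Your proof is correct and follows essentially the same route as the paper's, which is much terser: the paper simply cites Section 4.4.4 and Proposition 4.4.4 of \cite{book} for parts (1) and (2), and for part (3) observes that when $\gamma=0$ (and $\tau_V=0$ if $n=2$) the hypothesis of Proposition \ref{prop4.9}(2) is met. Your version unpacks these citations, deriving (1) directly from Propositions \ref{prop4.4.1} and \ref{prop4.9}(1) together with the ``lowest nonzero homogeneous component of $\kappa$ is harmonic'' principle, and spelling out both directions of (3) via Theorem 2.7 of \cite{twistor}; the only step one might want to say one more word about is the implication $\kappa(V,\cdot)=0\Rightarrow\kappa^h$ vanishes on a $\gr(V)$--insertion, which is not literally immediate from $\kappa^h=\pi_H(\kappa)$ but follows for instance from the filtration apparatus of Section \ref{5.2} (since $\kappa\in F^2_2$ forces the $\Pi_0$-- and $\Pi_1$--components of $\kappa^h$ to vanish).
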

\begin{proof} 
  Section 4.4.4 of \cite{book} contains a proof of (1). If $V$ is
  involutive, then for a local leaf space $\ps:U\to N$, define
  $\tilde\ps:U\to\Cal PTN$ by mapping $x\in U$ to the line
  $T_x\ps\cdot E_x\subset T_{\ps(x)}N$. Then it is shown in
  Proposition 4.4.4 of \cite{book} that for sufficiently small $U$,
  the map $\tilde\ps$ is an open embedding whose tangent map sends $V$
  to the vertical bundle and $E\oplus V$ to the tautological bundle,
  so (2) follows.

  If $\ga=0$ and (if $n=2$), also $\tau_V=0$, then the harmonic
  curvature $\ka^h$ evidently satisfies the assumptions of part (2) of
  Proposition \ref{prop4.9}, which then implies (3).
\end{proof}

The last ingredient we need to make our results on resolutions
explicit is notation for some tensor bundles. For a smooth manifold
$N$ of dimension $n$, we define $\Cal E[w]$ to be the bundle of
densities of weight $\frac{-w}{n+1}$ on $N$ (so $\Cal E[-n-1]$ is the
bundle of volume densities). Adding ``$[w]$'' to the name of a tensor
bundle will indicate a tensor product with $\Cal E[w]$. Now for
$k,\ell\in\Bbb N$, consider the tensor product $S^kT^*N\otimes S^\ell
TN$. If both $k$ and $\ell$ are positive, then there is a unique
contraction from this bundle to $S^{k-1}T^*N\otimes S^{\ell-1}TN$, and
we denote by $\Cal T_k^\ell$ the kernel of this contraction. We
further define $\Cal T^0_k:=S^kT^*N$ and $\Cal T^\ell_0:=S^\ell TN$.

\begin{prop}\label{prop5.6}
  Let $(M,E,V)$ be a generalized path geometry of dimension $2n+1$
  with $n\geq 2$ such that the relative tangent bundle $T_\rho M=V$ is
  involutive. Let $\Bbb V$ be a completely reducible representation of
  $P$, $\Cal VM\to M$ the corresponding relative tractor bundle and
  $\nabla^{\rho,\Cal V}$ the relative tractor connection on $\Cal VM$.

  (1) The relative BGG sequence induced by $\Bbb V$ is a complex and a
  fine resolution of the sheaf $\ker(\nabla^{\rho,\Cal V})$.  In
  particular, if $\Bbb V$ is an irreducible component of $H_*(\frak
  p_+,\tbv)$ for a representation $\tbv$ of $\frak g$, then one
  obtains a subcomplex in a curved BGG sequence.

  (2) If $M$ is the correspondence space $\Cal CN$ for a projective
  structure on a manifold $N$ of dimension $n+1$, then the sheaf
  $\ker(\nabla^{\rho,\Cal V})$ is globally isomorphic to the pullback
  of the sheaf of smooth sections of the tensor bundle over $N$
  induced by the representation $\Bbb V$.

  (3) If $M\cong P(TN)$ is a path geometry on a manifold $N$ of
  dimension $n+1$, then the isomorphism of sheaves from (2) holds
  locally (with the same tensor bundle).

  (4) If $n=2$ and $\tau_V=0$, then for the sequence \eqref{pathbgg}
  of invariant differential operators in Theorem \ref{thm5.5} the
  tensor bundle from parts (2) and (3) is $\Cal T^k_\ell[w+2\ell]$.
\end{prop}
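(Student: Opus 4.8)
The plan is to deduce parts (1)--(3) by specializing the general results of Section \ref{4} to (generalized) path geometries, so that the only genuine work there is to verify the relevant curvature conditions from the explicit description of the harmonic curvature recalled before Lemma \ref{lemma5.6}; part (4) is then a translation of the representation theory of Theorem \ref{thm5.5} into classical tensor bundles on $N$.

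For part (1), I would first show that the relative curvature vanishes. By Lemma \ref{lemma5.6}(1), involutivity of $T_\rho M=V$ forces $\tau_V=0$ when $n=2$, and for $n>2$ there is no such component; the remaining harmonic curvature components $\tau_E$ and $\ga$ each have a $(TM/H)^*$--slot, and since $V\subset H$ they are annihilated by inserting two sections of $V$, while the torsion--type component $\tau_E$ is even annihilated by inserting a single section of $V$. Hence the hypotheses of part (1) of Proposition \ref{prop4.9} are met, so $\ka$ vanishes on $T_\rho M\x T_\rho M$ and $\ka_\rho=0$, and part (1) of Theorem \ref{thm4.6} gives that the relative BGG sequence is a complex and a fine resolution of $\ker(\nabla^{\rho,\Cal V})$. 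If in addition $\Bbb V$ is an irreducible component of $H_*(\frak p_+,\tbv)$ for some representation $\tbv$ of $\frak g$, then Theorem \ref{thm5.3} identifies the relative sequence with a subsequence of the absolute BGG sequence for $\tbv$, and, being a complex, it is a subcomplex.

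For parts (2) and (3) I would invoke the twistor--space results of Sections \ref{4.6} and \ref{4.6a}. If $M=\Cal CN$ for a projective structure on $N$ (a regular normal parabolic geometry of type $(G,P)$), part (2) of Theorem \ref{thm4.6} applies directly and shows $\ker(\nabla^{\rho,\Cal V})$ is \emph{globally} the pullback of the sheaf of sections of $\underline{\Cal V}N=\Cal G\x_P\Bbb V$; since $\Bbb V$ is completely reducible and the projective structure identifies $\Cal G\x_P(\frak g/\frak p)$ with $TN$, this natural bundle is a tensor bundle (with a projective density weight) on $N$, giving (2). If instead $M\cong\Cal P(TN)$ is a path geometry, then $T_\rho M=V$ is the vertical bundle of $M\to N$, whose leaves are the fibres $\Cal P(T_xN)$; these are compact, so $N$ itself is the leaf space, but the Cartan bundle need not descend to a $P$--principal bundle over $N$ (indeed $M$ is locally a correspondence space of a projective structure exactly when $\ga$ vanishes, by Lemma \ref{lemma5.6}(3)). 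Hence one only gets the local statement: the computation above (via Proposition \ref{prop4.9}(1)) shows $\ka$ vanishes on $V\x V$, and, after checking that inserting one section of $V$ into $\ka$ forces values into $\Cal G\x_Q\frak p$ (which follows from the position of $\ga$ in $\Cal AM$, its values lying in the degree--zero part of the $\frak p$--grading), Theorem \ref{thm4.6a} applies: on sufficiently small open pieces of $N$ the Cartan bundle is modelled on $\Cal F=N'\x P$ with an induced soldering form, whence $\Cal F\x_P(\frak g/\frak p)\cong TN'$ and $\ker(\nabla^{\rho,\Cal V})$ is locally the pullback of the sheaf of sections of the same tensor bundle as in (2).

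For part (4), I would read off from the proof of Theorem \ref{thm5.5} that the sequence \eqref{pathbgg} arises from the relative tractor bundle induced by the completely reducible $P$--representation $\Bbb V$ with highest weight $\xdd{w+k}{\ell}{k}$, and then identify the corresponding natural bundle $\underline{\Cal V}N$ on the $3$--dimensional projective manifold $N$. Using the dictionary of Section \ref{5.5} together with Section 4.4.3 of \cite{book} --- the semisimple part of the Levi of $\frak p$ is an $\frak{sl}_3$ attached to the last two nodes, acting on $\frak g/\frak p\cong TN$, while $\Cal E[1]$ absorbs the central character --- the irreducible $\frak{sl}_3$--component determined by the labels $\ell$ and $k$ is realised by the trace--free part $\Cal T^k_\ell$ of $S^kT^*N\otimes S^\ell TN$, and the density normalisation is fixed by the first--node label $w+k$. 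I would pin this normalisation down, and thereby confirm the value $w+2\ell$, by the consistency check that the $Q$--irreducible quotient of $\Cal VM=\pi^*\underline{\Cal V}N$, i.e.\ $\Cal H_0(T^*_\rho M,\Cal VM)$, must reproduce the bundle $\Cal W_0=S^kV^*(w,2k+\ell)$ appearing in \eqref{pathbgg}. The main obstacle throughout is exactly this last bookkeeping step: correctly tracking how the central character of $\Bbb V$ combines with the weights carried by the $\frak{sl}_3$--part, and matching it against the conventions for $\Cal E[w]$, $V^*\cong V(2,-3)$, and the other identifications fixed in Section \ref{5.5}.
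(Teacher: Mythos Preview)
Your treatment of parts (1) and (2) matches the paper's approach and is correct: verify the harmonic curvature conditions of Proposition \ref{prop4.9}(1), deduce $\ka_\rho=0$, and then apply Theorem \ref{thm4.6} (and Theorem \ref{thm5.3} for the subcomplex claim).

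There is a genuine gap in part (3). To invoke part (2) of Theorem \ref{thm4.6a} you must show that inserting one section of $V$ into the \emph{full} curvature $\ka$ forces values in $\Cal G\x_Q\frak p$. Your parenthetical justification only discusses the harmonic component $\ga$; knowing that $\ga$ takes values in $\frak q_0\subset\frak p$ says nothing about the non--harmonic parts of $\ka$, and Proposition \ref{prop4.9} has no clause that transfers ``one entry from $V$ forces values in $\frak p$'' from $\ka^h$ to $\ka$. The paper closes this gap with a homogeneity argument: since the lowest nonzero homogeneous component of $\ka$ is harmonic, and since with $\tau_V=0$ the only harmonic component of homogeneity $2$ is $\tau_E$ (which vanishes whenever one entry comes from $V$), any contribution to $\ka(\xi,\eta)$ with $\xi\in\Ga(V)$ has homogeneity $\geq 3$; counting degrees then forces $\ka(\xi,\eta)\in\Cal G\x_Q\frak q\subset\Cal G\x_Q\frak p$. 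Without this (or an equivalent $\Bbb F$--stability argument via Proposition \ref{prop4.8}(3)), your step ``Theorem \ref{thm4.6a} applies'' is not justified.

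For part (4), your consistency--check strategy would eventually work but is roundabout and you do not actually carry it out. The paper proceeds directly: it reads off that $\frak g/\frak p$ has weight $\xdd{1}{0}{1}$ and $(\frak g/\frak p)^*\cong\frak p_+$ has weight $\xdd{-2}{1}{0}$, so the Cartan component $\Cal T^k_\ell$ of $S^\ell(\frak g/\frak p)\otimes S^k(\frak g/\frak p)^*$ has weight $\xdd{k-2\ell}{\ell}{k}$; comparing with the weight $\xdd{w+k}{\ell}{k}$ of $\Bbb V$ fixes the density twist as $[w+2\ell]$. This avoids the bookkeeping obstacle you flag.
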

\begin{proof}
  If $V$ is involutive, then $\tau_V=0$, and the harmonic curvature
  $\ka^h$ visibly satisfies the assumptions of part (1) of Proposition
  \ref{prop4.9}. Hence we conclude that $\ka_{\rho}=0$ and we may
  apply Theorem \ref{thm4.6} to obtain (1). As noted in the proof of
  Lemma \ref{lemma5.6}, in the case of a correspondence space, we can
  apply part (2) of Proposition \ref{prop4.9}, so (2) again follows
  from Theorem \ref{thm4.6}.

  Assuming that $M$ is a path geometry over $N$ (i.e.~that $N$ is a
  global twistor space for $M$), the fact that $\ka_{\rho}=0$ implies
  that we can apply Lemma \ref{lemma4.6a}. This shows that locally the
  Cartan bundle $\Cal G\to M$ is isomorphic to a principal $P$--bundle
  $\Cal F\to N$, and the sheaf $\ker(\nabla^{\rho,\Cal V})$ can be
  identified with $\Ga(\Cal F\x_P\Bbb V)$.

  Finally, we can apply the well known result that lowest non--zero
  homogeneous component of the curvature of any regular normal
  parabolic geometry is harmonic. Since $\tau_V$ vanishes identically,
  the list of harmonic components above shows that the next lowest
  possible homogeneity is two, and this is represented by
  $\tau_E$. Consequently, only components of homogeneity at least
  three contribute to values of $\ka$ if one of the entries is from
  $T_\rho M$. But this immediately implies that for $\xi\in \Ga(T_\rho
  M)$ and $\eta\in\frak X(M)$, we get $\ka(\xi,\eta)\in\Cal G\x_Q\frak
  q\subset\Cal G\x_Q\frak p$. Hence we can apply part (2) of Theorem
  \ref{thm4.6a}, which shows that the Cartan connection induces a
  soldering form $\th\in\Om^1(\Cal F,\frak g/\frak p)$. This implies
  that $\Cal F\x_P(\frak g/\frak p)\cong TN$, and hence the
  correspondence between completely reducible representations of $P$
  and tensor bundles is the same as in the case of a projective
  structure on $N$. This completes the proof of (3).

\smallskip

(4) The representation $\frak g/\frak p$ is the $\frak p$--irreducible
quotient of the adjoint representation, so the two representations
have the same lowest weight. Hence $\frak g/\frak p$ corresponds to
the weight $\xdd101$. Similarly, one verifies that the dual $(\frak
g/\frak p)^*\cong\frak p_+$ corresponds to the weight
$\xdd{-2}10$. Hence the tensor bundle $\Cal T^k_\ell$, which is
induced by the highest weight component in $S^\ell(\frak g/\frak
p)^*\otimes S^k(\frak g/\frak p)$ corresponds to the weight
$\xdd{k-2\ell}{\ell}{k}$, which implies the result.
\end{proof}

\begin{bibdiv}
\begin{biblist}

\bib{BEG}{article}{
   author={Bailey, T. N.},
   author={Eastwood, M. G.},
   author={Gover, A. R.},
   title={Thomas's structure bundle for conformal, projective and related
   structures},
   journal={Rocky Mountain J. Math.},
   volume={24},
   date={1994},
   number={4},
   pages={1191--1217},
   issn={0035-7596},
   review={\MR{1322223 (96e:53016)}},
   doi={10.1216/rmjm/1181072333},
}

\bib{BEastwood}{book}{
   author={Baston, Robert J.},
   author={Eastwood, Michael G.},
   title={The Penrose transform},
   series={Oxford Mathematical Monographs},
   note={Its interaction with representation theory;
   Oxford Science Publications},
   publisher={The Clarendon Press, Oxford University Press, New York},
   date={1989},
   pages={xvi+213},
   isbn={0-19-853565-1},
   review={\MR{1038279 (92j:32112)}},
}

\bib{BGG}{article}{
   author={Bern{\v{s}}te{\u\i}n, I. N.},
   author={Gel{\cprime}fand, I. M.},
   author={Gel{\cprime}fand, S. I.},
   title={Differential operators on the base affine space and a study of
   ${\germ g}$-modules},
   conference={
      title={Lie groups and their representations},
      address={Proc. Summer School, Bolyai J\'anos Math. Soc., Budapest},
      date={1971},
   },
   book={
      publisher={Halsted, New York},
   },
   date={1975},
   pages={21--64},
   review={\MR{0578996 (58 \#28285)}},
}

\bib{Calderbank--Diemer}{article}{ author={Calderbank, David M. J.},
  author={Diemer, Tammo}, title={Differential invariants and curved
    Bernstein-Gelfand-Gelfand sequences}, journal={J. Reine
    Angew. Math.}, volume={537}, date={2001}, pages={67--103},
  issn={0075-4102}, review={\MR{1856258 (2002k:58048)}}, }

\bib{twistor}{article}{
   author={{\v{C}}ap, Andreas},
   title={Correspondence spaces and twistor spaces for parabolic geometries},
   journal={J. Reine Angew. Math.},
   volume={582},
   date={2005},
   pages={143--172},
   issn={0075-4102},
   review={\MR{2139714 (2006h:32017)}},
   doi={10.1515/crll.2005.2005.582.143},
}

\bib{deformations}{article}{
   author={{\v{C}}ap, Andreas},
   title={Infinitesimal automorphisms and deformations of parabolic
   geometries},
   journal={J. Eur. Math. Soc. (JEMS)},
   volume={10},
   date={2008},
   number={2},
   pages={415--437},
   issn={1435-9855},
   review={\MR{2390330 (2009e:32041)}},
   doi={10.4171/JEMS/116},
}

\bib{Cap-Gover}{article}{
   author={{\v{C}}ap, Andreas},
   author={Gover, A. Rod},
   title={Tractor calculi for parabolic geometries},
   journal={Trans. Amer. Math. Soc.},
   volume={354},
   date={2002},
   number={4},
   pages={1511--1548},
   issn={0002-9947},
   review={\MR{1873017 (2003j:53033)}},
   doi={10.1090/S0002-9947-01-02909-9},
}

\bib{polynomiality}{article}{
   author={{\v{C}}ap, A.},
   author={Gover, A. R.},
   author={Hammerl, M.},
   title={Normal BGG solutions and polynomials},
   journal={Internat. J. Math.},
   volume={23},
   date={2012},
   number={11},
   pages={1250117, 29},
   issn={0129-167X},
   review={\MR{3005570}},
   doi={10.1142/S0129167X12501170},
}

\bib{examples}{article}{
   author={{\v{C}}ap, Andreas},
   author={Gover, A. Rod},
   author={Sou{\v{c}}ek, Vladim{\'{\i}}r},
   title={Conformally invariant operators via curved Casimirs: Examples},
   journal={Pure Appl. Math. Q.},
   volume={6},
   date={2010},
   number={3, Special Issue: In honor of Joseph J. Kohn.},
   pages={693--714},
   issn={1558-8599},
   review={\MR{2677309 (2012g:58066)}},
   doi={10.4310/PAMQ.2010.v6.n3.a4},
}

\bib{Weyl}{article}{
   author={{\v{C}}ap, Andreas},
   author={Slov{\'a}k, Jan},
   title={Weyl structures for parabolic geometries},
   journal={Math. Scand.},
   volume={93},
   date={2003},
   number={1},
   pages={53--90},
   issn={0025-5521},
   review={\MR{1997873 (2004j:53065)}},
}

\bib{book}{book}{
   author={{\v{C}}ap, Andreas},
   author={Slov{\'a}k, Jan},
   title={Parabolic geometries. I},
   series={Mathematical Surveys and Monographs},
   volume={154},
   note={Background and general theory},
   publisher={American Mathematical Society},
   place={Providence, RI},
   date={2009},
   pages={x+628},
   isbn={978-0-8218-2681-2},
   review={\MR{2532439 (2010j:53037)}},
}

\bib{CSS-BGG}{article}{
   author={{\v{C}}ap, Andreas},
   author={Slov{\'a}k, Jan},
   author={Sou{\v{c}}ek, Vladim{\'{\i}}r},
   title={Bernstein-Gelfand-Gelfand sequences},
   journal={Ann. of Math.},
   volume={154},
   date={2001},
   number={1},
   pages={97--113},
   issn={0003-486X},
   review={\MR{1847589 (2002h:58034)}},
}

\bib{Casimir}{article}{
   author={{\v{C}}ap, Andreas},
   author={Sou{\v{c}}ek, Vladim{\'{\i}}r},
   title={Curved Casimir operators and the BGG machinery},
   journal={SIGMA Symmetry Integrability Geom. Methods Appl.},
   volume={3},
   date={2007},
   pages={Paper 111, 17},
   issn={1815-0659},
   review={\MR{2366911 (2009d:22017)}},
   doi={10.3842/SIGMA.2007.111},
}

\bib{subcomplexes}{article}{
   author={{\v{C}}ap, Andreas},
   author={Sou{\v{c}}ek, Vladim{\'{\i}}r},
   title={Subcomplexes in curved BGG-sequences},
   journal={Math. Ann.},
   volume={354},
   date={2012},
   number={1},
   pages={111--136},
   issn={0025-5831},
   review={\MR{2957620}},
   doi={10.1007/s00208-011-0726-4},
}

\bib{part1}{article}{
   author={{\v{C}}ap, Andreas},
   author={Sou{\v{c}}ek, Vladim{\'{\i}}r},
   title={Relative BGG sequences; I.~Algebra},
   eprint={arXiv:1510.03331},
}

\bib{Eastwood--Rice}{article}{
   author={Eastwood, Michael G.},
   author={Rice, John W.},
   title={Conformally invariant differential operators on Minkowski space
   and their curved analogues},
   journal={Comm. Math. Phys.},
   volume={109},
   date={1987},
   number={2},
   pages={207--228},
   issn={0010-3616},
   review={\MR{880414 (89d:22012)}},
}

\bib{Fels}{article}{
   author={Fels, M. E.},
   title={The equivalence problem for systems of second-order ordinary
   differential equations},
   journal={Proc. London Math. Soc. (3)},
   volume={71},
   date={1995},
   number={1},
   pages={221--240},
   issn={0024-6115},
   review={\MR{1327940 (96d:58157)}},
   doi={10.1112/plms/s3-71.1.221},
}

\bib{HSSS}{article}{
   author={Hammerl, Matthias},
   author={Somberg, Petr},
   author={Sou{\v{c}}ek, Vladim{\'{\i}}r},
   author={{\v{S}}ilhan, Josef},
   title={On a new normalization for tractor covariant derivatives},
   journal={J. Eur. Math. Soc. (JEMS)},
   volume={14},
   date={2012},
   number={6},
   pages={1859--1883},
   issn={1435-9855},
   review={\MR{2984590}},
   doi={10.4171/JEMS/349},
}

\bib{Kostant}{article}{
   author={Kostant, Bertram},
   title={Lie algebra cohomology and the generalized Borel-Weil theorem},
   journal={Ann. of Math. (2)},
   volume={74},
   date={1961},
   pages={329--387},
   issn={0003-486X},
   review={\MR{0142696 (26 \#265)}},
}

\bib{Lepowsky}{article}{
   author={Lepowsky, J.},
   title={A generalization of the Bernstein-Gelfand-Gelfand resolution},
   journal={J. Algebra},
   volume={49},
   date={1977},
   number={2},
   pages={496--511},
   issn={0021-8693},
   review={\MR{0476813 (57 \#16367)}},
}

\end{biblist}
\end{bibdiv}

\end{document}